\theoremstyle{definition}
\newtheorem{thm}{Theorem}[section]
\newtheorem{exm}[thm]{Example}
\newtheorem{defi}[thm]{Definition}
\newtheorem{lemm}[thm]{Lemma}
\newtheorem{cor}[thm]{Corollary}
\newtheorem{rem}[thm]{Remark}
\newtheorem{prop}[thm]{Proposition}
\newtheorem{ques}[thm]{Question}
\theoremstyle{remark}
\DeclareMathOperator{\N}{N}
\DeclareMathOperator{\Mod}{mod}
\DeclareMathOperator{\diam}{diam}
\DeclareMathOperator{\loc}{loc}
\DeclareMathOperator{\re}{Re}
\DeclareMathOperator{\interior}{int}
\newcommand{\norm}[1]{ \left\Vert #1 \right\Vert }	
\newcommand{\abs}[1]{ \left| #1 \right| }           
\newcommand{\apmd}[2][]{												
	\ifthenelse{\equal{#1}{}}%
					{ \operatorname{N}_{#2}	}%
					{ \operatorname{N}_{#1,#2} 	}}
\newcommand{\aint}[2][]{
	\ifthenelse{\equal{#1}{}}%
					{%
\mathchoice%
      {\mathop{\kern 0.2em\vrule width 0.6em height 0.69678ex depth -0.58065ex
              \kern -0.8em \intop}\nolimits_{\kern -0.45em#2}^{#1}}%
      {\mathop{\kern 0.1em\vrule width 0.5em height 0.69678ex depth -0.60387ex
              \kern -0.6em \intop}\nolimits_{#2}^{#1}}%
      {\mathop{\kern 0.1em\vrule width 0.5em height 0.69678ex depth -0.60387ex
              \kern -0.6em \intop}\nolimits_{#2}^{#1}}%
      {\mathop{\kern 0.1em\vrule width 0.5em height 0.69678ex depth -0.60387ex
              \kern -0.6em \intop}\nolimits_{#2}^{#1}}}%
					{%
\mathchoice%
      {\mathop{\kern 0.2em\vrule width 0.6em height 0.69678ex depth -0.58065ex
              \kern -0.8em \intop}\nolimits_{\kern -0.45em#1}^{#2}}%
      {\mathop{\kern 0.1em\vrule width 0.5em height 0.69678ex depth -0.60387ex
              \kern -0.6em \intop}\nolimits_{#1}^{#2}}%
      {\mathop{\kern 0.1em\vrule width 0.5em height 0.69678ex depth -0.60387ex
              \kern -0.6em \intop}\nolimits_{#1}^{#2}}%
      {\mathop{\kern 0.1em\vrule width 0.5em height 0.69678ex depth -0.60387ex
              \kern -0.6em \intop}\nolimits_{#1}^{#2}}}}					
\begin{document}
    \title[Quasiconformal geometry and removable sets]{Quasiconformal geometry and removable sets for conformal mappings}
	
	\author{Toni Ikonen}
	\address{Department of Mathematics and Statistics, University of Jyvaskyla, P.O. Box 35 (MaD), FI-40014, University of Jyvaskyla, Finland.}
	\email{toni.m.h.ikonen@jyu.fi}
	
	\author{Matthew Romney}
	\address{Department of Mathematics and Computer Science, University of Cologne, Weyertal 86-90, 50931 Köln, Germany.}
	\email{mromney@math.uni-koeln.de}

	\thanks{Both authors were supported by the Academy of Finland, project number 308659. The first author was also supported by the Vilho, Yrjö and Kalle Väisälä Foundation. The second author was also supported by the Deutsche Forschungsgemeinschaft, grant SPP 2026.} 
	
	\subjclass[2010]{Primary 30L10. Secondary 30C35, 52A38, 53B40}
    \keywords{quasiconformal mappings, conformal modulus, Finsler metric}
	
	\begin{abstract}
		We study metric spaces defined via a conformal weight, or more generally a measurable Finsler structure, on a domain $\Omega \subset \mathbb{R}^2$ that vanishes on a compact set $E \subset \Omega$ and satisfies mild assumptions. Our main question is to determine when such a space is quasiconformally equivalent to a planar domain. We give a characterization in terms of the notion of planar sets that are removable for conformal mappings. We also study the question of when a quasiconformal mapping can be factored as a 1-quasiconformal mapping precomposed with a bi-Lipschitz map. 
	\end{abstract}
	
    \maketitle\thispagestyle{empty}
    
    \begingroup
    \hypersetup{hidelinks}
    \tableofcontents
    \endgroup
	
\section{Introduction}\label{sec:introduction}

    \subsection{Overview}

    Let $(X,d_X)$ and $(Y,d_Y)$ be metric spaces with locally finite Hausdorff $2$-measure. A homeomorphism $f\colon X \to Y$ is \emph{$K$-quasiconformal} if there exists $K \geq 1$ such that
	\begin{equation} \label{equ:qc_definition}
	K^{-1}\Mod \Gamma \leq \Mod f\Gamma \leq K \Mod \Gamma
	\end{equation}
	for all path families $\Gamma$ in $X$, where $\Mod \Gamma$ denotes the conformal modulus of $\Gamma$. The map $f$ is \emph{quasiconformal} if it is $K$-quasiconformal for some $K \geq 1$. 
	This definition is generally referred to as the {\it geometric definition} of quasiconformal mappings, and it is one of several possible generalizations of Euclidean quasiconformal maps to the setting of metric spaces. The definition of modulus, as well as other terms used in this introduction, is reviewed in \Cref{sec:preliminaries}.
	
	The \emph{quasiconformal uniformization problem} asks one to determine which metric spaces can be mapped onto a domain in the Euclidean plane or the 2-sphere by a mapping that is quasiconformal, according to one of the several definitions. This problem is based on the classical uniformization theorem, which states that every simply connected Riemannian 2-manifold is \textit{conformally} equivalent to either the Euclidean plane, the 2-sphere, or the hyperbolic plane. Outside the 2-dimensional Riemannian setting, conformality is a very strong property, and it is natural to require only quasiconformality.  Motivation comes from connections to neighboring fields such as complex dynamics \cite{BM:17} and geometric group theory \cite{Bon:06}.
	
	In the following, let $(X,d)$ be a metric space homeomorphic to a 2-dimensional manifold and having locally finite Hausdorff 2-measure. Such a space is referred to in this paper as a \emph{metric surface}. By \emph{quasiconformal surface}, we mean a metric surface $(X,d)$ that is quasiconformally equivalent to a smooth Riemannian 2-manifold. 

	
	The uniformization problem for metric surfaces has been studied recently using various axiomatic approaches. Rajala has proved that a metric surface $X$ homeomorphic to $\mathbb{R}^2$ is a quasiconformal surface if and only if it satisfies a condition called \emph{reciprocality} (\Cref{defi:reciprocal} below) \cite{Raj:17}. Roughly speaking, this condition says that $X$ does not have too many more rectifiable paths, as quantified by conformal modulus, than Euclidean space. In this case, as shown in \cite{Rom:19}, there exists a quasiconformal map $f\colon X \to \Omega \subset \mathbb{R}^2$ that satisfies the modulus inequality
	\[\frac{2}{\pi}\Mod \Gamma \leq \Mod f\Gamma \leq \frac{4}{\pi}\Mod \Gamma \]
	for all path families $\Gamma$ in $X$. This inequality is sharp, as can be shown by considering the plane equipped with either the $\norm{ \cdot }_{1}$- or $\norm{ \cdot }_{\infty}$-norm. These results are extended to arbitrary metric surfaces in \cite{Iko:19}. A different approach was taken in a series of papers of Lytchak and Wenger \cite{Lyt:Wen:17}, \cite{LW:18}, \cite{LW:20} based on the assumption that the space satisfies a \emph{quadratic isoperimetric inequality}.  
	
	The goal of the present paper is to understand the uniformization results described above in the context of concrete constructions of metric surfaces. We study a general scheme for constructing surfaces based on specifying a measurable Finsler structure on a planar domain that vanishes on some subset of the plane. The natural problem is to decide when this construction yields a quasiconformal surface. 
	
	We provide an answer by linking the uniformization problem for metric surfaces to a separate topic in complex analysis: removable sets for classes of holomorphic functions. There are several notions of removability; see \cite{You:15} for a recent survey. For us, the relevant definition is the following. A compact set $E \subset \mathbb{R}^2$ is \emph{removable for conformal mappings} if every conformal embedding $f\colon \mathbb{R}^2 \setminus E \to \widehat{\mathbb{R}}^2$ extends to a conformal mapping $\widetilde{f}\colon \widehat{\mathbb{R}}^2 \to \widehat{\mathbb{R}}^2$, that is, to a Möbius transformation. Here, $\widehat{ \mathbb{R} }^{2}$ denotes the extended plane, which can be identified with $\mathbb{S}^{2}$ via stereographic projection. There seems to be no standard terminology for sets satisfying this condition. This is referred to as \textit{$S$-removability} in the survey \cite{You:15}, while the terms \emph{set of absolute area zero} and \emph{neglible set for extremal distance} are also used. Note that this is different than the notion of \textit{conformal removability}, which requires that every \textit{homeomorphism} of $\widehat{\mathbb{R}}^2$ that is conformal on the set $\widehat{\mathbb{R}}^2 \setminus E$ be a Möbius transformation. 
	
	This connection to removable sets is natural in hindsight but does not appear to have been made before. On the other hand, removable sets are inherently connected to a different type of uniformization problem, namely of multiply connected planar domains onto some canonical class of domain, typically slit domains or circle domains. We recall that whether an arbitrary planar domain can be mapped conformally onto a circle domain is the well-known \emph{Koebe Kreisnormierungsproblem} \cite{HS:93}. We hope the present paper will add a new perspective on these various topics. 
	
	\subsection{Motivating examples} 
	
	A basic observation, made in Example 2.1 in \cite{Raj:17}, is that not every metric surface is a quasiconformal surface. A simple example is the following. Define a length pseudometric $d_\omega$ on $\mathbb{R}^2$ via the conformal weight $\omega = \chi_{\mathbb{R}^2 \setminus \mathbb{D}}$. More precisely, we define the $\omega$-length of an absolutely continuous path $\gamma$ to be $\ell_\omega(\gamma) = \int_\gamma \omega\,ds$, and let $d_\omega(x,y) = \inf \ell_\omega(\gamma)$, the infimum taken over all absolutely continuous paths $\gamma$ connecting $x$ and $y$. If we let $X$ be the quotient space of $\mathbb{R}^2$ formed by collapsing the unit disk to a single point, then $d_\omega$ induces a metric on $X$, denoted by $\widetilde{d}_\omega$, that is locally Euclidean outside the origin. The space $(X,\widetilde{d}_\omega)$, while being homeomorphic to $\mathbb{R}^2$, is not quasiconformally equivalent to a planar domain. This is because the family of paths in $X$ that intersect the collapsed point has positive modulus, while the modulus of the family of paths intersecting a single point in the Euclidean plane is zero. This example is included as Example 11.3 in \cite{LW:18}.
	
	A second example, and the one that comprises Example 2.1  in \cite{Raj:17}, is a continuous conformal weight $\omega$ that vanishes on a Cantor set $E$ of positive area. In this case, $d_\omega$ is a metric on $\mathbb{R}^2$, and the identity map $(\mathbb{R}^2,\|\cdot\|_2) \to (\mathbb{R}^2,d_\omega)$ is a homeomorphism. Nevertheless, the vanishing of the weight increases the conformal modulus of path families in $(\mathbb{R}^2,d_\omega)$ in a way incompatible with admitting a quasiconformal parametrization by $\mathbb{R}^2$. 
	
	At the other extreme, it is not hard to show that if the analogous construction is carried out for a set $E$ with Hausdorff dimension smaller than one, then the resulting space is quasiconformally equivalent to the plane. Indeed, the set $E$ is then negligible for length and so has no effect on modulus. What happens in the intermediate situation---when the Hausdorff dimension satisfies $1 \leq \dim_{\mathcal{H}} E < 2$ or when $\mathcal{H}^2(E) = 0$---is not \textit{a priori} clear and is one of the motivations of our work. 
    
    Similar constructions appear in a number of related contexts. One of these is the notion of \emph{strong $A_\infty$-weight} introduced by David and Semmes in \cite{Dav:Sem:90}. Such a weight determines a metric on $\mathbb{R}^2$ that is Ahlfors 2-regular and quasisymmetrically equivalent to the plane. Conversely, the Jacobian of a quasisymmetric mapping from $\mathbb{R}^2$ to an Ahlfors 2-regular metric space induces a strong $A_\infty$-weight on $\mathbb{R}^2$. We do not define this term here but refer the reader to \cite[Def. 1.5]{Sem:96}. Such weights appear naturally when trying to recognize metric spaces that are bi-Lipschitz embeddable in some Euclidean space. See \cite{Dav:Sem:90,Sem:93,Sem:96,Laa:02,Bis:07} for various contributions to this topic. A separate set of papers \cite{Bo:Kos:Roh:98,Bo:Hei:Roh:01} studies metrics on the unit disk defined by conformal weights satisfying a Harnack-type inequality and an area growth condition, and shows that a number of results of classical complex analysis have natural analogues in this setting. All of the metric surfaces constructed in these two sets of papers are quasiconformally equivalent to a planar domain.
    
    %
    %
    %
    %
    
    In the above examples, when a space fails to be a quasiconformal surface, this is due to the space ``collapsing'' on the set $E$ where the weight vanishes. In fact, it may be the case that this is essentially the only way that a metric surface can fail to admit a quasiconformal parametrization. This is made precise by the following question of Rajala and Wenger. 
    
    \begin{ques}
    Let $(X,d)$ be a metric space homeomorphic to $\mathbb{R}^2$ with locally finite Hausdorff 2-measure. Is there in general a domain $\Omega \subset \mathbb{R}^2$ and a surjective continuous monotone mapping $f\colon \Omega \to X$ such that $f$ is in the metric Sobolev space $N^{1,2}_{\loc}(\Omega, X)$ and satisfies the one-sided dilatation condition
        \[ g_f^2(x) \leq K J_f(x) \]
    for some constant $K \geq 1$ and almost every $x \in \Omega$?
    \end{ques}
    Here, $g_f$ is the minimal weak upper gradient of $f$ and $J_f$ is the Jacobian of $f$; see \Cref{sec:sobolev}. 
    We say that $f \colon \Omega \rightarrow X$ is \emph{monotone} if the preimage of every point $x \in X$ is a connected and compact subset of $\Omega$. 
    
    
    \subsection{Setting and main results} \label{sec:main_results}
    We continue with a description of our setting and main results. Let $\Omega$ be a planar domain and $E \subset \Omega$ be a compact set that does not separate $\Omega$. We consider a measurable seminorm field $N\colon \Omega \times \mathbb{R}^{2} \to [0, \infty)$ that vanishes exactly on the set $E$ and satisfies certain mild assumptions, namely lower semicontinuity, local boundedness, and having locally bounded distortion. The seminorm at the point $x \in \Omega$ is denoted throughout this paper by $N_x$. We think of $N$ as a Finsler structure on $\mathbb{R}^2$, determining a Finsler metric on $\mathbb{R}^2$, although requiring no regularity beyond the previous assumptions.
    
    For conciseness, and since $N_x$ is a norm for all $x \in \Omega \setminus E$, we use the term \emph{norm field} and not \emph{seminorm field} throughout this paper when referring to $N$.  A norm field $N$ satisfying the above hypotheses is said to be \emph{admissible} (Definition \ref{defi:seminorm_admissible} below). We define the \emph{$N$-length} of an absolutely continuous path $\gamma\colon I \to \Omega$ by
    \begin{equation} \label{equ:length}
      \ell_N(\gamma) = \int_I N \circ D\gamma(t) \,dt.  
    \end{equation}
    In interpreting \eqref{equ:length}, note that the base point of $N$ is understood to be $\gamma(t)$ even though this is omitted from the notation. 
    One then obtains a pseudometric $d_N$ on $\Omega$ by setting $d_N(x,y) = \inf \ell_N(\gamma)$, the infimum taken over all absolutely continuous paths $\gamma$ from $x$ to $y$ contained in $\Omega$. Let $\mathcal{E}_N$ denote the collection of equivalence classes of points in $\mathbb{R}^2$, declaring $x$ to be equivalent to $y$ if $d_N(x,y) = 0$. Then $d_N$ determines a metric on the quotient space $\mathbb{R}^2/\mathcal{E}_N$ denoted by $\widetilde{d}_N$. In Section \ref{sec:distances}, we describe this construction in more detail.

    
    We make the following definition.
    \begin{defi}
    The admissible norm field $N$ is \emph{reciprocal} if the corresponding space $(\Omega/\mathcal{E}_N, \widetilde{d}_N)$ is reciprocal (\Cref{defi:reciprocal}). 
    \end{defi}
    The natural problem is to characterize as best as possible those norm fields $N$ that are reciprocal. 
     Our first result is the following.
    

    \begin{thm} \label{thm:removable_implies_reciprocal}
    Let $\Omega \subset \mathbb{R}^2$ be a domain and $E \subset \Omega$ a compact set. If $E$ is removable for conformal mappings, then every admissible norm field $N \colon \Omega \times \mathbb{R}^{2} \rightarrow \left[0,  \infty\right)$ that vanishes exactly on $E$ is reciprocal.
    \end{thm}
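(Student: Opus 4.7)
The plan is to verify the reciprocality of $(X, \widetilde{d}_N) = (\Omega/\mathcal{E}_N, \widetilde{d}_N)$ by reducing modulus estimates in $X$ to Euclidean modulus estimates on a planar domain, and then applying the characterization of sets removable for conformal mappings as sets negligible for extremal distance (Ahlfors--Beurling).

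First I would apply the measurable Riemann mapping theorem to the Beltrami coefficient associated with $N$, extended by zero on $E$ (which is admissible because $N$ has locally bounded distortion on $\Omega \setminus E$). This produces a quasiconformal homeomorphism $F \colon \Omega \to \Omega^* \subset \mathbb{R}^2$ that is conformal with respect to the $N$-structure on $\Omega \setminus E$. Under $F$, the data $(\Omega, N)$ is transported to $(\Omega^*, \lambda)$ for a conformal weight $\lambda \colon \Omega^* \to [0, \infty)$ vanishing on $F(E)$, and since reciprocality is preserved under quasiconformal equivalence, it suffices to establish reciprocality for the quotient space associated with $(\Omega^*, \lambda)$. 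At points $x \in X \setminus \pi(E)$, the surface is locally bi-Lipschitz to a Euclidean domain (using the local boundedness and locally bounded distortion of $N$ on $\Omega \setminus E$), so the reciprocality conditions are classical there. The substantive case is $x \in \pi(E)$.

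For this case, the change of variables $\sigma = (\rho \circ \pi) \lambda$ expresses $\Mod \Gamma$ for a path family $\Gamma$ in the quotient as the infimum of $\int_{\Omega^*} \sigma^2 \, d\mathcal{L}^2$ over Borel $\sigma \geq 0$ that vanish on $F(E)$ and are admissible for the lifted family $\widetilde{\Gamma}$ in $\Omega^*$. Dropping the vanishing constraint gives the Euclidean modulus $\Mod_{\mathrm{Euc}} \widetilde{\Gamma}$ as a lower bound. The main analytic claim is that removability of $E$, once transferred from $E$ to $F(E)$ via the quasi-invariance of modulus under the $K$-quasiconformal map $F$, makes this constraint inessential, yielding the equality $\Mod \Gamma = \Mod_{\mathrm{Euc}} \widetilde{\Gamma}$. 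Granted this equality, the reciprocality bound at $x \in \pi(E)$ reduces to the vanishing of the Euclidean modulus of paths in $\Omega^*$ emanating from the lifted equivalence class in $F(E)$, which is a direct consequence of the negligibility of $F(E)$ for extremal distance.

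The main obstacle I anticipate is precisely the transfer of negligibility from $E$ to $F(E)$, since removability for conformal mappings is not a priori invariant under general quasiconformal maps. The argument therefore has to be phrased purely in modulus-theoretic terms that are robust under $F$: paths through $F(E)$ in $\Omega^*$ correspond under $F^{-1}$ to paths through $E$ in $\Omega$ with comparable modulus (up to a multiplicative constant depending only on the distortion of $N$), so the Euclidean-modulus-zero property of paths through $E$ guaranteed by Ahlfors--Beurling transfers to a vanishing modulus for paths through $F(E)$, the multiplicative factor being inconsequential since the target modulus is zero. Verifying that the change-of-variables identity for $\Mod \Gamma$ and the discarding of the vanishing constraint can be carried out consistently with the exceptional-family conventions in the definition of modulus is the core technical step.
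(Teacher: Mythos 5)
Your overall architecture is plausible and shares the paper's basic philosophy (reduce to a conformal weight, then reduce the quotient-modulus estimate to a Euclidean one and invoke removability), but the step you identify as ``the main analytic claim'' is exactly where the real work lies, and your proposal does not supply it.

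The assertion that ``removability of $E$ makes the vanishing constraint inessential, yielding the equality $\Mod \Gamma = \Mod_{\mathrm{Euc}} \widetilde{\Gamma}$'' does not follow from the Ahlfors--Beurling negligibility statement in any direct way. Negligibility for extremal distance (Proposition~\ref{prop:removable_set_characterizations}~(iii)) asserts $\Mod \Gamma(F,G;\Omega) = \Mod \Gamma(F,G;\Omega\setminus E)$ for compact $F,G$ \emph{disjoint from $E$} --- that is, adding back the paths that hit $E$ does not raise the modulus. You need something apparently similar but different: that requiring the admissible function $\sigma$ to vanish on $E$ does not raise the infimum defining the modulus of the lifted family $\widetilde{\Gamma}$. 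These statements are not equivalent, and it is not obvious how to pass between them, because the paths appearing in $\widetilde{\Gamma}$ are lifts of $\widetilde{d}_N$-rectifiable paths: such a lift may spend a positive amount of Euclidean length inside $E$ while incurring zero $\widetilde{d}_N$-length there, so that $\sigma$ vanishing on $E$ does bite. This is precisely the danger the paper circumvents by not attempting a comparison for arbitrary path families. Instead, the paper proves a quasiconformality criterion (Proposition~\ref{lemm:reciprocal_quadrilaterals_mod}) showing that it suffices to preserve the moduli of the two conjugate boundary path families of a single rectangle $R \supset E$ with $\partial R \cap E = \emptyset$, where the lift family is unambiguous; it then proves that modulus bound by explicitly constructing admissible functions via the Koebe vertical-slit uniformization of the approximating domains $\Omega_n \nearrow \mathbb{R}^2\setminus E$, following the proof of Ahlfors--Beurling's Theorem~9. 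Your proposal offers no substitute for this technical core, and without it the proof is incomplete.

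Two smaller remarks. First, your concern about transferring removability from $E$ to $F(E)$ under a quasiconformal $F$ is already settled: removability for conformal mappings is a quasiconformal invariant (Proposition~\ref{prop:removable_set_characterizations}~(iv),(v)), so no modulus-robustness workaround is needed. Second, the measurable Riemann mapping theorem reduction is heavier machinery than necessary: since $N$ and $\omega(N)\|\cdot\|_2$ are locally bi-Lipschitz comparable (this is exactly ``locally bounded distortion''), the two quotient metrics are locally quasiconformally equivalent, so one can pass to the conformal weight $\omega(N)$ directly without solving a Beltrami equation (this is \Cref{lemm:general:to:normal}). You would also want a compact-exhaustion step (as in \Cref{lemm:subdomain:to:space}) before invoking MRMT anyway, since local boundedness of the distortion does not give a global $L^\infty$ bound on the Beltrami coefficient.
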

    Recall that our definition of admissibility includes the statement that $N$ is locally bounded. It turns out that this assumption can be relaxed. In \Cref{prop:reciprocality:p-int}, we show that \Cref{thm:removable_implies_reciprocal} still holds provided there exists some $p > 2$ such that the maximal stretching $L( N )$ is in $L^{p}_{ \loc }( \Omega )$. This generalization follows fairly readily from \Cref{thm:removable_implies_reciprocal} by an approximation argument.

    Next, we consider whether some converse to \Cref{thm:removable_implies_reciprocal} holds. Observe first that the strongest possible converse to \Cref{thm:removable_implies_reciprocal} is false: a reciprocal norm field $N$ may vanish on a set $E$ that is not removable for conformal mappings. As a simple example, take $E \subset \mathbb{R}^2$ to be a snowflake arc and let $N = \chi_{\mathbb{R}^2 \setminus E}\|\cdot\|_2$. Since $\mathcal{H}_{\|\cdot\|_2}^1(|\gamma| \cap E) = 0$ for every absolutely continuous path $\gamma$, we see that $d_{N}$ actually coincides with the Euclidean metric. However, it is a basic fact that any set that is removable for conformal mappings is totally disconnected.
    
    
    On the other hand, if one requires that the norm field $N$ decays fast enough near $E$ and $N$ is reciprocal, then examples of the type just described are not possible. To illustrate this, consider two admissible norm fields $N_1$ and $N_2$ that satisfy $N_1 \leq N_2$. Every path that has finite $N_2$-length also has finite $N_1$-length, while the opposite may fail to be true for a large family of paths. In this sense, the space generated by the smaller norm field $N_1$ has more rectifiable paths and the reciprocality condition is harder to satisfy. This leads to the following partial converse to \Cref{thm:reciprocal_implies_removable}.
    \begin{thm} \label{thm:reciprocal_implies_removable}
    Let $\Omega \subset \mathbb{R}^2$ be a domain and $E \subset \Omega$ a compact set for which $\Omega \setminus E$ is connected, and let 
    $N_p(x) = \min\left\{ 1,  d_{\|\cdot\|_2}(x,E)^p \right\} \|\cdot\|_2$. If $N_p$ is reciprocal for some $p > \max\left\{ \dim_\mathcal{H} E - 1, 0 \right\}$, then the set $E$ is removable for conformal mappings.
    \end{thm}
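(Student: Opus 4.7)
The plan is to deduce the theorem from the Ahlfors--Beurling characterization: a compact set $E \subset \mathbb{R}^2$ is removable for conformal mappings if and only if it is \emph{negligible for extremal distance} (NED), meaning that $\Mod \Gamma(F_1, F_2; \mathbb{R}^2) = \Mod \Gamma(F_1, F_2; \mathbb{R}^2 \setminus E)$ for every pair of disjoint non-degenerate continua $F_1, F_2 \subset \mathbb{R}^2 \setminus E$. Since removability is a local property, I may reduce to the case $\Omega = \mathbb{R}^2$. The inequality $\leq$ is automatic from the inclusion $\Gamma(F_1, F_2; \mathbb{R}^2 \setminus E) \subset \Gamma(F_1, F_2; \mathbb{R}^2)$, so the substance lies in the reverse direction.

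First I would prove a H\"older estimate on $E$. For any $x, y \in E$ with the Euclidean segment $[x, y] \subset \Omega$, every point on $[x, y]$ lies within distance $|x-y|/2$ of $E$, so $N_p \leq 2^{-p}|x-y|^p \|\cdot\|_2$ along the segment (for $|x-y|$ small). Integrating yields $d_{N_p}(x, y) \leq 2^{-p}|x-y|^{p+1}$ for such $x, y \in E$, so the quotient map $\pi\colon (E, \|\cdot\|_2) \to (\pi(E), \widetilde{d}_{N_p})$ is $(p+1)$-H\"older. This forces $\dim_\mathcal{H}(\pi(E), \widetilde{d}_{N_p}) \leq \dim_\mathcal{H} E/(p+1) < 1$ by the hypothesis on $p$, and in particular $\mathcal{H}^1(\pi(E), \widetilde{d}_{N_p}) = 0$.

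Set $X = (\Omega/\mathcal{E}_{N_p}, \widetilde{d}_{N_p})$. By reciprocality and Rajala's theorem, $X$ admits a quasiconformal parametrization by a planar domain, so modulus in $X$ compares with Euclidean modulus. Combining this with the dimension bound via a Fuglede-type covering argument with annular test functions, the subfamily of curves in $X$ meeting $\pi(E)$ has zero modulus, giving $\Mod_X \Gamma(\pi(F_1), \pi(F_2); X) = \Mod_X \Gamma(\pi(F_1), \pi(F_2); X \setminus \pi(E))$. On $\Omega \setminus E$, the map $\pi$ is a conformal identification of $(\Omega \setminus E, \rho\,\|\cdot\|_2)$ with $(X \setminus \pi(E), \widetilde{d}_{N_p})$ for $\rho = \min\{1, d_{\|\cdot\|_2}(\cdot, E)^p\}$, and the change of variables $\tau \leftrightarrow (\sigma \circ \pi)\rho$ bijectively sends admissible densities to admissible densities with the same $L^2$-energy, so $\Mod_X \Gamma(\pi(F_1), \pi(F_2); X \setminus \pi(E)) = \Mod \Gamma(F_1, F_2; \mathbb{R}^2 \setminus E)$. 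Conversely, any density $\sigma$ admissible for $\Gamma(\pi(F_1), \pi(F_2); X)$ pulls back to $\tau = (\sigma \circ \pi)\rho$ on $\mathbb{R}^2$, vanishing on $E$, that is admissible in the classical Euclidean sense for $\Gamma(F_1, F_2; \mathbb{R}^2)$ with the same energy, hence $\Mod \Gamma(F_1, F_2; \mathbb{R}^2) \leq \Mod_X \Gamma(\pi(F_1), \pi(F_2); X)$.

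Chaining these (in)equalities gives $\Mod \Gamma(F_1, F_2; \mathbb{R}^2) \leq \Mod \Gamma(F_1, F_2; \mathbb{R}^2 \setminus E)$, which with the trivial reverse inequality establishes NED and hence the removability of $E$. The technical heart is the vanishing-modulus assertion in the third paragraph: while the dimension bound $\dim_\mathcal{H}(\pi(E), \widetilde{d}_{N_p}) < 1$ is morally the correct input, the quasiconformal parametrization does not preserve Hausdorff dimension, so polarity of $\pi(E)$ cannot simply be imported from the plane. The covering construction of admissible densities must instead be carried out intrinsically in $X$, exploiting the local boundedness of $N_p$ to control $\mathcal{H}^2$ of $\widetilde{d}_{N_p}$-balls from above and thereby drive the $L^2$-energy of the test densities to zero as the cover of $\pi(E)$ is refined.
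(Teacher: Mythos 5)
Your reductions are sound in places and match parts of the paper's machinery: the Hölder estimate $d_{N_p}(x,y)\lesssim |x-y|^{p+1}$ for $x,y \in E$ and the consequent bound $\mathcal{H}^1_{\widetilde{d}_{N_p}}(\pi(E))=0$ is exactly the paper's Lemma \ref{lemm:good_snowflake}, and your observations that $\pi$ is a conformal identification of $\Omega\setminus E$ with $X\setminus\pi(E)$ and that pulling back admissible densities gives $\Mod\Gamma(F_1,F_2;\mathbb{R}^2)\le \Mod_X\Gamma(\pi F_1,\pi F_2;X)$ are both correct. However, the step you yourself flag as the ``technical heart'' — that the family of curves in $X$ meeting $\pi(E)$ has zero modulus — is a genuine gap, and I don't believe the sketch you give can be made to work.

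First, the claim is strictly stronger than what you need. You only require $\Mod_X\Gamma(\pi F_1,\pi F_2;X)=\Mod_X\Gamma(\pi F_1,\pi F_2;X\setminus\pi(E))$, i.e.\ that $\pi(E)$ is negligible for extremal distance in $X$; vanishing modulus of \emph{all} curves through $\pi(E)$ is the zero-capacity condition and is much stronger. In $\mathbb{R}^2$, $\mathcal{H}^1(K)=0$ does imply $K$ is NED, but it does \emph{not} in general imply that curves touching $K$ have zero $2$-modulus (a curve can meet $K$ in a single point and pick up no length there, so an $L^2$ density concentrated near $K$ does not force its line integral to be large). The naive covering density $\rho=\sum_i r_i^{-1}\chi_{B(y_i,2r_i)}$ yields $L^2$-energy comparable to the number of balls, which does not go to zero just because $\sum_i r_i\to 0$; and the logarithmic annular refinements you allude to require control of $\sum_i 1/|\log r_i|$, which $\mathcal{H}^1=0$ does not give. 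Moreover, the upper $2$-regularity $\mathcal{H}^2_X(B_X(y,r))\lesssim r^2$ near $y\in\pi(E)$ that your sketch relies on can fail: writing $\mathcal{H}^2_X=\pi_*(\omega^2\mathcal{L}^2)$ with $\omega=\min\{1,\operatorname{dist}(\cdot,E)^p\}$, the preimage of a small $\widetilde{d}_{N_p}$-ball is only contained in a Euclidean $s$-neighborhood of $E$ with $s\approx r^{1/(p+1)}$, and the resulting bound is of order $r^{(2p+2-\alpha)/(p+1)}$ where $\alpha$ is a Minkowski-type dimension of $E$; this beats $r^2$ only in trivial cases.

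Second, and more fundamentally, the step is circular. Once reciprocality is assumed, the paper's Lemma \ref{lemm:subdomain:to:space} shows that $\pi=\pi_{N_p}$ is a $1$-quasiconformal homeomorphism, so $\Mod_X\Gamma(\pi F_1,\pi F_2;X)=\Mod\Gamma(F_1,F_2;\Omega)$ and, since $\pi$ is bi-Lipschitz off $E$, $\Mod_X\Gamma(\pi F_1,\pi F_2;X\setminus\pi(E))=\Mod\Gamma(F_1,F_2;\Omega\setminus E)$. Thus the equality you want to prove about modulus in $X$ is \emph{exactly} the NED condition for $E$ in $\Omega$ — the very conclusion of the theorem. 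No new information has been extracted: the quasiconformal parametrization provided by reciprocality does not preserve $\mathcal{H}^1$-measure, so the smallness of $\pi(E)$ under $\widetilde{d}_{N_p}$ cannot be transferred to the plane to produce an NED set there by any direct argument.

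The paper avoids this by working with the contrapositive and the equivalent formulation (ii) of \Cref{prop:removable_set_characterizations}: if $E$ is not removable, there is a conformal map $f\colon\mathbb{R}^2\setminus\widehat{E}\to\mathbb{R}^2\setminus E$ with $\mathcal{L}^2(\widehat{E})>0$. The Hölder estimate (your step one) is then used only to show $f$ extends continuously and monotonely to the quotients (\Cref{lemm:conformal_extension}), and the \emph{positive area} of $\widehat{E}$ is the lever: it forces the product of moduli of conjugate quadrilateral families to exceed $1$, contradicting the $1$-reciprocality that \Cref{lemm:subdomain:to:space} derives from the hypothesis. Your plan never introduces this positive-area phenomenon, and without it the argument cannot close.
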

    
    Our method of proof actually yields a stronger conclusion. The relevant property of the norm field $N_p$, verified in \Cref{lemm:good_snowflake} below, is that the quotient map $\pi_N$ maps $E$ onto a set of zero 1-dimensional Hausdorff measure with respect to the metric $\widetilde{d}_N$. Thus, for any reciprocal norm field $N$ with this property, the corresponding set $E$ on which $N$ vanishes is removable for conformal mappings. For example, one can show that, if $E$ is contained in a continuum $F$ satisfying $\mathcal{H}_{\|\cdot\|_2}^1(F) < \infty$, this condition on the quotient map $\pi_N$ is satisfied for all admissible norm fields $N$ vanishing on $E$. For such compact sets, the strongest converse to \Cref{thm:removable_implies_reciprocal} holds. That is, if any admissible norm field $N$ vanishing exactly on $E$ is reciprocal, then $E$ is removable for conformal mappings.
    
    The lower bound for $p$ in \Cref{thm:reciprocal_implies_removable} is sharp. Consider an arc $E \subset \mathbb{R}^{2}$ that is bi-Lipschitz equivalent to $( \left[0, 1\right], \abs{ \cdot }^{1/d} )$ for some $d \in (1,2)$. Then $E$ is a snowflake arc of Hausdorff dimension $d$. It follows from \cite[Theorem 6.3]{Sem:96} that the square of the weight $\omega_{d-1}(x) = \min\left\{ 1,  d_{\|\cdot\|_2}(x,E)^{d-1} \right\}$ is a strong $A_{\infty}$-weight, as defined in \cite[Definition 1.5]{Sem:96}, and hence the norm field $N_{d-1}$ is reciprocal. However, the arc $E$ is not removable for conformal mappings.
    
    Theorems \ref{thm:removable_implies_reciprocal} and \ref{thm:reciprocal_implies_removable} show that reciprocal norm fields are almost characterized by whether the set on which they vanish is removable for conformal mappings. We now mention a few facts about removable sets for conformal mappings that are known, many of them coming from a classic paper of Ahlfors--Beurling \cite{AB:50}. First, every compact set of positive Hausdorff 2-measure is not removable. Second, every compact set of zero Hausdorff 1-measure is removable. More intriguingly, for Cantor sets $E \subset \mathbb{R} \times \left\{ 0 \right\}$ of positive Hausdorff 1-measure, both outcomes are possible. In \cite{AB:50}, Ahlfors and Beurling construct Cantor sets in $\mathbb{R} \times \left\{0\right\}$ of positive $\mathcal{H}^{1}$-measure that are removable for conformal maps, as well as such Cantor sets that are not removable. A similar example in the related context of circle domain uniformization can be found as Theorem 11.1 of an early version of a paper of Schramm \cite{Sch:95}. Next, by Theorem 10 in \cite{AB:50} and Proposition 3.3 in \cite{Kal:Kov:Raj:19}, removable sets for conformal mappings are \emph{metrically removable}: for every $\varepsilon>0$, each pair of points $x,y \in \mathbb{R}^2$ can be connected by a curve disjoint from $E \setminus \left\{ x, y \right\}$ that has length at most $\|x-y\|_2 + \varepsilon$. See \cite{Hak:Her:08} and \cite{Kal:Kov:Raj:19} for more on the topic of metric removability. Removable sets for conformal mappings are also examples of the \emph{quasiextremal distance exceptional sets} considered in \cite{GM:85} and the related literature. Finally, an equivalent definition can be given by replacing the word ``conformal'' with ``quasiconformal'' in the definition \cite[Prop. 4.7]{You:15}. Thus the property of removability is invariant under quasiconformal mappings of the complementary domain. 
    
    This should be compared with the notion of \emph{removable sets for bounded analytic functions}. The problem of characterizing such sets is known as \emph{Painlev\'e's problem} and has received considerable attention, with a satisfactory resolution obtained by Tolsa in \cite{Tol:03}. We note here that this is a stronger notion of removability: every set that is removable for bounded analytic functions is removable for conformal mappings. See Proposition 4.3 of \cite{You:15} for a proof. For example, a removable set for bounded analytic functions must have Hausdorff dimension at most $1$. Moreover, according to David's resolution of Vitushkin's conjecture \cite{Dav:98}, a compact set $E$ with finite Hausdorff $1$-measure is removable for bounded analytic functions if and only if it is purely $1$-unrectifiable.
    
    Finally, we remark that the notion of uniformly disconnected sets provides a further class of examples to which these results apply. In \cite{Sem:96}, Semmes studies metrics of the form $d_{N_{p}}$, where $N_{p}$ is as in \Cref{thm:reciprocal_implies_removable}, with the additional assumption that the set $E$ is \emph{uniformly disconnected}, meaning that there exists $\varepsilon>0$ with the property that, for any two distinct points $x,y \in E$, there is no sequence of points $x=x_0, x_1, \ldots, x_m = y$ in $E$ satisfying $\|x_{j-1} - x_j\|_2 < \varepsilon \|x - y\|_2$ for all $j \in \{1, \ldots, m\}$. He proves that for such an $E$ and every $p > 0$, the square of the weight $\omega_p(x) = \min\{1, d_{\norm{\cdot}_2}(x,E)^{p}\}$ is a strong $A_\infty$-weight  
    and hence the norm field $N_{p}$ in \Cref{thm:reciprocal_implies_removable} is reciprocal. Therefore \Cref{thm:reciprocal_implies_removable} implies that uniformly disconnected Cantor sets are removable for conformal mappings. This removability can alternatively be deduced in many ways from the existing literature. Note in particular that a uniformly disconnected set $E$ can have Hausdorff dimension arbitrarily close to $2$. 
    
    \subsection{Factorization of quasiconformal mappings}
    
    This section is motivated by the following factorization problem. Consider a quasiconformal surface $(X,d)$ and corresponding isothermal parametrization $f \colon \Omega \to X$, where $\Omega$ is a smooth Riemannian surface. Following \cite{Iko:19}, a quasiconformal homeomorphism $f\colon \Omega \to X$ is \emph{isothermal} if it is distortion-minimizing at almost every point in a suitable sense. Roughly speaking, the \emph{pointwise distortion} of $f$ at $x$ is the aspect ratio of the image of a small ball centered at $x$. The existence of an isothermal parametrization for every quasiconformal surface is established in \cite[Corollary 6.3]{Iko:19}. See \Cref{sec:quasiconformal_mappings} for the precise definition of distortion and \Cref{sec:isothermal} for the definition of isothermal map. We ask: can one find a metric surface $(\widehat{X},\widehat{d})$ such that $f$ factors as $f = \widehat{f} \circ P$, where $\widehat{f} \colon \widehat{X} \to X$ is 1-quasiconformal and $P \colon \Omega \to \widehat{X}$ is bi-Lipschitz? In other words, can one find a ``conformal representative'' for the space $X$ within the class of bi-Lipschitz surfaces?
    
    If the metric is defined by a continuous reciprocal norm field of bounded distortion, then such a factorization can always be found. Recall that for every domain $\Omega \subset \mathbb{R}^{2}$ there exists a smooth Riemannian norm field $G = \omega\norm{ \cdot }_{2}$ on $\Omega$ such that $( \Omega, d_{G} )$ is complete and has Gaussian curvature $0$ or $-1$. We have the following result. 

    \begin{prop}\label{prop:pi:isothermal}
    Let $\Omega \subset \mathbb{R}^2$ be a domain and $N$ a reciprocal norm field with distortion $H$. If $N$ is continuous outside the set $E = \{ x\in \Omega: N_x = 0\}$, then there exists a distance $\widehat{d}$ on $\Omega$ such that:
    \begin{enumerate}[label=(\roman*)]
        \item The identity map $P\colon (\Omega, d_G) \to (\Omega, \widehat{d})$ satisfies
        \begin{equation} \label{equ:P_bi_Lipschitz}
          d_G(x,y) \leq \widehat{d}(P(x), P(y)) \leq H d_G(x,y)  
        \end{equation}
        for all $x,y \in \Omega$.
        \item The identity map $\widehat{\iota} \colon (\Omega, \widehat{d}) \to (\Omega, d_N)$ is 1-quasiconformal.
    \end{enumerate}
    \end{prop}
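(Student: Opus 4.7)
The strategy is to produce $\widehat d$ as the length distance of an explicit conformal weight $\widehat N$ on $\Omega$ built from the norm field $N$, chosen so as to simultaneously retain the Finsler area element of $N$ (for the $1$-quasiconformality in (ii)) and to lie in the conformal class of $G$ with pointwise ratio between $1$ and $H$ (for the bi-Lipschitz estimate in (i)).

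For $x \in \Omega \setminus E$ I would set
\[
\widehat N_x := \mu(x) \, \|\cdot\|_2, \qquad \mu(x) := \sqrt{\pi / |B_{N_x}|},
\]
where $|B_{N_x}|$ is the Lebesgue area of the unit $N_x$-ball, and extend $\widehat N$ by zero on $E$. Then $\widehat d := d_{\widehat N}$ is a conformally Euclidean (pseudo)metric whose $2$-dimensional Hausdorff density agrees pointwise on $\Omega\setminus E$ with the Busemann-Hausdorff $2$-density of $(\Omega, d_N)$. The distortion bound $L(x)/\ell(x) \leq H$, where $L(x) = \sup\{N_x(v) : \|v\|_2 = 1\}$ and $\ell(x) = \inf\{N_x(v) : \|v\|_2 = 1\}$, combined with the geometric identities $\pi/L^2 \leq |B_{N_x}| \leq \pi/\ell^2$, gives the two-sided estimate
\[
\ell(x) \,\|v\|_2 \;\leq\; \widehat N_x(v) \;\leq\; L(x) \,\|v\|_2, \qquad x \in \Omega \setminus E,\ v \in \mathbb{R}^2,
\]
and in particular $\widehat N$ and $N$ are pointwise comparable up to the factor $H$.

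To verify (ii), I would establish the equality $\operatorname{Mod}_{\widehat d}(\Gamma) = \operatorname{Mod}_{d_N}(\Gamma)$ for every path family $\Gamma$ by an admissibility transfer argument. Since the area densities coincide, the only obstacle to equality of moduli is the direction-dependent discrepancy between $\widehat N_x(v)$ and $N_x(v)$. Using the continuity of $N$ outside $E$, one can replace any admissible density for $d_N$ by a comparable admissible density for $\widehat d$ of the same mass through a direction-averaging (Fubini-type) argument, since the common area element encodes precisely the angular distribution of the unit ball. The reciprocality of $N$ takes care of path families concentrating on $E$: such families have modulus zero on both sides.

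For (i), I would select the uniformizing weight $\omega$ within its isometry class so that $\omega \leq \mu \leq H\omega$ on $\Omega \setminus E$; the lower bound gives $G \leq \widehat N$ and hence $d_G \leq \widehat d$, while the upper bound together with the distortion control gives $\widehat N \leq H G$ and hence $\widehat d \leq H d_G$. Combining these inequalities along curves yields the bi-Lipschitz estimate in (i).

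\textbf{Main obstacle.} The hardest step is the sharp $1$-quasiconformality in (ii). The pointwise comparison $\ell\,\|\cdot\|_2 \leq \widehat N \leq L\,\|\cdot\|_2$ only yields $H^2$-quasiconformality of $\widehat\iota$ by standard arguments. Extracting the sharp constant $K = 1$ depends on the exact matching of the area density $\mu^2 = \pi/|B_{N_x}|$ with the Busemann-Hausdorff $2$-density of $d_N$, together with a careful admissibility transfer relying on the continuity of $N$ and on reciprocality to control path families that meet the degeneracy set $E$.
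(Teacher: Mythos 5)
Your proposed weight $\widehat N_x = \mu(x)\,\|\cdot\|_2$ with $\mu(x) = \sqrt{\pi/|B_{N_x}|} = \sqrt{J_2(N_x)}$ replaces $N_x$ by a \emph{Euclidean} norm of matched area, and this cannot give $1$-quasiconformality of $\widehat\iota$. The matching of area densities only normalizes the Jacobian of the pointwise identity $(\mathbb{R}^2, \widehat N_x)\to(\mathbb{R}^2, N_x)$ to be $1$; its minimal weak upper gradient is $L(N_x)/\mu(x)$, so its outer dilatation is
\[
\frac{L(N_x)^2}{\mu(x)^2} = \frac{L(N_x)^2}{J_2(N_x)} = K_O(N_x),
\]
which equals $1$ only when $N_x$ is itself a scalar multiple of $\|\cdot\|_2$. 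For a concrete counterexample take $N \equiv \|\cdot\|_\infty$ on $\mathbb{R}^2$ (no degeneracy set at all), so $d_N$ is the $\ell^\infty$ metric and your $\widehat d$ is a scalar multiple of the Euclidean metric; the modulus of the family joining opposite sides of the unit square is $4/\pi$ in $(\mathbb{R}^2, d_N)$ and $1$ in $(\mathbb{R}^2, \widehat d)$. This is a genuine, irreducible discrepancy in moduli: there is no ``direction-averaging admissibility transfer'' that equates them, because the area element simply does not encode the shape of the unit ball. The claim in your plan that ``the common area element encodes precisely the angular distribution of the unit ball'' is false, and the entire step (ii) collapses.

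The fix — and the point of the paper's proof — is to rescale $N_x$ \emph{itself} rather than replace it by a round norm. Set
\[
M_x = \frac{\omega(x)}{\omega(N_x)}\,N_x \quad\text{for } x\in\Omega\setminus E, \qquad M_x = G_x = \omega(x)\|\cdot\|_2 \quad\text{for }x\in E,
\]
where $G=\omega\|\cdot\|_2$ is the uniformizing Riemannian weight and $\omega(N_x)$ is the minimal stretching of $N_x$. The distortion bound $\omega(N_x)\|\cdot\|_2 \leq N_x \leq H\omega(N_x)\|\cdot\|_2$ gives $G \leq M \leq HG$ and hence the bi-Lipschitz estimate (i) with $\widehat d = d_M$. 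For (ii), since $M_x$ is a positive scalar multiple of $N_x$ on $\Omega\setminus E$ (which has full measure), the pointwise identity $(\mathbb{R}^2, M_x)\to(\mathbb{R}^2, N_x)$ is a similarity — distortion $1$ — at almost every point, and continuity of $N$ outside $E$ guarantees $M$ is a lower semicontinuous norm field so that the metric derivatives of $\pi_N$ and $P = \pi_M$ are $N$ and $M$ a.e.; the $1$-quasiconformality of $\widehat\iota = \pi_N\circ P^{-1}$ then follows from the pointwise-distortion characterization. Your step (i) would have worked with essentially the same bookkeeping, but your step (ii) requires this different construction.
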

    If the identity map $\iota\colon \Omega \to (\Omega,d_N)$ is isothermal, then it has distortion at most $\sqrt{2}$ \cite[Cor. 4.7]{Iko:19}, and so \eqref{equ:P_bi_Lipschitz} holds with $H = \sqrt{2}$. The example of the $\ell^\infty$-norm on $\mathbb{R}^2$ shows that the value $H = \sqrt{2}$ in \eqref{equ:P_bi_Lipschitz} is sharp for the case of general isothermal maps. Since every quasiconformal surface has an isothermal parametrization, this raises the question of finding conditions on $N$ that guarantee that the conclusion of \Cref{prop:pi:isothermal} holds with $H = \sqrt{2}$. In turn, this question is related to the regularity of the Beltrami coefficient derived from distance ellipse field corresponding to $N$ and does not appear to have a straightforward answer. We briefly address this issue in \Cref{sec:positive_answer:regular}.

    In general, the conclusion of \Cref{prop:pi:isothermal} may fail if $N_x$ is discontinuous outside of $E$. In the final part of the paper, we present a lengthy construction giving a negative answer to the above factorization question in general. In fact, we obtain the stronger conclusion that no quasiconformal map $\widehat{f}$ in such a factorization can have distortion smaller than that of $f$.
    \begin{thm} \label{thm:example}
    There is a metric $d$ on $\mathbb{R}^2$ such that the identity map $\iota\colon (\mathbb{R}^2, \|\cdot\|_2) \to (\mathbb{R}^2, d)$ is an isothermal quasiconformal homeomorphism, but $\iota$ does not factor as $\iota = \widehat{\iota} \circ P$, where $(\widehat{X}, \widehat{d})$ is a metric surface, $\widehat{\iota} \colon (\widehat{X}, \widehat{d}) \to (\mathbb{R}^2, d)$ is quasiconformal with distortion $H(\widehat{\iota}) < \sqrt{2}$ and $P \colon (\mathbb{R}^2, \|\cdot\|_2) \to (\widehat{X}, \widehat{d})$ is bi-Lipschitz. 
    \end{thm}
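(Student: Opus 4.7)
The plan is to construct the metric $d$ as a Finsler length metric $d = d_N$ generated by a carefully chosen measurable norm field on $\mathbb{R}^2$, and then verify the claims of the theorem in turn. Drawing on the preceding discussion---which notes that the conclusion of \Cref{prop:pi:isothermal} fails precisely when $N$ is discontinuous outside its vanishing set---I would take $N_x = \lVert R_{\theta(x)}\,\cdot\,\rVert_\infty$, where $R_\theta$ is rotation by angle $\theta$ and $\theta \colon \mathbb{R}^2 \to \{0,\pi/4\}$ is measurable. A self-similar construction arranges $\theta$ so that both $\{\theta=0\}$ and $\{\theta=\pi/4\}$ have positive Lebesgue density in every open ball, while every point of $\mathbb{R}^2$ is a density point of exactly one of the two sets, and in addition the 1-dimensional $\theta$-density is well-behaved at a.e.\ point along a.e.\ direction.

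The pointwise sandwich $\tfrac{1}{\sqrt 2}\lVert\cdot\rVert_2 \le N_x \le \lVert\cdot\rVert_2$ makes $d$ bi-Lipschitz to the Euclidean metric, so $\iota$ is a homeomorphism and $d$ has locally finite Hausdorff $2$-measure. Admissibility of $N$ and the quasiconformality of $\iota$ then follow from the framework established earlier in the paper. To verify that $\iota$ is isothermal, one checks that at every density point of $\{\theta=0\}$ (respectively $\{\theta=\pi/4\}$), the metric derivative of $d_N$ equals $\lVert\cdot\rVert_\infty$ (respectively $\tfrac{1}{\sqrt 2}\lVert\cdot\rVert_1$); each of these norms has inscribed John ellipse equal to a Euclidean disk, so the identity map achieves the minimal possible pointwise distortion $\sqrt 2$ almost everywhere.

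For the non-factorization claim, suppose toward a contradiction that a factorization $\iota = \widehat\iota \circ P$ exists with $P$ bi-Lipschitz and $H(\widehat\iota) < \sqrt 2$. Identifying $\widehat X$ with $\mathbb{R}^2$ via $P$, we may view $\widehat d$ as a metric on $\mathbb{R}^2$ bi-Lipschitz to $\lVert\cdot\rVert_2$ and $\widehat\iota$ as the identity map between $(\mathbb{R}^2,\widehat d)$ and $(\mathbb{R}^2,d)$. Kirchheim's metric Rademacher theorem gives metric derivatives $\widehat N_x$ and $\bar N_x = N_x$ at a.e.\ $x$, both comparable to Euclidean, and the distortion bound translates to
\[
\sup_v \frac{N_x(v)}{\widehat N_x(v)} \cdot \sup_v \frac{\widehat N_x(v)}{N_x(v)} < \sqrt 2 \qquad \text{a.e.}
\]
A direct calculation with rotated-square norms forces $\widehat N_x$ to lie close (in the identity-multiplicative sense above) to $\lVert\cdot\rVert_\infty$ at a.e.\ $\{\theta=0\}$-density point, and close to $\tfrac{1}{\sqrt 2}\lVert\cdot\rVert_1$ at a.e.\ $\{\theta=\pi/4\}$-density point. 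Since both classes of density points interlace at every scale, $\widehat N$ must oscillate between these two configurations on sets of positive density in every ball.

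The contradiction comes from a length-metric obstruction: any measurable norm field comparable to Euclidean which arises as the metric derivative of a bi-Lipschitz-to-Euclidean length metric agrees a.e.\ with its lower semicontinuous convex envelope. For the oscillating $\widehat N$ above, this envelope is the convex hull of the pointwise minimum of $\lVert\cdot\rVert_\infty$ and $\tfrac{1}{\sqrt 2}\lVert\cdot\rVert_1$, which is the regular octagon norm; its identity-distortion from either $\lVert\cdot\rVert_\infty$ or $\tfrac{1}{\sqrt 2}\lVert\cdot\rVert_1$ equals exactly $\sqrt 2$, contradicting the strict inequality. The hardest step---which is where the lengthy part of the construction lives---is to promote this obstruction from length metrics to arbitrary bi-Lipschitz-to-Euclidean metrics (replacing $\widehat d$ by its associated length metric without decreasing the pointwise distortion of $\widehat\iota$), and to design the self-similar partition $\{\theta=0\}/\{\theta=\pi/4\}$ so that the Fubini-type density estimates and the convex-envelope argument hold uniformly at almost every point.
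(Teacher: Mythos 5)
Your proposed construction has a fundamental flaw that invalidates the whole approach: the norm field $N_x = \lVert R_{\theta(x)} \cdot\rVert_\infty$ is sandwiched between $\tfrac{1}{\sqrt{2}}\lVert\cdot\rVert_2$ and $\lVert\cdot\rVert_2$, so the resulting length metric $d = d_N$ is globally bi-Lipschitz equivalent to the Euclidean metric. But then the factorization $\iota = \widehat{\iota} \circ P$ exists trivially: take $(\widehat{X}, \widehat{d}) = (\mathbb{R}^2, d)$, $P = \iota$ (bi-Lipschitz by assumption), and $\widehat{\iota} = \mathrm{id}$, which is an isometry and hence quasiconformal with $H(\widehat{\iota}) = 1 < \sqrt{2}$. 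Thus the non-factorization conclusion of the theorem is false for any $d$ bi-Lipschitz to $\lVert\cdot\rVert_2$, and your construction cannot work. The paper's construction avoids this precisely by scaling the two square norms by constants $c_i \to 0$ along a nested sequence of Cantor sets shrinking to a point; the resulting metric is \emph{not} bi-Lipschitz to Euclidean, it degenerates at that point, and that degeneration is what ultimately produces the contradiction.

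The second issue is the key "length-metric obstruction" you invoke, namely that the metric derivative of a bi-Lipschitz-to-Euclidean length metric must agree a.e.\ with its own lower semicontinuous convex envelope. This is not an established result, and the paper deliberately does not rely on anything of the kind; indeed the discussion immediately after the theorem statement warns that the metric tangents of $P$ need not coincide with a naively written-down norm field when that field fails to be lower semicontinuous. The paper's actual mechanism is quantitative rather than structural: the alternating square/diamond alignments of the nested Cantor sets, together with the scaling $c_i \sim 2^{-i/2}$, force via an induction (\Cref{lemm:previous_level_density}, \Cref{lemm:quantitative_density}, and the "dense network of paths" in \Cref{sec:network_paths}) that $\widehat{N}_x(v) \le C a^i$ on $K_i \setminus K_{i+1}$ with $a = H(\widehat{\iota})/\sqrt{2} < 1$. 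This geometric decay in $i$ eventually contradicts the uniform lower bound on $\widehat{N}_x$ that the bi-Lipschitz condition on $P$ imposes. Your octagon-envelope argument, even if the envelope principle were available, only delivers the endpoint equality $\sqrt{2}$ and offers no room for the strict-inequality contradiction; the multiplicative decay across infinitely many scales is what makes the strict inequality bite, and it has no analogue in a uniformly bi-Lipschitz setting.
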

    The identity map $\iota$ in our construction has distortion $H(\iota) = \sqrt{2}$, so the inequality $H(\widehat{\iota}) < \sqrt{2}$ is sharp.
    
    The metric $d$ in \Cref{thm:example} is defined via a lower semicontinuous norm field of the form
    \[
        N_x
        =
        \begin{cases}
            c_x \norm{\cdot}_1 & \text{if } x \in F
            \\
            c_x \norm{\cdot}_\infty & \text{if } x \notin F
        \end{cases}
    \]
    for some measurable set $F \subset \mathbb{R}^2$ and measurable function $x \mapsto c_x$, where $0\leq c_x \leq 1$ and $c_x$ vanishes at a single point. Note that this fits exactly into the construction scheme of this paper, and therefore $(\mathbb{R}^2, d)$ is a quasiconformal surface. 
    
    One might initially expect that the metric $\widehat{d}$ on $\mathbb{R}^2$ defined by 
    \[
        \widehat{N}_x
        =
        \begin{cases}
            \norm{\cdot}_1 & \text{if } x \in F
            \\
            \sqrt{2}\norm{\cdot}_\infty & \text{if } x \notin F
    \end{cases}
    \]
    with $\widehat{\iota}$ and $P$ the identity map on $\mathbb{R}^2$, or some variation on this, gives a factorization satisfying the properties given in \Cref{thm:example}. Observe that $\norm{ \cdot }_{2} \leq \widehat{N} \leq \sqrt{2} \norm{ \cdot }_{2}$ everywhere, so the map $P$ in this situation is bi-Lipschitz. However, the map $\widehat{\iota}$ may fail to be 1-quasiconformal. The reason for this is that the norm field $\widehat{N}$ corresponding to $F$ is typically not lower semicontinuous, in which case the metric tangents of $P$ need not coincide with $\widehat{N}_x$ almost everywhere. Indeed, we prove \Cref{thm:example} by specifying explicitly a set $F$ and coefficients $c_x$ for which this failure of 1-quasiconformality occurs for the norm field $\widehat{N}$ defined above, and in fact for any conformal rescaling of $\widehat{N}$ bi-Lipschitz equivalent to the Euclidean norm field.

    
    We now describe our construction in somewhat more detail. The basic idea is to construct a sequence of nested Cantor sets $K_i$ as the intersection of a collection of squares in the plane. This is done so that the odd-indexed Cantor sets are formed from squares in the standard (i.e., non-rotated) alignment, while the even-indexed Cantor sets are formed from squares aligned diagonally. Next, the norm field on $K_i\setminus K_{i+1}$ for odd values of $i$ is defined to be the supremum norm $\norm{ \cdot }_{ \infty }$, scaled by a constant $c_i$ satisfying $c_i \to 0$ as $i \to \infty$, while the norm field for even values of $i$ is defined to be the $\norm{ \cdot }_{1}$-norm, also scaled by a constant $c_i'$ satisfying $c_i' \to 0$ as $i \to \infty$. A consequence of the distortion inequality for $\widehat{\iota}$ is that the metric tangents of $P$ and $\iota$ cannot differ by more than a fixed amount, up to rescaling. With a suitable choice of constants $c_i, c_i'$, the alternating arrangement of the Cantor sets $K_i$ then forces the metric tangents of $P$ to be arbitrarily small at some points.

    Lytchak--Wenger \cite{LW:18} and Creutz--Soultanis \cite{CS:19} study similar types of factorizations for \emph{minimal disks} or \emph{solutions to Plateau's problem} with metric space target, though without trying to optimize the properties of $P$ in the way that we have proposed. Here, we simply remark that the map $\iota$ in our example is also an energy-minimizing map (for the Reshetnyak energy) in the sense of these papers on each closed disk. We refer the reader to the above papers for definitions of these terms.

    \subsection{Outline} Our paper is organized as follows. \Cref{sec:preliminaries} gives an overview of basic results and notation related to metric Sobolev spaces, quasiconformal mappings, and removable sets. In \Cref{sec:distances}, we give a detailed overview of the construction of metric spaces from a prescribed norm field under suitable assumptions. In \Cref{sec:reciprocality}, we prove the first of the main results, \Cref{thm:removable_implies_reciprocal}, stating that an admissible norm field is reciprocal if it vanishes exactly on a set that is removable for conformal mappings. In \Cref{sec:reciprocal_implies_removable}, we prove the partial converse, \Cref{thm:reciprocal_implies_removable}. \Cref{sec:linear_Cantor_sets} gives a pair of examples of spaces constructed from conformal weights that each vanish on a linear Cantor set of positive length, one of which is reciprocal and one of which is not. Thus this can be viewed as the borderline case. Finally, \Cref{sec:example} gives the proof of \Cref{prop:pi:isothermal} as well as the construction for \Cref{thm:example}.
    
    \subsection*{Acknowledgments} 
    
    We are thankful to Alexander Lytchak, Kai Rajala, and Atte Lohvansuu for discussions about this project and feedback on a draft of this paper. We also thank Dimitrios Ntalampekos and Malik Younsi for discussions related to removable sets for conformal mappings, and in particular for Malik Younsi bringing the paper \cite{AB:50} to our attention. Finally, we thank Jarmo Jääskeläinen for discussions related to the Beltrami equation.
    
\section{Preliminaries}\label{sec:preliminaries}

\subsection{Notation} \label{sec:notation} 

    In this paper, we frequently consider several metrics in close proximity to one another. For this reason, we will consistently use subscripts to denote the metric being referred to. Let $(X,d)$ be a metric space. The open ball centered at a point $x \in X$ of radius $r>0$ with respect to the metric $d$ is denoted by $B_d(x,r)$. 

    The Euclidean metric is denoted by $\|\cdot\|_2$. Thus, for example, we write $B_{\|\cdot\|_2}(x,r)$ for an open ball with respect to this metric, and $ds_{\|\cdot\|_2}$ for the Euclidean length element.
    
    We recall the definition of Hausdorff measure. Let $(X,d)$ be a metric space. For all $p \geq 0$, the \emph{$p$-dimensional Hausdorff measure}, or \emph{Hausdorff $p$-measure}, is defined by 
	\[
	    \mathcal{H}_X^p(A)
	    =
	    \sup_{ \delta > 0 }
	    \inf
	        \left\{
	            \frac{\alpha(p)}{2^p}
	            \sum_{i=1}^\infty (\diam A_i)^p
	            :
	            A \subset \bigcup_{i=1}^\infty A_i, 
	            \diam A_i < \delta
	        \right\} 
	\]
	for all sets $A \subset X$, where $\alpha(p) = \pi^{ \frac{p}{2} } \left( \Gamma\left( \frac{p}{2} + 1 \right) \right)^{-1}$. The constant $\alpha(p)$ is chosen in such a way that $\mathcal{H}^{n}_{\mathbb{R}^{n}}$ coincides with the Lebesgue measure $\mathcal{L}^{n}$ for all positive integers.
	
	If the space $X$ is understood but not the metric $d$, then we use the notation $\mathcal{H}_d^p$ instead of $\mathcal{H}_X^p$. The Hausdorff dimension of a set $E \subset X$ is the infimal value of $p$ for which $\mathcal{H}_X^p(E)=0$ and is denoted by $\dim_{\mathcal{H}_d} E$. For the basics of Hausdorff measure, see for example \cite[Chapter 2]{Amb:Til:04}.
	
	Unless otherwise noted, in this paper a metric surface $(X,d)$ is always equipped with the Hausdorff 2-measure generated by the metric $d$. For example, the phrase \emph{almost every} refers to the Hausdorff 2-measure. Similarly, an interval in $\mathbb{R}$ is equipped with the Lebesgue measure $\mathcal{L}^1$.
    
    A \emph{path} is a continuous function from an interval into a metric space. A path in $X$ will typically be denoted by $\gamma$. The image of $\gamma$ is denoted by $|\gamma|$. The \emph{length} of the path $\gamma\colon [a,b] \to X$ is defined as 
    \[ \ell_d(\gamma) = \sup \sum_{j=1}^n d(\gamma(t_{i-1}), \gamma(t_i)),\]
    the supremum taken over all finite sequences $a = t_0 \leq t_1 \leq \cdots \leq t_n = b$. A path is \emph{rectifiable} if it has finite length. 
    
    The \emph{metric speed} of a path $\gamma \colon \left[a,  b\right] \rightarrow X$ at the point $t \in \left[a,  b\right]$ is defined as
    \begin{equation*}
    \label{eq:metric:speed:path}
        v_{\gamma}(t)
        =
        \lim_{ h \rightarrow 0 }
            \frac{ d( \gamma( t + h ),  \gamma( t ) ) }{ t }
    \end{equation*}
    whenever this limit exists. If $\gamma$ is rectifiable, its metric speed exists at $\mathcal{L}^{1}$-almost every $t \in \left[a,  b\right]$; see Theorem 2.1 of \cite{Dud:07}.

    A rectifiable path $\gamma\colon [a,b] \to X$ is \emph{absolutely continuous} if for all $a \leq s \leq t \leq b$,
    \begin{equation*}
    \label{eq:AC:characterization}
        d( \gamma(t),  \gamma(s) )
        \leq
        \int_{ s }^{ t }
            v_{ \gamma }( u )
        \,d\mathcal{L}^{1}( u )
    \end{equation*}
    with $v_{ \gamma } \in L^{1}( \left[a,  b\right] )$ and $\mathcal{L}^{1}$ the Lebesgue measure on the real line. Equivalently, $\gamma$ is absolutely continuous if it maps sets of $\mathcal{L}^{1}$-measure zero to sets of $\mathcal{H}_X^{1}$-measure zero in its image; see Section 3 of \cite{Dud:07}.
    
    A path $\widetilde{\gamma} \colon \left[c,  d\right] \rightarrow X$ is a \emph{reparametrization} of $\gamma$ if there exists a map $\psi \colon \left[a,  b\right] \rightarrow \left[c,  d\right]$ that is surjective, non-decreasing, and continuous such that $\gamma = \widetilde{\gamma} \circ \psi$. If $\psi$ is absolutely continuous, we say that $\widetilde{\gamma}$ is an \emph{absolutely continuous reparametrization} of $\gamma$. Note that this is different from $\widetilde{\gamma}$ itself being an absolutely continuous path.
    
    Every rectifiable path $\gamma$ has a reparametrization $\widetilde{\gamma} \colon \left[ 0,  \ell_d( \gamma ) \right] \rightarrow X$ such that the metric speed of $\widetilde{\gamma}$ equals one $\mathcal{L}^{1}$-almost everywhere. In this case, we write $\gamma_{s} = \widetilde{\gamma}$, and refer to $\gamma_{s}$ as the \emph{unit speed parametrization} of $\gamma$. See Chapter 5 of \cite{HKST:15} for details.
    
    If $\gamma$ is rectifiable, the unit speed parametrization $\gamma_{s}$ is $1$-Lipschitz and hence absolutely continuous \cite[Proposition 5.1.8]{HKST:15}.
    
    Let $\gamma$ be a rectifiable path. Then the \emph{path integral} of a Borel function $\rho \colon X \rightarrow \left[0,  \infty\right]$ \emph{over $\gamma$} is
    \begin{equation}
    \label{eq:path:integral}
        \int_{ \gamma }
            \rho
        \,ds
        =
        \int_{ 0 }^{ \ell_d( \gamma ) }
            \rho \circ \gamma_{s}
        \,d\mathcal{L}^{1},
    \end{equation}
    where $\mathcal{L}^{1}$ is the Lebesgue measure on the real line. 
    
    If $\gamma$ is absolutely continuous and $\widetilde{\gamma}$ is an absolutely continuous reparametrization of $\gamma$, the chain rule for metric speeds \cite[Theorem 3.16 and Remark 3.4]{Dud:07} states that
    \begin{equation*}
        v_{ \gamma }
        =
        (v_{ \widetilde{\gamma} } \circ \psi)
        \psi'
        \in
        L^{1}( \left[c,  d\right] ),
    \end{equation*}
    where the right-hand side is understood to be zero whenever the derivative $\psi' = 0$ (even if $v_{ \widetilde{\gamma} } \circ \psi$ is not defined or is infinite at such a point).
    
    Moreover, for absolutely continuous $\gamma$, the unit speed parametrization $\gamma_{s}$ is an absolutely continuous reparametrization of $\gamma$. Therefore \eqref{eq:path:integral} can be restated for absolutely continuous $\gamma \colon \left[a,  b\right] \rightarrow X$ as follows:
    \begin{equation*}
    \label{eq:path:integral:AC}
        \int_{ \gamma }
            \rho
        \,ds
        =
        \int_{ a }^{ b }
            (\rho \circ \gamma)
            v_{ \gamma }
        \,d\mathcal{L}^{1}.
    \end{equation*}
    Given a Borel set $A \subset X$, the length of a path $\gamma \colon \left[a, b\right] \rightarrow X$ in $A$ is defined as $\int_{ X } \#( A \cap \gamma^{-1}(x) ) \,d\mathcal{H}^{1}_{X}(x)$, where $\#( A \cap \gamma^{-1}(x) )$ is the multiplicity of $\gamma$ in $A$. This formula makes sense for paths that are not necessarily rectifiable; see Theorem 2.10.13 \cite{Fed:69}. If $\gamma$ is rectifiable, the number coincides with the path integral of $\chi_{A}$ over $\gamma$.

\subsection{Metric Sobolev spaces}\label{sec:sobolev}

    In this section, we give an overview of the theory of Sobolev spaces in the metric space setting. We refer the reader to the book \cite{HKST:15} for a comprehensive introduction to this topic. Throughout this section, assume that $(X,d_X)$ and $(Y, d_Y)$ are each a metric surface: a metric space homeomorphic to a $2$-dimensional manifold with locally finite Hausdorff 2-measure.

    The conformal modulus provides a basic way of measuring the size of a family of paths. It is a conformal invariant in the Euclidean case, which accounts for both its nomenclature and its usefulness. Let $\Gamma$ be a family of paths in $X$. A Borel function $\rho\colon X \to [0, \infty]$ is \emph{admissible} for the path family $\Gamma$ if the path integral $\int_\gamma \rho\,ds \geq 1$ for all locally rectifiable paths $\gamma \in \Gamma$. The \emph{conformal modulus}, or simply \emph{modulus}, of $\Gamma$ is
        \[\Mod \Gamma = \inf \int_X \rho^2\,d\mathcal{H}_X^2,\]
    where the infimum is taken over all admissible functions $\rho$. 

    If $\rho$ is admissible for a path family $\Gamma' \subset \Gamma$ such that $\Gamma \setminus \Gamma'$ has modulus zero, then $\rho$ is said to be \emph{weakly admissible} for $\Gamma$. If a property holds for every path $\gamma \in \Gamma$ except in a subfamily of modulus zero, then this property is said to hold \emph{on almost every path}. If $\Mod \Gamma < \infty$, then there exists a weakly admissible Borel function $\rho \in L^{2}( X )$ such that
    \begin{equation*}
        \Mod \Gamma
        =
        \int_{ X }
            \rho^{2}
        \,d\mathcal{H}^{2}_{X}.
    \end{equation*}
    Such a $\rho$ is called a \emph{minimizer} of $\Gamma$. Such a minimizer is unique $\mathcal{H}^{2}_{X}$-almost everywhere.

    Let $f\colon (X,d_X) \to (Y,d_Y)$ be a mapping between metric surfaces $X$ and $Y$. A function $g\colon X \to [0, \infty]$ is an \emph{upper gradient of $f$} if
        \[d_Y(f(x),f(y)) \leq \int_\gamma g\,ds\]
    for every rectifiable path $\gamma\colon [0,1] \to X$ connecting $x$ to $y$. The function $g$ is a \emph{weak upper gradient of $f$} if the same holds for almost every rectifiable path.
    
    The weak upper gradient $g \in L^{2}_{\loc}( X )$ is \emph{minimal} if it satisfies $g \leq \widetilde{g}$ almost everywhere for all weak upper gradients $\widetilde{g} \in L^{2}_{\loc}( X )$ of $f$. If $f$ has a weak upper gradient $g \in L^{2}_{\loc}( X )$, then $f$ has a minimal weak upper gradient, which we denote by $g_f$. The existence of $g_{f}$ follows from the fact that the weak upper gradients of $f$ form a lattice. This also implies that $g_{f}$ is unique up to measure zero; see Section 6 of \cite{HKST:15} and Section 3 of \cite{Wil:12} for details. In general, $g_{f}$ is only a weak upper gradient.

    Proposition 6.3.3 of \cite{HKST:15} and countable subadditivity of modulus (see also Lemmas 3.2 and 3.3 of \cite{Wil:12}) establish that a Borel function $\rho \colon X \rightarrow \left[0,  \infty\right]$ belonging to $L^{2}_{\loc}(X)$ is a weak upper gradient of $f$ if and only if for almost every absolutely continuous path $\gamma \colon \left[a,  b\right] \rightarrow X$, the composition $f \circ \gamma$ is an absolutely continuous path for which the metric speeds $v_{ f \circ \gamma }$ and $v_{ \gamma }$ satisfy
    \begin{equation}
        \label{eq:metric:speed:characterization}
        v_{ f \circ \gamma }
        \leq
        (\rho \circ \gamma)
        v_{ \gamma }
    \end{equation}
    $\mathcal{L}^{1}$-almost everywhere on $\left[a,  b\right]$. Since $\rho \in L^{2}_{\loc}( X )$ the right-hand side of \eqref{eq:metric:speed:characterization} is integrable on its domain for almost every $\gamma$.

    Let $Z$ be a metric space such that $\mathcal{H}_{d_Z}^2(Z) < \infty$. Choose a point $y \in Y$, and let $d_y = d_Y(\cdot,y)$. The space $L^{2}( Z,  Y )$ is defined as the set of measurable mappings $f\colon Z \to Y$ such that $d_y \circ f$ is in $L^2(Z)$. One can check that this definition is independent of the choice of $y$.  

    We define $L^{2}_{\loc}( X,  Y )$ to consist of those measurable mappings $f\colon X \to Y$ for which, for all $x \in X$, there is an open set $U \subset X$ containing $x$ such that $f|_U$ is in $L^2(U,Y)$. 
    
    The metric Sobolev space $N^{1,  2}_{\loc}( X,  Y )$ consists of those mappings $f \colon X \rightarrow Y$ in $L^{2}_{\loc}( X,  Y )$ that have a minimal weak upper gradient $g_{f} \in L^{2}_{\loc}( X )$.
    
    For open $U \subset X$ with $\mathcal{H}^{2}_{X}( U ) < \infty$, we say that $f \in N^{1,  2}( U,  Y )$ if $f|_{U} \in N^{1,  2}_{\loc}( U,   Y )$ in such a way that $g_{f}|_{U} \in L^{2}( U )$ and for some $y \in Y$, $f_{y}( x ) = d_{y} \circ f|_{U} \in L^{2}( U )$.

    Next we define the Jacobian of $f$ for continuous $f \colon X \rightarrow Y$. The \emph{pullback measure} $f^{*}\mathcal{H}^{2}_{Y}$ is defined for Borel sets $A \subset X$ by the formula
    \begin{equation*}
    \label{eq:pullback_measure}
        f^{*}\mathcal{H}^{2}_{Y}( A )
        =
        \int_{Y}
            \#( A \cap f^{-1}(y) )
        \,d\mathcal{H}^{2}_{Y},
    \end{equation*}
    where $\#( A \cap f^{-1}(y) )$ is the multiplicity function of $f$ relative to $A$. The measure $f^{*}\mathcal{H}^{2}_Y$ can be defined equivalently using a suitable Carathéodory construction; see \cite[2.10.10]{Fed:69}. In fact, $f^{*}\mathcal{H}^{2}_{Y}$ is a Borel regular outer measure.
    
    If the pullback measure $f^{*}\mathcal{H}^{2}_{Y}$ is locally finite, we say that the \emph{Jacobian} of $f$ is the Radon--Nikodym derivative of $f^{*}\mathcal{H}^{2}_{Y}$ with respect to $\mathcal{H}^{2}_{X}$. The Jacobian is denoted by $J_{f}$. The local finiteness of $f^{*}\mathcal{H}^{2}_{Y}$ and $\mathcal{H}^{2}_{X}$ imply that $J_{f}$ is locally integrable. See Sections 3.1-3.2 in Volume I of \cite{Bog:07} for details on the Radon--Nikodym derivative of a measure.

\subsection{Seminorms}\label{sec:seminorms}
    We introduce the terminology and notation we use for seminorms. Recall that a \emph{seminorm} $S$ on $\mathbb{R}^{2}$ is a function $S \colon \mathbb{R}^2 \to [0, \infty)$ satisfying the following conditions for all $v,w \in \mathbb{R}^2$ and $\lambda \in \mathbb{R}$:
    \begin{itemize}
        \item[(i)] (absolute homogeneity) $S( \lambda v ) = |\lambda| S( v )$ whenever $\lambda \in \mathbb{R}$ and $v \in \mathbb{R}^{2}$;
        \item[(ii)] (triangle inequality) $S( v + w ) \leq S( v ) + S( w )$.
    \end{itemize}
    The seminorm $S$ is a \emph{norm} if it has the additional property that $S( v ) = 0$ only if $v = 0$.
    The \emph{maximal stretching} of $S$ is
    \begin{equation}
        \label{eq:upper_gradient}
        L( S )
        =
        \sup\left\{
            S(v)
            \colon
            \norm{ v }_{2} \leq 1
        \right\}.
    \end{equation}
    The \emph{minimal stretching} of $S$ is
    \begin{equation}
        \label{eq:minimal_gradient}
        \omega(S)
        =
        \inf\left\{
            S(v)
            \colon
            \norm{ v }_{2} \geq 1
        \right\}.
    \end{equation}
    The \emph{Jacobian} of the seminorm $S$ is
    \begin{equation*}
	\label{eq:area_jacobian}
	    J_{2}(S)
	    =
	    \frac{ \pi }{ \mathcal{L}^{2}\left( \left\{ v : S(v) \leq 1 \right\} \right) }.
    \end{equation*}
    Observe that $J_2(S) = 0$ in the case that $N$ is only a seminorm.
    The \emph{distortion} of $S$ is 
    \begin{equation}
    \label{eq:distortion}
        H(S) = \frac{L(S)}{\omega(S)}
    \end{equation}
    if $\omega(S)>0$ and $H(S) = \infty$ otherwise. The latter case occurs if $S$ is a non-zero seminorm that is not a norm. 
    The \emph{outer dilatation} and \emph{inner dilatation} of $S$ are defined by, respectively,
    \begin{align*}
    \label{eq:outer_dilatation}
        K_O(S) = \frac{L(S)^2}{J_2(S)}, \quad K_I(S) = \frac{J_2(S)}{\omega(S)^2}
    \end{align*}
    if $J_{2}(S)>0$, and $K_O(S) = K_I(S) = \infty$ otherwise. The \emph{maximal dilatation} of $S$ is $K(S) = \max\{K_O(S), K_I(S)\}$. Observe that $K_O(S) \geq 1$ and $K_I(S) \geq 1$.
    
    The seminorm $S$ induces a pseudometric $d_S$ on $\mathbb{R}^2$ by the formula $d_S(x,y) = S(x-y)$. The identity map
    $   \iota_{S}
        \colon
        ( \mathbb{R}^{2},  \norm{ \cdot }_{2} )
        \to
        ( \mathbb{R}^{2},  d_S )$
    has the constant function $L( S )$ as its minimal weak upper gradient and $J_2(S)$ as its Jacobian. Its inverse $\iota_S^{-1}$ has the constant function $\omega(S)^{-1} $ as its minimal weak upper gradient.

    The following lemma gives a relationship between the maximal dilatation and distortion.
    \begin{lemm}\label{lemm:distortion_vs_dilatation}
    The distortion $H(S)$ and maximal dilatation $K(S)$ of $S$ satisfy $H( S ) \leq K( S ) \leq H( S )^{2}$.
    \end{lemm}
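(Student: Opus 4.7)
The plan is to compare the unit ball $\{v : S(v) \leq 1\}$ to Euclidean balls. If $\omega(S) = 0$, then both $H(S)$ and $K(S)$ equal $\infty$ by the stated conventions, and the inequalities hold trivially, so I assume $\omega(S) > 0$, in which case $S$ is a norm and $L(S) > 0$. From the definitions of $L(S)$ and $\omega(S)$ together with the absolute homogeneity of $S$, a direct scaling argument yields the pointwise comparison
\begin{equation*}
    \omega(S)\|v\|_2 \leq S(v) \leq L(S)\|v\|_2
    \quad \text{for all } v \in \mathbb{R}^2.
\end{equation*}

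This comparison immediately gives the chain of inclusions
\begin{equation*}
    B_{\|\cdot\|_2}\!\left(0,\tfrac{1}{L(S)}\right)
    \subset \{v : S(v) \leq 1\} \subset
    B_{\|\cdot\|_2}\!\left(0,\tfrac{1}{\omega(S)}\right).
\end{equation*}
Taking two-dimensional Lebesgue measures and inverting then yields $\omega(S)^2 \leq J_2(S) \leq L(S)^2$. Substituting these bounds into the definitions of the dilatations gives
\begin{equation*}
    K_O(S) = \frac{L(S)^2}{J_2(S)} \leq \frac{L(S)^2}{\omega(S)^2} = H(S)^2,
    \qquad
    K_I(S) = \frac{J_2(S)}{\omega(S)^2} \leq \frac{L(S)^2}{\omega(S)^2} = H(S)^2,
\end{equation*}
and maximizing over the two quantities produces the upper estimate $K(S) \leq H(S)^2$.

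For the reverse inequality $H(S) \leq K(S)$, I would exploit the algebraic identity $K_O(S)\cdot K_I(S) = L(S)^2/\omega(S)^2 = H(S)^2$, which holds directly from the definitions. Since $\max(a,b) \geq \sqrt{ab}$ for any nonnegative $a,b$, it follows that
\begin{equation*}
    K(S) = \max\{K_O(S),K_I(S)\} \geq \sqrt{K_O(S)K_I(S)} = H(S).
\end{equation*}

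The argument is essentially computational; the only mildly subtle point is verifying that the pointwise bound $\omega(S)\|v\|_2 \leq S(v) \leq L(S)\|v\|_2$ holds for every $v$, which requires using absolute homogeneity to transfer the suprema and infima (originally taken over the closed and complementary half-spaces of the unit Euclidean ball) to all of $\mathbb{R}^2$. Once that is in hand, everything reduces to measuring Euclidean balls and applying AM--GM.
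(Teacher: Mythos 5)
Your proof is correct and follows essentially the same route as the paper's: both rely on the identity $K_O(S)K_I(S) = H(S)^2$ together with the bounds $K_O(S) \geq 1$ and $K_I(S) \geq 1$. The only difference is that you derive $\omega(S)^2 \leq J_2(S) \leq L(S)^2$ explicitly via the ball inclusions, whereas the paper cites those inequalities as previously stated observations.
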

    \begin{proof} 
    If $\omega(S) = 0$, then $H(S) = K(S) = \infty$. Otherwise, $H(S)$ and $K(S)$ are both finite. Observe the relationship $H(S)^{2} = K_O(S)K_I(S) \leq K(S)^{2}$. On the other hand, the relationships $K_O(S) \geq 1$ and $K_I(S) \geq 1$ imply respectively that $H(S)^{2} \geq K_I(S)$ and $H(S)^{2} \geq K_O(S)$. We conclude that $H(S)^2 \geq K(S)$.
    \end{proof}

\subsection{Metric derivatives of Lipschitz mappings}\label{sec:lipschitz}
    Throughout this section, we let $\Omega$ denote a domain in $\mathbb{R}^2$ and $(X,d)$ denote a metric space.
    We refer to \Cref{sec:seminorms} for basic terminology about seminorms.
	\begin{defi}\label{defi:metric_derivative_Ivanov}
	Let $f \colon (\Omega,\|\cdot\|_2) \rightarrow (X,d)$ be a Lipschitz map. For all $x \in \Omega$ and $v \in \mathbb{R}^{2}$, the \emph{metric derivative of $f$ at $x$ in the direction $v$} is
	\begin{equation}
	\label{eq:metric_differential}
	    \apmd[f]{x}(v)
	    =
	    \limsup_{ t \rightarrow 0^{+} }
	        \frac{ d( f(x), f(x + tv) ) }{ t }.
	\end{equation}
	\end{defi}
    A result by Ivanov \cite{Iva:08} states the following. Similar results are proved in \cite{Kir:94,DP:90,DP:91,DP:95}.
	\begin{thm}\label{thm:legit_differential}
	Let $f \colon (\Omega,\|\cdot\|_2) \rightarrow (X,d)$ be a Lipschitz map. There exists a Borel set $N_{0} \subset \Omega$ of zero Lebesgue measure such that, for all $x \in \Omega \setminus N_{0}$ and all $v \in \mathbb{R}^{2}$, the limit superior in \eqref{eq:metric_differential} is an actual limit, and $v \mapsto \apmd[f]{x}(v)$ is a seminorm for every $x \in \mathbb{R}^{2} \setminus N_{0}$.
	\end{thm}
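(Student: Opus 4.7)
The plan is to reduce to the one-dimensional fact cited from Theorem 2.1 of \cite{Dud:07}, that any Lipschitz path into $X$ has a metric speed at $\mathcal{L}^1$-a.e. point. Let $L$ denote the Lipschitz constant of $f$ and fix a countable dense subset $D \subset \mathbb{R}^2$. For each $v \in D$, the map $s \mapsto f(x + sv)$ is $L\|v\|_2$-Lipschitz, and applying Fubini along lines in direction $v$ yields a Borel null set $N_v \subset \Omega$ outside of which the metric speed at $s = 0$ exists. On $\Omega \setminus N_v$, the limsup in \eqref{eq:metric_differential} is a genuine limit and $\apmd[f]{x}(v) = \apmd[f]{x}(-v)$, which together with the obvious positive homogeneity yields $\apmd[f]{x}(\lambda v) = |\lambda| \apmd[f]{x}(v)$ for every $\lambda \in \mathbb{R}$.

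Next, I would extend these properties from $D$ to all of $\mathbb{R}^2$. The triangle inequality in $X$ combined with the Lipschitz bound on $f$ gives
\begin{equation*}
    \bigl|
        d(f(x + tv), f(x)) - d(f(x + tw), f(x))
    \bigr|
    \leq
    d(f(x + tv), f(x + tw))
    \leq
    t L \|v - w\|_2,
\end{equation*}
so $v \mapsto \apmd[f]{x}(v)$ is $L$-Lipschitz for every $x \in \Omega$. Setting $N_1 = \bigcup_{v \in D} N_v$, for $x \in \Omega \setminus N_1$ the function $\apmd[f]{x}$ is $L$-Lipschitz on $\mathbb{R}^2$ and equals a genuine limit at every $v \in D$; a standard $\varepsilon/3$ argument combining this Lipschitz bound with density then forces the limsup to be a genuine limit at every $v \in \mathbb{R}^2$, and absolute homogeneity extends to all of $\mathbb{R}^2$ by continuity.

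The main obstacle is the triangle inequality $\apmd[f]{x}(v + w) \leq \apmd[f]{x}(v) + \apmd[f]{x}(w)$; by continuity in $(v, w)$ it suffices to verify this for $v, w \in D$. The starting point is
\begin{equation*}
    \frac{ d(f(x + t(v + w)), f(x)) }{ t }
    \leq
    \frac{ d(f(x + tv), f(x)) }{ t }
    +
    \frac{ d(f((x + tv) + tw), f(x + tv)) }{ t }.
\end{equation*}
The first term on the right tends to $\apmd[f]{x}(v)$ as $t \to 0^+$, but the second is the difference quotient for $\apmd[f]{\cdot}(w)$ at the moving base point $x + tv$, not at $x$. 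To control this, for fixed $v, w \in D$ and $\varepsilon > 0$, I would apply Egorov's theorem to the Borel function $y \mapsto \apmd[f]{y}(w)$ along any sequence $t_k \to 0^+$ to obtain a compact $K \subset \Omega$ with $\mathcal{L}^2(\Omega \setminus K) < \varepsilon$ on which $d(f(y + t_k w), f(y))/t_k \to \apmd[f]{y}(w)$ uniformly in $y$ and on which $\apmd[f]{\cdot}(w)$ is continuous. By Fubini, for $\mathcal{L}^2$-a.e. $x \in K$ the one-dimensional slice $\{t > 0 : x + tv \in K\}$ has $0$ as a Lebesgue density point; along such a sequence of $t_k$, the second term above tends to $\apmd[f]{x}(w)$, while the left-hand side tends to $\apmd[f]{x}(v+w)$ thanks to the second step. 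Letting $\varepsilon \to 0$ and taking a countable union over $v, w \in D$ produces a further null set $N_2$ outside of which the triangle inequality holds. Setting $N_0 := N_1 \cup N_2$, which is Borel and of Lebesgue measure zero after selecting Borel representatives of all exceptional sets, yields the conclusion.
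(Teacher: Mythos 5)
The paper does not itself prove this theorem; it states it as a citation to Ivanov \cite{Iva:08}, with related results in \cite{Kir:94,DP:90,DP:91,DP:95}. Your blind proof is therefore an independent argument. The overall strategy --- one-dimensional metric speeds plus Fubini to get existence of the directional limit for each $v$ in a countable dense $D$, the uniform $L$-Lipschitz bound in $v$ to extend from $D$ to all directions, and an Egorov/Lebesgue-density argument to obtain subadditivity --- is sound and along recognizable lines.

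One step does not work quite as written. In the subadditivity argument you apply Egorov to the sequence $y \mapsto d(f(y+t_k w), f(y))/t_k$ for a \emph{fixed} sequence $t_k \to 0^+$, obtaining a compact $K$ with uniform convergence along that particular sequence. You then use uniformity at $y = x+tv$ for $t$ ranging over $\{t>0 : x+tv\in K\}$ --- a set of parameters depending on $x$ that need not contain any tail of the pre-chosen $t_k$. The density of $\{t : x+tv\in K\}$ at $0$ gives you many admissible $t$'s, but not the ones fixed-sequence Egorov controlled, so as stated the second term is not controlled. What the argument actually requires is that $t\mapsto d(f(y+tw),f(y))/t$ converge to $\apmd[f]{y}(w)$ uniformly for $y\in K$ over the full continuum $t\to 0^+$. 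This is true, but one must either invoke a continuous-parameter Egorov, or equivalently apply ordinary Egorov to the monotone sequence
\begin{equation*}
    g_j(y)
    =
    \sup_{0<t<1/j}
    \left|
        \frac{d(f(y+tw),f(y))}{t}
        -
        \apmd[f]{y}(w)
    \right|,
\end{equation*}
which is measurable because the inner expression is continuous in $t$ for $t>0$. With that repair the rest is correct: for a.e.\ $x\in K$ one has $x\notin N_1$ (so by your second step the outer $\limsup$ is a genuine limit and it suffices to bound it along the density parameter set), $0$ is a one-dimensional Lebesgue density point of $\{t>0:x+tv\in K\}$ by Fubini plus the one-dimensional density theorem, and the Lusin-continuity of $\apmd[f]{\cdot}(w)$ on $K$ gives $\apmd[f]{x+tv}(w)\to\apmd[f]{x}(w)$ along that set of parameters.
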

	As a consequence of \Cref{thm:legit_differential}, the metric derivative of a Lipschitz map defines a seminorm field on $\Omega$.
	
	\begin{prop}\label{prop:lipschitz}
	Let $f \colon \Omega \rightarrow X$ be a Lipschitz function and $\apmd{ f }$ its metric derivative. The maximal stretching $x \mapsto L( \apmd{f}(x) )$ is a minimal weak upper gradient of $f$, and $f$ satisfies the change of variables formula 
	\begin{equation}
	\label{eq:change_of_variables}
        \int_{ \Omega }
            \rho(z)
            J_{2}( \apmd[f]{z} )\,
        d\mathcal{L}^{2}(z)
        =
        \int_{ X }
        \int_{ f^{-1}(x) }
            \rho(y)
        d\mathcal{H}^{0}(y)\,
        d\mathcal{H}^{2}_{d}( x )
    \end{equation}
    for all Borel functions $\rho \colon \Omega \rightarrow \left[0,  \infty\right]$.
	\end{prop}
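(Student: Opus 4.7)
The plan is to deduce both assertions from the pointwise differentiability theorem \Cref{thm:legit_differential}, combined with standard area-formula and modulus techniques. The change of variables formula \eqref{eq:change_of_variables} is Kirchheim's metric area formula from \cite{Kir:94}, so granted \Cref{thm:legit_differential} it suffices to invoke that result. The standard proof partitions $\Omega$ up to measure zero into countably many Borel pieces on which $f$ is uniformly close, after linear rescaling, to the map $v \mapsto \apmd[f]{x_0}(v)$ for a reference point $x_0$, then reduces the identity on each piece to the elementary linear change-of-variables computation and sums the contributions.

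For the upper gradient part, let $\gamma \colon [a,b] \to \Omega$ be an absolutely continuous path. Then $\gamma$ is differentiable at $\mathcal{L}^1$-a.e. $t$ with $v_\gamma(t) = \|\gamma'(t)\|_2$. Since the family of rectifiable paths spending positive length in the $\mathcal{L}^2$-null set $N_{0}$ of \Cref{thm:legit_differential} has modulus zero, for modulus-a.e.\ such $\gamma$ one has $\gamma(t) \notin N_{0}$ at $\mathcal{L}^1$-a.e.\ $t$. At such a point, using the Lipschitz bound to absorb the first-order error $\gamma(t+s) - \gamma(t) - s\gamma'(t) = o(s)$, one obtains
\begin{equation*}
    v_{f \circ \gamma}(t)
    = \lim_{s \to 0^+} \frac{d\bigl(f(\gamma(t)+s\gamma'(t)),\, f(\gamma(t))\bigr)}{s}
    = \apmd[f]{\gamma(t)}(\gamma'(t))
    \leq L\bigl(\apmd[f]{\gamma(t)}\bigr)\, v_\gamma(t),
\end{equation*}
where the middle equality uses that the limsup in \eqref{eq:metric_differential} is an actual limit by \Cref{thm:legit_differential}. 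The characterization \eqref{eq:metric:speed:characterization} then identifies $L(\apmd{f})$ as a weak upper gradient.

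For minimality, let $g \in L^2_{\loc}(\Omega)$ be any weak upper gradient of $f$. Fix a unit vector $e \in \mathbb{R}^2$ and a rectangle $R \subset \Omega$ with one pair of sides parallel to $e$. By Fubini, for $\mathcal{L}^1$-a.e.\ segment $\sigma$ in $R$ parallel to $e$, the weak upper gradient inequality and \Cref{thm:legit_differential} together give $\apmd[f]{\sigma(t)}(e) = v_{f \circ \sigma}(t) \leq g(\sigma(t))$ at $\mathcal{L}^1$-a.e.\ $t$. Integrating over $R$ via Fubini yields $\apmd[f]{x}(e) \leq g(x)$ for $\mathcal{L}^2$-a.e.\ $x \in R$, and hence for $\mathcal{L}^2$-a.e.\ $x \in \Omega$. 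Since each seminorm $\apmd[f]{x}$ is continuous in the direction variable, a countable dense set $\{e_k\} \subset \mathbb{S}^1$ suffices, so $L(\apmd[f]{x}) = \sup_k \apmd[f]{x}(e_k) \leq g(x)$ almost everywhere, giving the claimed minimality.

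The main obstacle is the measure-theoretic bookkeeping in the upper gradient step: one has to ensure that the $\mathcal{L}^2$-null exceptional set $N_0$ of \Cref{thm:legit_differential} is avoided in an $\mathcal{L}^1$-a.e.\ sense along modulus-a.e.\ rectifiable path, and also that the error term in the differentiability expansion along $\gamma$ is controlled uniformly enough in the direction $\gamma'(t)$ to upgrade the pointwise identity \eqref{eq:metric_differential} to the metric-speed identity used above. Both are standard once one combines Fubini applied to the canonical $\chi_{N_0}$ admissible function (showing the modulus of paths meeting $N_0$ in positive length is zero) with the refined version of \Cref{thm:legit_differential}, in which the limit in \eqref{eq:metric_differential} is uniform in $v$ on compact sets for a.e.\ base point $x$.
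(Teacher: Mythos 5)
Your argument is sound. For the change-of-variables identity you take the same route as the paper: once \Cref{thm:legit_differential} identifies Ivanov's everywhere-defined metric derivative with Kirchheim's almost-everywhere-defined one, the formula is Kirchheim's Corollary~8, so this part matches.

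For the minimal weak upper gradient, the paper simply cites Section~4 of Lytchak--Wenger, whereas you give a self-contained proof via the exceptional set $N_0$ and a foliation argument. Your argument is structurally the same as the one the paper itself uses later in \Cref{lemm:metric_derivative_minimal_upper_gradient} for the inverse of $\pi_N$, so the approach is consistent with the paper's toolkit; spelling it out rather than citing is a reasonable alternative. Two small imprecisions, neither fatal: (1) the modulus argument shows only that for a.e.\ absolutely continuous $\gamma$ one has $v_\gamma(t)\,\chi_{N_0}(\gamma(t)) = 0$ for a.e.\ $t$ (the path spends zero \emph{length} in $N_0$), not that $\gamma(t)\notin N_0$ for a.e.\ $t$, since $\gamma$ may linger in $N_0$ on a set of positive parameter measure while stationary. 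This is harmless: where $v_\gamma(t)=0$, the Lipschitz bound on $f$ gives $v_{f\circ\gamma}(t)=0$, and the upper gradient inequality holds trivially there. (2) Your concern about the limit in \eqref{eq:metric_differential} being uniform in $v$ is unnecessary: to obtain $v_{f\circ\gamma}(t)=\apmd[f]{\gamma(t)}(\gamma'(t))$ one only needs the limit at the single direction $v=\gamma'(t)$ together with the Lipschitz constant of $f$ to absorb the $o(s)$ error in $\gamma(t+s)-\gamma(t)-s\gamma'(t)$, so the pointwise statement of \Cref{thm:legit_differential} already suffices.
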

	\begin{proof}
    \Cref{thm:legit_differential} implies that the metric derivative, as defined in \Cref{defi:metric_derivative_Ivanov}, coincides with the metric derivative of Kirchheim \cite{Kir:94} $\mathcal{L}^{2}$-almost everywhere in $\Omega$. Kirchheim proves the change of variables formula \eqref{eq:change_of_variables} as Corollary 8 in \cite{Kir:94}.
    That $L( \apmd{f} )$ is a minimal weak upper gradient of $f$ is proved in Section 4 of \cite{Lyt:Wen:17}.
    \end{proof}
    The metric differential can be used to compute the metric speed of an absolutely continuous path.
    \begin{lemm}\label{lemm:metric_speed_Lips}
    If $\gamma \colon \left[a,  b\right] \rightarrow \Omega$ is an absolutely continuous path, then for almost every $t \in [a,b]$, the metric speed $v_{ f \circ \gamma }(t)$ of $f \circ \gamma$ exists and satisfies
    \begin{equation*}
        v_{ f \circ \gamma }(t)
        =
        \apmd{f} \circ D\gamma(t),
    \end{equation*}
    where $D\gamma(t)$ is the derivative of $\gamma$ at $t$.
    \end{lemm}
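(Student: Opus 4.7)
The plan is to evaluate the one-sided limit defining the metric speed directly, using only the Lipschitz bound on $f$ and the classical differentiability of $\gamma$; no appeal to \Cref{thm:legit_differential} is actually required. The key point is that the $\limsup$ defining $\apmd[f]{x}$ in \eqref{eq:metric_differential} is meaningful at every $x \in \Omega$, and composition with an absolutely continuous path automatically upgrades this $\limsup$ into a genuine limit.

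First, since $f$ is Lipschitz and $\gamma$ is absolutely continuous, the composition $f \circ \gamma$ is absolutely continuous, and hence $v_{f \circ \gamma}(t)$ exists at $\mathcal{L}^{1}$-almost every $t$ by Theorem 2.1 of \cite{Dud:07}. Similarly, each coordinate of $\gamma$ is absolutely continuous, so $\gamma$ is classically differentiable with derivative $D\gamma(t)$ at $\mathcal{L}^{1}$-almost every $t$. I would fix a point $t$ at which both conditions hold and let $L$ be a Lipschitz constant for $f$. Then $\gamma(t+h) = \gamma(t) + h D\gamma(t) + o(h)$, and the reverse triangle inequality combined with the Lipschitz bound yields
\[
    \left| d(f(\gamma(t + h)), f(\gamma(t))) - d(f(\gamma(t) + h D\gamma(t)), f(\gamma(t))) \right|
    \leq L \, \|\gamma(t + h) - \gamma(t) - h D\gamma(t)\|_{2} = o(h).
\]

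Dividing by $h > 0$ and letting $h \to 0^{+}$, the left-hand quotient converges to $v_{f \circ \gamma}(t)$ by the definition of the metric speed, and therefore so does the right-hand quotient. Because this produces a genuine limit, comparison with \eqref{eq:metric_differential} identifies the common value as $\apmd[f]{\gamma(t)}(D\gamma(t))$, which is the desired equality. The only conceptual point, and the only potential source of difficulty, is the observation that one does not need $\gamma(t)$ to lie outside the Ivanov exceptional set of \Cref{thm:legit_differential}: the Lipschitz comparison itself forces the one-sided limit at $\gamma(t)$ in the direction $D\gamma(t)$ to exist regardless, so the proof bypasses any null-set issue coming from $\gamma^{-1}(N_{0})$.
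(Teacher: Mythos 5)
Your proof is correct and takes a genuinely more elementary route than the paper's. The paper cites Ivanov's global length identity $\ell_{d}(f\circ\gamma) = \ell_{\apmd{f}}(\gamma)$ for Lipschitz paths, extends it to absolutely continuous paths via reparametrization, and then recovers the pointwise statement by Lebesgue differentiation. You work pointwise from the start: for a.e.\ $t$ you have simultaneously that $v_{f\circ\gamma}(t)$ exists (since $f\circ\gamma$ is absolutely continuous) and that $D\gamma(t)$ exists, and then the reverse triangle inequality together with the Lipschitz bound shows the difference quotients along $\gamma$ and along the line $s\mapsto \gamma(t)+sD\gamma(t)$ differ by $o(h)/h$. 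Since the left quotient has a genuine limit, so does the right one, and the $\limsup$ in the definition of $\apmd[f]{\gamma(t)}(D\gamma(t))$ is automatically a limit equal to $v_{f\circ\gamma}(t)$. This avoids Ivanov's length identity entirely and, as you note, also sidesteps any worry about $\gamma$ spending time in the exceptional set $N_{0}$ of \Cref{thm:legit_differential}: the comparison is made at the level of the raw $\limsup$, which is always well-defined. The trade-off is that the paper's proof packages the same estimate through a citation rather than carrying it out, which is slightly shorter but less transparent; your argument is self-contained modulo the basic facts that Lipschitz-post-composition preserves absolute continuity and that a.c.\ paths are classically differentiable a.e.
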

    \begin{proof}
	Ivanov proves in Proposition 2.7 of \cite{Iva:08} that $\ell_{d}( f \circ \gamma ) = \ell_{ \apmd{f} }( \gamma )$ for every Lipschitz path $\gamma \colon \left[a,  b\right] \rightarrow \mathbb{R}^{2}$. Since every absolutely continuous path has a Lipschitz parametrization, the same result holds for absolutely continuous paths $\gamma \colon \left[a,  b\right] \rightarrow \mathbb{R}^{2}$. The lemma now follows from the Lebesgue differentiation theorem.
    \end{proof}

\subsection{Quasiconformal mappings} \label{sec:quasiconformal_mappings}

    Recall the geometric definition of quasiconformal mapping given in \eqref{equ:qc_definition}. A result of Williams is that this geometric definition is equivalent to an analytic definition based on metric Sobolev spaces. We state the two-dimensional case of this result, or rather a generalization to the case of continuous monotone maps.
    \begin{thm}[cf. \cite{Wil:12}] \label{prop:williams:L-Wversion}
    Let $X$ and $Y$ be metric surfaces with locally finite Hausdorff $2$-measure. Let $f \colon X \rightarrow Y$ be continuous and monotone and suppose that the pullback measure $f^{*}\mathcal{H}^{2}_{Y}$ is locally finite.
    The following are equivalent for the same constant $K \geq 1$:
    \begin{enumerate}
        \item[(i)] $\Mod \Gamma \leq K \Mod f\Gamma$ for all path families $\Gamma$ in $X$.
        \item[(ii)] $f \in N_{\loc}^{1,2}(X,Y)$ and satisfies
            \[ g_f^2(x) \leq KJ_f(x) \]
        for $\mathcal{H}_X^2$-almost every $x \in X$.
    \end{enumerate}
    \end{thm}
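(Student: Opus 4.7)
The plan is to follow the template of Williams' original proof for homeomorphisms, bridging the monotone case via a preliminary fiber analysis. The first step is to show that the local finiteness of $f^*\mathcal{H}^2_Y$ forces $\#( f^{-1}(y) ) = 1$ for $\mathcal{H}^2_Y$-almost every $y \in f(X)$. Since $f$ is monotone, each fiber $f^{-1}(y)$ is a continuum, so $\#( f^{-1}(y) ) \in \{1, \infty\}$. If $S = \{ y : \#( f^{-1}(y) ) = \infty \}$ had positive $\mathcal{H}^2_Y$-measure, then for any bounded open $U \subset X$ meeting $f^{-1}(S)$ and any $y \in S$ with $f^{-1}(y) \cap U \neq \emptyset$, the set $U \cap f^{-1}(y)$ would be a nonempty relatively open subset of a non-degenerate continuum and thus uncountable, giving $\#( U \cap f^{-1}(y) ) = \infty$. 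Covering $f^{-1}(S)$ by countably many such $U_i$ and noting $S \subset \bigcup_i f(U_i)$ would then force $f^*\mathcal{H}^2_Y(U_i) = \infty$ for some $i$, contradicting local finiteness.

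For (ii) $\Rightarrow$ (i), given a family $\Gamma$ in $X$ and an admissible function $\rho$ for $f\Gamma$, set $\widetilde\rho = (\rho \circ f)\, g_f$ (defined as zero where $f(x)$ is not in the domain of $\rho$). Using the weak upper gradient characterization \eqref{eq:metric:speed:characterization}, for modulus-almost every rectifiable $\gamma \in \Gamma$, the composition $f \circ \gamma$ is absolutely continuous with $v_{ f \circ \gamma } \leq ( g_f \circ \gamma )\, v_{ \gamma }$ almost everywhere, whence the change-of-variables on the real line yields $\int_{ \gamma } \widetilde\rho \, ds \geq \int_{ f \circ \gamma } \rho \, ds \geq 1$, so $\widetilde\rho$ is weakly admissible for $\Gamma$. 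Then combining $g_f^2 \leq K J_f$, the Radon--Nikodym inequality $\int_X \phi\, J_f \, d\mathcal{H}^2_X \leq \int_X \phi \, df^*\mathcal{H}^2_Y$ for $\phi = (\rho \circ f)^2$, the definition of $f^*\mathcal{H}^2_Y$ via multiplicity, and the first-paragraph fiber reduction gives
\[
  \int_X \widetilde\rho^2 \, d\mathcal{H}^2_X
  \leq K \int_Y \rho(y)^2\, \#( f^{-1}(y) )\, d\mathcal{H}^2_Y(y)
  \leq K \int_Y \rho^2 \, d\mathcal{H}^2_Y,
\]
so $\Mod \Gamma \leq K \Mod f\Gamma$ after minimizing over $\rho$.

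For (i) $\Rightarrow$ (ii), the plan is to adapt the argument of \cite{Wil:12}, which produces in the homeomorphic setting an $L^2$ upper gradient $g$ of $f$ satisfying $g^2 \leq K J_f$ almost everywhere by testing the modulus inequality against families of short path segments in small coordinate rectangles and extracting weak limits. In our situation this argument is carried out in a chart around an $\mathcal{H}^2_X$-typical point $x_0$; by the first paragraph one may arrange that $x_0$ lies in a trivial fiber of $f$ and that $f$ is injective on a positive-measure subset of every sufficiently small neighborhood, which is exactly the input required for Williams' localization. Passing to the minimal weak upper gradient then yields $g_f^2 \leq K J_f$ almost everywhere, and the $L^2$ integrability of $g$ together with the finiteness of $f^*\mathcal{H}^2_Y$ on compact sets delivers $f \in N^{1,2}_{\loc}(X, Y)$. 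The principal obstacle is the step just mentioned: ensuring that the ``non-injective'' fibers do not interfere with the local modulus estimates, which is precisely what the fiber-reduction of the first paragraph is designed to resolve; a secondary issue is verifying that path families contained in single non-degenerate fibers form a modulus-zero family, which follows again since such fibers project $\mathcal{H}^2_Y$-negligibly into $Y$.
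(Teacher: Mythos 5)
Your (ii) $\Rightarrow$ (i) argument follows the paper's intended strategy and is correct: the admissible function is pulled back via $g_f$, the dilatation inequality converts $g_f^2$ into $J_f$, the Jacobian is compared with the pullback measure $f^*\mathcal{H}^2_Y$, and the fiber reduction (multiplicity one for $\mathcal{H}^2_Y$-a.e.\ point of $f(X)$) eliminates the multiplicity factor. Your first-paragraph fiber analysis is the right argument; the paper runs it locally, fixing an open $U$ with $f^*\mathcal{H}^2_Y(U)<\infty$ and showing $B_U=\{y\in f(U):\#(f^{-1}(y))=\infty\}$ is $\mathcal{H}^2_Y$-null, but the content is the same.

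The direction (i) $\Rightarrow$ (ii) has a genuine gap. You propose to localize around a point $x_0$ in a trivial fiber and assert that $f$ is ``injective on a positive-measure subset of every sufficiently small neighborhood.'' This does not follow from your first paragraph: you established $\mathcal{H}^2_Y(S)=0$ for the exceptional image set $S=\{y:\#(f^{-1}(y))=\infty\}$, but the claim you invoke concerns the $\mathcal{H}^2_X$-measure of $f^{-1}(S)$, and $\mathcal{H}^2_X(f^{-1}(S))=0$ would require Lusin's Condition $(N^{-1})$ for $f$, which is not available at this stage — it is a consequence of the conclusion, not a hypothesis. More to the point, the detour is unnecessary and misidentifies where the difficulty lies: the implication (i) $\Rightarrow$ (ii) holds for arbitrary continuous $f\in L^2_{\loc}(X,Y)$, with neither monotonicity nor any form of injectivity playing a role. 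One applies Theorem 3.10 of \cite{Wil:12} directly: for continuous $f$, membership $f\in N^{1,2}(U,Y)$ is equivalent to $\liminf_{\varepsilon\to 0^+}\varepsilon^2\Mod\Gamma_\varepsilon<\infty$, where $\Gamma_\varepsilon$ consists of the paths in $U$ whose $f$-images have endpoints at distance at least $\varepsilon$, and moreover this $\liminf$ equals the $L^2(U)$-norm of the minimal weak upper gradient. Hypothesis (i) bounds the $\liminf$ by $K\,\mathcal{H}^2_Y(f(U))<\infty$, which gives both $f\in N^{1,2}_{\loc}(X,Y)$ and $\int_U g_f^2\,d\mathcal{H}^2_X\le K\,f^*\mathcal{H}^2_Y(U)$ for open $U$; Borel regularity of the two measures upgrades this to all Borel sets and hence to $g_f^2\le K\,J_f$ almost everywhere. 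You should replace the localization plan with this direct appeal to Williams' characterization.
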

    \Cref{prop:williams:L-Wversion} can be established using the original proof in \cite{Wil:12} with slight modifications which deal with the multiplicity of $f$. This is omitted here. A similar result can be found as Proposition 3.5 of \cite{LW:20}.
    
    The \emph{outer dilatation} of $f$ is the smallest constant $K \geq 1$ for which the modulus inequality $\Mod \Gamma \leq K \Mod f \Gamma$ holds for all $\Gamma$ in $X$. The \emph{inner dilatation} of $f$ is the smallest constant $K \geq 1$ for which $\Mod f \Gamma \leq K \Mod \Gamma$ holds for all $\Gamma$ in $X$. These are denoted respectively by $K_O(f)$ and $K_I(f)$. Thus a quasiconformal map is a homeomorphism with finite outer and inner dilatation. The \emph{pointwise distortion} of $f$ at $x \in X$ is
    \begin{equation}
    \label{eq:QC:pw:distortion}
        H_{f}( x )
        =
        g_{f}( x ) g_{ f^{-1} }( f(x) ).
    \end{equation}
    We interpret \eqref{eq:QC:pw:distortion} as $H_{f}( x ) = 1$ whenever $g_{f}( x ) = 0$ or $g_{ f^{-1} }( f(x) ) = 0$. A consequence of Proposition 3.4 and Corollary 3.8 in \cite{Iko:19} is that $H_{f}( x )$ is independent of the representatives of $g_{f}$ and $g_{ f^{-1} }$ $\mathcal{H}^{2}_{X}$-almost everywhere.
    Moreover, Corollary 3.12 of \cite{Iko:19} implies that $H_{f}( x ) \leq \sqrt{ K_{O}(f) K_{I}(f) }$ for $\mathcal{H}^{2}_{X}$-almost every $x \in X$. The smallest constant $H \geq 1$ for which $H_{f}( x ) \leq H$ for $\mathcal{H}^{2}_{X}$-almost every $x \in X$ is called the \emph{distortion} of $f$ and denoted by $H(f)$.
    
    Consider now a quasiconformal map $f\colon \Omega \subset \mathbb{R}^2 \to X$ that is also Lipschitz. Then the equalities $g_f(x) = L(\N_{f,x})$ and $g_{ f^{-1} } \circ f(x) = ( \omega( \N_{f,x} ) )^{-1}$ hold for $\mathcal{L}^2$-almost every $x \in \Omega$ \cite[Proposition 4.8]{Iko:19}. Consequently, we have the equality $H_f(x) = H(\N_{f,x})$ for $\mathcal{L}^2$-almost every $x \in \Omega$. 

    In general, a quasiconformal map $f\colon \Omega \subset \mathbb{R}^2 \to X$ must satisfy \emph{Lusin's Condition ($N^{-1}$)}: for every Borel set $E \subset \Omega$ of positive Lebesgue measure, $f(E)$ has positive Hausdorff 2-measure. This is essentially proved in Remark 8.3 or Section 17 of \cite{Raj:17}. On the other hand, $f$ need not satisfy \emph{Lusin's Condition ($N$)}: for every Borel set $E \subset \Omega$ of zero Lebesgue measure, $f(E)$ has zero Hausdorff 2-measure. An example of this is given as Proposition 17.1 of \cite{Raj:17}. Generalizations of these results are considered in Section 3 of \cite{Iko:19}.

    A uniformization theorem for quasiconformal mappings was proved by Rajala based on the notion of \emph{reciprocality} \cite{Raj:17}. Let $X$ be a metric surface. For a set $G \subset X$ and disjoint sets $F_1, F_2 \subset G$, let $\Gamma(F_1,F_2; G)$ denote the family of paths whose images are contained in $G$ that start from $F_1$ and end in $F_2$. A \emph{quadrilateral} is a set $Q$ homeomorphic to $[0,1]^2$ with boundary consisting of four nonoverlapping boundary arcs, labelled $\xi_1, \xi_2, \xi_3, \xi_4$ in cyclic order.
    
    \begin{defi}\label{defi:reciprocal}
    A metric surface $X$ is \emph{reciprocal} if there exists a constant $\kappa  \geq 1 $ such that
    \begin{align}
    	\label{upper:bound}
    	\Mod \Gamma\left( \xi_{1},  \xi_{3};  Q \right)
    	\Mod \Gamma\left( \xi_{2},  \xi_{4};  Q \right)
    	\leq
    	\kappa
    \end{align}
    for every quadrilateral $Q \subset X$, and 
    \begin{equation}
    	\label{point:zero:modulus}
    	\lim_{ r \rightarrow 0^{+} }
    	\Mod \Gamma\left( \overline{B}( x,  r ),  X \setminus B( x,  R );  \overline{B}( x,  R ) \right)
    	=
    	0
    \end{equation}
    for all $x \in X$ and $R > 0$ such that $X \setminus B( x, R ) \neq \emptyset$.
    \end{defi}
    Note that, for all metric surfaces, the product in \eqref{upper:bound} is bounded from below by a universal constant $\widetilde{\kappa} >0$ \cite{RR:19}. We say that a reciprocal surface is \emph{$\kappa$-reciprocal} if \eqref{upper:bound} and the corresponding lower bound hold for the constant $\kappa$.
    
    Theorem 1.4 in \cite{Raj:17} states that a metric surface $X$ homeomorphic to $\mathbb{R}^2$ is reciprocal if and only if there exists a quasiconformal homeomorphism onto the disk or the Euclidean plane. This result is extended to arbitrary metric surfaces in \cite{Iko:19}. More precisely, Theorem 1.3 in \cite{Iko:19} states that a metric surface $X$ is locally reciprocal (that is, every point in $X$ has a neighborhood that is reciprocal) if and only if $X$ is quasiconformally equivalent to a smooth Riemannian 2-manifold. In particular, a metric surface that is locally reciprocal is also globally reciprocal.

\subsection{Removable sets for conformal mappings}
    
    We collect some background on removable sets for conformal mappings. 
    Recall from the introduction that the compact set $E \subset \mathbb{R}^{2}$ is \emph{removable for conformal mappings} if every conformal embedding $f \colon \mathbb{R}^2 \setminus E \rightarrow \widehat{\mathbb{R}}^2$ extends to a conformal mapping $F \colon \widehat{\mathbb{R}}^2 \rightarrow \widehat{\mathbb{R}}^2$. Thus $f$ is the restriction of a Möbius transformation.
    
    This notion exists under several names, including \emph{sets of absolute area zero} and \emph{negligible sets for extremal distance}. This nomenclature reflects the following characterization.
    
    \begin{prop} \label{prop:removable_set_characterizations}
    Let $E \subset \mathbb{R}^2$ be compact. The following are equivalent.
    \begin{enumerate}[label=(\roman*)]
        \item $E$ is removable for conformal mappings. \label{item:removable_i}
        \item $E$ has absolute area zero: for every conformal embedding $f\colon \mathbb{R}^{2} \setminus E \to \widehat{\mathbb{R}}^2$, the complementary set $\widehat{\mathbb{R}}^2 \setminus f(\mathbb{R}^{2} \setminus E)$ has Lebesgue measure zero. \label{item:removable_ii}
        \item $E$ is negligible for modulus: for every domain $\Omega \subset \mathbb{R}^{2}$ and pair of disjoint compact sets $F, G \subset \Omega \setminus E$, $\Mod \Gamma( F, G; \Omega ) = \Mod \Gamma( F, G; \Omega \setminus E )$. \label{item:removable_iii}
        \item Any quasiconformal embedding $f \colon \mathbb{R}^{2} \setminus E \rightarrow \widehat{\mathbb{R}}^2$ has an extension to a quasiconformal mapping $F\colon \widehat{\mathbb{R}}^2 \to \widehat{\mathbb{R}}^2$. \label{item:removable_iv}
        \item For any open set $U \subset \mathbb{R}^{2}$, every quasiconformal mapping on $U \setminus E$ extends quasiconformally to the whole open set $U$. \label{item:removable_v}
    \end{enumerate}
    \end{prop}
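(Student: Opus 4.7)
The plan is to establish the cycle (i) $\Rightarrow$ (ii) $\Rightarrow$ (iii) $\Rightarrow$ (i) among the conformal statements, to verify that condition (iii) is quasiconformally invariant, and then to import (iv) and (v) via the measurable Riemann mapping theorem.

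For (i) $\Rightarrow$ (ii), I would first apply (i) to the identity embedding $\iota\colon \mathbb{R}^2 \setminus E \to \widehat{\mathbb{R}}^2$; its unique Möbius extension is the identity, so the image complement $E \cup \{\infty\}$ must have Lebesgue measure zero, yielding $\mathcal{L}^2(E) = 0$. For a general conformal embedding $f$ with Möbius extension $\widetilde{f}$, the pole $\widetilde{f}^{-1}(\infty)$ is a single point, so $\widetilde{f}$ is smooth away from it and maps the null set $E$ to a null set $\widetilde{f}(E)$; since the complement of $f(\mathbb{R}^2 \setminus E)$ equals $\widetilde{f}(E) \cup \{\widetilde{f}(\infty)\}$, condition (ii) follows. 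For (iii) $\Rightarrow$ (ii), I would test the modulus identity on suitably chosen compact sets to force the image complement of any conformal embedding to have vanishing area. The deeper implications (ii) $\Rightarrow$ (i) and (ii) $\Rightarrow$ (iii) are the classical Ahlfors--Beurling theorems of \cite{AB:50}, which I would cite rather than reprove. The main obstacle is (ii) $\Rightarrow$ (i), which rests on the extremal length argument of \cite[Theorem 10]{AB:50}: if the image complement has zero area, a combination of normal families and area comparison produces a conformal extension across $E$.

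Next I would observe that condition (iii) is invariant under quasiconformal self-homeomorphisms of the ambient space. Since (iii) is equivalent to the statement that, within every family $\Gamma(F, G; \Omega)$, the subfamily of paths meeting $E$ has modulus zero, and quasiconformality distorts modulus by at most a bounded factor, this vanishing passes to $h(E)$ for any quasiconformal homeomorphism $h$.

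With this invariance in hand, (i) $\Rightarrow$ (iv) follows via the measurable Riemann mapping theorem: given a quasiconformal embedding $f\colon \mathbb{R}^2 \setminus E \to \widehat{\mathbb{R}}^2$ with Beltrami coefficient $\mu_f$, I would extend $\mu_f$ by zero across $E$, solve the Beltrami equation globally to obtain a quasiconformal self-homeomorphism $h$ of $\widehat{\mathbb{R}}^2$ with the same coefficient, and note that $f \circ h^{-1}$ is conformal on $\widehat{\mathbb{R}}^2 \setminus h(E)$. Since $h(E)$ again satisfies the conformal equivalences, $f \circ h^{-1}$ extends to a Möbius transformation, and the composition with $h$ extends $f$ quasiconformally. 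The implication (iv) $\Rightarrow$ (i) is immediate since conformal maps are $1$-quasiconformal, and (iv) $\Leftrightarrow$ (v) is handled by the same Beltrami-coefficient extension applied on a neighborhood of $E$ inside $U$, using that quasiconformality is a local property.
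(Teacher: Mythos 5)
The paper's own ``proof'' of this proposition is a pure citation: it refers to \cite{AB:50} for the equivalence of (i), (ii), (iii), and to Propositions 4.6 and 4.7 of \cite{You:15} for the equivalence of (i) with (v) and (iv) respectively, noting only that the latter rest on the measurable Riemann mapping theorem. You attempt to sketch some of the underlying arguments yourself, which is a reasonable ambition, but two steps in your outline do not hold up.

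First, your argument for (i) $\Rightarrow$ (ii) begins by ``applying (i) to the identity embedding'' and concluding that $\mathcal{L}^2(E) = 0$. This is not a valid inference: removability applied to the identity map only gives the tautology that the identity on $\mathbb{R}^2 \setminus E$ extends to the identity M\"obius transformation, which carries no information about the Lebesgue measure of $E$. The implication ``(i) $\Rightarrow$ $\mathcal{L}^2(E) = 0$'' is true, but its proof is nontrivial --- essentially, if $\mathcal{L}^2(E) > 0$, the measurable Riemann mapping theorem applied to the Beltrami coefficient $\frac{1}{2}\chi_E$ produces a quasiconformal self-homeomorphism of $\widehat{\mathbb{R}}^2$ that is conformal on $\mathbb{R}^2 \setminus E$ but not M\"obius, so its restriction is a conformal embedding with no M\"obius extension. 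This is exactly the same machinery you reserve for (i) $\Rightarrow$ (iv), so the step is not as elementary as you present it. (The remainder of your (i) $\Rightarrow$ (ii) argument, deducing (ii) from $\mathcal{L}^2(E) = 0$ by smoothness of the M\"obius extension, is fine.)

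Second, you claim that (iii) is ``equivalent to the statement that, within every family $\Gamma(F,G;\Omega)$, the subfamily of paths meeting $E$ has modulus zero,'' and build your quasiconformal-invariance argument for (iii) on this reformulation. One direction is formal (if the subfamily of paths meeting $E$ has modulus zero, then by monotonicity and subadditivity of modulus the two moduli in (iii) agree), but the converse is not: $\Mod \Gamma = \Mod \Gamma'$ with $\Gamma' \subset \Gamma$ does not in general force $\Mod(\Gamma \setminus \Gamma')$ to vanish, since modulus is only subadditive. The stronger statement you want is a genuine theorem about NED sets, not a restatement of (iii). Since the quasiconformal invariance of removability is what your (i) $\Rightarrow$ (iv) argument needs (to deduce that $h(E)$ is again removable, where $h$ solves the extended Beltrami equation), and you derive that invariance from the purported reformulation of (iii), the overall scheme has a structural gap: you would either have to prove the stronger path-family statement for NED sets, or argue the quasiconformal invariance of removability along the lines of \cite{You:15}, where one uses the equivalence with (ii) established by \cite{AB:50} rather than a direct modulus argument. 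As it stands, simply citing \cite{AB:50} and \cite{You:15}, as the paper does, is the cleaner route.
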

    The equivalence of \ref{item:removable_i}, \ref{item:removable_ii} and \ref{item:removable_iii} is proved in \cite{AB:50}. The equivalence of \ref{item:removable_i} and \ref{item:removable_iv} is a consequence of the measurable Riemann mapping theorem. See Proposition 4.7 in \cite{You:15} for a proof. The equivalence of \ref{item:removable_i} and \ref{item:removable_v} can also be found in \cite{You:15} as Proposition 4.6. We see from \ref{item:removable_iv} and \ref{item:removable_v} that removability for conformal mappings is a local property and a quasiconformal invariant. If $E$ contains a nontrivial connected component $E_0$, then there is a non-Möbius conformal map $f\colon \mathbb{R}^2 \setminus E_0 \to \mathbb{R}^2$ such that $\mathbb{R}^2 \setminus f(\mathbb{R}^2 \setminus E_0)$ is the closed unit disk. Thus Property \ref{item:removable_ii} implies that a removable set for conformal mappings is totally disconnected.
    
    Property \ref{item:removable_iii} in \Cref{prop:removable_set_characterizations} indicates the connection between quasiconformal uniformization and removable sets. Observe that for each triple $F$, $G$, and $\Omega$, $\Gamma( F, G; \Omega \setminus E )$ is a subset of $\Gamma( F, G; \Omega )$ and thus satisfies $\Mod \Gamma( F, G; \Omega \setminus E ) \leq \Mod \Gamma( F, G; \Omega )$. In contrast, the metric space constructions in our paper collapse a domain at the set $E$ and hence \emph{increase} the modulus of a path family, up to a factor related to the dilatation bound of the norm field. Thus \Cref{thm:removable_implies_reciprocal} and \Cref{thm:reciprocal_implies_removable} can be summarized roughly by saying that \emph{removing the set $E$ does not decrease the modulus of any path family if and only if collapsing the plane at $E$ does not increase the modulus of any path family}.

\section{Constructing a metric from a norm field}\label{sec:distances}

    In this section, we give a description of the metric spaces considered in this paper and develop their basic properties. These spaces are constructed from measurable Finsler structures satisfying additional assumptions. The precise definition is given in \Cref{sec:admissible_norm_field}. 
    
    There is a vast literature on Riemannian and Finsler geometry, typically requiring smoothness or at least continuity of the metrics. The idea of constructing metrics from Finsler structures with less regularity has been considered by various previous authors, and so the material in this section is more-or-less standard. In \Cref{sec:remarks:admissible}, we include a brief comparison with the existing literature. 

    We consider here seminorm fields $N$ such that either $N_x$ is a norm or $N_x = 0$ for all $x \in \Omega$. Recall from the introduction that, slightly abusing terminology, we use the term \emph{norm field} to refer to an object of this type. Since a vector $v \in \mathbb{R}^2$ often comes with an implicit basepoint $x$, we will sometimes write $N(v)$ in place of $N_x(v)$, such as in the expression $N \circ D\gamma$. 
    \subsection{Definition of the metric} \label{sec:admissible_norm_field}
    
    Let $\Omega \subset \mathbb{R}^{2}$ be a domain.
    \begin{defi}\label{defi:seminorm_admissible}
    A norm field $N \colon \Omega \times \mathbb{R}^{2} \rightarrow \left[0,  \infty\right)$ is \emph{admissible} if it satisfies the following:
    \begin{itemize}
        \item[(i)] (lower semicontinuous) For all vectors $v \in \mathbb{R}^2$ and points $x \in \Omega$, we have $N_x(v) \leq \liminf_{y \to x} N_y(v)$.
        \item[(ii)] (locally bounded) For all $x \in \Omega$, there is a neighborhood $U$ of $x$  and $M >0$ such that $L(N_y) \leq M$ for all $y \in U$.  
        \item[(iii)] (locally bounded distortion) For all $x \in \Omega$, there is a neighborhood $U$ of $x$ and $H >0$ such that $L(N_y) \leq H \omega( N_y) )$ for all $y \in U$.
        \item[(iv)] (nonseparating) The set $E = \{x \in \Omega: N_x = 0\}$ is compact and $\Omega \setminus E$ is connected. 
    \end{itemize}
    \end{defi}
    An immediate consequence of having locally bounded distortion is that $N_{x}(v) = 0$ for some $v \in \mathbb{R}^{2} \setminus \left\{ 0 \right\}$ if and only if $N_{x}$ is identically zero. 
    
    We use the norm field $N$ to measure the length of an absolutely continuous path $\gamma \colon \left[a,  b\right] \rightarrow \Omega$ in the following way. We define the \emph{$N$-length} of $\gamma$ to be
    \begin{equation*}
    \label{eq:length_norm_field}
        \ell_{N}( \gamma )
        =
        \int_a^b
            N\circ D\gamma(t)\, 
        dt,
    \end{equation*}
    where $D\gamma \colon [a,b] \rightarrow \Omega \times \mathbb{R}^{2}$ is a Borel representative of the differential of $\gamma$.
    \begin{defi}\label{eq:seminorm_admissible_distance}
    Let $N$ be an admissible seminorm field and $x,  y \in \Omega$. The \emph{$N$-distance} between $x$ and $y$ is defined as
    \begin{equation*}
        d_{N}(x,y)
        =
        \inf \ell_{N}( \gamma ),
    \end{equation*}
    where the infimum is taken over absolutely continuous paths $\gamma$ joining $x$ to $y$ in $\Omega$.
    \end{defi}
    
    The function $d_{N}$ is locally finite and satisfies the triangle inequality, but it may happen that $d_N(x,y) = 0$ for distinct points $x,y \in \Omega$. Thus, in general, $d_N$ is only a pseudodistance. Let $\mathcal{E}_N$ be the partition of $\Omega$ into equivalence classes of points, where $x,y \in \Omega$ belong to the same equivalence class if $d_N(x,y) = 0$. This yields the quotient space $\Omega / \mathcal{E}_N$ and the natural quotient map $\pi_N \colon \Omega \to \Omega / \mathcal{E}_N$. The space $\Omega / \mathcal{E}_N$ comes equipped with the metric that is the pushforward of $d_N$ under $\pi_N$, which we denote by $\widetilde{d}_N$.
    
    A consequence of the local boundedness of $N$ is that the quotient map $\pi_N$ is locally Lipschitz. In particular, the results described in \Cref{sec:lipschitz} apply to the map $\pi_N$.
    
    \subsection{Remarks on definition of admissible norm fields}\label{sec:remarks:admissible}
    
    We offer a few remarks about  \Cref{defi:seminorm_admissible} and give a comparison to the previous literature. 
    
    The lower semicontinuity assumption guarantees that the metric tangents of $\Omega / \mathcal{E}_{N}$ coincide with $N$ almost everywhere. This implies, for example, that two conformally equivalent norm fields generate metrics that are 1-quasiconformally equivalent. In general, the metric tangents are not so well-behaved. For example, let $F \subset [0,1]$ be a Cantor set of positive linear measure, and let $E = F \times F \subset \mathbb{R}^2$. The norm field $N$ defined by
    \[
        N_x
        = 
        \begin{cases}
            2\|\cdot\|_\infty & \text{ if } x \in E
            \\
            \|\cdot\|_1 & \text{ if } x \notin E
        \end{cases} 
    \]
    generates the same metric as the norm field $\|\cdot\|_1$, despite the fact that they differ on a positive measure set. Indeed, the inequality $\|x-y\|_1 \leq d_N(x,y)$ is immediate for all $x,y \in \mathbb{R}^2$, since $\|\cdot\|_1 \leq 2\|\cdot\|_\infty$. On the other hand, for all $x,y \in \mathbb{R}^2 \setminus E$, there is an $\ell_1$-geodesic from $x$ to $y$ lying in $\mathbb{R}^2 \setminus E$. Thus $d_N(x,y) \leq \|x-y\|_1$ for such $x,y$. Since $E$ has empty interior, we obtain the inequality $d_N(x,y) \leq \|x-y\|_1$ for all $x,y \in \mathbb{R}^2$. The lower semicontinuity assumption allows us to avoid this type of behaviour; see \Cref{lemm:metric_derivative_Jacobian} below.
    
    The fact that $E = \left\{ x \in \Omega \colon N_{x} = 0 \right\}$ is non-separating guarantees that the quotient space is homeomorphic to $\Omega$ (\Cref{lemm:topology}). For example, if $E$ is the Euclidean unit circle and $N = \chi_{ \mathbb{R}^{2} \setminus E }\norm{ \cdot }_{2}$, the resulting quotient space is not a $2$-manifold.

    Now we discuss some of the related literature on non-smooth Finsler metrics. Perhaps the first investigations into this topic were carried out by Busemann--Mayer in \cite{BM:41}. Beginning in the 1940s, the Russian school led by Alexandrov developed a theory of \emph{surfaces of bounded curvature}, also now known as \emph{Alexandrov surfaces}, as a generalization of two-dimensional Riemannian geometry. See \cite{AZ:67} and \cite{Res:93} for an overview.

    Finsler metrics on Lipschitz manifolds were systematically studied by De Cecco--Palmieri in the series of papers \cite{DP:88, DP:90, DP:91,DP:95}. Note that they take a different approach to defining the distance $d_N$ from a norm field $N$. The idea is to make the distance more robust by making the definition insensitive to changes in $N$ on a set of measure zero. In particular, the norm field $N$ need only be defined on a full measure subset. This is achieved as follows. For a set $F \subset \mathbb{R}^2$ of measure zero, let $\Gamma_F$ be the family of absolutely continuous paths that intersect $F$ in a set of length zero. Then one defines the metric $d_{N,F}$ as in \Cref{eq:seminorm_admissible_distance} but restricting to paths in $\Gamma_F$. Next one defines $D_N(x,y) = \sup d_{N,F}(x,y)$, the supremum taken over all measure zero sets $F$. This is called the \emph{intrinsic distance} in \cite{DP:95,GPP:06} and \emph{essential metric} in \cite{AHPS:18} and further investigated in \cite{CS:19}. Observe that if $N$ is continuous, then the essential metric coincides with the metric considered in this paper. However, we do not take this approach, since the norm fields we have in mind typically vanish on a set of measure zero, and we prefer the additional flexibility of only requiring $N$ to be lower semicontinuous.
    

    \subsection{Properties of length} 
    
    In the remainder of this section, we establish properties of admissible norm fields and their corresponding metric. Our first lemma states that the property of lower semicontinuity of $N$ in each direction $v$ can be promoted to lower semicontinuity at a point in all directions uniformly. 
    
    \begin{lemm}\label{lemm:strong_LSC}
    Let $N$ be an admissible norm field and $x \in \Omega$. For every $\varepsilon > 0$, there exists $r > 0$ such that
    \begin{equation*}
        N_{y}(v)
        \geq
        ( 1 - \varepsilon )N_{x}(v).
    \end{equation*}
    for all $y \in B( x,  r )$ and $v \in \mathbb{R}^{2}$.
    \end{lemm}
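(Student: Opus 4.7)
The plan is to upgrade the pointwise lower semicontinuity in each direction $v$ to a lower bound uniform in $v$, by exploiting compactness of the unit circle together with uniform Lipschitz control on the seminorms $N_y$ near $x$. By absolute homogeneity it suffices to prove the inequality for $v$ in the Euclidean unit circle $S^1 = \{v \in \mathbb{R}^2 : \|v\|_2 = 1\}$.

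First I would dispose of the trivial case: if $N_x \equiv 0$, then $(1-\varepsilon)N_x(v) = 0 \leq N_y(v)$ for every $y$ and $v$. Otherwise, the remark immediately after \Cref{defi:seminorm_admissible} (consequence of locally bounded distortion) says $N_x$ is a genuine norm, so by continuity of $N_x$ and compactness of $S^1$ we have $\omega(N_x) > 0$. By conditions (ii) and (iii) of admissibility, choose $M>0$ and a neighborhood $U_0$ of $x$ with $L(N_y) \leq M$ for all $y \in U_0$; in particular each $v \mapsto N_y(v)$ is $M$-Lipschitz on $(\mathbb{R}^2, \|\cdot\|_2)$ because for any seminorm $S$ with $L(S) \leq M$, $|S(v) - S(w)| \leq S(v-w) \leq M\|v-w\|_2$.

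Next, fix $\delta > 0$ (to be chosen) and cover $S^1$ by finitely many Euclidean balls of radius $\delta$ centered at points $v_1, \dots, v_k \in S^1$. By the pointwise lower semicontinuity (i) applied at $x$ to each of the finitely many vectors $v_i$, there exists $r>0$ (with $B(x,r) \subset U_0$) such that
\begin{equation*}
    N_y(v_i) \geq \bigl(1 - \tfrac{\varepsilon}{2}\bigr) N_x(v_i) \qquad \text{for all } y \in B(x,r) \text{ and } i=1,\dots,k.
\end{equation*}
For an arbitrary $v \in S^1$, choose $v_i$ with $\|v-v_i\|_2 < \delta$; using the triangle inequality for $N_y$ and the Lipschitz bounds $L(N_y) \leq M$ and $L(N_x) \leq M$, I compute
\begin{align*}
    N_y(v)
    &\geq N_y(v_i) - M\delta
    \geq \bigl(1-\tfrac{\varepsilon}{2}\bigr) N_x(v_i) - M\delta \\
    &\geq \bigl(1-\tfrac{\varepsilon}{2}\bigr) N_x(v) - \bigl[\bigl(1-\tfrac{\varepsilon}{2}\bigr)M + M\bigr]\delta.
\end{align*}
The conclusion $N_y(v) \geq (1-\varepsilon)N_x(v)$ then follows provided $\bigl[(1-\varepsilon/2)M + M\bigr]\delta \leq (\varepsilon/2)\omega(N_x)$, which is achievable by shrinking $\delta$ since $\omega(N_x) > 0$. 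One then re-chooses $r$ accordingly for this $\delta$.

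The only genuine obstacle is making sure the two kinds of errors—the slack $(1-\varepsilon/2)$ from the pointwise lower semicontinuity and the discretization error $M\delta$ from covering $S^1$—can be absorbed against $(1-\varepsilon)N_x(v)$; this is precisely where the assumption of locally bounded distortion (iii) is essential, since it forces $\omega(N_x)>0$ and provides the uniform positive baseline $N_x(v) \geq \omega(N_x)$ on $S^1$ against which the additive error $M\delta$ can be made negligible.
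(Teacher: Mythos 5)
Your proof is correct, and it takes a genuinely different route from the paper's. The paper argues by contradiction: assuming the conclusion fails, it extracts sequences $y_n \to x$ and $v_n \in \mathbb{S}^1$ with $N_{y_n}(v_n) < N_x(v_n) - \delta$, passes to a convergent subsequence $v_n \to v$ using sequential compactness, and then derives a contradiction at the single limiting direction $v$ by combining the $M$-Lipschitz estimate $|N_z(v) - N_z(v_n)| \leq M\|v - v_n\|_2$ with pointwise lower semicontinuity at $v$. You instead run a direct finite-cover (ε-net) argument: cover $\mathbb{S}^1$ by $\delta$-balls around $v_1, \dots, v_k$, apply pointwise lower semicontinuity at each $v_i$ to get a single $r$, and then transfer the bound from the nearest $v_i$ to an arbitrary $v \in \mathbb{S}^1$ using the same $M$-Lipschitz estimate. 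Both proofs hinge on the same three ingredients — compactness of $\mathbb{S}^1$, the uniform bound $L(N_y) \leq M$ from admissibility condition (ii), and the positivity $\omega(N_x) > 0$ guaranteed by condition (iii) — and they are comparable in length; yours is arguably the slightly more constructive exposition since it avoids the contradiction scaffolding and produces the radius $r$ explicitly from the finite subcover. One cosmetic point: the choice of $M$ and $U_0$ uses only condition (ii), not (iii); condition (iii) enters only to guarantee $\omega(N_x) > 0$, as you correctly emphasize at the end.
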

    \begin{proof}
    If $N_x$ is the zero seminorm, then the conclusion follows immediately. Thus we may assume that $N_x$ is a norm. By the positive homogeneity of $N$, we need only consider vectors $v \in \mathbb{S}^1$. Let $\varepsilon >0$ and let $\delta = \varepsilon \omega(N_x)$, so that $N_x(v) - \delta \geq (1- \varepsilon)N_x(v)$ for all $v \in \mathbb{R}^2$. Thus it suffices to show that there exists a radius $r > 0$ such that
    \begin{equation*}
        N_{y}( v )
        \geq
        N_{x}( v ) - \delta
    \end{equation*}
    for all $y \in B( x,  r )$ and $v \in \mathbb{S}^{1}$.
    
    Assume to the contrary that no such $r$ exists. Then there exist sequences $(y_n) \subset \Omega$ and $(v_n) \subset \mathbb{S}^1$ for which
    \begin{equation} \label{equ:contradiction}
        N_{ y_{n} }( v_{n} )
        <
        N_{x}( v_{n} )
        -
        \delta
    \end{equation}
    for all $n \in \mathbb{N}$. By passing to a subsequence, we have that $v_{n}$ converges to some vector $v \in \mathbb{S}^{1}$.
    
    Let $M >0$ be such that $L(N_y) \leq M$ for all $y$ in a neighborhood of $x$. Then for every sufficiently large $n \in \mathbb{N}$,
    \begin{align*}
        N_{x}( v_{n} )
        -
        M
        \norm{ v - v_{n} }_2
        &\leq
        N_{x}( v )
    \end{align*}
    and
    \begin{align*}
        N_{ y_{n} }( v )
        &\leq
        N_{ y_{n} }( v_{n} )
        +
        M \norm{ v - v_{n} }_{2}.
    \end{align*}
    Moreover, the lower semicontinuity of $N$ implies that
    \[N_{x}( v )
        -
        \frac{ \delta }{ 2 }
        \leq N_{y_n}(v)\]
    for all sufficiently large $n \in \mathbb{N}$. Combining these inequalities yields
    \begin{equation*}
        N_{x}( v_{n} )
        -
        \left(
            2 M \norm{ v - v_{n} }_{2}
            +
            \frac{ \delta }{ 2 }
        \right)
        \leq
        N_{ y_{n} }( v_{n} ).
    \end{equation*}
    Let $n$ be sufficiently large so that $\|v-v_n\|_2 < \delta(4M)^{-1}$. Then the preceeding inequality contradicts \eqref{equ:contradiction}, and the result follows.
    \end{proof}
    
    The next lemma shows that the metric $d_N$ is locally well-behaved outside of the set $E$. 
    
    \begin{lemm}\label{lemm:quotient_map_invertible}
    Let $N$ be an admissible norm field. For all $x \in \Omega \setminus E$, there exists $r>0$ such that $\overline{B}(x,r) \subset \Omega \setminus E$ and the quotient map $\pi_{N}$ is bi-Lipschitz in the neighborhood $B(x,r)$.
    \end{lemm}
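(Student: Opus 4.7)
The plan is to find a radius $r > 0$ such that on $B(x,r)$ the metric $d_N$ is bi-Lipschitz equivalent to $\|\cdot\|_2$, from which the bi-Lipschitz property of $\pi_N$ follows since $\widetilde d_N(\pi_N(z),\pi_N(w)) = d_N(z,w)$ by the definition of the quotient pseudometric. Since $E$ is compact and $x \notin E$, we can first pick $r_0 > 0$ with $\overline{B}(x, 2r_0) \subset \Omega \setminus E$, so that $N_y$ is a genuine norm for every $y$ in this ball, and in particular $\omega(N_x) > 0$.

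For the upper bound, I would use local boundedness to find a constant $M > 0$ with $L(N_y) \leq M$ on $\overline{B}(x,2r_0)$ (shrinking $r_0$ if needed). For any $z, w \in B(x,r_0)$ the straight Euclidean segment $\gamma_{zw}$ lies in $\overline{B}(x,2r_0)$, and then the length formula gives $\ell_N(\gamma_{zw}) \leq M\|z-w\|_2$, so $d_N(z,w) \leq M\|z-w\|_2$.

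The slightly more delicate step is the lower bound. Applying \Cref{lemm:strong_LSC} with $\varepsilon = 1/2$, I would produce $r_1 \in (0, r_0]$ such that $N_y(v) \geq \tfrac{1}{2}\omega(N_x)\|v\|_2$ for all $y \in B(x, 2r_1)$ and $v \in \mathbb{R}^2$. Set $r = r_1/2$ and let $z,w \in B(x,r)$. For any absolutely continuous path $\gamma$ in $\Omega$ from $z$ to $w$, two cases arise. If $|\gamma| \subset B(x, 2r_1)$, then the pointwise bound yields $\ell_N(\gamma) \geq \tfrac{1}{2}\omega(N_x)\,\ell_{\|\cdot\|_2}(\gamma) \geq \tfrac{1}{2}\omega(N_x)\|z-w\|_2$. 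Otherwise $\gamma$ exits $B(x,2r_1)$, so it contains a subpath $\gamma'$ inside $\overline{B}(x, 2r_1)$ of Euclidean length at least $2r_1 - r = 3r_1/2 \geq 2\|z-w\|_2$, and the same pointwise bound on $\gamma'$ gives $\ell_N(\gamma) \geq \ell_N(\gamma') \geq \omega(N_x)\|z-w\|_2$. Taking the infimum over $\gamma$ yields $d_N(z,w) \geq \tfrac{1}{2}\omega(N_x)\|z-w\|_2$.

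The main obstacle is precisely this second case: competitor paths in the definition of $d_N$ are allowed to leave the ball, so a purely pointwise lower bound on $N$ inside the ball is not directly enough. It is resolved by choosing $r$ small relative to the radius on which the pointwise bound holds, so that any path leaving the larger ball is automatically much longer (in the Euclidean, hence $N$-sense) than the Euclidean distance $\|z-w\|_2$. Combining the two bounds gives $\tfrac{1}{2}\omega(N_x)\|z-w\|_2 \leq d_N(z,w) \leq M\|z-w\|_2$ on $B(x,r)$, and since $d_N(z,w) > 0$ whenever $z \neq w$ in this ball, we have $\widetilde d_N(\pi_N(z),\pi_N(w)) = d_N(z,w)$, proving that $\pi_N|_{B(x,r)}$ is bi-Lipschitz.
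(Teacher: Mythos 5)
Your proof follows essentially the same strategy as the paper's: use Lemma~\ref{lemm:strong_LSC} (lower semicontinuity) to get a uniform lower bound $N_y \geq c\norm{\cdot}_2$ on a slightly larger ball, local boundedness for the upper bound, and then split the lower-bound estimate into paths that stay in the larger ball versus paths that exit it, taking $r$ small relative to the outer radius so that exiting paths are long. One small arithmetic slip: the intermediate claim $3r_1/2 \geq 2\norm{z-w}_2$ does not follow from $\norm{z-w}_2 < r_1$ (take $\norm{z-w}_2 = 0.9\,r_1$); however, this is harmless, since $\ell_N(\gamma') \geq \tfrac12\omega(N_x)\cdot\tfrac{3r_1}{2} > \tfrac34\omega(N_x)\norm{z-w}_2 > \tfrac12\omega(N_x)\norm{z-w}_2$ already gives the stated lower bound in the second case.
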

    \begin{proof}
    We let $\omega( x ) = \omega( N_{x} )$ denote the minimal stretching of $N$. \Cref{lemm:strong_LSC} implies that $\omega$ is lower semicontinuous. Also $\omega( x ) = 0$ if and only if $N_{x}$ is a seminorm.
    
    Let $x \in \Omega \setminus E$. Let $R>0$ be such that the closed ball $\overline{B}(x,R)$ is contained in $\Omega \setminus E$ and satisfies $\omega(z) \geq \omega(x)/2$ for all $z \in B(x,R)$. Such an $R>0$ exists by the lower semicontinuity of the map $z \mapsto \omega(z)$. Moreover, the local boundedness of $N$ implies that the maximal stretching $L( N_z )$ is bounded from above by $M$ for all $z \in B(x,R)$. We conclude that 
    \[\frac{\omega(x)}{2}\norm{ v }_{2} \leq N_{z}(v) \leq M\norm{ v }_{2} \]
    for all $z \in B(x,R)$ and all $v \in \mathbb{R}^2$.
    
    Let $r = R/2$. We claim that \[\frac{\omega(x)\norm{ y-z }_{2}}{4} \leq d_N(y,z) \leq M \norm{ y-z }_{2}\] for all $y,z \in B(x,r)$. Clearly, the line segment from $y$ to $z$ has $N$-length at most $M\norm{ y-z }_{2}$. For the lower bound, consider an arbitrary absolutely continuous path $\gamma$ from $y$ to $z$. If $|\gamma| \subset B(x,R)$, then we have the lower bound $\ell_N(\gamma) \geq \omega(x)\norm{ y-z }_{2}/2$. If $|\gamma|$ is not contained in $B(x,R)$, then its length is at least \[\omega(x)(R-r) = \frac{\omega(x)R}{2} \geq \frac{\omega(x)\norm{ y-z }_{2}}{4}.\] 
    Since our path is arbitrary, we obtain $d_N(y,z) \geq \omega(x)\norm{ y-z }_{2}/4$. We conclude that $d_N$ is bi-Lipschitz equivalent to the Euclidean distance on $B(x,r)$.
    \end{proof}

    \begin{lemm}\label{lemm:derivative_standard}
    For $\mathcal{L}^{2}$-almost every $x \in \Omega$, the metric derivative $\apmd{\pi_{N}}$ of $\pi_{N}$ at $x$ satisfies
    \begin{equation*}
        \apmd[\pi_{N}]{x}
        \leq
        N_x.
    \end{equation*}
    Moreover, for every $x \in \Omega$, 
    \begin{equation*}
        N_x
        \leq
        \apmd[\pi_{N}]{x}.
    \end{equation*}
    In particular, the metric derivative $\apmd{ \pi_{N} }$ equals $N$ $\mathcal{L}^{2}$-almost everywhere in $x \in \Omega$.
    \end{lemm}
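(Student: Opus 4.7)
The plan is to establish the two inequalities separately. For the upper bound, the natural competitor in the definition of $d_{N}$ is the straight segment $\gamma_{t}(s) = x + sv$ for $s \in [0,t]$, and a Lebesgue-differentiation argument identifies the limit of its $N$-length divided by $t$ with $N_{x}(v)$. For the lower bound, the strong lower semicontinuity proved in \Cref{lemm:strong_LSC} lets me dominate the $N$-length of any competing path by $(1-\varepsilon)N_{x}$ applied to its secant, provided that path remains in a small enough ball around $x$.

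For the upper bound, fix $v \in \mathbb{R}^{2}$. The competitor $\gamma_{t}$ gives
\begin{equation*}
\widetilde{d}_{N}(\pi_{N}(x),\pi_{N}(x+tv)) \leq \ell_{N}(\gamma_{t}) = \int_{0}^{t} N_{x+sv}(v) \, ds.
\end{equation*}
Since $N$ is lower semicontinuous and locally bounded, $y \mapsto N_{y}(v)$ is Borel and locally bounded. Applying Fubini along one-dimensional lines parallel to $v$ together with the Lebesgue differentiation theorem yields a set $A_{v} \subset \Omega$ of full measure on which $\lim_{t \to 0^{+}} t^{-1}\int_{0}^{t} N_{x+sv}(v)\, ds = N_{x}(v)$. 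Dividing the inequality above by $t$ and taking $\limsup$ then gives $\apmd[\pi_{N}]{x}(v) \leq N_{x}(v)$ on $A_{v}$. To pass to all $v$ simultaneously, I would fix a countable dense subset $V_{0} \subset \mathbb{R}^{2}$ and set $A = \left( \bigcap_{v \in V_{0}} A_{v} \right) \setminus N_{0}$, where $N_{0}$ is the null set from \Cref{thm:legit_differential}. On the full-measure set $A$, both $\apmd[\pi_{N}]{x}$ and $N_{x}$ are seminorms and hence continuous, so the inequality extends from $V_{0}$ to all $v \in \mathbb{R}^{2}$ by density.

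For the lower bound at every $x \in \Omega$, the case $x \in E$ is immediate from $N_{x} \equiv 0$. Assume $x \notin E$, fix $v \in \mathbb{R}^{2}$, and let $\varepsilon \in (0,1)$. By \Cref{lemm:strong_LSC}, choose $r > 0$ so that $N_{y}(w) \geq (1-\varepsilon)N_{x}(w)$ for all $y \in B(x,r)$ and $w \in \mathbb{R}^{2}$. After shrinking $r$, the reasoning in the proof of \Cref{lemm:quotient_map_invertible} provides constants $M > 0$ and $c = \omega(N_{x})/2 > 0$ with $L(N_{y}) \leq M$ and $\omega(N_{y}) \geq c$ on $B(x,r)$. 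The straight segment then gives $d_{N}(x,x+tv) \leq tM\|v\|_{2}$, while any path from $x$ to $x+tv$ exiting $\overline{B}(x,r)$ has Euclidean length at least $2(r - t\|v\|_{2})$, hence $N$-length at least $2c(r - t\|v\|_{2})$. For $t$ sufficiently small, the latter exceeds $2tM\|v\|_{2}$, so every near-minimizing path for $d_{N}(x,x+tv)$ remains in $\overline{B}(x,r)$. For such an interior path $\alpha \colon [0,1] \to \overline{B}(x,r)$ from $x$ to $x+tv$, the integral Minkowski inequality for the seminorm $N_{x}$ combined with \Cref{lemm:strong_LSC} yields
\begin{equation*}
\ell_{N}(\alpha) \geq (1-\varepsilon) \int_{0}^{1} N_{x}(D\alpha(s))\, ds \geq (1-\varepsilon) N_{x}\!\left(\int_{0}^{1} D\alpha(s)\, ds\right) = (1-\varepsilon) t N_{x}(v).
\end{equation*}
Passing to the infimum, $d_{N}(x,x+tv) \geq (1-\varepsilon) t N_{x}(v)$ for small $t$, so $\liminf_{t \to 0^{+}} d_{N}(x,x+tv)/t \geq (1-\varepsilon) N_{x}(v)$. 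Since $\limsup \geq \liminf$ and $\varepsilon$ is arbitrary, the desired inequality follows.

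The main obstacle is upgrading the single-direction upper bound to a simultaneous upper bound for all $v$; this step depends crucially on \Cref{thm:legit_differential} supplying the seminorm (hence continuity) structure of $\apmd[\pi_{N}]{x}$ almost everywhere, without which the density argument would fail. The localization in the lower bound is otherwise routine, with the hypothesis $x \notin E$ ensuring $\omega(N_{x}) > 0$, so that the straight segment is asymptotically optimal among all competitors.
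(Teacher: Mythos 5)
Your proposal is correct and follows essentially the same route as the paper: the upper bound via Fubini and Lebesgue differentiation along lines parallel to a fixed direction (with the passage to all directions via a countable dense set, which the paper leaves implicit), and the lower bound via \Cref{lemm:strong_LSC}, confinement of near-minimizers to a small ball, and the convexity of the frozen norm $N_x$. Your lower-bound bookkeeping is in fact slightly cleaner than the paper's, which introduces a $(1+\varepsilon\alpha M)$ correction factor; you avoid it by restricting the infimum directly to paths that stay in $\overline{B}(x,r)$.
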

    \begin{proof}
    First, we show that the upper bound $\apmd[ \pi_{N} ]{x} \leq N_x$ holds $\mathcal{L}^{2}$-almost everywhere in $\Omega$. Consider a fixed $v \in \mathbb{R}^{2} \setminus \left\{0\right\}$. The local boundedness of $N$ implies that the function $x \mapsto N_x(v)$ is locally integrable. Consider a rectangle $R \subset \Omega$ with one side parallel to $v$. There is a family of parallel line segments $\gamma_t \colon [0,h_0] \to R$, $\gamma_t(s) = x_t + vs$, that foliate $R$. Observe that for all $t$ and $s$, $D\gamma_{t}(s) = v$. The definition of $d_{N}$ implies that
    \begin{equation*}
        \apmd[ \pi_{N} ]{ \gamma_{t}(s) }( v )
        \leq
        \limsup_{ h \rightarrow 0^{+} }
            \frac{1}{h}
            \int_{ \left[s,  s+h\right] }
                N_{ \gamma_{t}(a) }( v )
            \, d\mathcal{L}^{1}(a).
    \end{equation*}
    According to Lebesgue's differentiation theorem, the $\limsup$ on the right-hand side equals $N_{ \gamma_{t}(s) }( v )$ for $\mathcal{L}^{1}$-almost every $s \in [0, h_0]$. Fubini's theorem implies that
    \begin{equation*}
        \apmd[ \pi_{N} ]{ x }( v )
        \leq
        N_x( v )
    \end{equation*}
    holds $\mathcal{L}^{2}$-almost everywhere in $R$. Since $R$ is arbitrary, the same conclusion holds for almost every point in $\Omega$. The first inequality follows.
    
    Next, we show that the inequality $N_x \leq \apmd[ \pi_{N} ]{x}$ holds for all $x \in \Omega$. In the case that $x \in E$, the conclusion is immediate since then $N_x = 0$. We consider now the case that $x \in \Omega \setminus E$. Let $v \in \mathbb{R}^{2} \setminus \left\{ 0 \right\}$ and let $\varepsilon > 0$. 
    
    Let $r>0$ be such that the conclusions of \Cref{lemm:strong_LSC} and \Cref{lemm:quotient_map_invertible} hold for the point $x$ and the given value of $\varepsilon$. In particular, \Cref{lemm:quotient_map_invertible} implies that there exists $\alpha>0$ such that 
        \[\alpha^{-1}d_N(y,z) \leq \|y-z\|_2 \leq \alpha d_N(y,z)\] 
    for all $y,z \in B(x,r)$. Moreover, the local boundedness of $N$ implies that there exists $M >0$ such that the maximal stretching $L(N_y) \leq M$ for all $y \in B(x,r)$. Let
    \[
        t_0
        =
        \frac{ 1 }{ \norm{ v }_{2} }
        \frac{r}{2\alpha}
        \min\left\{
            \frac{1}{\alpha},
            \frac{1}{\varepsilon M}
        \right\}.
    \]
    For all $t \in (0, t_0)$, consider an absolutely continuous path $\gamma_t\colon [0,1] \to \Omega$ joining $x$ to $x+tv$ that satisfies
    \begin{equation}
    \label{eq:choice_lower_bound}
        \int_{ 0 }^{ 1 }
            N \circ D\gamma_{t}
        \,d\mathcal{L}^{1}
        \leq
        d_{N}( x,  x + t v ) +
        \varepsilon t N_{x}(v).
    \end{equation}
    The right-hand side of \eqref{eq:choice_lower_bound} is bounded above by $\alpha t \norm{ v }_{2} + \varepsilon M t \norm{ v }_{2} < r/\alpha$. In particular, this implies that 
    \begin{equation}
    \label{eq:almost_minimizer}
        |\gamma_{t}|
        \subset
        B_{\|\cdot \|}( x,  r).
    \end{equation}
    Next, observe that
    \begin{equation}
    \label{eq:good_estimate_basepoint_norm}
        tN_x(v) 
        = 
        N_{x}(tv)
        \leq
        M \|tv\|_2
        \leq
        \alpha M d_{N}( x,  x + tv ).
    \end{equation}
    Applying now the conclusion of \Cref{lemm:strong_LSC} along $\gamma_{t}$, which is allowed due to \eqref{eq:almost_minimizer}, we have
    \begin{equation}
    \label{eq:good_estimate_along_N}
        ( 1 - \varepsilon )N_x( D\gamma_{t}(s) )
        \leq
        N \circ D\gamma_{t}(s),
    \end{equation}
    for almost every $s \in [0,1]$. Note that the norm field $N$ on the left-hand side has a fixed basepoint.
   
   Since straight line segments are geodesics with respect to the norm $N_{x}$, by integrating both sides of \eqref{eq:good_estimate_along_N} and applying \eqref{eq:choice_lower_bound} and \eqref{eq:good_estimate_basepoint_norm}, we obtain
    \begin{equation}
    \label{eq:the_final_point}
        ( 1 - \varepsilon )
        N_{x}( t v )
        \leq
         ( 1 + \varepsilon \alpha M)d_{N}( x,  x + tv ).
    \end{equation}
    We divide both sides of \eqref{eq:the_final_point} by $t$ and let $t \rightarrow 0$. We have
    \begin{equation*}
       ( 1 - \varepsilon )N_{x}(v) 
        \leq
        ( 1 + \varepsilon \alpha M )
        \liminf_{ t \rightarrow 0 }
            \frac{ d_{N}( x,  x + tv ) }{ t }.
    \end{equation*}
    The $\liminf$ on the right-hand side is bounded from above by the metric derivative $\apmd[ \pi_{N} ]{x}(v)$. The result follows by letting $\varepsilon \rightarrow 0$.
    \end{proof}
    
    \begin{lemm}\label{lemm:metric_derivative_Jacobian}
    For every Borel function $\rho \colon \Omega/\mathcal{E}_N \rightarrow \left[0,  \infty\right]$, we have the change of variables formula
    \begin{equation*}
        \int_{ \Omega }
            \rho \circ \pi_{N}
            J_{2}( N )
        \,d\mathcal{L}^{2}
        =
        \int_{ \Omega / \mathcal{E}_{N} }
            \rho
        \,d\mathcal{H}^{2}_{ \widetilde{d}_{N} }.
    \end{equation*}
    \end{lemm}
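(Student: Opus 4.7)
The plan is to derive the formula as an instance of the Kirchheim area formula \eqref{eq:change_of_variables} for the quotient map $\pi_N$ applied to the test function $\rho \circ \pi_N$, together with the identification of the metric derivative of $\pi_N$ with $N$ from \Cref{lemm:derivative_standard}. First I would observe that $\pi_N$ is locally Lipschitz: the local boundedness of $N$ in \Cref{defi:seminorm_admissible}(ii) implies $d_N(x,y) \leq M\|x-y\|_2$ on every sufficiently small convex neighborhood, so that on each bounded open $V$ with $\overline{V} \subset \Omega$ (shrunk if needed), $\pi_N|_V$ extends to a Lipschitz map on $\overline{V}$ and \Cref{prop:lipschitz} applies. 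Substituting $\rho \circ \pi_N$ into \eqref{eq:change_of_variables} and using $(\rho \circ \pi_N)(y) = \rho(x)$ for $y \in \pi_N^{-1}(x)$, then passing to the limit along an exhaustion $V \uparrow \Omega$ by monotone convergence, gives
\[
\int_\Omega (\rho \circ \pi_N)(z)\, J_2(\apmd[\pi_N]{z})\, d\mathcal{L}^2(z) = \int_{\Omega/\mathcal{E}_N} \rho(x)\, \#\pi_N^{-1}(x)\, d\mathcal{H}^2_{\widetilde{d}_N}(x).
\]
By \Cref{lemm:derivative_standard}, $J_2(\apmd[\pi_N]{z}) = J_2(N_z)$ for $\mathcal{L}^2$-almost every $z \in \Omega$, so the left-hand side here already coincides with that of the lemma.

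It remains to show $\#\pi_N^{-1}(x) = 1$ for $\mathcal{H}^2_{\widetilde{d}_N}$-almost every $x \in \Omega/\mathcal{E}_N$, which I would do in two steps. First, $\pi_N$ is injective on $\Omega \setminus E$: given $y \in \Omega \setminus E$ and $y' \neq y$ with $d_N(y, y') = 0$, \Cref{lemm:quotient_map_invertible} supplies $R > 0$ with $\overline{B}(y, R) \subset \Omega \setminus E$ such that $\pi_N$ is bi-Lipschitz on $B(y, R/2)$ and $N_z(v) \geq c\|v\|_2$ on $B(y, R)$ for some constant $c > 0$. An absolutely continuous path $\gamma$ from $y$ to $y'$ either has image in $\overline{B}(y, R/2)$, yielding $\ell_N(\gamma) \geq c\|y - y'\|_2 > 0$, or exits $B(y, R/2)$ at some first time, in which case the initial segment is contained in $\overline{B}(y, R/2) \subset \Omega \setminus E$ and contributes $\ell_N \geq cR/2 > 0$. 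Either way one obtains a positive lower bound independent of $\gamma$, contradicting $d_N(y, y') = 0$. Second, $\mathcal{H}^2_{\widetilde{d}_N}(\pi_N(E)) = 0$: applying \eqref{eq:change_of_variables} to the test function $\chi_E$ on $\Omega$ yields
\[
\int_{\Omega/\mathcal{E}_N} \#(E \cap \pi_N^{-1}(x))\, d\mathcal{H}^2_{\widetilde{d}_N}(x) = \int_E J_2(\apmd[\pi_N]{z})\, d\mathcal{L}^2(z) = 0,
\]
since $\apmd[\pi_N]{z} \leq N_z = 0$ for $z \in E$ by \Cref{lemm:derivative_standard}, so that $J_2$ of the metric derivative vanishes on $E$.

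Combining these observations, $\mathcal{H}^2_{\widetilde{d}_N}$-almost every point of $\Omega/\mathcal{E}_N$ lies in $\pi_N(\Omega \setminus E)$ and has exactly one preimage, so $\#\pi_N^{-1}(x) = 1$ almost everywhere; the first displayed identity then reduces to the claim of the lemma. The main technical obstacle is the injectivity of $\pi_N|_{\Omega \setminus E}$, where one must preclude the possibility that some far-away point is identified with $y$ via paths that leave $\Omega \setminus E$ through $E$. The uniform Euclidean lower bound on $N$ over a small ball about $y$, guaranteed by \Cref{lemm:quotient_map_invertible}, is precisely what supplies a positive uniform cost for any such competing path and resolves the difficulty.
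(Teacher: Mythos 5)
Your proof is correct and follows essentially the same strategy as the paper's own argument: apply Kirchheim's area formula from \Cref{prop:lipschitz} to $\pi_N$, replace $J_2(\apmd{\pi_N})$ by $J_2(N)$ using \Cref{lemm:derivative_standard}, then eliminate the multiplicity by combining injectivity of $\pi_N$ on $\Omega \setminus E$ with $\mathcal{H}^2_{\widetilde{d}_N}(\pi_N(E)) = 0$. The paper's proof is terse and leaves the injectivity argument to \Cref{prop:main_section} (which actually comes later in the text); you instead spell it out directly from \Cref{lemm:quotient_map_invertible} with the escape-the-ball cost estimate, which is the same reasoning. One minor imprecision: $\apmd[\pi_N]{z} \leq N_z$ holds only for $\mathcal{L}^2$-almost every $z$, not literally every $z \in E$, but since you integrate against $\mathcal{L}^2$ this is harmless.
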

    \begin{proof}
    We proved in \Cref{lemm:derivative_standard} the fact that the metric derivative of $\pi_{N}$ equals $N$ $\mathcal{L}^{2}$-almost everywhere. Then the change of variables formula \Cref{prop:lipschitz} implies that $\mathcal{H}^{2}_{ \widetilde{d}_{N} }( \pi_{N}(E) ) = 0$. The fact that $\pi_{N}$ is injective in the complement of $E$ implies that the multiplicity term from \Cref{prop:lipschitz} can be omitted.
    \end{proof}
    
    \begin{lemm}\label{lemm:metric_speed_ABS}
    For every absolutely continuous path $\gamma$ in $\Omega$, $\ell_N(\gamma) = \ell_{d_N}( \pi_N \circ \gamma )$. In particular, the equality
    \begin{equation} \label{eq:metric_speed}
        v_{ \pi_{N} \circ \gamma }
        =
        N \circ D\gamma
    \end{equation}
    holds almost everywhere in the domain of $\gamma$.
    \end{lemm}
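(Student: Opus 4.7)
The plan is to first establish the length equality $\ell_N(\gamma) = \ell_{d_N}(\pi_N \circ \gamma)$ by proving the two inequalities separately, and then to deduce the metric speed identity as an immediate consequence.

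Since $N$ is locally bounded and $|\gamma|$ is a compact subset of $\Omega$, the quotient map $\pi_N$ is Lipschitz on some neighborhood of $|\gamma|$, and hence $\pi_N \circ \gamma$ is an absolutely continuous path in $(\Omega/\mathcal{E}_N, \widetilde{d}_N)$. For the upper bound $\ell_{d_N}(\pi_N \circ \gamma) \leq \ell_N(\gamma)$, fix any partition $a = t_0 \leq t_1 \leq \cdots \leq t_n = b$ of the domain of $\gamma$. Each restriction $\gamma|_{[t_{i-1}, t_i]}$ is a valid competitor in the infimum defining $d_N(\gamma(t_{i-1}), \gamma(t_i))$, so
\[
\widetilde{d}_N(\pi_N(\gamma(t_{i-1})), \pi_N(\gamma(t_i))) \leq \int_{t_{i-1}}^{t_i} N \circ D\gamma(s)\,ds.
\]
Summing over $i$ and taking the supremum over partitions yields the upper bound.

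For the reverse inequality, apply \Cref{lemm:metric_speed_Lips} to the locally Lipschitz map $\pi_N$ to get
\[
v_{\pi_N \circ \gamma}(t) = \apmd[\pi_N]{\gamma(t)}(D\gamma(t))
\]
for almost every $t$ in the domain of $\gamma$. The crucial input here is the pointwise-everywhere lower bound $N_x \leq \apmd[\pi_N]{x}$ from \Cref{lemm:derivative_standard}: combining it with the preceding display gives $v_{\pi_N \circ \gamma}(t) \geq N \circ D\gamma(t)$ almost everywhere. Integrating this inequality over $[a,b]$ and using absolute continuity of $\pi_N \circ \gamma$ gives $\ell_{d_N}(\pi_N \circ \gamma) \geq \ell_N(\gamma)$, completing the length equality.

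Finally, the metric speed identity \eqref{eq:metric_speed} follows by integration: we have $v_{\pi_N \circ \gamma} \geq N \circ D\gamma$ almost everywhere, while the length equality shows $\int v_{\pi_N \circ \gamma} = \int N \circ D\gamma$, forcing equality pointwise almost everywhere. The main subtlety throughout is that $\gamma$ may pass through, and even spend positive parameter-time in, the exceptional set in $\Omega$ where the upper bound $\apmd[\pi_N]{x} \leq N_x$ can fail; this is precisely why the lower bound in \Cref{lemm:derivative_standard} is needed in its stronger, pointwise-everywhere form rather than its almost-everywhere companion.
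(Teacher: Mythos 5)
Your proof is correct and follows essentially the same route as the paper: the upper bound is a direct consequence of the definition of $d_N$, the lower bound rests on combining \Cref{lemm:metric_speed_Lips} with the pointwise-everywhere inequality $N_x \leq \apmd[\pi_N]{x}$ from \Cref{lemm:derivative_standard}, and the metric speed identity drops out by comparing the integrals. The only cosmetic difference is that the paper deduces \eqref{eq:metric_speed} via the Lebesgue differentiation theorem applied to the equality of arc-length functions, whereas you use the equivalent observation that a nonnegative function with zero integral vanishes almost everywhere; your explicit remark that it is the \emph{everywhere} lower bound that carries the argument (since $\gamma$ may spend positive time in $E$) is exactly the right point of care.
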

    \begin{proof}
    An immediate consequence of the definitions is that $\ell_{ \widetilde{d}_{N} }( \pi_{N} \circ \gamma ) \leq \ell_{N}( \gamma )$ for every absolutely continuous $\gamma$ in $\Omega$. For the other direction, let $L$ denote the $\apmd{\pi_N}$-length of $\gamma$:
    \begin{equation*}
        L
        =
        \int_I \apmd{\pi_N} \circ D\gamma(t)
        \,d\mathcal{L}^{1}(t).
    \end{equation*}
    Since $N(x) \leq \apmd{\pi_N}(x)$ for all $x \in \Omega$ by \Cref{lemm:derivative_standard}, we see that $\ell_N(\gamma) \leq L$. By \Cref{lemm:metric_speed_Lips}, the equality $L = \ell_{ \widetilde{d}_{N} }(\pi_N \circ \gamma)$ holds for all absolutely continuous $\gamma$. The equality $\ell_N(\gamma) = \ell_{ \widetilde{d}_{N} }(\pi_N \circ \gamma)$ now follows. The metric speed identity \eqref{eq:metric_speed} follows from the Lebesgue differentiation theorem.
    \end{proof}
 
    As a consequence of the previous lemma, whenever $\gamma\colon I \to \Omega$ is an absolutely continuous path, we have the integral formula
    \[
        \int_{\pi_N \circ \gamma} \rho\, ds_N
        =
        \int_{I} (\rho \circ \pi_{N}) (N \circ D\gamma)\,d\mathcal{L}^{1}
    \]
    for all Borel measurable functions $\rho: \Omega/\mathcal{E}_N \to [0, \infty]$.

\subsection{The quotient map} 
    
    \begin{prop}\label{prop:main_section}
    The quotient map $\pi_{N} \colon \Omega \rightarrow \Omega / \mathcal{E}_{N}$ is locally Lipschitz, locally bi-Lipschitz in the complement of $E$, and its restriction to $\Omega \setminus E$ is injective.
    
    Moreover, the map $\pi_{N}$ is closed and, for all $x \in \pi_N(E)$, the preimage $\pi^{-1}_{N}( x )$ is a connected and compact subset of $E$.
    \end{prop}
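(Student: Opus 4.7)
My plan is to verify the five assertions in turn, using the preceding lemmas of this section and a common escape argument for the nontrivial cases. Local Lipschitz continuity of $\pi_N$ at any $x \in \Omega$ follows from local boundedness of $N$: choose $M, r > 0$ with $L(N_y) \leq M$ on $B(x, r)$; then integrating $N$ along the straight segment gives $d_N(y, z) \leq M \|y - z\|_2$ for $y, z \in B(x, r/2)$. The local bi-Lipschitz property on $\Omega \setminus E$ is \Cref{lemm:quotient_map_invertible}, and reusing the proof of that lemma gives global injectivity on $\Omega \setminus E$: for $x \in \Omega \setminus E$ and $y \neq x$, every path from $x$ to $y$ traverses a positive Euclidean distance inside a fixed ball around $x$ on which $\omega(N)$ is bounded below, yielding $d_N(x, y) > 0$.

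The central tool for the remaining assertions is the following escape argument. Fix $x^* \in E$ and choose a closed ball $\overline{B} \subset \Omega$ with $E \subset \interior \overline{B}$, and a small Euclidean neighborhood $U$ of $E$ with $\overline{U} \subset \interior \overline{B}$. The annular region $\overline{B} \setminus U$ is a compact subset of $\Omega \setminus E$, so $\omega(N)$ is bounded below there by a positive constant (using that by locally bounded distortion, $\omega(N)$ is locally bounded below away from $E$). Any path from a point $z \notin \overline{B}$ to $x^*$ must traverse this annulus with Euclidean length at least $\dist(\partial B, \partial U) > 0$, yielding a positive lower bound for $d_N(z, x^*)$ independent of $z$. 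In particular, there is a threshold $\varepsilon > 0$ such that $\{z \in \Omega : d_N(x^*, z) < \varepsilon\}$ is contained in a fixed compact subset of $\Omega$.

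From this the remaining assertions follow. For $y = \pi_N(x^*) \in \pi_N(E)$, injectivity off $E$ forces $\pi_N^{-1}(y) \subset E$; continuity of $d_N(x^*, \cdot)$ then makes $\pi_N^{-1}(y)$ closed, hence a compact subset of $E$. For connectedness, given any $z \in \pi_N^{-1}(y)$ I would construct a continuum $K_z \subset \pi_N^{-1}(y)$ through $x^*$ and $z$ as a Hausdorff subsequential limit (via Blaschke's selection theorem) of the images of almost minimizing $N$-paths from $x^*$ to $z$, which by the escape argument lie in a fixed compact subset of $\Omega$; each $w \in K_z$ satisfies $d_N(x^*, w) = 0$ by the Lipschitz bound, and $K_z$ is connected as a Hausdorff limit of continua. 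The union $\bigcup_z K_z$ then equals $\pi_N^{-1}(y)$ and is connected because every $K_z$ contains $x^*$. For closedness, take $C \subset \Omega$ closed and $y_n = \pi_N(x_n) \to y$ with $x_n \in C$; fix $x^* \in \pi_N^{-1}(y)$. If $x^* \in \Omega \setminus E$, local bi-Lipschitz forces $x_n \to x^*$ in the Euclidean metric, giving $x^* \in C$. If $x^* \in E$, the escape argument confines $(x_n)$ to a compact subset of $\Omega$, so after passing to a subsequence $x_n \to x' \in C$ in the Euclidean metric; the Lipschitz bound gives $d_N(x', x^*) = 0$, and $x' \in \Omega \setminus E$ would contradict injectivity, so $x' \in E \cap C$ and $y = \pi_N(x') \in \pi_N(C)$. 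The main obstacle I anticipate is formulating the escape argument cleanly, since $\omega(N)$ need not be bounded below globally on $\Omega \setminus E$; the key is to exploit the compactness of $E$ by fixing the ball $\overline{B}$ and the neighborhood $U$ at the outset.
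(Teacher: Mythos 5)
Your proposal is substantively correct and follows the same strategy as the paper: a trapping argument that confines almost-minimizing paths to a fixed compact region, followed by a Hausdorff-limit (Blaschke) argument to produce the connecting continuum, and a compactness argument for closedness. The paper implements the trapping by choosing, for each component $K$ of $E$, a Jordan curve in $\Omega \setminus E$ separating $K$ from $\partial\Omega$, whereas you use a single compact region around all of $E$; both work, and yours is perhaps a bit cleaner since it treats $E$ all at once.

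Two small but genuine imprecisions are worth fixing. First, a closed \emph{ball} $\overline{B} \subset \Omega$ with $E \subset \interior\overline{B}$ need not exist for a general domain $\Omega$ (for instance, $\Omega = \mathbb{R}^2 \setminus \{0\}$ with $E$ a Cantor set on a circle about the origin). What you need is any compact set $K_0$ with $E \subset \interior K_0 \subset K_0 \subset \Omega$, which always exists by compactness of $E$; the escape estimate then uses $\dist(\overline{U}, \partial K_0) > 0$ together with the first-exit/last-entry trick on a path from $U$ to $\Omega \setminus K_0$. Second, the assertion that $\omega(N)$ is bounded below on the compact annular region is justified in your write-up by ``locally bounded distortion,'' but that condition only guarantees that $\omega(N_y) = 0$ implies $N_y = 0$; it provides no uniform lower bound on a compact set. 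The positive lower bound comes instead from lower semicontinuity of $\omega(N)$, which is a consequence of \Cref{lemm:strong_LSC} (the strengthened lower semicontinuity of $N$); the paper itself makes exactly this appeal inside the proof. Both issues are easily repaired and do not change the overall structure of the argument.
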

    \begin{proof}
    We already proved in \Cref{lemm:quotient_map_invertible} that $\pi_N$ is locally bi-Lipschitz outside of $E$. Moreover, since $N$ is locally bounded, $\pi_{N}$ is locally Lipschitz at all points in $\Omega$.
    
    Next, let $x \in \Omega \setminus E$ and $U \subset \Omega \setminus E$ be a neighborhood of $x$ such that $\pi_{N}|_{U}$ is bi-Lipschitz. The bi-Lipschitz property implies that $d_N(x,y)>0$ for all $y \in U$. Next, let $r>0$ be small enough so that $\overline{B}_{\norm{\cdot}}(x,r) \subset U$, and let $c = \inf\{ d_N(x,y) \colon y \in S_{\norm{\cdot}}(x,r)\}>0$. If $y \in \Omega \setminus U$, then any path from $x$ to $y$ must intersect $S_{\norm{\cdot}}(x,r)$, which gives $d_N(x,y) \geq c >0$. We conclude that $\pi_{N}$ is injective in the complement of $E$.
    
    Next, we prove that $\pi_N^{-1}( \widetilde{x} )$ is a connected compact subset of $E$ for all $\widetilde{x} \in \pi_N(E)$. Let $x \in \pi_{N}^{-1}( \widetilde{x} )$ and let $K$ be the component of $E$ containing $x$.
    
    Let $\gamma$ be a closed Jordan path in $\Omega \setminus E$ that separates $K$ and the boundary of $\partial \Omega$. See \cite[Section III.3]{Why:64} for the existence of such a path $\gamma$. Let $U$ be the complementary component of $|\gamma|$ containing $K$. 
    
    Let $c = \inf\left\{ d_N(x,z) \colon z \in \Omega \setminus \overline{U} \right\}$. The image $\abs{ \gamma }$ has a small neighborhood $V$ compactly contained in $\Omega \setminus E$. Every path joining the point $x$ to $\Omega \setminus \overline{U}$ must pass through $\overline{V}$. The lower semicontinuity of $N$ implies that $N \geq \alpha \norm{ \cdot }_{2}$ in $\overline{V}$ for some $\alpha > 0$ and hence that $c > 0$.
    
    Let $y \in \pi_N^{-1}( \widetilde{x} )$. Let $(\gamma_{n})$ be a sequence of Lipschitz paths joining $x$ to $y$ satisfying
    \begin{equation*}
        \ell_{N}( \gamma_{n} )
        \leq
        2^{-n} c
    \end{equation*}
    for all $n \in \mathbb{N}$. Observe that the image of each path $\gamma_n$ is contained in $\overline{U}$. Moreover, for every $z_{n} \in \abs{ \gamma_{n} }$, we have that $d_N(x, z_{n} ) \leq 2^{-n} c$. This implies that a subsequence of the sets $(|\gamma_n|)$ converges with respect to the Hausdorff distance to a connected subset of $\pi_N^{-1}( \widetilde{x} ) \cap \overline{U}$. This is a consequence of general properties of Hausdorff convergence in metric spaces; see Proposition 4.4.14 and Theorems 4.4.15 and 4.4.17 in \cite{Amb:Til:04}. 
    Since $y$ is arbitrary, we conclude that $\pi_N^{-1}( \widetilde{x} )$ is connected.
    
    Next, we check that $\pi_N^{-1}( \widetilde{x} ) \subset K$. Observe that, for a given point $z \in \Omega \setminus K$, the Jordan path $\gamma$ above can be chosen so that $K$ and $z$ are contained in different complementary components. An argument similar to that above shows that $d_N(K,z)>0$.
    
    The final step is to show that $\pi_{N}$ is closed. Let $F \subset \Omega$ be a closed set and let $\widetilde{x}$ be a limit point of $\pi_N(F)$. Since $\pi_N^{-1}( \widetilde{x} )$ is a singleton or contained in a component of $E$, there is a Jordan domain $U \subset \Omega$ such that $\partial U$ is contained in $\Omega \setminus E$ and separates $\pi_{N}^{-1}( \widetilde{x} )$ and $\partial \Omega$. Arguing as in the first part of the proof, we deduce that there is a constant $c>0$ such that $d_N( \pi_N^{-1}( \widetilde{x} ), z) \geq c$ for all $z \in \Omega \setminus U$. This implies that $\widetilde{x}$ is a limit point of $\pi_N(F \cap \overline{U})$. Let $( \widetilde{x}_j )$ be a sequence in $\pi_N(F \cap \overline{U})$ with limit $\widetilde{x}$. Let $(y_j)$ be a sequence in $F \cap \overline{U}$ such that $\pi_N( y_j ) = \widetilde{x}_j$. The compactness of $\overline{U}$ implies that there is a subsequence $(y_{j_k})$ that converges to a point $y \in F$. Since $\pi_N|_{\overline{U}}$ is Lipschitz, it follows that $(x_{j_k})$ converges to $\pi_N(y)$, and moreover that $\widetilde{x} = \pi_N(y)$. We conclude that $\widetilde{x} \in \pi_N(F)$, and hence that $\pi_N(F)$ is closed. 
    \end{proof}
    \begin{cor}\label{lemm:topology}
    The space $\Omega / \mathcal{E}_{N}$ is homeomorphic to $\Omega$.
    \end{cor}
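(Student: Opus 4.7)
The plan is to apply Moore's decomposition theorem. By \Cref{prop:main_section}, $\pi_{N}$ is a continuous, closed surjection whose fibers are either singletons or compact connected subsets of a single component of $E$; hence the partition $\mathcal{E}_{N}$ constitutes an upper semicontinuous decomposition of $\Omega$.

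The key step is to verify that every nontrivial fiber $K = \pi_{N}^{-1}(\widetilde{x})$ does not separate $\Omega$. Suppose for contradiction that $\Omega \setminus K = U_{1} \sqcup U_{2}$ for some disjoint, nonempty open sets. The connected set $\Omega \setminus E$ is contained in $\Omega \setminus K$, so it lies in one of these sets, say $U_{1}$; consequently $U_{2} \subset E$. Since $\Omega$ is connected and $U_{2}$ is a nonempty proper open subset of $\Omega$, its boundary in $\Omega$ must meet $K$ at some point $x'$. Given $y \in U_{2}$ and $\varepsilon > 0$, the local boundedness of $N$ at $x'$ furnishes a constant $M > 0$ and a convex neighborhood $V$ of $x'$ with $L( N_{z} ) \leq M$ for all $z \in V$. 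Using the path-connectedness of the open set $U_{2}$, choose a path in $U_{2}$ from $y$ to some $y' \in U_{2} \cap V$ with $\|y' - x'\|_{2} < \varepsilon / M$; since $U_{2} \subset E$ and $N \equiv 0$ on $E$, its $N$-length is zero. Concatenating with the Euclidean segment from $y'$ to $x'$, of $N$-length at most $M \|y' - x'\|_{2} < \varepsilon$, yields an absolutely continuous path from $y$ to $x'$ of $N$-length less than $\varepsilon$. Letting $\varepsilon \to 0$ gives $d_{N}(y, x') = 0$, so $y$ lies in $K$, contradicting $y \in U_{2}$.

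To pass to Moore's theorem on the sphere, extend $\mathcal{E}_{N}$ to a decomposition of $S^{2}$ by declaring each point of $S^{2} \setminus \Omega$ its own class; this decomposition remains upper semicontinuous since $\pi_{N}$ is closed and all nontrivial elements lie in $\Omega$. Each nontrivial element is a compact subset of $\Omega$ that does not separate $\Omega$ by the previous step; it also does not separate $S^{2}$, since $S^{2} \setminus K$ is the union of the connected set $\Omega \setminus K$ with $S^{2} \setminus \Omega$, and each component of the latter accumulates on $\overline{\Omega \setminus K}$ as $K$ is compactly contained in $\Omega$. Moore's decomposition theorem then provides a homeomorphism $S^{2} \to S^{2} / \overline{\mathcal{E}_{N}}$, which restricts to the desired homeomorphism between $\Omega$ and $\Omega / \mathcal{E}_{N}$.

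The main obstacle is the non-separation verification for the fibers, which crucially uses both the fact that $\Omega \setminus E$ is connected (forcing the "bad" component $U_{2}$ to lie inside $E$) and the local boundedness of $N$ (controlling the $N$-length of the short segment from $y'$ to $x'$). Once this is established, Moore's theorem applies as a black box.
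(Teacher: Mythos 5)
Your proof follows the same overall architecture as the paper: establish via \Cref{prop:main_section} that $\pi_N$ is a closed monotone map whose fibers are continua, check that the fibers do not separate, and invoke Moore's decomposition theorem. The genuinely different (and valuable) contribution is your direct metric argument for the non-separation of the fibers. The paper justifies this step by asserting ``since the components of $E$ are non-separating, so are the elements of $\mathcal{E}_N$''; as a general topological principle this does not hold --- the unit circle is a subcontinuum of the closed unit disk, which does not separate the plane, yet the circle does. What makes the conclusion true here is exactly the mechanism you identify: if a fiber $K = \pi_N^{-1}(\widetilde{x})$ separated $\Omega$, then, because $\Omega \setminus E$ is connected and contained in $\Omega\setminus K$, some complementary component would lie inside $E$, and any point $y$ of that component could be joined to a boundary point $x' \in K$ by an absolutely continuous path of arbitrarily small $N$-length (a polygonal path inside $E$ of zero $N$-length, followed by a short Euclidean segment controlled by local boundedness), forcing $d_N(y,x')=0$ and hence $y\in K$, a contradiction. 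So your argument supplies a step the paper leaves implicit.

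Two points need tightening. First, in the decomposition $\Omega\setminus K = U_1 \sqcup U_2$ as you set it up, $U_2$ need not be connected, so invoking ``path-connectedness of the open set $U_2$'' is not automatic. Take instead $U_1$ to be the single component of $\Omega\setminus K$ containing $\Omega\setminus E$ and $U_2$ any other component (open, with $\partial U_2\subset K$ and $U_2\subset E$), or equivalently work within the component of $U_2$ containing $y$; then the argument proceeds as written. Second, the claim that the homeomorphism $S^2 \to S^2/\overline{\mathcal{E}_N}$ ``restricts to the desired homeomorphism between $\Omega$ and $\Omega/\mathcal{E}_N$'' is not correct as stated: an abstract homeomorphism of spheres has no reason to carry $\Omega$ onto the image of $\Omega$ in the quotient. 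What is needed is the shrinking (near-homeomorphism) form of Moore's theorem --- the quotient map $q\colon S^2\to S^2/\overline{\mathcal{E}_N}$ is a uniform limit of homeomorphisms, and its restriction to the saturated open set $\Omega$ inherits this, giving $\Omega/\mathcal{E}_N\cong\Omega$. The paper avoids this detour by citing a 2-manifold version of Moore's theorem directly (Daverman, Thm.~25.1), which is the cleaner route. With these two corrections your argument is complete, and arguably more self-contained than the paper's on the key non-separation step.
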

    \begin{proof}
    By \Cref{prop:main_section}, $\pi_{N}$ is a closed and monotone map. Thus each element of the decomposition $\mathcal{E}_{N}$ is a planar continuum. Since the components of $E$ are non-separating, so are the elements of $\mathcal{E}_{N}$. It follows now from the classical theorem of Moore that $\Omega / \mathcal{E}_{N}$ is homeomorphic to $\Omega$. See, for instance, Theorem 25.1 in \cite{Dav:86}.
    \end{proof}
    
    Next we study the analytic properties of $\pi_{N}$. A consequence of \Cref{prop:lipschitz} and \Cref{lemm:derivative_standard} is that $x \mapsto L( N_{x} )$ is a minimal weak upper gradient of $\pi_{N}$. The following lemma identifies the minimal weak upper gradient of the inverse of $\pi_{N}$.
    \begin{lemm}\label{lemm:metric_derivative_minimal_upper_gradient}
    If $U \subset \Omega$ is an open set such that $\pi_{N}|_{U}$ is injective and its inverse $h$ is an element of $N^{1,2}_{\loc}( \pi_{N}( U ), \mathbb{R}^2)$, the function
    \begin{equation*}
        g
        =
        \left( 
            \frac{ 1 }{ \omega( N ) }
            \chi_{ U \setminus E }
        \right) 
        \circ 
        h
    \end{equation*}
    is a minimal weak upper gradient of $h$.
    \end{lemm}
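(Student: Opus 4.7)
The plan is to verify (i) that $g$ is a weak upper gradient of $h$, and (ii) that any other weak upper gradient $g^{*} \in L^{2}_{\loc}(\pi_{N}(U))$ of $h$ dominates $g$ almost everywhere; together these give minimality. Borel measurability of $g$ follows from the lower semicontinuity of $\omega(N)$ (a consequence of \Cref{lemm:strong_LSC}) and the continuity of $h$, which in turn follows from the closedness of $\pi_{N}$ established in \Cref{prop:main_section}.

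For (i) I use the metric-speed characterization \eqref{eq:metric:speed:characterization}. For almost every absolutely continuous $\widetilde{\gamma}\colon [a,b] \to \pi_{N}(U)$, the Sobolev hypothesis on $h$ yields that $\gamma := h \circ \widetilde{\gamma}$ is absolutely continuous in $\Omega$ and satisfies $v_{\gamma} \leq (g_{h} \circ \widetilde{\gamma})\, v_{\widetilde{\gamma}}$ almost everywhere, where $g_{h}$ is the minimal weak upper gradient of $h$; in particular, $g_{h} \circ \widetilde{\gamma}$ is finite a.e. Applying \Cref{lemm:metric_speed_ABS} to $\gamma$ gives $v_{\widetilde{\gamma}}(t) = v_{\pi_{N} \circ \gamma}(t) = N_{\gamma(t)}(D\gamma(t))$ a.e. For $t$ with $\gamma(t) \in U \setminus E$, the elementary inequality $\norm{w}_{2} \leq S(w)/\omega(S)$, valid for every norm $S$ on $\mathbb{R}^{2}$, yields
\begin{equation*}
v_{\gamma}(t) = \norm{D\gamma(t)}_{2} \leq \frac{N_{\gamma(t)}(D\gamma(t))}{\omega(N_{\gamma(t)})} = g(\widetilde{\gamma}(t))\, v_{\widetilde{\gamma}}(t).
\end{equation*}
For $t$ with $\gamma(t) \in U \cap E$, $N_{\gamma(t)} = 0$ forces $v_{\widetilde{\gamma}}(t) = 0$, and the finiteness of $g_{h} \circ \widetilde{\gamma}$ then forces $v_{\gamma}(t) = 0$; the required inequality reduces to $0 \leq 0$.

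For (ii) I use a Fubini-type argument. Since $\pi_{N}(U \cap E)$ is $\mathcal{H}^{2}_{\widetilde{d}_{N}}$-null by \Cref{lemm:metric_derivative_Jacobian} and $g = 0$ there, it suffices to prove $g^{*} \geq g$ a.e.\ on $\pi_{N}(U \setminus E)$. Fix a rectangle $R$ with $\overline{R} \subset U \setminus E$ and a unit vector $v \in \mathbb{S}^{1}$, foliate $R$ by the parallel segments $\gamma_{y}(s) = y + sv$, and set $\widetilde{\gamma}_{y} = \pi_{N} \circ \gamma_{y}$. Let $\widetilde{\Gamma}$ be the modulus-zero family in $\pi_{N}(U)$ on which the upper gradient inequality for $g^{*}$ fails. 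Because $\pi_{N}$ is bi-Lipschitz on $\overline{R}$ by \Cref{lemm:quotient_map_invertible}, a standard bi-Lipschitz modulus comparison combined with the change of variables in \Cref{lemm:metric_derivative_Jacobian} forces the pulled-back subfamily in $R$ to have modulus zero, and the classical formula for the modulus of a family of parallel segments then gives that $\mathcal{L}^{1}$-almost every starting point $y$ is non-exceptional. Along each such segment, using $v_{\gamma_{y}} \equiv 1$ and $v_{\widetilde{\gamma}_{y}}(s) = N_{\gamma_{y}(s)}(v)$ from \Cref{lemm:metric_speed_ABS}, the upper gradient inequality reads $1 \leq g^{*}(\pi_{N}(y+sv))\, N_{y+sv}(v)$ for a.e.\ $s$. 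Fubini upgrades this to an inequality $\mathcal{L}^{2}$-a.e.\ in $R$. Running the argument over a countable dense set of directions $v \in \mathbb{S}^{1}$ and exploiting continuity of $N_{x}$ on $\mathbb{S}^{1}$ for $x \in U \setminus E$ yields $g^{*}(\pi_{N}(x)) \geq \sup_{v} 1/N_{x}(v) = 1/\omega(N_{x}) = g(\pi_{N}(x))$ for $\mathcal{L}^{2}$-a.e.\ $x \in R$. Covering $U \setminus E$ by countably many such rectangles and transferring by the locally bi-Lipschitz map $\pi_{N}$ completes the proof.

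The main obstacle is the transfer between exceptional families for modulus on $\pi_{N}(U)$ and negligible transverse sets of line segments in $R$. This is handled by the bi-Lipschitz behavior of $\pi_{N}$ outside $E$ from \Cref{lemm:quotient_map_invertible}; the remaining steps reduce to standard Fubini-plus-density arguments in the plane.
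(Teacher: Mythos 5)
Your proof is correct and follows essentially the same route as the paper: the upper bound is obtained via the metric-speed characterization of upper gradients combined with \Cref{lemm:metric_speed_ABS}, and minimality via a Fubini foliation argument over a countable dense set of directions. Two small points of divergence: the paper handles the exceptional set $\gamma^{-1}(E)$ by reparametrizing so that it has zero Lebesgue measure, which is cleaner than your appeal to finiteness of $g_h \circ \widetilde{\gamma}$ --- that finiteness is not automatic on the set where $v_{\widetilde{\gamma}} = 0$, and the conclusion $v_\gamma = 0$ there really rests on the convention $\infty \cdot 0 = 0$ implicit in \eqref{eq:metric:speed:characterization}; and you omit the verification that $g \in L^{2}_{\loc}$, which the paper deduces from the locally bounded distortion of $N$ together with the change of variables formula \Cref{lemm:metric_derivative_Jacobian}.
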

    We use the convention $\frac{ 1 }{ 0 } \cdot 0 = 0$ in \Cref{lemm:metric_derivative_minimal_upper_gradient}.
    \begin{proof}
    We show that the function $g$ as in the claim is a weak upper gradient of $h$. First, the change of variables formula \Cref{lemm:metric_derivative_Jacobian} implies that $\mathcal{H}^{2}_{ \widetilde{d}_{N} }( \pi_{N}( E ) ) = 0$. Therefore the paths that have positive $\widetilde{d}_{N}$-length on $\pi_{N}( E )$ have zero modulus. Moreover, since $h$ is an element of $N^{1,  2}_{\loc}(\pi_{N}( U ), \mathbb{R}^2)$, $h$ maps almost every absolutely continuous path in $\pi_{N}( U )$ to an absolutely continuous path in $U$. Thus it suffices to check the upper gradient inequality for a path $\widetilde{\gamma} \colon [0,1] \to \pi_{N}( U )$ that intersects $\pi_{N}( E )$ in a set of $\widetilde{d}_{N}$-length zero and along which $h$ is absolutely continuous.  
    
    Let $\widetilde{\gamma} \colon [0,1] \to \pi_{N}( U )$ be such a path, and let $x = \widetilde{\gamma}(0)$ and $y = \widetilde{\gamma}(1)$. Let $\gamma = h \circ \widetilde{\gamma}$. Note that the absolute continuity of $h$ along $\widetilde{\gamma}$ implies that $\gamma$ intersects $E$ in a set of Euclidean length zero. Therefore, by reparametrizing, we can assume that the set $J = \gamma^{-1}( \Omega \setminus E )$ has full length in $[0,1]$.

    By \Cref{lemm:metric_speed_Lips}, the metric speed identity $v_{ \widetilde{\gamma} } = N \circ D\gamma$ holds $\mathcal{L}^1$-almost everywhere for $\gamma$. Also for almost every $t \in \left[0, 1\right] \setminus \gamma^{-1}( E )$,
    \begin{equation}
    \label{eq:speed:inequality}
        v_{ \gamma }( t )
        =
        \norm{ D\gamma(t) }_{2}
        \leq
        \frac{ 1 }{ \omega( N_{\gamma}(t) ) }
        N \circ D\gamma(t)
        =
        \frac{ 1 }{ \omega( N_{\gamma}(t) ) }
        v_{ \widetilde{\gamma} }(t),
    \end{equation}
    where $\omega( N_{\gamma(t)} )$ is the minimal stretching of $N$ at $\gamma(t)$. Since $\gamma^{-1}( E )$ has zero measure, we conclude from \eqref{eq:speed:inequality} that
    for almost every $t \in \left[0, 1\right]$,
    \begin{equation}
    \label{eq:speed:inequality:almost:complete}
        v_{ \gamma }( t )
        \leq
        \left(
            \frac{ 1 }{ \omega( N ) }
            \chi_{ \Omega \setminus E }
        \right)
        \circ \gamma(t)
        v_{ \widetilde{\gamma} }(t).
    \end{equation}
    The right-hand side in \eqref{eq:speed:inequality:almost:complete} equals $g \circ ( \pi_{N} \circ \gamma(t) ) v_{ \widetilde{\gamma} }(t)$. Therefore, integrating both sides of \eqref{eq:speed:inequality:almost:complete} implies that
    \begin{equation*}
        \|h(x) - h(y)\|_2
        \leq
        \int_{ \widetilde{\gamma} } g \,ds_{ \widetilde{d}_{N} }.
    \end{equation*}
    The local $L^{2}$-integrability of $g$ follows from the fact that $N$ has locally bounded distortion (\Cref{lemm:distortion_vs_dilatation}) and the change of variables formula (\Cref{lemm:metric_derivative_Jacobian}).
    
    We are left to check that $g$ is a minimal weak upper gradient. Let $\rho \in L^{2}_{\loc}( \pi_{N}( U ) )$ be a weak upper gradient of $h$. We want to show that $g(x) \leq \rho(x)$ for $\mathcal{H}^{2}_{d_{N}}$-almost every $x \in \pi_{N}( U )$. The set $\pi_{N}( E )$ is negligible, so it is sufficient to check this in the complement of $\pi_{N}( E )$. As $h$ is locally bi-Lipschitz in the complement of $\pi_{N}( E )$, it suffices to check that
    \begin{equation}
    \label{eq:almost}
        g \circ \pi_{N}(x)
        =
        \sup_{ v \in \mathbb{S}^{1} }
            \frac{ 1 }{ N_{x}( v ) }
        \leq
        \rho \circ \pi_{N}( x )
    \end{equation}
    $\mathcal{L}^{2}$-almost every $x \in U \setminus E$.
    
    Consider a square $R \subset U \setminus E$ and the accompanying foliation given by
    \begin{equation*}
        \gamma_{t}(s)
        =
        x_{0} + s v + t w,
    \end{equation*}
    where $v,w \in \mathbb{R}^2$ are orthogonal vectors and $s,t \in [-1,1]$.
    
    The metric speed characterization $v_{ \pi_{N} \circ \gamma } = N \circ D\gamma$ implies that
    \begin{equation*}
        v_{ \gamma_{t} }( s )
        =
        \norm{ v }_{2}
        \leq
        \rho \circ ( \pi_{N} \circ \gamma_{t}(s) )
        N_{ \gamma_{t}(s) }(v)
    \end{equation*}
    holds almost everywhere along the domain of $\gamma_{t}$ for almost every $t$. Fubini's theorem implies that
    \begin{equation*}
        \norm{ v }_{2}
        \leq
        \rho \circ \pi_{N}( x )
        N_{ x }( v )
    \end{equation*}
    holds $\mathcal{L}^{2}$-almost everywhere on $R$. Equivalently,
    \begin{equation*}
        1
        \leq
        \rho \circ \pi_{N}( x )
        N_{x}\left( \frac{ v }{ \norm{ v }_{2} } \right)
    \end{equation*}
    $\mathcal{L}^{2}$-almost everywhere on $R$. We can cover $U \setminus E$ by squares whose sides are parallel to $v$ and $w$, so we deduce that
    \begin{equation}
    \label{eq:essential:inequality}
        \frac{ 1 }{ N_{x}\left( \frac{ v }{ \norm{ v }_{2} } \right) }
        \leq
        \rho \circ \pi_{N}( x )
    \end{equation}
    $\mathcal{L}^{2}$-almost everywhere on $U \setminus E$.
    
    Let $D$ be a countable dense subset of $\mathbb{S}^{1}$. We have shown that, for $\mathcal{L}^{2}$-almost every $x \in U \setminus E$, \eqref{eq:essential:inequality} holds for every $v \in D$. Consequently, \eqref{eq:almost} holds for $\mathcal{L}^{2}$-almost every $x \in U \setminus E$.
    \end{proof}
    
    \subsection{Local quasiconformality} Let $U$ be a subdomain of $\Omega$ such that $\overline{U} \subset \Omega$ is compact. Since the norm field $N$ has locally bounded distortion, there exists $K( U ) < \infty$ such that 
    \begin{equation*}
    \label{eq:outer_distortion}
        L( N )^2
        \leq
        K(U) J_{2}( N )
    \end{equation*}
    for the maximal stretching $L( N )$ and the Jacobian $J_{2}( N )$. Recall that $L(N)$ is a weak upper gradient of $\pi_N$, $J_{2}( N )$ is the Jacobian of $\pi_{N}$, and that the pullback measure $\pi_N^*\mathcal{H}_{\widetilde{d}_N}^2$ is locally finite. Thus \Cref{prop:williams:L-Wversion} implies the following.
    \begin{prop}\label{prop:modulus_distortion}
    For every path family $\Gamma$ in $U$, we have that
    \begin{equation*}
    \label{eq:distortion_inequality}
        \Mod \Gamma
        \leq
        K(U) \Mod \pi_{N} \Gamma.
    \end{equation*}
    \end{prop}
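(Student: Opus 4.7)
The plan is to deduce this proposition as an essentially immediate consequence of \Cref{prop:williams:L-Wversion} applied to the restriction $\pi_{N}|_{U} \colon U \to \pi_{N}(U)$, once the analytic hypothesis of that theorem is verified with constant $K(U)$. Since the bound $L(N)^{2} \leq K(U) J_{2}(N)$ holds pointwise on $U$ by the definition of $K(U)$, the task reduces to identifying the minimal weak upper gradient and Jacobian of $\pi_{N}$ with $L(N)$ and $J_{2}(N)$ respectively, and to checking that $\pi_{N}|_{U}$ satisfies the hypotheses of \Cref{prop:williams:L-Wversion}.

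First, I would record that $\pi_{N}|_{U}$ is Lipschitz; this is guaranteed because $N$ is locally bounded on $\Omega$ and $\overline{U}$ is compact in $\Omega$. In particular, $\pi_{N}|_{U}$ lies in $N^{1,2}_{\loc}(U, \pi_{N}(U))$. By \Cref{prop:lipschitz}, the function $x \mapsto L(\apmd{\pi_{N}}(x))$ is a minimal weak upper gradient of $\pi_{N}$, and \Cref{lemm:derivative_standard} gives $\apmd{\pi_{N}} = N$ almost everywhere on $\Omega$. Hence $g_{\pi_{N}}(x) = L(N_{x})$ for almost every $x \in U$. Next, \Cref{lemm:metric_derivative_Jacobian} provides the change of variables formula
\begin{equation*}
    \int_{\Omega} \rho \circ \pi_{N}\, J_{2}(N)\,d\mathcal{L}^{2} = \int_{\Omega / \mathcal{E}_{N}} \rho \,d\mathcal{H}^{2}_{\widetilde{d}_{N}},
\end{equation*}
which by the definition of the Jacobian (and the fact that $\pi_{N}$ is injective outside the negligible set $E$) identifies $J_{\pi_{N}}(x) = J_{2}(N_{x})$ for almost every $x \in U$. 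In particular, the pullback measure $\pi_{N}^{*}\mathcal{H}^{2}_{\widetilde{d}_{N}}$ is locally finite on $U$, since its density $J_{2}(N)$ is bounded above by $L(N)^{2}$, which is locally bounded.

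I would then check the remaining hypotheses of \Cref{prop:williams:L-Wversion}: the map $\pi_{N}|_{U}$ is continuous (being Lipschitz) and monotone, which follows from \Cref{prop:main_section} since $\pi_{N}^{-1}(\widetilde{x}) \cap \overline{U}$ is compact and either a singleton or contained in a connected component of $E$. Combining these with the pointwise inequality
\begin{equation*}
    g_{\pi_{N}}^{2}(x) = L(N_{x})^{2} \leq K(U) J_{2}(N_{x}) = K(U) J_{\pi_{N}}(x)
\end{equation*}
valid $\mathcal{L}^{2}$-a.e.\ on $U$, the implication (ii) $\Rightarrow$ (i) of \Cref{prop:williams:L-Wversion} yields $\Mod \Gamma \leq K(U) \Mod \pi_{N}\Gamma$ for every path family $\Gamma$ in $U$.

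The argument is largely bookkeeping; the only subtlety is ensuring that the identifications of $g_{\pi_{N}}$ and $J_{\pi_{N}}$ with $L(N)$ and $J_{2}(N)$ hold globally enough on $U$ for the pointwise inequality to transfer. This is exactly where the lower semicontinuity of $N$ enters via \Cref{lemm:derivative_standard}, so no additional regularity or approximation argument is needed.
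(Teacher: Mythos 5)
Your proof is correct and follows essentially the same route as the paper, which likewise deduces the proposition from \Cref{prop:williams:L-Wversion} after identifying $L(N)$ as a weak upper gradient of $\pi_N$ and $J_2(N)$ as its Jacobian via \Cref{lemm:derivative_standard} and \Cref{lemm:metric_derivative_Jacobian}. You have simply made the bookkeeping (continuity, monotonicity, local finiteness of the pullback measure) explicit, which the paper leaves to a one-line remark.
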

    
    If $(\Omega/\mathcal{E}_N, \widetilde{d}_N)$ is reciprocal, then it admits some quasiconformal parametrization from a domain in Euclidean space. We show here that the map $\pi_N$ itself is a quasiconformal parametrization, at least locally.
    
    \begin{prop}\label{lemm:injectivity_obstruction}
    The metric surface $( \Omega / \mathcal{E}_{N}, \widetilde{d}_{N} )$ is reciprocal if and only if $\pi_{N}$ is a homeomorphism that is locally quasiconformal.
    \end{prop}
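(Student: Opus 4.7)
The plan is to handle the two implications separately.
\emph{Sufficient direction.} Suppose $\pi_N$ is a locally quasiconformal homeomorphism. Each point of $\Omega/\mathcal{E}_N$ then has a neighborhood that is quasiconformally equivalent, via $\pi_N^{-1}$, to a Euclidean domain. Since Euclidean domains are reciprocal and the two defining conditions of reciprocality are quasiconformal invariants, $(\Omega/\mathcal{E}_N, \widetilde{d}_N)$ is locally reciprocal, and Theorem~1.3 of \cite{Iko:19} upgrades this to global reciprocality.

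\emph{Necessary direction, injectivity of $\pi_N$.} Assume $(\Omega/\mathcal{E}_N, \widetilde{d}_N)$ is reciprocal. By \Cref{prop:main_section}, $\pi_N$ fails to be injective only if some fiber $K = \pi_N^{-1}(\widetilde{x})$ with $\widetilde{x} \in \pi_N(E)$ is a non-degenerate continuum. I would derive a contradiction from the point-modulus condition \eqref{point:zero:modulus} at such a $\widetilde{x}$. Fix $R > 0$ small enough that $\overline{B}_{\widetilde{d}_N}(\widetilde{x},R)$ is compact and its $\pi_N$-preimage is compactly contained in $\Omega$; this is possible because $\pi_N$ is proper, being continuous and closed with compact fibers (\Cref{prop:main_section}). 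For $r \in (0, R]$ set $U_r = \pi_N^{-1}(B_{\widetilde{d}_N}(\widetilde{x}, r))$ and compare the quotient family
\[
\Gamma_r = \Gamma\bigl(\overline{B}_{\widetilde{d}_N}(\widetilde{x},r),\; X\setminus B_{\widetilde{d}_N}(\widetilde{x},R);\; \overline{B}_{\widetilde{d}_N}(\widetilde{x},R)\bigr)
\]
with its Euclidean lift $\widetilde{\Gamma}_r = \Gamma(\overline{U}_r,\; \Omega\setminus U_R;\; \overline{U}_R)$. Since $\pi_N(\overline{U}_s) \subset \overline{B}_{\widetilde{d}_N}(\widetilde{x},s)$ for $s \in \{r, R\}$, we have $\pi_N(\widetilde{\Gamma}_r) \subset \Gamma_r$, so \Cref{prop:modulus_distortion} applied on $\overline{U}_R$ yields
\[
\Mod \Gamma_r \;\geq\; K(\overline{U}_R)^{-1}\Mod \widetilde{\Gamma}_r \;\geq\; K(\overline{U}_R)^{-1}\Mod \Gamma(K,\; \Omega\setminus U_R;\; \overline{U}_R) \;>\; 0
\]
uniformly in $r > 0$, where positivity on the right is the standard Euclidean fact that a non-degenerate continuum in the interior of a bounded planar domain has positive conformal capacity. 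Letting $r \to 0^+$ contradicts \eqref{point:zero:modulus}, so $\pi_N$ is injective and, by \Cref{prop:main_section}, a homeomorphism.

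\emph{Necessary direction, local quasiconformality of $\pi_N$.} By Theorem~1.3 of \cite{Iko:19}, $(\Omega/\mathcal{E}_N, \widetilde{d}_N)$ is quasiconformally equivalent to a Riemannian $2$-manifold, hence locally quasiconformally equivalent to a Euclidean domain. For any $x \in \Omega$, pick a neighborhood $V$ of $\pi_N(x)$ and a quasiconformal homeomorphism $h\colon V \to D$ onto a Euclidean domain $D$, together with a subdomain $U \Subset \pi_N^{-1}(V)$ with $x \in U$ and $\overline{U}$ compact in $\Omega$. Combining \Cref{prop:modulus_distortion} with the inner-dilatation inequality for $h$ gives
\[
\Mod \Gamma \;\leq\; K(\overline{U})\, K_I(h)\, \Mod (h\circ\pi_N)(\Gamma)
\]
for every path family $\Gamma$ in $U$. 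Thus $h\circ\pi_N|_U$ is a homeomorphism between planar domains with bounded geometric outer dilatation, hence quasiconformal by classical planar theory. Then $\pi_N|_U = h^{-1}\circ (h\circ\pi_N)|_U$ is a composition of quasiconformal maps, which shows that $\pi_N$ is locally quasiconformal. The main difficulty lies in the injectivity step, in verifying that the lifted family $\widetilde{\Gamma}_r$ converts the metric point-modulus obstruction into positivity of a Euclidean modulus between two disjoint continua; the needed tools — properness of $\pi_N$, \Cref{prop:modulus_distortion}, and classical modulus positivity — are already in hand, so the argument reduces to careful bookkeeping.
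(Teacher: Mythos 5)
Your proof is correct, but it takes a genuinely different route from the published one in the injectivity step, so let me compare.

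The published argument does not prove injectivity separately from local quasiconformality. It fixes a Jordan domain $Q$ with $\partial Q \cap E = \emptyset$, uses reciprocality to obtain a $\tfrac{\pi}{2}$-quasiconformal map $f\colon \pi_N(Q) \to \overline{\mathbb{D}}$, sets $h = f \circ \pi_N|_{\interior Q}$, and notes that $h$ automatically has bounded geometric outer dilatation. Injectivity of $h$ — and hence of $\pi_N$ — is then deduced from the $2$-Loewner property of $\overline{\mathbb{D}}$: if a fiber $h^{-1}(y)$ were a nondegenerate continuum $C$, then on one hand $\Mod\Gamma(C,\partial Q;Q)>0$ by the Loewner property, while on the other hand $\Mod\Gamma(C,\partial Q;Q)$ is dominated by the modulus of paths from the single point $y$ to $\mathbb{S}^1$, which vanishes. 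Quasiconformality of $h$ (and hence of $\pi_N|_{\interior Q} = f^{-1}\circ h$) then follows from the classical planar characterization. You instead isolate injectivity as a direct consequence of the point-modulus axiom \eqref{point:zero:modulus}: lift the annular condenser around a putative nondegenerate fiber back to $\Omega$, push the positive Euclidean modulus through the quotient map via \Cref{prop:modulus_distortion}, and contradict \eqref{point:zero:modulus}. Both work. The paper's route is more economical — one application of a Loewner estimate handles injectivity and the construction of the local chart simultaneously — while yours is more transparent about which reciprocality axiom (the vanishing point modulus rather than the quadrilateral bound) is responsible for injectivity. The sufficient direction and the derivation of local quasiconformality from a local quasiconformal chart via \Cref{prop:modulus_distortion} are essentially as in the paper.

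Two small corrections. In the chain $\Mod\Gamma \leq K(\overline{U})\,K_I(h)\,\Mod(h\circ\pi_N)\Gamma$, with the paper's convention that $K_O(f)$ is the constant in $\Mod\Gamma \leq K_O(f)\Mod f\Gamma$, the factor should be $K_O(h)$ (equivalently $K_I(h^{-1})$), not $K_I(h)$. Also, $U_R = \pi_N^{-1}(B_{\widetilde{d}_N}(\widetilde{x},R))$ need not be a Jordan domain, so the positivity of $\Mod\Gamma(K,\Omega\setminus U_R;\overline{U}_R)$ merits a sentence; it does hold, via the $2$-Loewner property of $\mathbb{R}^2$ plus an overflowing argument, but it is cleaner to replace $U_R$ with a Jordan domain $U$ satisfying $K\subset U$, $\overline U$ compact in $\Omega$, and $\partial U\subset\Omega\setminus E$, which exists since $E$ is non-separating.
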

    Here, a map $\psi \colon X \rightarrow Y$ is \emph{locally quasiconformal} if every point $x \in X$ has a neighborhood $U$ such that the restriction of $\psi$ to $U$ is $K$-quasiconformal for some $K \geq 1$, where the $K$ is allowed to depend on $x$.
    \begin{proof}

    If $\pi_{N}$ is a locally quasiconformal homeomorphism, every point in $Y = ( \Omega / \mathcal{E}_{N},  \widetilde{d}_{N} )$ has a neighborhood that is reciprocal. By Corollary 1.4 of \cite{Iko:19}, this implies that $Y$ is reciprocal.
    
    Conversely, suppose that $Y$ is reciprocal. It suffices to fix an arbitrary Jordan domain $Q \subset \Omega$ with $\partial Q \cap E = \emptyset$ and check that $\pi_{N}|_{\interior Q}$ is quasiconformal.
    
    The reciprocality of $Y$ implies the existence of a homeomorphism $f \colon \pi_{N}( Q ) \rightarrow \overline{\mathbb{D}}$ that is $\frac{ \pi }{ 2 }$-quasiconformal in $\pi_{N}( Q )$. Let $h = f \circ \pi_{N}|_{ \interior Q }$. The claim follows if we can check that $h$ is quasiconformal.
    
    The map $h$ satisfies the assumptions of \Cref{prop:williams:L-Wversion} and the property (i). Therefore the quasiconformality of $h$ follows from its injectivity; see Section 3.1 of \cite{Ast:Iwa:Mar:09}. We check that $h$ is injective.
    
    Let $y \in \mathbb{D}$ and $C = h^{-1}( y )$. The set $C$ is connected and compact. The modulus of paths joining $y$ to the boundary $\mathbb{S}^{1}$ is zero by the $2$-Loewner property of $\overline{ \mathbb{D} }$ \cite[Example 8.24]{Hei:01}, hence (i) from \Cref{prop:williams:L-Wversion} implies that the modulus of paths joining $C$ to $\partial Q$ is zero. This happens only when $C$ is a singleton by the $2$-Loewner property of $\overline{ \mathbb{D} }$. Therefore $h$ is injective.
    \end{proof}

    \begin{rem} \label{rem:positive_measure}
    \Cref{lemm:injectivity_obstruction} gives two simple criteria for $( \Omega / \mathcal{E}_{N}, \widetilde{d}_{N} )$ to fail to be reciprocal. First, if $\mathcal{L}^{2}( E ) > 0$, then $\pi_{N}$ is not locally quasiconformal since Lusin's Condition ($N^{-1}$) is violated. Second, if $\pi_{N}$ is not injective, then $( \Omega / \mathcal{E}_{N}, \widetilde{d}_{N} )$ is not reciprocal.
    \end{rem} 
    
\section{Removable implies reciprocal}\label{sec:reciprocality}
    The objective of this section is to prove \Cref{thm:removable_implies_reciprocal}. An outline of the proof is as follows. First, we give a pair of reductions, \Cref{lemm:general:to:normal} and \Cref{lemm:subdomain:to:space}, showing that it suffices to consider only the case of admissible norm fields of the form $N = \omega \|\cdot\|_2$ defined on all of $\mathbb{R}^2$, for some bounded function $\omega\colon \mathbb{R}^2 \to [0, \infty)$. Next, \Cref{lemm:reciprocal_quadrilaterals_mod} gives a criterion for the mapping $\pi_N$ in our situation to be quasiconformal: it suffices to show that $\pi_N$ preserves the modulus of the path families $\Gamma(\xi_1, \xi_3;R)$ and $\Gamma(\xi_2, \xi_4;R)$ for a single rectangle $R$ containing $E$ with boundary edges $\xi_1, \xi_2, \xi_3, \xi_4$.
    
    We complete the proof by verifying the modulus condition of \Cref{lemm:reciprocal_quadrilaterals_mod}. This part is an application of the classical theorem of uniformization onto slit domains. This argument is based on the proof of Theorem 9 in \cite{AB:50}. In \Cref{sec:generalization}, we extend \Cref{thm:removable_implies_reciprocal} by relaxing the assumption that $L( N ) \in L^{\infty}_{ \loc } ( \Omega )$ to the assumption that $L( N ) \in L^{p}_{\loc}( \Omega )$ for some $p \in ( 2, \infty )$.
    
    \begin{lemm}\label{lemm:general:to:normal}
    An admissible norm field $N$ on $\Omega$ is reciprocal if and only if the norm field $\widehat{N} = \omega( N ) \norm{ \cdot }_{2}$ induced by the minimal stretching $\omega( N )$ is reciprocal.
    \end{lemm}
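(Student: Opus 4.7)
My plan is to use \Cref{lemm:injectivity_obstruction} as the main bridge between reciprocality and quasiconformality of the quotient map, and then exploit the fact that the locally bounded distortion hypothesis makes $N$ and $\widehat{N}$ locally bi-Lipschitz equivalent as seminorm fields. First I would verify that $\widehat{N}$ is itself an admissible norm field: its zero set coincides with $E$ because $\omega(N_x) = 0$ exactly when $N_x$ is the zero seminorm; lower semicontinuity of $\omega(N)$ was already observed in the proof of \Cref{lemm:quotient_map_invertible} as a consequence of \Cref{lemm:strong_LSC}; local boundedness follows from $\omega(N) \leq L(N)$; and the distortion of $\widehat{N}$ is identically $1$.

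The key step is to show that $\mathcal{E}_N = \mathcal{E}_{\widehat{N}}$. One inclusion is immediate from the pointwise inequality $\widehat{N} \leq N$, which yields $d_{\widehat{N}} \leq d_N$ globally. For the reverse inclusion, given $x,y \in \Omega$ with $d_{\widehat{N}}(x,y) = 0$, I would take approximate minimizers $\gamma_n$ with $\ell_{\widehat{N}}(\gamma_n) \to 0$ and argue that, for $n$ large, $\gamma_n$ lies in a fixed compact subset $K$ of $\Omega$. This uses compactness of $E$ together with the fact that $\omega(N)$ is bounded below by a positive constant outside any Euclidean neighborhood of $E$ (by lower semicontinuity, positivity on $\Omega \setminus E$, and compactness), which forces Euclidean excursions of $\gamma_n$ outside a suitable compact neighborhood of $E \cup \{x,y\}$ to have Euclidean length bounded by a multiple of $\ell_{\widehat{N}}(\gamma_n) \to 0$. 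Locally bounded distortion on $K$ then gives $N \leq H_K \widehat{N}$ pointwise, so $\ell_N(\gamma_n) \leq H_K \ell_{\widehat{N}}(\gamma_n) \to 0$ and $d_N(x,y) = 0$.

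Once $\mathcal{E}_N = \mathcal{E}_{\widehat{N}}$, the quotient set $X := \Omega/\mathcal{E}_N$ carries both metrics $\widetilde{d}_N$ and $\widetilde{d}_{\widehat{N}}$ and a common underlying quotient map $\pi$. The same compactness argument shows that the identity map $(X,\widetilde{d}_{\widehat{N}}) \to (X,\widetilde{d}_N)$ is locally bi-Lipschitz, hence a locally quasiconformal homeomorphism that induces the same topology on $X$. By \Cref{lemm:injectivity_obstruction} applied to each of $N$ and $\widehat{N}$, reciprocality is equivalent to $\pi$ being a locally quasiconformal homeomorphism from $\Omega$ onto $(X,\widetilde{d}_N)$ (respectively $(X,\widetilde{d}_{\widehat{N}})$). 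Writing $\pi$ as the composition of $\pi_{\widehat{N}}$ with this locally bi-Lipschitz identity map gives the desired equivalence of the two local quasiconformality conditions, and since the topologies on $X$ agree the two homeomorphism conditions are equivalent as well.

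The main obstacle is the equality $\mathcal{E}_N = \mathcal{E}_{\widehat{N}}$: passing from the pointwise and only locally uniform comparison $\widehat{N} \leq N \leq H_K \widehat{N}$ to a global statement about the pseudodistances requires the compactness argument above to control the behavior of approximate geodesics. The remainder of the proof is essentially formal once \Cref{lemm:injectivity_obstruction} and the basic fact that a locally bi-Lipschitz homeomorphism of metric surfaces is locally quasiconformal are invoked.
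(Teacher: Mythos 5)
Your proof is correct and follows essentially the same approach as the paper: reduce via \Cref{lemm:injectivity_obstruction} to local quasiconformality of the quotient maps, then use $\widehat{N} \leq N$ together with the locally bounded distortion to obtain the local two-sided comparison $\widehat{N} \leq N \leq H\widehat{N}$, yielding local bi-Lipschitz equivalence of the induced distances. The paper's proof is terser, leaving implicit both the admissibility check on $\widehat{N}$ and the verification that $\mathcal{E}_N = \mathcal{E}_{\widehat{N}}$, which your compactness argument (approximate $\widehat{N}$-minimizers eventually stay in a compact set where $N \leq H\widehat{N}$ holds with a uniform constant) correctly spells out.
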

    \begin{proof}
    As we see from \Cref{lemm:injectivity_obstruction}, it suffices to show that the metrics generated by $N$ and $\widehat{N}$ are locally quasiconformally equivalent. Observe first that it follows directly from the definition that $\widehat{N} \leq N$. Since $N$ has locally bounded distortion, every point has a neighborhood $U$ such that $N_{x} \leq H \widehat{N}_{x}$ for some $H > 0$ and every $x \in U$. These facts imply that the corresponding distances are locally bi-Lipschitz equivalent.
    \end{proof}
    For the following lemma, fix a subdomain $\Omega' \subset \Omega$ that contains $E$ and is compactly contained in $\Omega$. Let $K = \overline{ \Omega' }$. Since $N = \omega \norm{ \cdot }_{2}$ is locally bounded, there exists $\alpha > 0$ such that $\omega < \alpha$ everywhere on $K$. We define
    \begin{equation*}
    \label{eq:norm:field:extension}
        \widehat{N}
        =
        \left(
            \omega \chi_{ K }
            +
            \alpha \chi_{ \mathbb{R}^{2} \setminus K }
        \right)
        \norm{ \cdot }_{2}.
    \end{equation*}
    The choice of $\alpha$ implies that $\widehat{N}$ is admissible on $\mathbb{R}^{2}$ vanishing exactly on $E$. Also, $\widehat{N}$ coincides with $N$ in $\Omega'$.
    \begin{lemm}\label{lemm:subdomain:to:space}
    The norm field $N = \omega \norm{ \cdot }_{2}$ is reciprocal in $\Omega$ if and only if the extension $\widehat{N}$ is reciprocal in $\mathbb{R}^{2}$. Moreover, in either one of these cases the quotient maps $\pi_{ \widehat{N} }$ and $\pi_{N}$ are 1-quasiconformal homeomorphisms.
    \end{lemm}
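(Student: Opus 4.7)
The plan is to deduce both parts of the lemma from \Cref{lemm:injectivity_obstruction} together with the local-to-global reciprocality principle stated just after \Cref{defi:reciprocal}. The $1$-quasiconformality is essentially automatic from the conformal structure: assuming $\pi_N$ is already a locally quasiconformal homeomorphism, the fact that $N_x = \omega(x)\norm{\cdot}_{2}$ is a positive multiple of the Euclidean norm gives $L(N_x) = \omega(N_x) = \omega(x)$ and $J_{2}(N_x) = \omega(x)^{2}$. By \Cref{prop:lipschitz} and \Cref{lemm:metric_derivative_Jacobian}, $g_{\pi_N} = \omega$ and $J_{\pi_N} = \omega^{2}$ almost everywhere, so $g_{\pi_N}^{2} = J_{\pi_N}$. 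The analytic characterization \Cref{prop:williams:L-Wversion} then gives $K_O(\pi_N) = 1$, and an analogous computation applied to $\pi_N^{-1}$ using the minimal weak upper gradient from \Cref{lemm:metric_derivative_minimal_upper_gradient} yields $K_I(\pi_N) = 1$. The same argument applies verbatim to $\pi_{\widehat N}$.

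For the equivalence of reciprocality, I would invoke the fact that a metric surface homeomorphic to a $2$-manifold is reciprocal if and only if it is locally reciprocal at every point. Both quotient spaces $(\Omega/\mathcal{E}_N, \widetilde d_N)$ and $(\mathbb{R}^{2}/\mathcal{E}_{\widehat N}, \widetilde d_{\widehat N})$ are, by \Cref{lemm:quotient_map_invertible}, locally bi-Lipschitz to Euclidean domains at every point outside $\pi(E)$, so local reciprocality at such points is automatic. The equivalence therefore reduces to showing that local reciprocality at any point of $\pi(E)$ is the same condition for $N$ as for $\widehat N$.

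To establish this, I would produce an open neighborhood of $E$ on which $d_N$ and $d_{\widehat N}$ coincide. Choose open sets $W \Subset V \Subset \Omega'$ all containing $E$. By \Cref{lemm:strong_LSC} the minimal stretching $\omega$ of $N$ is lower semicontinuous; since $N_x$ is a norm on $\Omega \setminus E$, $\omega$ is positive on the compact set $\overline{V} \setminus W$, so there exists $c > 0$ with $\omega \geq c$ on $\overline{V} \setminus W$. Because $N = \widehat N$ on $\Omega'$, the same bound holds for the minimal stretching of $\widehat N$, and since $\widehat N = \alpha\norm{\cdot}_{2}$ on $\mathbb{R}^{2} \setminus K$, a corresponding positive lower bound for $\widehat N$ is available outside $V$. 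Consequently, any absolutely continuous path starting in $W$ and exiting $V$ has both $N$-length and $\widehat N$-length at least $c \cdot \dist_{\norm{\cdot}_{2}}(\partial W, \partial V)$. Thus, for $x, y \in W$ sufficiently close in either metric, the infima defining $d_N(x,y)$ and $d_{\widehat N}(x,y)$ are approached by paths contained in $V$, where the two norm fields agree. This yields $d_N(x,y) = d_{\widehat N}(x,y)$, so the quotient spaces are isometric on a neighborhood of $\pi(E)$. Local reciprocality at points of $\pi(E)$ is therefore equivalent for the two norm fields, and both halves of the main equivalence follow.

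The main obstacle I anticipate is the careful verification that the infima in $d_N(x,y)$ and $d_{\widehat N}(x,y)$ are approached by paths that stay in $V$; this confinement is what forces the minimizers into the region where $N = \widehat N$, and it relies essentially on the compact containment $\overline{\Omega'} \subset \Omega$, the non-separation of $E$, and the lower semicontinuity of $N$ supplied by \Cref{lemm:strong_LSC}.
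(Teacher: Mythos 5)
Your proposal is correct and follows essentially the same route as the paper: a local isometry between the two quotient spaces near $\pi(E)$ (forced by lower bounds on the length of paths exiting a neighborhood of $E$), combined with the local bi-Lipschitz structure off $E$, reduces reciprocality of one norm field to the other; and $1$-quasiconformality is obtained exactly as you describe, bounding the outer dilatation of $\pi_N$ via \Cref{prop:williams:L-Wversion} (or \Cref{prop:modulus_distortion}) and that of $\pi_N^{-1}$ via \Cref{lemm:metric_derivative_minimal_upper_gradient} and the change of variables formula. The only cosmetic difference is that the paper packages the local-isometry step into a map $f$ between the quotients of $\Omega'$ and invokes \Cref{lemm:injectivity_obstruction} directly, whereas you argue through local reciprocality, but the content is the same.
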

    \begin{proof}
    First of all, since $N$ and $\widehat{N}$ are equal in $\Omega'$, there exists a homeomorphism
    \begin{equation*}
        f
        \colon
        \pi_{N}( \Omega' )
        \rightarrow
        \pi_{ \widehat{N} }( \Omega' )
    \end{equation*}
    for which $\pi_{ \widehat{N} } = \pi_{N} \circ f$ on $\Omega'$. In fact, the map $f$ is a local isometry and hence 1-quasiconformal.
    
    Since the restrictions of $\pi_{N}$ and $\pi_{ \widehat{N} }$ to the complement of $E$ are locally bi-Lipschitz, we deduce that they are locally quasiconformal if and only if their restrictions to $\Omega'$ are locally quasiconformal. These two conditions are equivalent for the maps since $f$ is quasiconformal. We conclude from \Cref{lemm:injectivity_obstruction} that $N$ is reciprocal if and only if $\widehat{N}$ is reciprocal.
    
    We are left to check that if $\pi_{N}$ is locally quasiconformal, then it is actually 1-quasiconformal. Combining \Cref{prop:williams:L-Wversion} with the local quasiconformality of $\pi_{N}$, we conclude that $h = \pi_{N}^{-1}$ has the Sobolev regularity required for \Cref{lemm:metric_derivative_minimal_upper_gradient}. Therefore
    \begin{equation*}
        \rho
        =
        \left(
            \frac{ 1 }{ \omega }
            \chi_{ \Omega \setminus E  }
        \right)
        \circ
        h
    \end{equation*}
    is a minimal weak upper gradient of $h$. The change of variables formula \Cref{lemm:metric_derivative_Jacobian} and \Cref{prop:williams:L-Wversion} imply that the outer dilatation of $h$ is bounded from above by one. The outer dilatation bound for $\pi_N$ follows from \Cref{prop:modulus_distortion}. We conclude that $\pi_{N}$ is 1-quasiconformal. The 1-quasiconformality of $\pi_{ \widehat{N} }$ is argued in a similar manner.
    \end{proof}
    
   \subsection{A criterion for quasiconformality}\label{sec:criterions} 
    
    We prove \Cref{lemm:reciprocal_quadrilaterals_mod} in this section. We consider an admissible norm field $N = \omega \norm{ \cdot }_{2}$ defined on a domain $\Omega \subset \mathbb{R}^{2}$ vanishing exactly on a non-separating compact set $E \subset \Omega$.
    
    We consider a quadrilateral $Q \subset \Omega$ whose boundary $\partial Q$ does not intersect the set $E$. Let $( \xi_{1},  \xi_{2},  \xi_{3},  \xi_{4} )$ be a decomposition of $\partial Q$ into four arcs labelled in counterclockwise order.
    
    Since $\partial Q$ does not intersect $E$, the restriction of $\pi_{N}$ to $\partial Q$ is a homeomorphism (\Cref{prop:main_section}). As a consequence of \Cref{lemm:topology}, the image $\pi_{N} Q$ is a Jordan domain with boundary $\pi_{N} \partial Q$ and the arcs $( \pi_{N} \xi_{1},  \pi_{N} \xi_{2},  \pi_{N} \xi_{3},  \pi_{N} \xi_{4} )$ decompose $\pi_{N}( \partial Q )$.
    
    We fix some notation for the following proof. Let
    \begin{align*}
        \Gamma_{1}
        &=
        \Gamma\left(
            \xi_{1},  \xi_{3};  Q
        \right)
        \quad \text{and} \quad
        \widetilde{\Gamma}_{1}
        =
        \Gamma\left(
            \pi_{N}\xi_{1},  \pi_{N}\xi_{3};  \pi_{N}Q
        \right);
        \\
        \Gamma_{2}
        &=
        \Gamma\left(
            \xi_{2},  \xi_{4};  Q
        \right)
        \quad \text{and} \quad
        \widetilde{\Gamma}_{2}
        =
        \Gamma\left(
            \pi_{N}\xi_{1},  \pi_{N}\xi_{3};  \pi_{N}Q
        \right).
    \end{align*}
    We defined $\Gamma(F_1, F_2; G)$ in \Cref{sec:quasiconformal_mappings}. Notice that $\pi_{N} \Gamma_{1} \subset \widetilde{\Gamma}_{1}$ and $\pi_{N} \Gamma_{2} \subset \widetilde{\Gamma}_{2}$.
    
    \begin{prop}\label{lemm:reciprocal_quadrilaterals_mod}
    Let $N = \omega \norm{ \cdot }_{2}$ be admissible. If $\Mod \Gamma_{1} = \Mod \widetilde{\Gamma}_{1}$ and $\Mod \Gamma_{2} = \Mod \widetilde{\Gamma}_{2}$, then the restriction of $\pi_{N}$ to $Q$ is a homeomorphism and 1-quasiconformal.
    \end{prop}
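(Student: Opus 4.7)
The plan is to proceed in three stages: (i) the map $\pi_{N}$ automatically has outer dilatation at most $1$ because $N$ is a conformal rescaling of the Euclidean norm; (ii) the hypothesized modulus equalities force $\pi_{N}|_{Q}$ to be injective; (iii) once $\pi_{N}|_{Q}$ is a homeomorphism, the inverse inherits the pointwise dilatation identity $g_{h}^{2} = J_{h}$, giving $1$-quasiconformality.

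For stage (i), a direct computation shows $L(N_{x})^{2} = \omega(x)^{2} = J_{2}(N_{x})$ at every $x \in Q$. Together with \Cref{prop:lipschitz}, \Cref{lemm:derivative_standard} and \Cref{lemm:metric_derivative_Jacobian} this identifies $g_{\pi_{N}} = \omega$ and $J_{\pi_{N}} = \omega^{2}$ almost everywhere, so $g_{\pi_{N}}^{2} = J_{\pi_{N}}$ a.e. Since $\pi_{N}$ is continuous and monotone by \Cref{prop:main_section} with locally finite pullback measure, \Cref{prop:williams:L-Wversion} applied with $K = 1$ yields $\Mod \Gamma \leq \Mod \pi_{N}\Gamma$ for every path family $\Gamma$ in $Q$. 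Combining this with the inclusions $\pi_{N}\Gamma_{i} \subset \widetilde{\Gamma}_{i}$ and the hypothesis $\Mod \Gamma_{i} = \Mod \widetilde{\Gamma}_{i}$ gives
\[
\Mod \Gamma_{i} \;=\; \Mod \pi_{N}\Gamma_{i} \;=\; \Mod \widetilde{\Gamma}_{i} \qquad (i = 1, 2),
\]
so the inclusions $\pi_{N}\Gamma_{i} \subset \widetilde{\Gamma}_{i}$ are tight in modulus.

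Stage (ii) is the heart of the argument. Suppose $\pi_{N}|_{Q}$ is not injective. By \Cref{prop:main_section} there is a nondegenerate continuum $C \subset E \cap \interior Q$ with $\pi_{N}^{-1}(\pi_{N}(C)) = C$; write $x_{0} = \pi_{N}(C)$. Choose a Jordan domain $V$ with $\overline{V} \subset Q$ containing $C$ and with $\partial V \cap E = \emptyset$. The Euclidean modulus $\Mod \Gamma(C, \partial V; V)$ is strictly positive because $C$ is a nondegenerate continuum, and stage (i) transfers this to $\Mod \pi_{N} \Gamma(C, \partial V; V) > 0$; the image family lies in $\Gamma(\{x_{0}\}, \pi_{N}\partial V; \pi_{N} V)$. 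Continuously extending these ``halves'' to full paths in $\pi_{N} Q$ from $\pi_{N}\xi_{1}$ to $\pi_{N}\xi_{3}$ produces a positive-modulus subfamily of $\widetilde{\Gamma}_{1}$ whose members cannot be written as $\pi_{N}\gamma$ for any $\gamma \in \Gamma_{1}$, since a continuous lift would have to jump between two distinct points of $C$. The main obstacle is turning this into a rigorous contradiction with $\Mod \pi_{N}\Gamma_{1} = \Mod \widetilde{\Gamma}_{1}$; I plan to exploit the uniqueness of the modulus minimizer to show that the minimizer $\rho_{1}$ of $\pi_{N}\Gamma_{1}$ cannot remain weakly admissible along the extra jumping paths while still achieving the minimal energy, by an annular-modulus estimate around $x_{0}$ provided by stage (i).

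Finally, for stage (iii), once injectivity is established set $h = (\pi_{N}|_{Q})^{-1}$. The map $h$ is locally bi-Lipschitz off $\pi_{N}(E)$ by \Cref{lemm:quotient_map_invertible}, and $\pi_{N}(E)$ has $\mathcal{H}^{2}_{\widetilde{d}_{N}}$-measure zero by \Cref{lemm:metric_derivative_Jacobian}. A sub-quadrilateral exhaustion of $Q$ applied to the modulus equalities upgrades $\pi_{N}$ to a local quasiconformal homeomorphism, so $h \in N^{1,2}_{\loc}(\pi_{N}(Q), \mathbb{R}^{2})$. Then \Cref{lemm:metric_derivative_minimal_upper_gradient} gives $g_{h} = ((1/\omega)\chi_{Q \setminus E}) \circ h$ as minimal weak upper gradient, and \Cref{lemm:metric_derivative_Jacobian} identifies $J_{h} = (1/\omega^{2}) \circ h$ off $\pi_{N}(E)$, so $g_{h}^{2} = J_{h}$ a.e. A second application of \Cref{prop:williams:L-Wversion} to $h$ gives $\Mod \pi_{N}\Gamma \leq \Mod \Gamma$ for every path family $\Gamma$ in $Q$, which combined with stage (i) shows $\pi_{N}|_{Q}$ is $1$-quasiconformal.
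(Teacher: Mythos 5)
Your stage (i) is correct and matches the paper. The real difficulty, which you correctly identify but do not resolve, is stage (ii): establishing injectivity.

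Your proposed contradiction argument has a genuine gap. You observe that if $\pi_{N}$ collapses a nondegenerate continuum $C$ to a point $x_{0}$, then stage (i) produces a positive-modulus family of paths in $\pi_{N}Q$ terminating at $x_{0}$. The problem is that this only directly contradicts reciprocality, specifically \eqref{point:zero:modulus}, which is the very conclusion you are trying to establish; it is not part of the hypothesis. The hypothesis consists only of the two modulus equalities for a single quadrilateral. Modulus is not additive, so $\Mod \pi_{N}\Gamma_{1} = \Mod \widetilde{\Gamma}_{1}$ does not prevent $\widetilde{\Gamma}_{1}$ from containing a positive-modulus family of ``jumping'' paths disjoint from $\pi_{N}\Gamma_{1}$. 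Your hope of using uniqueness of the modulus minimizer to finish is only sketched, and it is not at all clear how to turn the observation that the minimizer of $\widetilde{\Gamma}_{1}$ might ``see'' the extra paths into a numerical contradiction with the equality of minimal energies. This is precisely the missing idea.

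The paper resolves this constructively rather than by contradiction. It invokes the result of \cite{RR:19} that the modulus $\Mod \widetilde{\Gamma}_{1}$ is attained by a continuous Sobolev potential $\widetilde{u}_{1}$ on $\pi_{N}Q$ with boundary values $0$ on $\pi_{N}\xi_{1}$ and $1$ on $\pi_{N}\xi_{3}$, whose minimal weak upper gradient $\widetilde{\rho}_{1}$ is the modulus minimizer. Pulling back by $\pi_{N}$ gives $u_{1} = \widetilde{u}_{1}\circ\pi_{N}$ with weak upper gradient $\rho_{1} = (\widetilde{\rho}_{1}\circ\pi_{N})\omega$ whose $L^{2}$-energy, by the modulus equality hypothesis, equals $\Mod \Gamma_{1}$. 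Since this is the Euclidean extremal energy for the boundary data, $u_{1}$ is a Dirichlet minimizer, hence harmonic in $\interior Q$ by Weyl's lemma. Doing the same for the conjugate family, the pair $f = (u_{1}, Mu_{2})$ with $M = \Mod\Gamma_{1}$ is a conformal homeomorphism of $Q$ onto a rectangle, by the Riemann mapping theorem and the theory of harmonic conjugates. Since $f = \widetilde{f}\circ\pi_{N}$ with $\widetilde{f} = (\widetilde{u}_{1}, M\widetilde{u}_{2})$, bijectivity of $f$ forces $\pi_{N}|_{Q}$ to be injective. This is the key structural input absent from your stage (ii). Your stage (iii) is essentially sound once injectivity is in hand, and is close in spirit to the paper's conclusion (which uses the explicit factorization $h = f^{-1}\circ\widetilde{f}$ to get the Sobolev regularity of the inverse rather than a sub-quadrilateral exhaustion, but either route should work).
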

    \begin{proof}
    \Cref{prop:modulus_distortion} and the special form of $N$ imply that the outer dilatation of $\pi_{N}$ is one so we only need to check that $\pi_{N}|_{Q}$ is injective and that its inverse has its outer dilatation bounded from above by one.
    
    It was proved in \cite{RR:19} that there exists a continuous function
    \begin{equation*}
        \widetilde{u}_{1}
        \colon
        \pi_{N} Q
        \rightarrow
        \left[0,  1\right] 
    \end{equation*}
    in the Sobolev space $N^{1,  2}( \pi_{N}Q )$ whose minimal weak upper gradient $\widetilde{\rho}_{1}$ is a minimizer for $\Mod \widetilde{\Gamma}_{1}$. The function $\widetilde{u}_{1}$ satisfies the boundary conditions $\widetilde{u}_{1}( \pi_{N} \xi_{1} ) = 0$ and $\widetilde{u}_{1}( \pi_{N} \xi_{3} ) = 1$.
    
    Consider $u_{1} = \widetilde{u}_{1} \circ \pi_{N}$. Since $N = \omega \norm{ \cdot }_{2}$ and $\pi_{N}$ has bounded outer dilatation, it is readily verified that $\rho_{1} = (\widetilde{\rho}_{1} \circ \pi_{N}) \omega \in L^{2}( Q )$ is a weak upper gradient of $u_{1}$ with $L^{2}$-norm $\Mod \widetilde{ \Gamma }_{1} = \Mod \Gamma_{1}$. Therefore $u_{1} \in N^{1, 2}( Q )$.

    A consequence of Weyl's lemma \cite[A.6.10]{Ast:Iwa:Mar:09} and continuity of $u_{1}$ is that $u_{1}$ is harmonic in the interior of $Q$; it minimizes the Dirichlet energy among continuous Sobolev maps $u \colon Q \rightarrow \left[0, 1\right]$ with boundary values $u( \xi_{1} ) = 0$ and $u( \xi_{3} ) = 1$.
    
    We repeat the above construction for the path families $\Gamma_{2}$ and $\widetilde{\Gamma}_{2}$. Let $u_{2}$ and $\widetilde{u}_{2}$ denote the corresponding functions, where $u_{2}( \xi_{2} ) = 0$ and $u_{2}( \xi_{4} ) = 1$.
    
    Let $M$ denote the modulus of $\Gamma_{1}$. A consequence of Riemann mapping theorem is that $M u_{2}$ is a harmonic conjugate of $u_{1}$ and the restriction of $f = ( u_{1}, M u_{2}) $ to the interior of $Q$ is conformal. The map extends as a homeomorphism to the boundary $\partial Q$.
    
    Let $\widetilde{f} = ( \widetilde{u}_{1},  M \widetilde{u}_{2} )$. Then $f = \widetilde{f} \circ \pi_{N}$ by construction. This identity and bijectivity of $f$ imply that the restriction of $\pi_{N}$ to $Q$ is a homeomorphism.
    
    Since the restriction of $\pi_{N}$ to $Q$ is a homeomorphism and $\partial Q$ does not intersect $E$, we find a Jordan neighborhood $U \supset Q$ for which $\pi_{N}|_{U}$ is a homeomorphism and that $U \cap E = Q \cap E$ (\Cref{prop:main_section}). Let $h$ denote the inverse of $\pi_{N}|_{ U }$.
    
    We claim that $h \in N^{1,  2}_{\loc}( \pi_{N}(U),  U )$. Since $\pi_{N}$ is locally bi-Lipschitz in the complement of $E$ and $E \cap U \subset \mathrm{int}( Q )$, it suffices to verify that $h|_{ V }$ is an element of $N^{1, 2}_{\loc}( V, U )$, where $V = \pi_{N}( \mathrm{int}(Q))$. This regularity follows readily since the restriction of $f$ to the interior of $Q$ is locally bi-Lipschitz, $\widetilde{f}$ is an element of $N^{1,  2}\left( \pi_{N}Q,  \left[0,  1\right] \times \left[0,  M\right] \right)$, and $h = f^{-1} \circ \widetilde{f}$ in $V$. Now the outer dilatation bound $K_O(h) \leq 1$ follows from \Cref{lemm:metric_derivative_minimal_upper_gradient} and the change of variables formula of $\pi_{N}$.
    \end{proof}
    
    \begin{rem}
    \Cref{lemm:reciprocal_quadrilaterals_mod} is related to a question posed by Rajala in \cite{Raj:17}. Rajala asks whether the reciprocal upper bound \eqref{upper:bound} implies that points have zero modulus in the sense of \eqref{point:zero:modulus}. This is the case whenever $N = \omega \norm{ \cdot }_{2}$ is admissible and satisfies the sharp modulus upper bound of one in \eqref{upper:bound}. The proof relies on the existence and Sobolev regularity of energy minimizers of quadrilaterals on metric spaces studied on \cite{Raj:17,RR:19}. The key observation is that if $\pi_{N}$ preserves the modulus of opposite sides of $\partial Q$, then the quotient map $\pi_{N}$ pulls back these minimizers to harmonic minimizers on the Euclidean space.
    \end{rem}
    
\subsection{Proof of \Cref{thm:removable_implies_reciprocal}}\label{sec:S_implies_R}

    Let $E \subset \mathbb{R}^{2}$ be removable for conformal mappings. We want to prove that for any domain $\Omega \supset E$ and admissible norm field $N \colon \Omega \times \mathbb{R}^{2} \rightarrow \left[0, \infty\right)$ vanishing exactly on $E$, the quotient space $( \Omega / \mathcal{E}_{N}, \widetilde{d}_{N} )$ is reciprocal.
    
    As shown in \Cref{lemm:general:to:normal} and \Cref{lemm:subdomain:to:space}, we only need to consider the case where $\Omega = \mathbb{R}^{2}$ and $N = \omega \norm{ \cdot }_{2}$.
    
    Let $R = [a,b] \times [c,d]$ be a rectangle whose interior contains $E$. Let $\xi_{1} = \{a\} \times [c,d]$, $\xi_{2} = [a,b] \times \{c\}$, $\xi_{3} = \{b\} \times [c,d]$, and $\xi_{4} = [a,b] \times \{d\}$. Let $\Gamma_{1} = \Gamma( \xi_{1},  \xi_{3};  R )$ and $\Gamma_{2} = \Gamma( \xi_{2},  \xi_{4};  R )$.
    
    Let $\widetilde{ \Gamma }_{1}$ denote the family of paths joining $\pi_{N} \xi_{1}$ to $\pi_{N} \xi_{3}$ in $\pi_{N} R$ and $\widetilde{ \Gamma }_{2}$ the family of paths joining $\pi_{N} \xi_{2}$ to $\pi_{N} \xi_{4}$ in $\pi_{N} R$. We claim that $\Mod \widetilde{ \Gamma }_{1} = \Mod \Gamma_{1}$ and $\Mod \widetilde{ \Gamma }_{2} = \Mod \Gamma_{2}$. \Cref{lemm:reciprocal_quadrilaterals_mod} then implies that $\pi_{N}$ is $1$-quasiconformal in the interior of $R$. Since $R$ is an arbitrary rectangle containing $E$, it then follows that $\pi_N$ is globally 1-quasiconformal.
    
    Observe that the inequalities $\Mod \widetilde{ \Gamma }_{1} \geq \Mod \Gamma_{1}$ and $\Mod \widetilde{ \Gamma }_{2} \geq \Mod \Gamma_{2}$ hold in general by \Cref{prop:modulus_distortion}. Thus we only need to verify the opposite inequalities.
    
    A standard fact is that there is a sequence of finitely connected domains $\Omega_k \subset \mathbb{R}^2 \setminus E$ such that $\overline{\Omega}_k \subset \Omega_{k+1}$ for all $k \in \mathbb{N}$, each component of $\partial \Omega_k$ is a closed analytic Jordan path, and $\bigcup_{k=1}^\infty \Omega_k = \mathbb{R}^2 \setminus E$. We assume without loss of generality that $\partial R \subset \Omega_{1}$.
    
    For each $n \in \mathbb{N}$, there exists a conformal embedding $\varphi_{n} \colon \Omega_{n} \rightarrow \mathbb{R}^{2}$ normalized as
    \begin{equation*}
        \varphi_{n}(z)
        =
        z
        +
        \frac{ a_{1,n} }{ z }
        +
        \cdots
    \end{equation*}
    near $\infty$ such that the real part of $a_{1,n}$ is the smallest among all conformal embeddings $\psi \colon \Omega_{n} \rightarrow \mathbb{R}^{2}$ of the form
    \begin{equation}
    \label{eq:normalization}
        \psi(z)
        =
        z
        +
        \frac{ a }{ z }
        +
        \cdots .
    \end{equation}
    See for example Section V.2 of \cite{Gol:69}.
    
    For each $n \in \mathbb{N}$, the minimizer $\varphi_{n}$ is unique and its image is a domain $U_{n} \subset \mathbb{R}^{2}$ whose complement consists of finitely many arc segments parallel to the vertical axis.
    
    Fix $n$ and consider $k \geq n$. By the minimality of the real part of $a_{1,  n}$, we have that $0 \geq \re( a_{1, k} ) \geq \re( a_{1,  n} )$. Hence the mappings $\varphi_k|_{\Omega_n}\colon \Omega_n \to \mathbb{R}^2$ form a normal family. See for example the proof of Theorem 1 of \cite[Section V.2]{Gol:69} for details. A diagonal argument then implies that $( \varphi_{n} )$ is a normal family. Thus every subsequence of $(\varphi_n)$ has a further subsequence converging uniformly on compact sets to a conformal map $f \colon \mathbb{R}^{2} \setminus E \rightarrow \mathbb{R}^{2}$ satisfying the normalization \eqref{eq:normalization} around $\infty$. By the removability of $E$, the map extends to a Möbius transformation, and thus \eqref{eq:normalization} implies that $f(z) = z$ for all $z \in \mathbb{R}^{2}$. Hence the sequence $( \varphi_{k} )$ itself must converge to the identity map uniformly on compact sets of $\mathbb{R}^{2} \setminus E$.
    
    Let $Q_{n}$ denote the quadrilateral bounded by the Jordan curve $\varphi_{n}( \partial R )$. The quadrilateral $Q_{n}$ converges to $R$ with respect to Hausdorff distance as $n \rightarrow \infty$. Let $\pi_{1}$ and $\pi_{2}$ denote projection onto the $x$-axis and $y$-axis, respectively, and let $a_{n} = \sup \pi_1(\varphi_n(\xi_1))$, $b_{n} = \inf \pi_1(\varphi_n(\xi_3))$, $c_{n} = \inf \pi_2(\varphi_n(\xi_2))$ and $d_{n} = \sup \pi_2(\varphi_n(\xi_4))$.
    
    Let $R_n = [a_n, b_n] \times [c_n,d_n]$ and $\widehat{E}_n = \mathbb{R}^2 \setminus \varphi_n(\Omega_n)$. Observe that $\widehat{E}_n$ consists of finitely many vertical slits $S_1, \ldots, S_m$.
    
    There exists $n_{0}$ such that for all $n \geq n_{0}$, the slits $\widehat{E}_{n}$ are contained in the interior of $R_{n}$, and that $0 < b_{n} - a_{n}$ and $0 < d_{n} - c_{n}$. Fix such an $n$. We claim that
    \begin{equation}
    \label{eq:slit_modulus}
        \Mod \widetilde{ \Gamma }_{1}
        \leq
        \frac{ d_{n} - c_{n} }{ b_{n} - a_{n} }.
    \end{equation}
    Consider the function $\rho_n \colon \mathbb{R}^2 / \mathcal{E}_{N} \to [0, \infty]$ defined as zero in the complement of $\pi_{N}( \Omega_{n} )$, and otherwise by
    \[
        \rho_n
        =
        \left(
            \frac{
                \chi_{
                    R_n \setminus \widehat{E}_n
                }
            }
            { b_{n} - a_{n} }
            \circ \varphi_{n}
            \frac{ J_{ \varphi_{n} }^{-1/2} }{ \omega }
        \right)
        \circ ( \pi_N|_{ \Omega_{n} } )^{-1}.
    \]
    We claim that $\rho_n$ is admissible for $\widetilde{ \Gamma }_{1}$. Let $\gamma \in \widetilde{ \Gamma }_{1}$ be locally rectifiable with respect to $\widetilde{d}_N$.
    
    We consider the restriction of $\gamma$ to the set $I = \gamma^{-1}\left( \pi_{N} \varphi_{n}^{-1}( R_{n} \setminus \widehat{E}_{n} ) \right)$. We have
    \begin{equation*}
        \int_{ \gamma } \rho_{n} \,ds_{ \widetilde{d}_{N} }
        \geq
        \int_{ I }
            \rho_{n} \circ \gamma
            v_{ \gamma }
        \,d\mathcal{L}^{1}.
    \end{equation*}
    Here $\theta = \varphi_{n} \circ ( \pi_{N}|_{ \Omega_{n+1} } )^{-1} \circ \gamma|_{I}$ is well-defined and
    \begin{equation*}
        \int_{ I }
            \rho_{n} \circ \gamma
            v_{ \gamma }
        \,d\mathcal{L}^{1}
        =
        \int_{ I }
            \frac{
                \chi_{
                    R_n \setminus \widehat{E}_n
                }
            }
            { b_{n} - a_{n} }
            \circ \theta
            v_{ \theta }
        \,d\mathcal{L}^{1}.
    \end{equation*}
    Since $\widehat{E}_{n}$ consists of finitely many vertical slits, we conclude using the area formula for paths and the projection onto the $x$-axis that
    \begin{equation*}
        \int_{ I }
            \frac{
                \chi_{
                    R_n \setminus \widehat{E}_n
                }
            }
            { b_{n} - a_{n} }
            \circ \theta
            v_{ \theta }
        \,d\mathcal{L}^{1}
        \geq
        \frac{ 1 }{ b_{n} - a_{n} }
        \mathcal{L}^{1}\left( \abs{ \pi_{1} \circ \theta } \right)
        \geq
        1.
    \end{equation*}
    Therefore
    \[
        \int_{\gamma} \rho_n\,ds_{ \widetilde{d}_N }
        \geq
        1,
    \]
    and we conclude that $\rho_n$ is admissible. The change of variables formulas for $\pi_{N}$ and $\varphi_{n}$ yield that
    \begin{align*}
        \int \rho_n^2\, d\mathcal{H}_{ \widetilde{d}_N }^2
        =
        \frac{ d_{n} - c_{n} }{ b_{n} - a_{n} }.
    \end{align*}
    This verifies \eqref{eq:slit_modulus}. Finally, observe that $d_n - c_n \to d-c$ and $b_n-a_n \to b-a$ as $n \to \infty$. This shows that
    \[
        \Mod \widetilde{ \Gamma }_{1}
        \leq
        \frac{ d - c }{ b - a }
        =
        \Mod \Gamma_1.
    \]
    A similar argument, using conformal mappings onto horizontal slit domains, shows that $\Mod \widetilde{ \Gamma }_{2} \leq \Mod \Gamma_2$. This completes the proof.

\subsection{An extension of \Cref{thm:removable_implies_reciprocal} to integrable norm fields}\label{sec:generalization}
    In this section we extend \Cref{thm:removable_implies_reciprocal} to the case of lower semicontinuous norm fields $N$ with locally bounded distortion such that $L(N) \in L_{\loc}^p(\Omega)$ for some $p \in (2, \infty)$. We assume that $N$ vanishes exactly on a compact set $E \subset \Omega$ that is removable for conformal mappings.
    
    For this section, we allow the possibility for $N_x$ to be infinite at some points $x \in \Omega$. To say this more precisely, in the definition of seminorm in \Cref{sec:seminorms}, we consider a seminorm to be a function $S \colon \mathbb{R}^2 \to [0, \infty]$ satisfying the same assumptions listed there, following the convention that $0 \cdot \infty = 0$. An admissible norm field is now a function $N \colon \Omega \times \mathbb{R}^2 \to [0, \infty]$ satisfying the conditions of \Cref{defi:seminorm_admissible}, except that local boundedness of $N$ is now replaced by the assumption that $L(N) \in L_{\loc}^p(\Omega)$.  Observe that the local boundedness of the distortion then implies that if $N_x(v) = \infty$ for some $v \in \mathbb{R}^2\setminus \{0\}$, then $N_x$ must have the form
    \[
        N_x(v) = \begin{cases} \infty & \text{if } v \neq 0 \\ 0 & \text{if } v = 0 \end{cases} .
    \] 
    In particular, $\omega(N_x) = L(N_x) = \infty$. Note also that the minimal stretching $\omega(N)$ is lower semicontinuous, and that \Cref{lemm:strong_LSC} remains true for $x \in \Omega$ with $\omega( N_{x} ) < \infty$.
    
    We define the pseudodistance $d_{N}$ exactly as in \Cref{eq:seminorm_admissible_distance}. It is no longer immediately clear that $d_{N}( x, y ) < \infty$ for any given pair of points $x, y \in \Omega$, but this can be shown from the local $L^p$-integrability of $L(N_x)$, since $p > 2$. See \cite[Thm. 1.1]{LW:18} and \cite[Prop. 5.12]{CS:19} for proofs of the analogous fact in related settings.

    As before, we identify $x, y \in \Omega$ if $d_{N}( x, y ) = 0$ and let $X$ denote the corresponding quotient space. Let $\pi \colon \Omega \rightarrow X$ denote the associated quotient map. The quotient distance $d_X$ on $X$ is defined as follows: for every $x,y \in X$, we set $d_X(x,y) = d_N(\pi^{-1}(x), \pi^{-1}(y))$, observing that this is independent of the choice of element in $\pi^{-1}(x)$ and $\pi^{-1}(y)$ and hence well-defined.
    
    Next, we establish that $\pi$ is continuous and an element of $N^{1,p}_{\loc}( \Omega, X )$. To this end, for every $x \in \Omega$, $f_{x}( z ) = d_{N}( x, z )$ is measurable as a consequence of \cite[Theorem 9.3.1]{HKST:15}. Since $L(N)$ is an upper gradient of $\pi$, it is also an upper gradient of $f_{x}$. Then Morrey's embedding theorem \cite[Theorem 9.2.14]{HKST:15} implies that $f_{x}$ is locally Hölder continuous with Hölder exponent and Hölder constant independent of $x$. This implies that $\pi$ is locally Hölder continuous. Thus $\pi$ is continuous with upper gradient $L( N ) \in L_{\loc}^{p}( \Omega )$. We are now ready for the main result of this section. We recall that $N$ is assumed to vanish on a compact set $E$ removable for conformal mappings.
    \begin{prop}\label{prop:reciprocality:p-int}
    The metric space $X$ has locally finite Hausdorff $2$-measure, and the quotient map $\pi$ is a locally quasiconformal homeomorphism. In particular, $X$ is a quasiconformal surface.
    \end{prop}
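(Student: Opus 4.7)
The plan is to prove the proposition by reducing to \Cref{thm:removable_implies_reciprocal} via a truncation argument and then adapting the slit-domain proof of \Cref{sec:S_implies_R}. Following the reductions in \Cref{lemm:general:to:normal} and \Cref{lemm:subdomain:to:space}, which go through in the $L^{p}$ setting because the local bi-Lipschitz comparisons preserve $L^{p}$-integrability and lower semicontinuity, one reduces to the case $\Omega = \mathbb{R}^{2}$ and $N = \omega \norm{\cdot}_{2}$ with $\omega \in L^{p}( \mathbb{R}^{2} )$ lower semicontinuous and vanishing exactly on the compact set $E$.

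Setting $\omega_{m} = \min\{\omega, m\}$ and $N_{m} = \omega_{m} \norm{\cdot}_{2}$, each $N_{m}$ is admissible in the sense of \Cref{defi:seminorm_admissible} and vanishes exactly on $E$, so \Cref{thm:removable_implies_reciprocal} together with the $1$-quasiconformality output of its proof via \Cref{lemm:subdomain:to:space} yields $1$-quasiconformal quotient homeomorphisms $\pi_{m} \colon \mathbb{R}^{2} \to X_{m}$. I then show that $\pi = \pi_{N}$ is itself a homeomorphism: following \Cref{prop:main_section}, each fiber $\pi^{-1}(y)$ is a connected compact subset of $\mathbb{R}^{2}$, because the argument there relies only on the local positive lower bound of $\omega$ on compact subsets of $\mathbb{R}^{2} \setminus E$ supplied by lower semicontinuity. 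Nontrivial fibers therefore lie in $E$, which is totally disconnected by removability, so all fibers are singletons. Local finiteness of $\mathcal{H}^{2}_{X}$ follows from the change of variables identity $\mathcal{H}^{2}_{X}(\pi(A)) = \int_{A} \omega^{2}\, d\mathcal{L}^{2}$ combined with $\omega^{2} \in L^{p/2}_{\loc} \subset L^{1}_{\loc}$, and the modulus upper bound $\Mod \Gamma \leq \Mod \pi \Gamma$ is provided by \Cref{prop:williams:L-Wversion} applied with $g_{\pi} = \omega$ and $J_{\pi} = \omega^{2}$, since $g_{\pi}^{2} = J_{\pi}$ almost everywhere.

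The main obstacle is the reverse inequality $\Mod \pi \Gamma \leq \Mod \Gamma$ needed to invoke \Cref{lemm:reciprocal_quadrilaterals_mod} on a rectangle $R$ containing $E$. I would adapt the slit-domain test function
\[
\rho_{n} = \left( \frac{ \chi_{R_{n} \setminus \widehat{E}_{n}} }{ b_{n} - a_{n} } \circ \varphi_{n} \cdot \frac{ J_{\varphi_{n}}^{-1/2} }{ \omega } \right) \circ ( \pi|_{\Omega_{n}} )^{-1}
\]
of \Cref{sec:S_implies_R} verbatim. Although $\pi$ need not be locally Lipschitz on subdomains $\Omega_{n} \subset\subset \mathbb{R}^{2} \setminus E$ when $\omega$ is unbounded, the inverse $\pi|_{\Omega_{n}}^{-1}$ is locally Lipschitz on compact subsets of $\pi(\Omega_{n})$, since $1/\omega$ is locally bounded on $\Omega_{n}$ by lower semicontinuity and compactness. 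This local Lipschitzness is enough to carry out the admissibility verification, and in the change of variables the $1/\omega$ factor in $\rho_{n}$ cancels the Jacobian $\omega^{2}$ of $\pi$ exactly as in the bounded case, so the energy bound $\int \rho_{n}^{2}\, d\mathcal{H}^{2}_{X} \to \Mod \Gamma_{1}$ survives without requiring $\omega$ to be bounded. Admissibility of $\rho_{n}$ itself is the delicate point; if difficulties arise near $E$, I would recover it by first establishing admissibility of the analogous $\rho_{n}^{(m)}$ for $\pi_{m}$ using \Cref{thm:removable_implies_reciprocal} applied to $N_{m}$, and then passing to the limit $m \to \infty$ via monotone convergence on rectifiable paths. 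Combining the two modulus bounds, \Cref{lemm:reciprocal_quadrilaterals_mod} then yields $1$-quasiconformality of $\pi$ on every sufficiently large rectangle containing $E$; since $E$ is compact, local $1$-quasiconformality of $\pi$ everywhere in $\mathbb{R}^{2}$ follows.
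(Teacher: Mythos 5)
Your proposal correctly identifies the main ingredients (truncation $\omega_m = \min\{\omega, m\}$, reduction to $N = \omega\|\cdot\|_2$, the change-of-variables and Williams' theorem for the forward modulus bound), but it diverges from the paper on two essential points where genuine gaps appear.

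First, your argument that $\pi$ is a homeomorphism is incomplete. You show that fibers of $\pi$ are connected compact subsets of $E$, hence singletons by total disconnectedness, so $\pi$ is a continuous bijection. But this does not give continuity of $\pi^{-1}$: invariance of domain would require already knowing that $X$ is a $2$-manifold, which is what you are trying to prove, and \Cref{prop:main_section}'s closedness argument relies on $\pi_N$ being locally Lipschitz, which fails when $\omega$ is unbounded. The paper handles this cleanly: it observes that $d_k \leq d_X$ makes $\psi_k = \pi_k \circ \pi^{-1}$ a well-defined $1$-Lipschitz map $X \to X_k$, so $\pi^{-1} = \pi_k^{-1} \circ \psi_k$ is continuous because $\pi_k^{-1}$ is continuous by the bounded case. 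Your approach also bypasses the totally-disconnected detour entirely, since injectivity of $\pi$ already follows from injectivity of $\pi_k$ together with $d_k \leq d_X$.

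Second, your plan for the reverse modulus inequality $\Mod \pi \Gamma \leq \Mod \Gamma$ is to rerun the slit-domain construction with the test functions $\rho_n$, but you acknowledge the admissibility step is delicate and defer it to a monotone-convergence fallback without specifying how to make it rigorous. This route also implicitly requires an $L^p$-version of \Cref{lemm:reciprocal_quadrilaterals_mod}, whose proof uses local Lipschitz regularity of $\pi_N$ that you no longer have. Relatedly, you assert $g_\pi = \omega$ and $J_\pi = \omega^2$ almost everywhere, but these do not follow from \Cref{lemm:metric_derivative_Jacobian} in the unbounded setting; the paper establishes them by showing $\ell_{d_X}(\pi \circ \theta) = \ell_N(\theta)$ on almost every path via monotone convergence, then invokes \cite[Theorem 7.1]{Vod:00} for Lusin's Condition (N) from $\pi \in N^{1,p}_{\loc}$ with $p>2$. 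The paper's proof of the reverse inequality avoids the slit-domain machinery entirely: it reduces $K_O(\pi^{-1}) = 1$ to $K_O(\psi_k) = 1$ for some $k$, and verifies the latter by identifying the minimal weak upper gradient of $\psi_k$ as $\rho_k = ((\omega_k/\omega)\chi_{\Omega \setminus E}) \circ \pi^{-1}$ through the metric speed identity on paths avoiding $\pi(E)$ in length. This is substantially more direct and sidesteps all the issues you flag as delicate. I would recommend replacing the slit-domain adaptation with this factorization argument.
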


    \begin{proof}
    We first prove that $\pi$ is a homeomorphism. To this end, let $\omega(z) = \omega( N_{z} )$ and $\widehat{ N }_{z} = \omega( z )\norm{ \cdot }_{2}$ for every $z \in \Omega$. For each $k \in \mathbb{N}$, we define the function $\omega_k \colon \Omega \to [0, \infty)$ by
    \begin{equation*}
        \omega_{k}(z)
        =
        \min\left\{
            \omega(z), k
        \right\}.
    \end{equation*}
    Each function $\omega_{k}$ is bounded and lower semicontinuous in $\Omega$, and $\omega_{k}(z) = 0$ if and only if $\omega( z ) = 0$. For every $z \in \Omega$, the sequence $( \omega_{k}(z) )_{ k = 1 }^{ \infty }$ is non-decreasing and converges to $\omega(z)$.

    Let $N_{k} = \omega_{k} \norm{ \cdot }_{2}$ and $d_{k} = d_{ N_{k} }$. Since $N_k$ is bounded and lower semicontinuous, \Cref{thm:removable_implies_reciprocal} implies that $\pi_{k} \colon \Omega \rightarrow ( \Omega, d_{k} )$ defined by $\pi_{k}(z) = z$ is a $1$-quasiconformal homeomorphism. Since $N_{k} \leq \omega(N)\norm{ \cdot }_{2} \leq N$ everywhere, we see that
    \begin{equation*}
        d_{k}( \pi_{k}(x), \pi_{k}(y) ) \leq d_{X}( \pi(x), \pi(y) )
    \end{equation*}
    for all $x, y \in \Omega$. Since $\pi_{k}$ is a homeomorphism, we see that $\pi$ is injective. Now the map $\psi_{k} \colon X \rightarrow ( \Omega, d_{k} )$ defined by $\psi_k = \pi_{k} \circ \pi^{-1}$ is $1$-Lipschitz, hence $\pi^{-1} = \pi_{k}^{-1} \circ \psi_{k}$ is continuous. Therefore $\pi$ is a homeomorphism.
    
    Recall that $N$ has locally bounded distortion. 
    From this and the fact that, for every $x \in X$, $\pi^{-1}( \overline{B}_{X}( x, r ) )$ is compact for sufficiently small $r > 0$, we see that the induced distances $d_{N}$ and $d_{ \widehat{N} }$ are locally bi-Lipschitz equivalent. We assume from this point onwards, without loss of generality, that $N = \widehat{N} = \omega \norm{ \cdot }_{2}$.
    
    Let $\Gamma_{0}$ denote the family of paths along which $\omega = L( N )$ fails to be integrable. Since $\omega \in L^{p}_{\loc}( \Omega ) \subset L^{2}_{ \loc }( \Omega )$, the family $\Gamma_{0}$ has zero modulus. Recall from \Cref{lemm:metric_speed_ABS} that, for any absolutely continuous path $\theta$ in $\Omega$,
    \begin{equation*}
        \ell_{ d_{X} }( \pi \circ \theta )
        \geq
        \lim_{ k \rightarrow \infty }
        \ell_{ d_{k} }( \psi_{k} \circ \theta )
        =
        \lim_{ k \rightarrow \infty }
        \ell_{ N_{k} }( \theta )
        =
        \ell_{ N }( \theta ),
    \end{equation*}
    where the latter equality follows from monotone convergence. If $\theta \not\in \Gamma_{0}$, we have $\ell_{ N }( \theta ) < \infty$ and the definition of $d_{ X }$ implies $\ell_{ d_{X} }( \pi \circ \theta ) \leq \ell_{ N }( \theta )$. So $\ell_{ d_{X} }( \pi \circ \theta ) = \ell_{ N }( \theta )$. Since the equality holds for all absolutely continuous paths outside the negligible family $\Gamma_{0}$, we conclude from Sections 3.3 and 3.4 of \cite{LW:18} that $L( N ) = \omega$ is a minimal weak upper gradient of $\pi \in N^{1, 2}_{ \loc }( \Omega, X )$ and the Jacobian of $\pi$ equals $J_{2}( N ) = \omega^{2}$ $\mathcal{L}^{2}$-almost everywhere. Since we also have that $\pi \in N^{1, p}_{ \loc }( \Omega, X )$ for $p > 2$, it satisfies Lusin's Condition ($N$) \cite[Theorem 7.1]{Vod:00}. Therefore, for each compact set $K \subset \Omega$,
    \begin{equation*}
        \mathcal{H}^{2}_{X}( \pi( K ) )
        =
        \int_{ K }
            \omega^{2}
        \,d\mathcal{L}_{2}
        <
        \infty.
    \end{equation*}
    We conclude that $X$ has locally finite Hausdorff $2$-measure. An application of \Cref{prop:williams:L-Wversion} yields that $K_{O}( \pi ) = 1$.
    
    The proof is complete after we verify $K_{O}( \pi^{-1} ) = 1$. Since $\pi_{k}$ is $1$-quasiconformal for every $k$, it suffices to verify $K_{O}( \psi_{k} ) = 1$ for some $k$. To this end, we fix $k \in \mathbb{N}$ and recall that $\psi_{k}$ is $1$-Lipschitz.
    
    Since $\pi_{ k }$ is a quasiconformal homeomorphism, it satisfies Lusin's Condition ($N^{-1}$). This implies that the map $\pi^{-1}$ satisfies Lusin's Condition ($N$). As a consequence, the Jacobian of $\psi_{k}$ coincides with $\rho_{k}^{2}$ for $\rho_{k} = ( (\omega_{k}/\omega) \chi_{ \Omega \setminus E } ) \circ \pi^{-1}$.
     
    Since $\psi_{k}$ is Lipschitz, we have $\psi_{k} \in N^{1, 2}_{ \loc }( X, ( \Omega, d_{k} ) )$. We claim that any minimal weak upper gradient of $\psi_{k}$ coincides with $\rho_{k}$ almost everywhere in $X$. If we verify this, then $K_{O}( \psi_{k} ) = 1$ follows from \Cref{prop:williams:L-Wversion}.
    
    Consider an absolutely continuous path $\gamma \colon \left[0, 1\right] \rightarrow X$ with $\abs{ \gamma } \subset X \setminus \pi( E )$. Then $\psi_{k} \circ \gamma$ is absolutely continuous, and since $d_{k}$ and $\norm{ \cdot }_{2}$ are locally bi-Lipschitz equivalent in a neighborhood of the image of $\theta = \pi_{ k }^{-1} \circ \psi_{k} \circ \gamma$, the path $\theta$ is absolutely continuous with respect to $\norm{ \cdot }_{2}$. Then by monotone convergence and \Cref{lemm:metric_speed_ABS},
    \begin{equation*}
        \ell_{ N }( \theta )
        =
        \lim_{ n \rightarrow \infty }
        \ell_{ N_{n} }( \theta )
        =
        \lim_{ n \rightarrow \infty }
        \ell_{ d_{n} }( \psi_{n} \circ \gamma ).
    \end{equation*}
    Since every $\psi_{n}$ is $1$-Lipschitz,
    \begin{equation*}
        \lim_{ n \rightarrow \infty }
        \ell_{ d_{n} }( \psi_{n} \circ \gamma )
        \leq
        \ell_{ d_{X} }( \gamma ).
    \end{equation*}
    Therefore $\ell_{N}( \theta ) \leq \ell_{ d_{X} }( \gamma ) < \infty$, and, by the construction of $d_{X}$, $\ell_{ d_{X} }( \gamma ) \leq \ell_{ N }( \theta )$.
    
    Since $\ell_{N}( \theta ) = \ell_{ d_{X} }( \gamma )$ holds for every subpath of $\gamma$, we see that $v_{ \gamma } = N \circ D\theta = \omega \circ \theta v_{\theta}$ and $v_{ \psi_{k} \circ \gamma } = \omega_{k} \circ \theta v_{\theta}$ almost everywhere in the domain of $\gamma$. We conclude from this that
    \begin{equation}
    \label{eq:identity}
        v_{ \psi_{k} \circ \gamma }
        =
        \rho_{k} \circ \gamma
        v_{ \gamma }
    \end{equation}
    almost everywhere with respect to the length measure of $\gamma$. If $\widetilde{\Gamma}_{0}$ denotes the family of absolutely continuous paths in $X$ that have positive length on the set $\pi( E )$, then $\mathcal{H}^{2}_{X}( \pi(E) ) = 0$ implies $\Mod \widetilde{\Gamma}_{0} = 0$. The equality \eqref{eq:identity} remains valid for every absolutely continuous path $\gamma \not\in \widetilde{ \Gamma }_{0}$, which implies that $\rho_{k}$ is a weak upper gradient of $\psi_{k}$. The minimality of $\rho_{k}$ is immediate from \eqref{eq:identity}. So any minimal weak upper gradient of $\psi_{k}$ coincides with $\rho_{k}$ $\mathcal{H}^{2}_{X}$-almost everywhere.
    \end{proof}
    
    \begin{rem}\label{rem:exten:sharp} 
    The norm field $N = \omega \norm{ \cdot }_{2}$ defined by the weight $\omega( x ) = \norm{ x }_{2}^{-1} ( 1 - \log \norm{ x }_{2} )^{-1} \in L^{2}( \mathbb{D} )$ induces a complete hyperbolic metric on the punctured unit disk. In particular, the origin is at infinite distance from any other point. Consequently, the assumption $p > 2$ in \Cref{prop:reciprocality:p-int} cannot be relaxed.
    \end{rem}

\section{Reciprocal implies removable} \label{sec:reciprocal_implies_removable}

    This section is dedicated to a proof of \Cref{thm:reciprocal_implies_removable}. Recall that we consider a compact set $E \subset \Omega$ for which $\Omega \setminus E$ is connected, together with the norm field $N$ defined by $N_x = \min\left\{ 1,  d_{\norm{\cdot}_{2}}( E, x )^p \right\}\norm{ \cdot }_{2}$ for some $p > \max\left\{ \dim_{\mathcal{H}} E- 1,  0 \right\}$. The norm field $N$ induces a decomposition $\mathcal{E}_{ N }$ of $\Omega$, a metric $\widetilde{d}_N$ on $\Omega/ \mathcal{E}_N$, and a quotient map $\pi \colon \Omega \to (\Omega/\mathcal{E}_{N},  \widetilde{d}_{ N })$, as described in \Cref{sec:distances}.
    
\subsection{Decay of the norm field near $E$} 
    The following lemma states that if $N$ decays to zero sufficently fast near $E$, then each component of $E$ collapses to a point under the quotient map $\pi_{N}$.
    
    \begin{lemm}\label{lemm:good_snowflake}
    Let $N_x = \min\{1, d_{\|\cdot\|_2}(x,E)^p\}\norm{ \cdot }_{2}$. For all $p > \max\left\{ \dim_{\mathcal{H}} E- 1,  0 \right\}$, $\mathcal{H}_{ \widetilde{d}_{N} }^1( \pi_{N}(E) ) = 0$. Consequently, the preimage of every $x \in \pi_{N}( E )$ is a connected component of $E$.
    \end{lemm}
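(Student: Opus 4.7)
\medskip

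\textbf{Proof plan.} The plan is to use the rapid decay of $N$ near $E$ to cover $\pi_N(E)$ by images of small Euclidean balls whose $\widetilde{d}_N$-diameters decay fast enough to force $\mathcal{H}^1_{\widetilde{d}_N}(\pi_N(E))=0$. The main quantitative step is a length estimate on line segments near $E$.

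Fix $s'$ with $\dim_{\mathcal{H}}E<s'<p+1$; this is possible because $p>\max\{\dim_{\mathcal{H}}E-1,0\}$, so in particular $p+1>\dim_{\mathcal{H}}E$. Since $\mathcal{H}^{s'}(E)=0$, for every sufficiently small $\eta>0$ there is a cover of $E$ by Euclidean balls $B_i=B_{\|\cdot\|_2}(x_i,r_i)$ with $x_i\in E$, $r_i<\eta$ and $\sum_i r_i^{s'}<\eta$. Since $E\subset\Omega$ is compact, by choosing $\eta$ smaller than the Euclidean distance from $E$ to $\partial\Omega$ we may also assume every $B_i$ lies in $\Omega$.

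The core estimate is the following. For any $w\in B_i$ one has $d_{\|\cdot\|_2}(w,E)\leq \|w-x_i\|_2<r_i$, hence $N_w\leq r_i^{p}\|\cdot\|_2$. Consequently, for $y\in B_i$, the line segment from $y$ to $x_i$ (contained in $B_i\subset\Omega$) has $N$-length at most $r_i^{p}\cdot r_i=r_i^{p+1}$. Therefore $d_N(y,x_i)\leq r_i^{p+1}$, and so
\begin{equation*}
  \diam_{\widetilde{d}_N}\pi_N(B_i)\leq 2r_i^{p+1}.
\end{equation*}
The family $\{\pi_N(B_i)\}$ covers $\pi_N(E)$ and each member has diameter at most $2\eta^{p+1}$, which tends to $0$ as $\eta\to 0$. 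Moreover
\begin{equation*}
  \sum_i 2r_i^{p+1}
  \;=\;2\sum_i r_i^{p+1-s'}\,r_i^{s'}
  \;\leq\;2\eta^{p+1-s'}\sum_i r_i^{s'}
  \;\leq\;2\eta^{p+2-s'},
\end{equation*}
and since $p+2-s'>1>0$, this tends to $0$ as $\eta\to 0$. Hence $\mathcal{H}^1_{\widetilde{d}_N}(\pi_N(E))=0$.

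For the consequence, let $K$ be a connected component of $E$. By continuity, $\pi_N(K)$ is a connected subset of $\pi_N(E)$, hence has vanishing $\mathcal{H}^1$-measure, so it must be a single point $x\in\pi_N(E)$; in other words $K\subset\pi_N^{-1}(x)$. Conversely, \Cref{prop:main_section} guarantees that $\pi_N^{-1}(x)$ is a connected compact subset of $E$, and therefore contained in a single component of $E$, which must be $K$. Hence $\pi_N^{-1}(x)=K$, as claimed. The only nonroutine point is choosing $s'$ within the admissible window $(\dim_{\mathcal{H}}E,p+1)$; everything else is the straightforward length-times-radius comparison made possible by $N_x\leq d_{\|\cdot\|_2}(x,E)^p\|\cdot\|_2$.
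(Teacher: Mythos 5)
Your proof is correct and takes essentially the same route as the paper's: cover $E$ by small sets, bound each set's $\widetilde{d}_N$-diameter using the $p$-th power decay of the weight near $E$, sum to conclude $\mathcal{H}^1_{\widetilde{d}_N}(\pi_N(E))=0$, and then deduce the consequence from \Cref{prop:main_section} together with the fact that a nonempty connected set of zero $\mathcal{H}^1$-measure is a singleton. The only cosmetic differences are that the paper works directly from $\mathcal{H}^{p+1}(E)=0$ (available since $p+1>\dim_{\mathcal{H}}E$) without your detour through the auxiliary exponent $s'$, and it integrates $t^p$ along the radial segment to get the slightly sharper bound $d_j^{p+1}/(p+1)$ rather than your coarser supremum bound $r_i^{p+1}$; neither affects the validity of the argument.
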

    \begin{proof}
    Let $p > \dim_{\mathcal{H}} E- 1$ and let $\varepsilon>0$. By the definition of Hausdorff dimension, there exists $\delta > 0$ and a countable collection of sets $\mathcal{A} = \{A_j\}$ such that $E \subset \bigcup_j A_j$, $\diam_{\|\cdot\|_2} A_j \leq \delta$  for all $j$, and 
        \[\sum_j (\diam_{\|\cdot\|_2} A_j)^{p+1} < \varepsilon.\]
    Without loss of generality, we may assume that $A_j \cap E \neq \emptyset$ for all $j$. Let $d_j = \diam_{\|\cdot\|_2} A_j$. Thus $A_j \subset \overline{B}_{\|\cdot\|_2}(y, d_j)$ for some $y \in E$. By integrating $N$ over the straight-line path from $y$ to a point $z \in A_j$, it follows that
    \[d_N(y,z) \leq \int_0^{d_j} t^p\,dt = \frac{d_j^{p+1}}{p+1}. \]
    Thus $\diam_{d_N} A_j \leq 2(p+1)^{-1} d_j^{p+1} < 2(p+1)^{-1} \delta^{p+1}$, and we get $\sum_j \diam_{d_N} A_j < 2(p+1)^{-1}\varepsilon.$ This is sufficient to show that $\mathcal{H}_{ \widetilde{d}_{N} }^1( \pi_{N}(E) ) = 0$.
    
    
    Next, let $x \in \pi_{N}( E )$. \Cref{prop:main_section} implies that $\pi_{N}^{-1}( x )$ is a subset of a connected component $F$ of $E$. Since $\pi_{N}( F )$ is connected and compact subset of $\pi_{N}( E )$, we have that
    \begin{equation*}
        \diam \pi_{N}( F )
        \leq
        \mathcal{H}_{ \widetilde{d}_{N} }^1( \pi_{N}( F ) )
        \leq
        \mathcal{H}_{ \widetilde{d}_{N} }^1( \pi_{N}( E ) )
        =
        0.
    \end{equation*}
    Hence $\pi_{N}( F ) = x$ and we must have $F = \pi_{N}^{-1}( x )$.
    \end{proof}

\subsection{Proof of \Cref{thm:reciprocal_implies_removable}}

    We first observe that if $(\Omega/\mathcal{E}_{N},  \widetilde{d}_{ N })$ is reciprocal, then the space formed by taking the same set $E$ and the same definition for $N$, but applied to all points $x \in \mathbb{R}^2$, is also reciprocal. Thus the choice of domain $\Omega$ is not relevant for the proof, and we assume for the remainder of the section that $\Omega = \mathbb{R}^2$.
    
    We prove the contrapositive: if $E$ is not removable for conformal mappings, then $( \mathbb{R}^{2} / \mathcal{E}_{N},  \widetilde{d}_{N} )$ is not reciprocal.
    
    Let $E \subset \mathbb{R}^2$ be a set that is not removable for conformal mappings. As a consequence of \Cref{prop:removable_set_characterizations}, there is a compact set $\widehat{E} \subset \mathbb{R}^{2}$ of positive measure and a conformal map $f \colon \mathbb{R}^{2} \setminus \widehat{E} \to \mathbb{R}^{2} \setminus E$. Let $\widehat{N} = \chi_{ \mathbb{R}^{2} \setminus \widehat{E} }\norm{ \cdot }_{2}$ and let $\widehat{\pi} \colon \mathbb{R}^{2} \to (\mathbb{R}^{2}/\mathcal{E}_{ \widehat{N} }, \widetilde{d}_{\widehat{N}})$ be the associated quotient map. Observe that $\widehat{N}$ is an admissible norm field vanishing on the set $\widehat{E}$.
    
    The following lemma states that $f$ extends to a mapping of the respective quotient spaces. For brevity, let $\widehat{Y} = \mathbb{R}^{2} / \mathcal{E}_{ \widehat{N} }$ and $Y = \mathbb{R}^{2} / \mathcal{E}_{ N }$.
    
    \begin{lemm}\label{lemm:conformal_extension}
    The map $f\colon \mathbb{R}^2 \setminus \widehat{E} \to \mathbb{R}^2 \setminus E$ induces a continuous monotone map $\widehat{f} \colon \widehat{Y} \rightarrow Y$. That is, there is a monotone map $\widehat{f}\colon \widehat{Y} \to Y$ satisfying $\widehat{f}\circ \widehat{\pi}(x) = \pi_N \circ f(x)$ for all $x \in \mathbb{R}^2 \setminus \widehat{E}$. 
    \end{lemm}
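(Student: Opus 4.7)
The plan is to construct a continuous map $\widehat{F} \colon \mathbb{R}^{2} \to Y$ extending $\pi_{N} \circ f$ that is constant on each connected component of $\widehat{E}$, and then pass to the quotient. Since $\widehat{\pi} \colon \mathbb{R}^{2} \to \widehat{Y}$ is a closed continuous surjection whose fibers are exactly the connected components of $\widehat{E}$ (\Cref{prop:main_section}), such an $\widehat{F}$ descends uniquely to a continuous map $\widehat{f} \colon \widehat{Y} \to Y$ satisfying $\widehat{f} \circ \widehat{\pi} = \widehat{F}$; the monotonicity of $\widehat{f}$ will then be read off from a fiber analysis.

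After post-composing $f$ with a Möbius transformation if necessary, we may assume $f$ extends conformally across $\infty$ with $f( \infty ) = \infty$, so that $f$ sends bounded sets to bounded sets. Fix a component $K$ of $\widehat{E}$. Since components of compact sets are separated from the rest of the set, we can choose a decreasing sequence of Jordan neighborhoods $U_{n} \supset K$ with $\overline{U_{n}} \cap \widehat{E} = K$, such that the outer component $V_{n}$ of $U_{n} \setminus K$ adjacent to $\partial U_{n}$ is connected and $\bigcap_{n} \overline{V_{n}} = K$. Define the cluster set
\[
    C( K ) = \bigcap_{n} \overline{ f( V_{n} ) }.
\]
This is a nested intersection of closures of connected sets, hence a nonempty compact connected subset of $\mathbb{R}^{2}$; because $f$ is bounded near $K$ and $f( V_{n} ) \subset \mathbb{R}^{2} \setminus E$, the cluster set $C( K )$ lies in $E$. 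Being connected and inside $E$, it belongs to a single connected component of $E$, which \Cref{lemm:good_snowflake} collapses to a single point of $Y$. We accordingly set $\widehat{F}( x ) = \pi_{N}( f( x ) )$ for $x \in \mathbb{R}^{2} \setminus \widehat{E}$ and $\widehat{F}( x ) = \pi_{N}( C( K ) )$ for $x$ lying in a component $K$ of $\widehat{E}$.

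The main technical step, and the step I expect to present the chief difficulty, is continuity of $\widehat{F}$ at points $x \in \widehat{E}$. Given $x_{n} \to x$ with $x$ in a component $K$ of $\widehat{E}$, consider first the subcase $x_{n} \in \mathbb{R}^{2} \setminus \widehat{E}$: any accumulation point of $f( x_{n} )$ lies in $C( K )$ by the definition of cluster set (finite by the normalization $f(\infty) = \infty$), and continuity of $\pi_{N}$ together with $\pi_{N}( C( K ) )$ being a single point gives $\pi_{N}( f( x_{n} ) ) \to \widehat{F}( x )$. In the subcase $x_{n} \in \widehat{E}$ in a component $K_{n}$, choose auxiliary $z_{n} \in \mathbb{R}^{2} \setminus \widehat{E}$ with $\norm{ z_{n} - x_{n} }_{2} \to 0$ and $\dist( f( z_{n} ), C( K_{n} ) ) \to 0$. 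The bound $d_{N}( a, b ) \leq \norm{ a - b }_{2}$, which holds because $N \leq \norm{ \cdot }_{2}$, shows that $\pi_{N}$ is 1-Lipschitz, so $\widetilde{d}_{N}( \pi_{N}( f( z_{n} ) ), \widehat{F}( x_{n} ) ) \to 0$; this reduces the second subcase to the first.

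For monotonicity, I will check that $\widehat{f}^{-1}( y )$ is always a singleton or empty. For $y \in Y \setminus \pi_{N}( E )$ this is immediate from injectivity of $f$. For $y = \pi_{N}( K' )$ with $K'$ a component of $E$, distinct components $K_{1} \neq K_{2}$ of $\widehat{E}$ cannot both satisfy $C( K_{i} ) \subset K'$: applying the same construction to the conformal bijection $f^{-1}$ in the reverse direction yields a connected cluster set $\widehat{C}( K' ) \subset \widehat{E}$ that would otherwise meet both $K_{1}$ and $K_{2}$, forcing them into a single component of $\widehat{E}$, a contradiction. Hence every fiber of $\widehat{f}$ is either a single point of $\widehat{Y}$ or empty, giving monotonicity.
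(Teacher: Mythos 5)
The proposal takes the same overall approach as the paper (extend $\pi_{N}\circ f$ across each component of $\widehat{E}$ by looking at where $f$ accumulates, then check continuity and monotonicity by compactness), but a key preliminary step is wrong and that wrongness propagates into the construction.

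You assert that ``components of compact sets are separated from the rest of the set'' to produce Jordan neighborhoods $U_{n}\supset K$ with $\overline{U_{n}}\cap\widehat{E}=K$. That is not achievable in general: for a compact $\widehat{E}$, a component $K$ is only an intersection of relatively clopen sets, so for any open $U\supset K$ one can only guarantee $\overline{U}\cap\widehat{E}$ is a relatively clopen superset of $K$, which shrinks toward $K$ but need not equal it. If infinitely many other components of $\widehat{E}$ accumulate at $K$ --- the generic situation for the Ahlfors--Beurling sets $\widehat{E}$ arising here --- then every such $U_{n}$ contains other components of $\widehat{E}$, hence $V_{n}$ meets $\widehat{E}$ and $f$ is not defined on all of $V_{n}$. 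Your cluster set is then really $\bigcap_{n}\overline{f(V_{n}\setminus\widehat{E})}$, and the sets $V_{n}\setminus\widehat{E}$ need not be connected (for instance when $\widehat{E}$ has a nondegenerate arc component inside $V_{n}$). Consequently $C(K)$ need not be connected, so there is no guarantee that $C(K)$ lies in a single component of $E$, and $\widehat{F}(x)=\pi_{N}(C(K))$ may fail to be a point. This is exactly where well-definedness of the map breaks. The paper's proof avoids this by using a Jordan curve $\widehat{\gamma}_{m}\subset B(\widehat{A},1/m)\setminus\widehat{E}$ and the Jordan domain $A_{m}$ bounded by $f(\widehat{\gamma}_{m})$; each $A_{m}$ is automatically connected and the nested intersection $A=\bigcap_{m}\overline{A}_{m}$ is a nonempty compact connected subset of $E$, regardless of what other components of $\widehat{E}$ lie inside the curves. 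Replacing your cluster set by this $A$ repairs the construction.

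Two smaller remarks. First, \Cref{prop:main_section} only gives that the fibers of $\widehat{\pi}$ over $\widehat{\pi}(\widehat{E})$ are connected compact subsets of $\widehat{E}$, not that they are entire components; since $\widehat{E}$ may contain snowflake-type components along which no rectifiable path runs, points of the same component of $\widehat{E}$ are not necessarily $\widehat{\pi}$-equivalent. Your construction still descends (you only need $\widehat{F}$ constant on fibers, which is weaker), and $\widehat{f}^{-1}(y)$ then equals $\widehat{\pi}(K)$ for a single component $K$ rather than a singleton as you claim; but $\widehat{\pi}(K)$ is still compact connected, so monotonicity survives. Second, your argument for uniqueness of that $K$ via the reverse cluster set $\widehat{C}(K')$ of $f^{-1}$ is a genuine addition: the paper asserts the monotonicity ``by construction'' without this step, and your symmetry argument, once stated in terms of the nested-Jordan-domain version of the cluster set, supplies the missing justification.
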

    \begin{proof}
    Let $y \in \widehat{Y}$, and let $\widehat{F}$ denote its preimage under $\widehat{\pi}$. If $\widehat{F} = \{x\}$ for some point $x \notin \widehat{E}$, then we set $\widehat{f}(y) = \pi_N \circ f(x)$. 
    
    Otherwise, $\widehat{F}$ is a subset of some component $\widehat{A}$ of $\widehat{E}$. For all $m \in \mathbb{N}$, let $\widehat{\gamma}_m$ be a Jordan path with image contained in $B_{\|\cdot\|_2}( \widehat{A},  1/m) \setminus \widehat{E}$ that separates $\widehat{A}$ and infinity. The curve $|\widehat{\gamma}_m|$ is the boundary of a closed region $\widehat{A}_m$ containing $\widehat{A}$. We assume without loss of generality that $| \widehat{\gamma}_{m+1} | \subset \widehat{A}_{m}$ for all $m$.
    
    By assumption, $\gamma_m = f \circ \widehat{\gamma}_m$ is a Jordan loop whose image bounds a compactly contained domain $A_m$. Let $A = \bigcap_m \overline{A}_m$. It is immediate that $A$ is nonempty and compact. The intersection is also connected; see for example Section 28 of \cite{Wil:70}. This implies that $A$ is a connected component of $E$. Therefore $\pi_{N}( A )$ is a point by \Cref{lemm:good_snowflake}. We define $\widehat{f}( y ) = \pi_{N}( A )$.
    
    We now check that $\widehat{f}$ is continuous. Let $y \in \widehat{Y}$ and let $(y_n)$ be a sequence in $\widehat{Y}$ converging to $y$. Let $\widehat{F}_n = \widehat{\pi}^{-1}(y_n)$. In the case that $\widehat{F} = \{x\}$ for some $x \notin \widehat{E}$, the continuity is obvious. Otherwise, we proceed as follows. For each fixed $m \in \mathbb{N}$, $F_n \subset \widehat{A}_m$ for sufficiently large $n$. This implies that $\widehat{f}(y_n) \subset \pi_N(A_m)$. Therefore the accumulation points of $\widehat{f}(y_n)$ are in the intersection of $\pi_{N}( A_{m} )$. Since the intersection equals $\pi_{N}( A )$, the sequence $\widehat{f}( y_{n} )$ converges to $\pi_{N}( A ) = \widehat{f}(y)$. The continuity follows.
    
    By construction, the preimage of a point in $\mathbb{R}^2 / \mathcal{E}$ under $\widehat{\pi} \circ \widehat{f}$ is either a single-point set or a component of $\widehat{E}$. We conclude that $\widehat{f}$ is monotone.
    \end{proof}
    
    Let $R = \left[a,  b\right] \times \left[c,  d\right] \subset \mathbb{R}^{2}$ be a rectangle whose interior contains $\widehat{E}$. Let $\Gamma_{1}$ denote the family of paths $\gamma_{t}\colon [a,b] \to \mathbb{R}^2$, where $t \in [c,d]$, defined by $\gamma_t(s) = (s,t)$. Thus $\Gamma_{1}$ is a foliation of $R$ by horizontal paths. Let $\Gamma_{2}$ denote the corresponding foliation of $R$ by vertical paths. 
    
    Next, let $Q$ be the Jordan domain bounded by $f( \partial R )$, and let $\xi_{1}, \xi_{2}, \xi_{3}, \xi_{4}$ denote, respectively, the image of the left, bottom, right, and top side of $R$.
    Let $\widetilde{\Gamma}_{1}$ denote the family paths joining $\pi_N\xi_{1}$ to $\pi_N\xi_{3}$ in $\pi_NQ$ and $\widetilde{\Gamma}_{2}$ the family of paths joining $\pi_N\xi_{2}$ to $\pi_N\xi_{4}$.
    
   	By \Cref{lemm:subdomain:to:space}, it suffices to show that $Y$ is not 1-reciprocal. Thus the proof is complete after we verify the inequalities
   	\begin{equation}
    \label{eq:estimate:1}
        1
        <
        \Mod \widehat{\pi} \Gamma_{1}
        \Mod \widehat{\pi} \Gamma_{2}
    \end{equation} and 
    \begin{equation}
    \label{eq:estimate:3}
        \Mod \widehat{\pi} \Gamma_{1}
        \Mod \widehat{\pi} \Gamma_{2}
        \leq
        \Mod \widetilde{\Gamma}_{1}
        \Mod \widetilde{\Gamma}_{2}.
    \end{equation}

	Define the function $P \colon \mathbb{R}^2 \to [0,\infty]$ by
	\begin{equation*}
	    P(x)
	    =
	    \begin{cases}
	        L(N_{f(x)}) \norm{ D_{x}f } & \text{ if } x \not\in \widehat{E}
	        \\
	        0 & \text{ if } x \in \widehat{E}
	    \end{cases}
	     .
	\end{equation*}
    Since $N$ is a weighted Euclidean norm and $f$ is conformal in the complement of $\widehat{E}$, it follows that $N \circ D_{x}f(v) = P(x)\norm{v}_{2}$ for all $v \in \mathbb{R}^{2}$ and all $x \in \mathbb{R}^2\setminus \widehat{E}$.

    We consider the function $\widehat{ P } \colon \mathbb{R}^{2} / \widehat{ \mathcal{E} } \to [0, \infty]$ defined by taking $\widehat{ P }(x) = P(\widehat{\pi}^{-1}(x))$. Observe that $\widehat{ P }$ is well-defined since $\widehat{\pi}$ is injective outside of $\widehat{E}$. Loosely speaking, $\widehat{ P }$ is a weak upper gradient of $\widehat{ f }$.

	Let $\rho \colon \widehat{f}( \widehat{R} ) \rightarrow \left[0, \infty\right]$ be an admissible function for $\widetilde{\Gamma}_{1}$, and let $\widehat{\rho} = ( \rho \circ \widehat{f} )\widehat{P}$. We first observe that
	\begin{equation}
	\label{eq:change:of:variables:widehat}
	    \int_{ \widehat{R} }
	        \widehat{\rho}^{2}
	    \,d\mathcal{H}^{2}_{ \widehat{d} }
	    =
	    \int_{ \widehat{f}(\widehat{R}) }
	        \rho^{2}
	    \,d\mathcal{H}^{2}_{ \widetilde{d}_{N} }.
	\end{equation}
	Indeed, the integrals are left unchanged by the removal of $\pi_{N}( E )$ and $\widehat{ \pi }(\widehat{ E })$ from both sides. With this reduction, the identity \eqref{eq:change:of:variables:widehat} follows from the Jacobian identities $J_{f} \equiv \norm{ Df }^{2}$, $J_{ \widehat{\pi } } = \chi_{ \mathbb{R}^{2} \setminus E }$, and $J_{ \pi_{N} } = L^{2}( N )$.
	
	Next, we claim that $\widehat{\rho}$ is weakly admissible for $\widehat{\pi} \Gamma_{1}$. Let $\widehat{ \gamma_{t} }$ denote the image under $\widehat{\pi}$ of the horizontal path $\gamma_{t}$ in the quotient space $\mathbb{R}^{2} / \widehat{ \mathcal{E} }$.
	\Cref{lemm:metric_speed_ABS} implies that 
	\begin{equation}
	\label{eq:metric_speed_characterization}
	    v_{ \widehat{f} \circ \widehat{\gamma_{t}} }( s )
	    =
	    N \circ Df \circ D\gamma_{t}( s )
	    =
	    \widehat{P} \circ \widehat{\gamma_{t}}(s) v_{ \widehat{\gamma_{t}} }(s)
	\end{equation}
	for $\mathcal{L}^{1}$-almost every $s \in [a,b] \setminus \gamma_{t}^{-1}( \widehat{E} )$ and that the total variation of $\widehat{ \gamma_{t} }$ in $\widehat{\pi} \widehat{E}$ is zero. Similarly, since $\mathcal{H}^{1}_{ \widetilde{d}_{N} }( \pi_{N}(E) ) = 0$ by \Cref{lemm:good_snowflake}, the area formula \cite[Theorem 2.10.13]{Fed:69} for paths implies that the total variation of $\widehat{f} \circ \widehat{\gamma_{t}}$ in $\pi_{N}(E)$ is zero. We conclude that $\widehat{f} \circ \widehat{ \gamma_{t} }$ is absolutely continuous as long as the right-hand side of \eqref{eq:metric_speed_characterization} is integrable.

	Observe that \eqref{eq:change:of:variables:widehat} holds with the characteristic function $\chi_{ \widehat{f} \widehat{R} }$ in place of $\rho$ and $\widehat{P}$ in place of $\widehat{\rho}$. Then an application of Fubini's theorem implies that the function in the right-hand side of \eqref{eq:metric_speed_characterization} is integrable for $\mathcal{L}^{1}$-almost every $t$. For such $t$, we conclude from \eqref{eq:metric_speed_characterization} that
	\begin{equation*}
        1
	    \leq
        \int_{ \widehat{ f } \circ \widehat{ \gamma_{t} } }
            \rho
        \,ds
        =
        \int_{ \widehat{ \gamma_{t} } }
            \widehat{ \rho }
        \,ds.
    \end{equation*}
    Therefore $\widehat{\rho}$ is weakly admissible for $\widehat{\pi} \Gamma_{1}$, and the equality \eqref{eq:change:of:variables:widehat} implies that
	\begin{equation*}
	    \Mod \widehat{\pi} \Gamma_{1}
	    \leq
	    \Mod \widetilde{\Gamma}_{1}.
	\end{equation*}
	A similar argument applied to the path family $\widehat{\pi} \Gamma_{2}$ gives $\Mod \widehat{\pi} \Gamma_{2} \leq \Mod \widetilde{\Gamma}_{2}$. The inequality \eqref{eq:estimate:3} now follows.

	To conclude the proof, we prove \eqref{eq:estimate:1}. Let $\rho$ be admissible for $\widehat{ \pi } \Gamma_{1}$. Then for all $t \in [c,d]$, we have $1 \leq \int_c^d \rho \circ \widehat{ \pi } \chi_{\mathbb{R}^2 \setminus \widehat{E} }(s,t)\,dt$. Applying Fubini's theorem and Hölder's inequality gives
	\begin{align*}
    d-c &
        \leq
        \int_R 
            \rho \circ \widehat{\pi} \chi_{\mathbb{R}^2 \setminus \widehat{E} }
	    \, d\mathcal{L}^{2} 
	    \leq 
	    \left(
	        \int_R 
	            \rho \circ \widehat{\pi}^2 \chi_{ \mathbb{R}^{2} \setminus \widehat{E} }
	        \, d\mathcal{L}^{2}
	   \right)^{1/2}
	   \mathcal{L}^{2}( R \setminus \widehat{E} )^{1/2}.
	\end{align*}
	After rearranging and taking the infimum over admissible $\rho$, we find that
	\[
	    (d-c)^2/\mathcal{L}^{2}(R \setminus \widehat{E})
	    \leq
	    \Mod \widehat{ \pi } \Gamma_{1}.
	\]
	The analogous argument gives $(b-a)^2/\mathcal{L}^{2}(R \setminus \widehat{E}) \leq \Mod \widehat{ \pi } \Gamma_{2}.$ Thus
    \[
	    1
	    <
	    \frac{
	        (b-a)^2(d-c)^2
	  }{
	        \mathcal{L}^{2}( R \setminus \widehat{E} )^2
	  }
	    \leq
	    \Mod \widehat{ \pi } \Gamma_{1}
	    \Mod \widehat{ \pi } \Gamma_{2}.
    \]

\section{Linear Cantor sets: two examples} \label{sec:linear_Cantor_sets}
    
    We call a Cantor set $E \subset \mathbb{R} \times \left\{0\right\}$ a \emph{linear Cantor set}. As remarked in \Cref{sec:main_results}, a norm field vanishing on a linear Cantor set $E$ of positive length may or may not be reciprocal.
    For completeness, we include here two explicit examples to illustrate both of these cases. Recall from the discussion following the statement of \Cref{thm:reciprocal_implies_removable} that a compact set $E \subset [0,1] \times \{0\}$ is removable for conformal mappings if there exists an admissible norm field $N$ vanishing on $E$ that is reciprocal. Such an $E$ is necessarily a linear Cantor set by \Cref{lemm:injectivity_obstruction}. Conversely, if there exists an admissible norm field vanishing on a linear Cantor set $E$ that is not reciprocal, then $E$ is not removable for conformal mappings. Versions of these examples are already present in \cite[Section 7]{AB:50}. A closely related construction, and the one that we directly based \Cref{ex:weight_not_reciprocal} on, is found in Section 11 of an early version of the paper \cite{Sch:95}.
    
    \begin{exm}\label{ex:weight_not_reciprocal}
    We construct a lower semicontinuous weight $\omega \colon \mathbb{R}^2 \to [0, \infty]$ that vanishes on a Cantor set $E \subset [0,1] \times \{0\}$ of positive length such that the space $(\mathbb{R}^2, d_\omega)$ is not reciprocal. The idea is to make $E$ sufficiently large so that the modulus of the path family joining $(0,  0)$ to $(0,  1)$ in $( \mathbb{R}^{2},  d_{\omega} )$ is positive. 
    
    Identify $[0,1]$ with the set $[0,1] \times \{0\} \subset \mathbb{R}^2$. Let $a_1 = 1/2$, and now define inductively sequences $(a_j)$, $(b_j)$ by the rules $b_j = a_j/\exp(4^j)$ and $a_{j+1} = (a_j - b_j)/2$. Let $I_1$ be a closed interval centered at $t_1 = 1/2$ of length $2b_1$. Define next intervals $I_j$ inductively as follows. Assume that we have a collection of disjoint intervals $I_1, \ldots, I_{j-1}$. From the complement of $(0,1) \setminus \bigcup_{k=1}^{j-1} I_k$, choose an open interval $J$ of largest length. Let $t_j$ be the midpoint of $J$, and let $I_j$ be the closed interval centered at $t_j$ of length $2b_j$. We record the observation that $d_{\|\cdot\|_2}(t_j, \{0,1\}) = \min\{t_j, 1-t_j\} \geq a_j$. Let $E = [0,1] \setminus \bigcup_j I_j$, and let $\omega = \chi_{\mathbb{R}^2 \setminus E}$.
    
    Consider now an interval $I_j$. Assume in the first case that $t_j \leq 1/2$. For all $t \in (0, t_j-b_j)$, let $\gamma_{j,t}$ be the path that connects $t$ to $2t_j -t$ along the upper semicircle of the circle centered at $t_j$ with radius $t_j-t$. Let $\Gamma_j$ be the family of all such paths $\gamma_{j,t}$. Observe that $\Gamma_j$ is a full-modulus subfamily of the family of paths in the upper half-plane $H$ that separate the sets $\overline{B}((t_j,0),b_j)$ and $H \setminus B((t_j,0), t_j)$.
    
    Since the metric speed of $\gamma \in \Gamma_{j}$ with respect to Euclidean distance and with respect to $d_{\omega}$ coincide almost everywhere along $\gamma$, the modulus of $\Gamma_j$ with respect to the metric $d_\omega$ equals the Euclidean modulus: $\Mod_{ d_{\omega} } \Gamma_j = \log(t_j/b_j)/\pi$. See for example \cite[Lemma 7.18]{Hei:01}. Next, we consider the case that $t_j > 1/2$. For all $t \in (t_j+b_j,1)$, let $\gamma_{j,t}$ be the path connecting $t$ to $2t_j-t$ along the upper semicircle of the circle centered at $t_j$ with radius $t-t_j$. In this case, we have $\Mod_{ d_{\omega} } \Gamma_j = \log((1-t_j)/b_j)/\pi$.
    
    We claim that the metric $d_\omega$ violates reciprocality condition \eqref{point:zero:modulus}. Let $F_1 = \{(0,0)\}$ and $F_2 = \{(1,0)\}$ and let $\Gamma = \Gamma(F_1,F_2;\mathbb{R}^2)$. Recall the notation $\Gamma(F_1,F_2;G)$ defined in \Cref{sec:quasiconformal_mappings}.
    
    Observe that $\Gamma$ is a subfamily of $\Gamma(F_1, \mathbb{R}^2\setminus \mathbb{D};\mathbb{R}^2)$, which is majorized by the annular path families $\Gamma(B_{\norm{\cdot}_2}(0,\varepsilon), \mathbb{R}^2 \setminus \mathbb{D};\mathbb{R}^2)$ for all $\varepsilon>0$. In particular,
    \begin{equation*}
        \Mod_{d_\omega} \Gamma(B_{\norm{\cdot}_2}(0,\varepsilon), \mathbb{R}^2 \setminus \mathbb{D};\mathbb{R}^2) \geq
        \Mod_{d_\omega} \Gamma
    \end{equation*}
    for all $\varepsilon>0$. Thus it is sufficient to show that $\Mod_{d_\omega} \Gamma > 0$.  

    Let $\rho$ be an admissible function for $\Gamma$ for the metric $d_\omega$. For each $j \in \mathbb{N}$, let $m_j = \inf \{ \int_\gamma \rho\,ds_\omega: \gamma \in \Gamma_j\}$. If $m_j > 0$, this implies that $\rho/m_j$ is admissible for the path family $\Gamma_j$, and thus that 
    \begin{equation}
        \label{eq:ex.6.1:modulus:lowerbound}
        \int_{A_j} \frac{\rho^2}{m_j^2}\, d\mathcal{H}_\omega^2
        \geq
        \Mod \Gamma_j
        =
        \frac{\log(\min\{t_j,1-t_j\}/b_j)}{\pi}
        \geq
        \frac{\log(a_j/b_j)}{\pi}.
    \end{equation} 
    Let $\gamma_j$ be a path in $\Gamma_j$ such that $\int_{\gamma_j} \rho\,ds_\omega \leq \max\{2m_j,2^{-j-1}\}$. We can define a path $\gamma\colon [0,1] \to \mathbb{R}^2$ by
    \[
        \gamma(t)
        =
        (t,
        \sup_{j\in \mathbb{N}} \pi_2( |\gamma_j| \cap (\{t\}\times \mathbb{R}) )
        ).
    \] 
    Here, $\pi_2$ denotes projection onto the vertical axis. We see that the path $\gamma$ is $d_\omega$-rectifiable as follows. For every $t \in [0,1]$ except on a countable set, there is $\varepsilon > 0$ such that $\gamma((t-\varepsilon, t+\varepsilon))$ is a subpath of a single path $\gamma_j$. At such points $t$, $\gamma$ is locally rectifiable. Otherwise, there is $\varepsilon>0$ such that the image of $\gamma((t-\varepsilon, t+\varepsilon))$ is contained in the union of the image of two paths $\gamma_j$, $\gamma_k$. Again, it follows that $\gamma$ is locally rectifiable at these points. We conclude that $\gamma$ itself is rectifiable. Observe that
    \begin{align*}
        1
        \leq
        \int_\gamma \rho\,ds
        & \leq 
        \sum_{j=1}^\infty \int_{\gamma_j} \rho\,ds 
        \leq
        \sum_{j=1}^\infty \max\{2m_j,2^{-j-1}\}.
    \end{align*}
    This implies that the relationship $m_j \geq 1/(2\cdot 2^j)$ must hold for some $j \in \mathbb{N}$. This together with \eqref{eq:ex.6.1:modulus:lowerbound} gives 
    \begin{align*}
        \frac{1}{2\cdot 2^j}
        \leq
        m_j
        &\leq
        \left(
            \frac{\pi}{\log(a_j/b_j)}
        \right)^{1/2}
        \left(
            \int_{\mathbb{R}^2}
                \rho^2
            \,d\mathcal{H}_{\norm{\cdot}_2}^2
        \right)^{1/2}
        \\
        &=
        \left(
            \frac{\pi}{4^j}
        \right)^{1/2}
        \left(
            \int_{\mathbb{R}^2}
                \rho^2
            \,d\mathcal{H}_{\norm{\cdot}_2}^2
        \right)^{1/2}.
    \end{align*}
    This yields the lower bound
    \[
        \frac{1}{4\pi}
        \leq
        \int_{\mathbb{R}^n}\rho^2\,d\mathcal{H}^2.
    \]
    We conclude that $(\mathbb{R}^2, d_\omega)$ is not reciprocal.
    \end{exm}

    \begin{exm}
    We construct a lower semicontinuous weight $\omega \colon \mathbb{R}^2 \to [0, \infty]$ that vanishes on a Cantor set $E \subset [0,1] \times \{0\}$ of positive length such that the space $(\mathbb{R}^2, d_\omega)$ is reciprocal.
    
    As before, identify $[0,1]$ with the set $[0,1] \times \{0\} \subset \mathbb{R}^2$. Consider the quadrilateral $Q = [0,1] \times [-1,1]$. Let $\Gamma$ be the family of paths in $Q$ connecting the left and right edges of $Q$. 
    
    Fix for the time being a value $t \in (0,1/2)$. Let $I=[t,1-t] \subset (0,1)$ and let $\omega_1 = \chi_{\mathbb{R}^2 \setminus I}$, noting that $\omega_1$ vanishes on the set $I$. Let $\mathcal{E}_1$ denote the decomposition of $\mathbb{R}^2$ corresponding to $I$. The weight $\omega_1$ determines a metric $\widetilde{d}_{\omega_1}$ on $\mathbb{R}^2/\mathcal{E}_1$ that is not reciprocal. Let $\pi_{\omega_1}$ denote the associated quotient map. Note that the metric $\widetilde{d}_{\omega_1}$, like all other metrics in this example, agrees with the Euclidean metric locally outside of $\pi_{\omega_1}(I)$. Thus the Hausdorff 2-measure relative to the metric $\widetilde{d}_{\omega_1}$ coincides with Lebesgue measure.
    
    Let $\widetilde{ \rho }$ be an admissible function for $\pi_{ \omega_{1} } \Gamma$ with respect to the metric $\widetilde{d}_{ \omega_{1} }$ satisfying $\int \widetilde{ \rho }^2\, d\mathcal{H}_{d_{\widetilde{\omega}}}^2 \leq 2 \Mod \pi_{\omega_1} \Gamma$. Since the function 
    \[
        \widetilde{g}
        =
        \frac{
            \chi_{ [0, t) \times \left[-1, 1\right] }
            +
            \chi_{ (1-t, 1] \times \left[-1, 1\right] }
        }
        {2t}
    \] 
    is admissible for $\pi_{\omega_1} \Gamma$, it follows that
    \begin{equation}\label{equ:exm_2_upper_bound:prelim}
        \int \widetilde{ \rho }^2\, d\mathcal{H}_{ \widetilde{d}_{ \omega_{1} } }^{2}
        \leq
        2\int \widetilde{g}^2\,d\mathcal{H}_{ \widetilde{d}_{ \omega_{1} } }^{2}
        =
        \frac{2}{t}.
    \end{equation}
    Let $\rho = \chi_{Q} + \widetilde{\rho}\circ \pi_{\omega_1}$.
    
    For all $n \in \mathbb{N}$, $i \in \{1, \ldots, n\}$, let $\varphi_i^n$ denote the similarity mapping of $\mathbb{R}^2$ taking $I$ to the interval $[(i-1+t)/n, (i-t)/n ]$. Explicitly, $\varphi_i^n(x) = x/n+ ((i-1)/n,0)$. Let $I_n = \bigcup_{i=1}^n \varphi_i^n(I)$ and let $\mathcal{E}_n$ denote the corresponding decomposition of $\mathbb{R}^2$. Let $\omega_n = \chi_{\mathbb{R}^2 \setminus I_n}$, and $\widetilde{d}_{\omega_n}$ the resulting metric on $\mathbb{R}^2 / \mathcal{E}_n$.

    Let $\rho_i^n = \rho \circ (\varphi_i^n)^{-1}$. Define now the function $\rho_n\colon Q \to [0, \infty]$ by
    \[
        \rho_n(x)
        =
        \left\{ 
        \begin{array}{ll}
            \rho_i^n(x) 
            & \text{ if } x \in \varphi_i^n((0,1) \times [-1,1]) \text{ for some } i \in \{1, \ldots, n\}
            \\
            1
            & \text{ otherwise }
        \end{array} 
        \right. .
    \]
    For all $x \in \pi_{ \omega_{n} }( Q )$, we define $\widetilde{\rho}_{n}(x) = \rho_{n}( \pi_{ \omega_{n} }^{-1}( x ) )$. We claim that $\widetilde{\rho}_n$ is admissible for $\pi_{ \omega_{n} }\Gamma$ with respect to the metric $\widetilde{d}_{\omega_n}$.
    
    Let $Q_i^n = [(i-1)/n,i/n]\times [-1,1]$, and let $\gamma_i^n$ be a subpath of $\gamma$ that traverses $Q_i^n$ horizontally. It suffices to show that $\int_{\gamma_i^n} \rho_n \omega_{n} \,ds_{ \norm{ \cdot }_{2} } \geq 1/n$. If $\gamma_i^n$ does not intersect $I_n$, then this is clear since $\rho_i^n \geq 1$ on $Q_i^n \setminus I_n$. If $\gamma_i^n$ is contained in $\varphi_i^n(Q)$, then this is also immediate by the admissibility of $\widetilde{\rho}$. Finally, if $\gamma_i^n$ intersects both $I_n$ and $Q_i^n \setminus \varphi_i^n(Q)$, then $\gamma_i^n$ must traverse a vertical distance of $1/n$, and again the conclusion follows. We conclude that $\rho_n$ is admissible for $\Gamma$. 
    
    Next, we have the upper bound 
    \begin{align} \label{equ:exm_2_upper_bound}
        \int_Q \rho_n^2\, d\mathcal{L}^2 &
        \leq
        \int_Q 1\, d\mathcal{L}^2
        +
        \sum_{i=1}^n \int_{Q_i^n} (\rho_i^n)^2 \, d\mathcal{L}^2
        \leq
        2 + \frac{ \norm{ \rho }_{ L^{2}(Q) }^{2} }{ n }.
    \end{align}
    Observe that $2 = \Mod_{\|\cdot\|_2} \Gamma$. Thus, by taking $n$ to be sufficiently large, the modulus of $\Gamma$ with respect to $\widetilde{d}_{\omega_n}$ becomes arbitrarily close to the Euclidean modulus. 
    
    We can now define the Cantor set $E$ as follows. For a given $t \in (0,1/2)$ and $n \in \mathbb{N}$, let $I(t)$, $I_n(t)$ and $\omega_n(t)$ denote respectively the sets $I$ and $I_n$ and the weight $\omega_n$ constructed above. For all $j \in \mathbb{N}$, let $t_j = 2^{-j-2}$, observing that $\mathcal{L}^1(I(t_j)) = 1 - 2t_j$. Let $\widetilde{\omega}_{j} = \omega_{n_j}(t_j)$. By choosing $n_j$ sufficiently large, we can guarantee that
    \[
        \mathcal{L}^1(I_{n_j}(t_j) \cap I_{n_{j-1}}(t_{j-1}))
        \geq
        (1-4t_{j}) \mathcal{L}^1(I_{n_{j-1}}(t_{j-1}))
    \]
    and that $\Mod \pi_{ \widetilde{\omega}_{j} } \Gamma \leq 2 + 1/j$ by applying \eqref{equ:exm_2_upper_bound:prelim} and \eqref{equ:exm_2_upper_bound}. Inductively choosing $n_j$ in this manner, we have
    \[
        \mathcal{L}^1\left(\bigcap_{i=1}^j I_{n_i}(t_i)\right)
        \geq
        \prod_{i=1}^j (1-4t_i) = \prod_{i=1}^j (1-2^{-i}).
    \]
    Let $E = \bigcap_{j=1}^\infty I_{n_j}(t_j)$ and let $\omega = \chi_{\mathbb{R}^2 \setminus E}$. Then $\mathcal{L}^1(E) = \prod_{j=1}^\infty (1-2^{-j}) > 0$. Moreover, $\omega \geq \widetilde{\omega}_{j}$ for all $j \in \mathbb{N}$. This fact, combined with \Cref{prop:williams:L-Wversion}, yields that $2 \leq \Mod \pi_{ \omega } \Gamma \leq \Mod \pi_{ \widetilde{ \omega }_{j} } \Gamma$ for all $j \in \mathbb{N}$. We conclude that $\Mod \pi_{\omega} \Gamma = 2 = \Mod_{\|\cdot\|_2} \Gamma$.

    Let $\Gamma^*$ denote the family of paths connecting the bottom and top edges of $Q$. It is clear that the function $\rho^* = 1/2 \chi_{Q}$ is admissible for $\Gamma^*$ with respect to the metric $\widetilde{d}_\omega$. Thus $\Mod_{\widetilde{d}_\omega} \Gamma^* = 1/2 = \Mod_{\|\cdot\|_2} \Gamma^*$. By \Cref{lemm:reciprocal_quadrilaterals_mod}, this suffices to show that $\widetilde{d}_\omega$ is reciprocal. 
    \end{exm}

\section{Factoring quasiconformal mappings} \label{sec:example}

    The goal of this section is to prove \Cref{prop:pi:isothermal} and \Cref{thm:example}. To prepare for this, we first give in \Cref{sec:isothermal} an overview of isothermal quasiconformal mappings. See \cite{Iko:19} for a more complete treatment. \Cref{sec:positive_answer} gives the proof of \Cref{prop:pi:isothermal}. This is followed by a discussion in \Cref{sec:positive_answer:regular} of the problem of optimizing the distortion constant in \Cref{prop:pi:isothermal}. Finally, in \Cref{sec:proof:factorization:bad}, we prove \Cref{thm:example}.

\subsection{Isothermal Parametrizations}\label{sec:isothermal}
     
    Let $X$ be a quasiconformal surface. By Theorem 6.2 in \cite{Iko:19}, there exists a complete Riemannian surface $Y$ of constant curvature and a quasiconformal map
    \begin{equation*}
        \psi
        \colon
        Y
        \rightarrow
        X
    \end{equation*}
    with minimal pointwise distortion at almost every point: for every other Riemannian surface $Z$ and quasiconformal map $\varphi \colon Z \rightarrow X$, the inequality
    \begin{equation}
    \label{eq:minimization}
        \left(
            g_{ \psi }
            (g_{ \psi^{-1} } \circ \psi)
        \right) \circ ( \psi^{-1} \circ \varphi )
        \leq
        g_{ \varphi }
        (g_{ \varphi^{-1} } \circ \varphi)
    \end{equation}
    holds $\mathcal{H}^{2}_{Z}$-almost everywhere on $Z$. Recall that, for example, $g_{ \psi }$ and $g_{ \psi^{-1} } $ refer to minimal weak upper gradients of $\psi$ and $\psi^{-1}$, respectively. In this case, we say that $( Y,  \psi )$ is an \emph{isothermal parametrization} of $X$. By Corollary 4.7 of \cite{Iko:19}, any isothermal parametrization $\psi$ is quasiconformal with outer dilatation $K_O(\psi)$ at most $4/\pi$ and inner dilatation $K_I(\psi)$ at most $\pi/2$. Also, the pointwise distortion of $\psi$ is bounded from above by $\sqrt{2}$ $\mathcal{H}^{2}_{Y}$-almost everywhere.
    
    We elaborate on the meaning of \eqref{eq:minimization} in the case when $X = ( \mathbb{R}^{2},  d_N )$ for some norm $N$. Then we can take $Y = \mathbb{R}^{2}$ and $\psi$ to be a linear map
    \begin{equation*}
        \psi
        \colon
        \mathbb{R}^{2}
        \rightarrow
        ( \mathbb{R}^{2},  d_N )
    \end{equation*}
    such that $g_{ \psi } = L( N \circ \psi )$ and $g_{ \psi^{-1} } = \omega( N \circ \psi )^{-1}$. Recall that $L$ and $\omega$ denote, respectively, the maximal and minimal stretching, defined in \eqref{eq:upper_gradient} and \eqref{eq:minimal_gradient}.
    
    The inequality \eqref{eq:minimization} implies that, for all other linear maps $\varphi \colon \mathbb{R}^{2} \rightarrow ( \mathbb{R}^{2},  N )$, we have
    \begin{equation}
        \label{eq:isothermal:norms}
        \frac{ L( N \circ \psi ) }{ \omega( N \circ \psi ) }
        \leq
        \frac{ L( N \circ \varphi ) }{ \omega( N \circ \varphi ) }.
    \end{equation}
    In terms of the \emph{distortion} of a norm defined in \eqref{eq:distortion}, the inequality \eqref{eq:isothermal:norms} implies that $N\circ \psi$ has the smallest possible distortion among such pairs $\psi$ and $\varphi$. This can be phrased in terms of the \emph{Banach--Mazur distance} in convex geometry; see \cite{Rom:19} and \cite[Section 4]{Iko:19}. 
    
    An isothermal parametrization of a quasiconformal surface is essentially unique. This is also part of the content of Theorem 6.2 of \cite{Iko:19}, partially quoted here.
    \begin{thm}[\cite{Iko:19}]\label{thm:iso:unique}
    Let $\psi \colon Y \rightarrow X$ be an isothermal parametrization of $X$, and $\varphi \colon Z \rightarrow X$ a quasiconformal map from a Riemannian surface $Z$ onto $X$. Then $\varphi$ is isothermal if and only if $\psi^{-1} \circ \varphi$ is a conformal diffeomorphism.
    \end{thm}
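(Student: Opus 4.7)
The plan is to reduce everything to the pointwise distortion and exploit the fact that $h = \psi^{-1} \circ \varphi$ is automatically a quasiconformal homeomorphism between the two smooth Riemannian surfaces $Z$ and $Y$ (a composition of quasiconformal maps), which places it inside the classical theory where $1$-quasiconformal is the same as conformal diffeomorphism.

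The core identity I would establish first is the factorization of the pointwise distortion: applying the chain rule for minimal weak upper gradients (cf.\ \Cref{prop:lipschitz} and the Sobolev calculus of \Cref{sec:sobolev}, together with the analogous statement for $\varphi^{-1} = h^{-1} \circ \psi^{-1}$) gives, $\mathcal{H}^2_Z$-almost everywhere on $Z$,
\begin{equation*}
    g_{\varphi}(z) = g_{\psi}(h(z))\, g_{h}(z), \qquad g_{\varphi^{-1}}(\varphi(z)) = g_{\psi^{-1}}(\psi(h(z)))\, g_{h^{-1}}(h(z)).
\end{equation*}
Multiplying yields the key formula $H_{\varphi}(z) = H_{\psi}(h(z))\, H_{h}(z)$, where $H_{h}(z) = g_{h}(z)\, g_{h^{-1}}(h(z)) \geq 1$ (as $h$ is a homeomorphism between Riemannian surfaces; see the remark after \eqref{eq:QC:pw:distortion} for the convention that handles degenerate points). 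Equivalently, in the notation of \eqref{eq:minimization}, $H_{\varphi} = (H_{\psi} \circ h)\, H_{h}$.

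For the sufficiency direction, assume $h$ is a conformal diffeomorphism. Then $H_{h} \equiv 1$ a.e., so $H_{\varphi} = H_{\psi}\circ h$. To verify the isothermal inequality \eqref{eq:minimization} for $\varphi$ against an arbitrary quasiconformal $\varphi' \colon Z' \to X$, we compute
\begin{equation*}
    H_{\varphi} \circ (\varphi^{-1}\circ \varphi') = (H_{\psi} \circ h) \circ (\varphi^{-1}\circ \varphi') = H_{\psi} \circ (\psi^{-1}\circ \varphi') \leq H_{\varphi'},
\end{equation*}
the last inequality being the isothermal condition for $\psi$. Hence $\varphi$ is isothermal.

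For the necessity direction, suppose $\varphi$ is isothermal. Specializing \eqref{eq:minimization} with $\varphi' = \psi$ and pulling back through $h$ gives $H_{\varphi}(z) \leq H_{\psi}(h(z))$ a.e.\ on $Z$. Combined with the reverse inequality $H_{\varphi}(z) = H_{\psi}(h(z))\, H_{h}(z) \geq H_{\psi}(h(z))$, we conclude $H_{h}(z) = 1$ a.e.\ where $H_{\psi}(h(z)) > 0$; on the remaining set, the standard convention forces the same conclusion after inspecting $g_h$ and $g_{h^{-1}}$ directly. Thus $h$ is a $1$-quasiconformal homeomorphism between smooth Riemannian surfaces, and by the classical equivalence (see, e.g., the Beltrami equation viewpoint used in \Cref{sec:quasiconformal_mappings}), $h$ is a conformal diffeomorphism. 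The main technical obstacle is justifying the chain rule and the factorization $H_{\varphi} = (H_\psi\circ h)H_h$ up to null sets in the metric-valued Sobolev setting, which requires the independence of $H_\varphi$ from the choice of upper-gradient representatives established in Proposition~3.4 and Corollary~3.8 of \cite{Iko:19}, together with careful handling of the set where gradients vanish.
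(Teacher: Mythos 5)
The paper does not prove this theorem; it quotes Theorem 6.2 of \cite{Iko:19} verbatim. So there is no in-text proof to compare against, only the reference. Your proposal has the right high-level shape (reduce to showing $h = \psi^{-1} \circ \varphi$ is $1$-quasiconformal between Riemannian surfaces, then invoke Weyl's lemma), but it rests on a false identity.

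The ``key formula'' $g_{\varphi}(z) = g_{\psi}(h(z))\, g_{h}(z)$ is not an equality. The chain rule for minimal weak upper gradients, like the underlying chain rule for metric differentials $\apmd[\varphi]{z} = \apmd[\psi]{h(z)} \circ Dh(z)$, only gives the inequality $L(\apmd[\psi]{h(z)} \circ Dh(z)) \leq L(\apmd[\psi]{h(z)})\, L(Dh(z))$, with strict inequality precisely when the maximizing direction of $\apmd[\psi]{h(z)}$ does not align with the maximal singular direction of $Dh(z)$. A concrete counterexample: take $\psi = \mathrm{id}\colon \mathbb{R}^2 \to (\mathbb{R}^2, \|\cdot\|_\infty)$ (isothermal by \Cref{cor:isothermal:supremum}) and $h$ linear, a stretch by $3$ along $(1,1)/\sqrt{2}$; then $g_\psi \circ h \cdot g_h = 1 \cdot 3 = 3$ while $g_\varphi = L(\|h(\cdot)\|_\infty) = 3/\sqrt{2} < 3$. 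Consequently the formula $H_\varphi = (H_\psi \circ h)\, H_h$ is only an upper bound (and in fact one has the two-sided bound $H_\psi(h(z))/H_h(z) \leq H_\varphi(z) \leq H_\psi(h(z))\, H_h(z)$), so the necessity argument collapses: from the two isothermal inequalities you correctly get $H_\varphi = H_\psi \circ h$ a.e., but that together with these one-sided chain-rule bounds does not force $H_h = 1$.

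The sufficiency direction is fine, because when $Dh$ is conformal (scalar times orthogonal) the chain rule \emph{does} become an equality, so that part survives. To repair necessity you need an input the chain rule cannot supply: from $\apmd[\varphi]{z} = \apmd[\psi]{h(z)} \circ Dh(z)$, both seminorms minimize distortion within their $\mathrm{GL}_2$-orbit (this is the pointwise isothermal characterization of \Cref{lemm:isothermal:admissible}, Theorem 4.16 in \cite{Iko:19}), and one must know that the minimizer is \emph{unique up to conformal linear maps}. This uniqueness (essentially uniqueness of the distance ellipse / John ellipse for the unit ball of a planar norm) is what forces $Dh(z)$ to be conformal a.e., after which Weyl's lemma finishes the argument. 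That is the missing ingredient, and it is a genuinely separate lemma from the upper-gradient calculus you invoke.
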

    
    Let $N$ be an admissible reciprocal norm field on $\mathbb{R}^{2}$ that vanishes on the compact set $E \subset \mathbb{R}^{2}$. The following lemma is a consequence of Theorem 4.16 of \cite{Iko:19}.
    \begin{lemm}\label{lemm:isothermal:admissible}
    The identity map $\iota \colon \mathbb{R}^{2}   \rightarrow ( \mathbb{R}^{2}, d_{N} )$ is isothermal if and only if
    \begin{equation}
        \label{eq:lemm:isothermal:admissible}
        \frac{ L( N_{x} ) }{ \omega( N_{x} ) } \leq
        \frac{ L( N_{x} \circ \varphi ) }{ \omega( N_{x} \circ \varphi ) } 
    \end{equation}
    for all $\varphi \in \mathrm{GL}_2$, for $\mathcal{L}^{2}$-almost every $x \in \mathbb{R}^{2}$.
    \end{lemm}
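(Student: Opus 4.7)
The plan is to deduce the lemma directly from Theorem~4.16 of \cite{Iko:19}, which characterizes isothermal parametrizations of a quasiconformal surface by a pointwise distortion inequality. By \Cref{lemm:derivative_standard} the metric derivative of $\iota$ at $x$ equals $N_x$ for $\mathcal{L}^{2}$-a.e.\ $x$, and by the identifications recorded at the end of \Cref{sec:quasiconformal_mappings} we have $g_\iota(x) = L(N_x)$ and $g_{\iota^{-1}}(\iota(x)) = \omega(N_x)^{-1}$ a.e. Hence the pointwise distortion $H_\iota(x)$ coincides with the left-hand side of \eqref{eq:lemm:isothermal:admissible}. Since $\iota$ is a quasiconformal homeomorphism, \Cref{rem:positive_measure} together with Lusin's Condition $(N^{-1})$ forces $\mathcal{L}^{2}(E)=0$, so $N_x$ is a genuine norm at $\mathcal{L}^{2}$-a.e.\ $x$.

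For the forward direction, assume $\iota$ is isothermal. I will fix $\varphi \in \mathrm{GL}_{2}$ and apply \eqref{eq:minimization} to the Lipschitz quasiconformal map $\iota \circ \varphi \colon \mathbb{R}^{2} \to (\mathbb{R}^{2},d_N)$. The chain rule for metric speeds together with \Cref{lemm:derivative_standard} identifies its metric derivative at $x$ with $N_{\varphi(x)} \circ \varphi$, so the pointwise distortion of $\iota \circ \varphi$ at $x$ equals $L(N_{\varphi(x)} \circ \varphi)/\omega(N_{\varphi(x)} \circ \varphi)$. Substituting $y = \varphi(x)$, which preserves $\mathcal{L}^{2}$-null sets, gives \eqref{eq:lemm:isothermal:admissible} for the chosen $\varphi$ at a.e.\ $y$. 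A countable dense subset $\{\varphi_n\} \subset \mathrm{GL}_{2}$ combined with the continuity of $\varphi \mapsto L(N_y \circ \varphi)/\omega(N_y \circ \varphi)$ at each invertible matrix (valid once $N_y$ is a norm) upgrades this to all $\varphi \in \mathrm{GL}_{2}$ at a.e.\ $y$.

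For the converse, I assume \eqref{eq:lemm:isothermal:admissible} holds almost everywhere. Given an arbitrary quasiconformal parametrization $\psi \colon Z \to (\mathbb{R}^{2},d_N)$ from a Riemannian surface $Z$, the composition $\iota^{-1} \circ \psi$ is quasiconformal between planar domains and, by classical planar quasiconformal theory, has an approximate differential $D_z \in \mathrm{GL}_{2}$ at $\mathcal{H}^{2}_{Z}$-a.e.\ $z$. The chain rule identifies the metric derivative of $\psi$ at $z$ with $N_{\psi(z)} \circ D_z$, so the pointwise distortion of $\psi$ at $z$ equals $L(N_{\psi(z)} \circ D_z)/\omega(N_{\psi(z)} \circ D_z)$. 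Applying the hypothesis at $y=\psi(z)$ with $\varphi=D_z$ yields $H_\iota(\psi(z)) \leq H_\psi(z)$ a.e., which is exactly the pointwise inequality required by Theorem~4.16 of \cite{Iko:19}. That theorem therefore concludes that $\iota$ is isothermal.

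The main obstacle I anticipate is the passage in the forward direction from ``for each $\varphi$'' to ``for all $\varphi$'' on a common $\mathcal{L}^{2}$-null complement, since the exceptional set must be chosen independently of $\varphi$. This reduces to a standard bookkeeping argument using a countable dense subset of $\mathrm{GL}_{2}$ together with the joint continuity of the stretchings $L$ and $\omega$ under pullback by invertible linear maps; the measurable selection $z \mapsto D_z$ in the converse direction is similarly routine.
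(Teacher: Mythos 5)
The paper does not write out a proof of this lemma at all: it simply declares it ``a consequence of Theorem~4.16 of~\cite{Iko:19}'' and moves on. So there is no paper proof to match against; what you have produced is a reconstruction of the argument that the authors are implicitly invoking. Against that standard, your forward direction is sound: composing $\iota$ with an arbitrary $\varphi\in\mathrm{GL}_2$ produces another Lipschitz quasiconformal competitor, \Cref{lemm:metric_speed_Lips} and \Cref{lemm:derivative_standard} give the metric derivative $N_{\varphi(x)}\circ\varphi$, and \eqref{eq:minimization} then yields the claimed inequality almost everywhere after the change of variable $y=\varphi(x)$; the countable-dense-family upgrade to all of $\mathrm{GL}_2$ is fine since $N_y$ is a genuine norm a.e.

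The converse direction has a genuine gap as written. You compute $H_\psi(z)$ via a ``chain rule'' asserting that the metric derivative of $\psi=\iota\circ u$, with $u=\iota^{-1}\circ\psi$ planar quasiconformal, equals $N_{\psi(z)}\circ D_z$. But $\psi$ need not be Lipschitz, so the machinery established in this paper for computing metric derivatives and minimal weak upper gradients (\Cref{thm:legit_differential}, \Cref{prop:lipschitz}, \Cref{lemm:metric_speed_Lips}, and the identities $g_f = L(\apmd{f})$, $g_{f^{-1}}\circ f = \omega(\apmd{f})^{-1}$ recorded at the end of \Cref{sec:quasiconformal_mappings}) does not apply to $\psi$ directly. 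The identification $H_\psi(z) = L(N_{\psi(z)}\circ D_z)/\omega(N_{\psi(z)}\circ D_z)$ requires a Sobolev-level chain rule (relating the weak upper gradients of $\iota\circ u$ to the approximate differential $Du$), which is exactly the content of the results in \cite{Iko:19} near Theorem~4.16 --- so invoking ``Theorem~4.16'' at the end of your converse to close the argument is, in effect, assuming what needs to be proved. More likely Theorem~4.16 of \cite{Iko:19} already states the lemma's pointwise criterion for Lipschitz quasiconformal maps in terms of the metric differential, in which case the lemma follows in one line from \Cref{lemm:derivative_standard} and there is nothing to reconstruct: your converse is re-deriving Theorem~4.16 rather than applying it, and it uses unestablished chain-rule facts to do so.
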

    Observe that \eqref{eq:lemm:isothermal:admissible} is satisfied by the norm $N_x = \norm{\cdot}_\infty$, and more generally by any norm $N_x$ whose unit ball is a square. Thus \Cref{lemm:isothermal:admissible} has the following corollary.
    \begin{cor}\label{cor:isothermal:supremum}
    Suppose that $N$ is reciprocal and that the unit ball of $N_{x}$ is a square for $\mathcal{L}^{2}$-almost every $x \in \mathbb{R}^{2}$. Then the identity map
        $\iota
        \colon
        \mathbb{R}^{2}
        \rightarrow
        ( \mathbb{R}^{2}, d_{N} )$
    is isothermal.
    \end{cor}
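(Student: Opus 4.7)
The plan is to invoke \Cref{lemm:isothermal:admissible}, which reduces the statement to verifying that, for almost every $x \in \mathbb{R}^2$, the distortion $H(N_x) = L(N_x)/\omega(N_x)$ is no larger than $H(N_x \circ \varphi)$ for every $\varphi \in \mathrm{GL}_2$. Since this is a pointwise condition, it suffices to fix a single norm $S$ on $\mathbb{R}^2$ whose unit ball is a square and show $H(S) \leq H(S \circ \varphi)$ for every $\varphi \in \mathrm{GL}_2$.

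The first step is a geometric reformulation. Unpacking the definitions from \Cref{sec:seminorms}, one sees that $L(S) = 1/r(B_S)$ and $\omega(S) = 1/R(B_S)$, where $r(B_S)$ and $R(B_S)$ denote the Euclidean inradius and circumradius of the unit ball $B_S = \{v : S(v) \leq 1\}$. Hence $H(S) = R(B_S)/r(B_S)$, and since the unit ball of $S \circ \varphi$ is the linear image $\varphi^{-1}(B_S)$, the task reduces to the following planar geometry claim: for every parallelogram $P \subset \mathbb{R}^2$ centered at the origin, $R(P)/r(P) \geq \sqrt{2}$, with equality precisely when $P$ is a square. Granting this, the corollary follows at once: $H(S) = \sqrt{2}$ because $B_S$ is a square, while the parallelogram $\varphi^{-1}(B_S)$ satisfies $H(S \circ \varphi) \geq \sqrt{2} = H(S)$ for every $\varphi \in \mathrm{GL}_2$.

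The main obstacle is the parallelogram inequality, which I would verify by elementary trigonometry. Writing $P = \{s v_1 + t v_2 : s, t \in [-1, 1]\}$ with $a = \|v_1\|_2 \leq b = \|v_2\|_2$ and $c = |\cos \angle(v_1, v_2)| \in [0, 1)$, the vertices $\pm v_1 \pm v_2$ give $R(P)^2 = a^2 + b^2 + 2abc$, while each side of $P$ lies on a line whose Euclidean distance from the origin equals $a\sqrt{1-c^2}$ or $b\sqrt{1-c^2}$, so $r(P) = a\sqrt{1-c^2}$. The resulting quantity
\[
  \bigl( R(P)/r(P) \bigr)^2
  = \frac{a^2 + b^2 + 2abc}{a^2(1 - c^2)}
\]
is nondecreasing in both $b/a \geq 1$ and $c \in [0, 1)$, so it is minimized jointly at $b = a$ and $c = 0$, where its value is $2$. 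This yields the desired bound with the correct equality case, completing the reduction.
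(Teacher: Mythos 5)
Your proposal is correct and follows essentially the same route as the paper: both reduce to verifying condition \eqref{eq:lemm:isothermal:admissible} of \Cref{lemm:isothermal:admissible}, which the paper then simply asserts holds for any norm whose unit ball is a square. Your circumradius-to-inradius reformulation and the explicit parallelogram inequality $R(P)/r(P) \geq \sqrt{2}$ supply a full proof of the geometric observation that the paper takes as self-evident.
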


\subsection{Proof of \Cref{prop:pi:isothermal}}\label{sec:positive_answer}

    Recall that we are assuming that $N$ is a reciprocal norm field such that $\pi_{N} \colon \Omega \rightarrow ( \Omega, d_{N} )$ is isothermal, and that $N$ is continuous outside the set $E = \left\{x \in \Omega: N_{x} = 0 \right\}$.
    
    Let $G$ be a complete Riemannian norm field on $\Omega$ of constant Gaussian curvature $-1$ or $0$, which exists by the classical uniformization theorem. The field is of the form $G = \omega \norm{ \cdot }_{2}$ for some smooth $\omega$. Consider the norm field
    \begin{equation*}
        M
        =
            \chi_{ \Omega \setminus E }
            \frac{ \omega }{ \omega( N ) }
            N
            +
            \chi_{ E }
            G.
    \end{equation*}
    The function $1/\omega( N )$ is continuous in $\Omega \setminus E$ due to the continuity of $N$ outside $E$. Then the distortion bound on $N$ implies that $M$ is a lower semicontinuous norm field satisfying $G \leq M \leq H G$ everywhere.
    
    Let $\widehat{d} = d_{M}$ denote the distance induced by $M$. Then
    \begin{equation*}
        d_{G}
        \leq
        \widehat{d}
        \leq
        Hd_{G},
    \end{equation*}
    so the identity map $P = \pi_{M} \colon ( \Omega, d_{G} ) \rightarrow ( \Omega, \widehat{d} )$ satisfies \eqref{equ:P_bi_Lipschitz} and in particular is $H$-bi-Lipschitz. \Cref{lemm:derivative_standard} states that its metric differential coincides with $M$ $\mathcal{L}^{2}$-almost everywhere.
    
    The proof is complete after we show that $\widehat{\iota} = \pi_{N} \circ P^{-1}$ is $1$-quasiconformal. Recall that the metric derivatives of $\pi_{N}$ and $P$ coincide with $N$ and $M$, respectively. Then, as a consequence of Corollary 5.15 and Proposition 5.12 of \cite{Iko:19}, the $1$-quasiconformality is equivalent to proving that for $\mathcal{L}^{2}$-almost every $x \in \Omega$, the distortion of the identity map from $( \mathbb{R}^{2}, M_{x} )$ to $( \mathbb{R}^{2}, N_{x} )$ equals one $\mathcal{L}^{2}$-almost everywhere.
    
    Observe that, by the change of variables formula \Cref{lemm:metric_derivative_Jacobian} and the Lusin's Condition ($N^{-1}$) of $\pi_{N}$, the set $E$ has zero $\mathcal{L}^{2}$-measure, so we only need to check the pointwise distortion in the complement of $E$. Here the claim is immediate, since $M_{z} = \omega(z)N_{z}/\omega( N_{ z } )$ for every $z \in \Omega \setminus E$. We conclude that $\widehat{ \iota }$ is $1$-quasiconformal.
    
    \subsection{Remarks on optimal distortion}\label{sec:positive_answer:regular}
    
    We discuss the question of when the optimal constant $H = \sqrt{2}$ in \eqref{equ:P_bi_Lipschitz} in \Cref{prop:pi:isothermal} can be achieved. We recall that any planar quasiconformal mapping $f\colon \Omega \to \widehat{\Omega}$ is a solution of the \emph{Beltrami equation} $f_{\overline{z}} = \mu f_z$, where $\mu\colon \Omega \to \mathbb{C}$ is a measurable function satisfying $\norm{\mu}_\infty < 1$. Conversely, the \emph{measurable Riemann mapping theorem} provides a homeomorphic solution to the Beltrami equation for any such $\mu$. The function $\mu$ is called the \emph{Beltrami coefficient}. Geometrically, the choice of a Beltrami coefficient corresponds to the choice of a measurable ellipse field on $\Omega$ modulo rescaling of the ellipses. See Chapter 5 of \cite{Ast:Iwa:Mar:09} for an in-depth overview of the topic. 
    
    Given a norm field $\widehat{N}$ on a domain $\widehat{\Omega} \subset \mathbb{R}^2$, one obtains an ellipse field on $\widehat{\Omega}$ by associating to each norm $\widehat{N}_x$ its distance ellipse, that is, the unique ellipse $\mathcal{E} \subset B_{\widehat{N}_x}(0,1)$ having minimal $\lambda \geq 1$ such that $B_{\widehat{N}_x}(0,1) \subset \lambda \mathcal{E}$. This in turn gives a Beltrami coefficient $\mu_{\widehat{N}}$ corresponding to $\widehat{N}$. We refer the reader to \cite[Sec. 4]{Iko:19} for more details.
    
    This choice of ellipse field also determines an underlying Riemannian structure on the metric space $(\widehat{\Omega}, d_{\widehat{N}})$. A consequence of the classical slit domain uniformization theorem \cite[Chapter III, Section 4]{Ah:Sa:60} and \cite[Corollary 6.3]{Iko:19} is the existence of a domain $\Omega \subset \mathbb{R}^2$ and a locally quasiconformal map $\psi \colon \Omega \to \widehat{ \Omega }$ such that $\widehat{ f } = \pi_{ \widehat{N} } \circ \psi$ is isothermal. Consider the distance $d( x, y ) = d_{ \widehat{N} }( \widehat{f}(x), \widehat{f}(y) )$ on $\Omega$ and the norm field $N = \widehat{N} \circ D\psi$. Then the identity map $\iota \colon \Omega \to ( \Omega, d )$ is isothermal and the metric differential of $\iota$ exists and equals $N$ $\mathcal{L}^{2}$-almost everywhere. If the norm field $N$ obtained in this manner is continuous and non-zero outside $E = \psi^{-1}\left( \{ x \in \widehat{ \Omega }: \widehat{N}_x = 0\} \right)$, then \Cref{prop:pi:isothermal} now holds with constant $H = \sqrt{2}$ for the space $(\Omega, d)$ and hence the original space $(\widehat{\Omega}, d_{\widehat{N}})$ as well.
    
    The question of when the norm field $N$ is continuous, in turn, depends upon the regularity of the map $\psi$. In fact, if $\psi$ is $\mathcal{C}^{1}$-continuous in $\Omega$, then $N$ is continuous and non-zero outside $E$. Since the map $\psi$ arises as a solution to the Beltrami equation, this leads to the question of regularity of solutions to the Beltrami equation. Indeed, if we consider a domain $U$ compactly contained in $\widehat{ \Omega }$, the restriction of $\psi^{-1}$ to $U$ solves the Beltrami equation induced by $\mu_{ \widehat{N} }|_{ U }$. The $\mathcal{C}^{1}$-continuity of $\psi$ in $U$ is known to hold, for example, when $\mu_{ \widehat{N} }|_{ U }$ is $\mathcal{C}^{1}$-continuous, locally Hölder continuous \cite[Theorem 15.0.7]{Ast:Iwa:Mar:09} or in $W^{1,p}_{ \loc }( U )$ for a large enough $p > 1$ depending on the $L^{\infty}$-norm of $\mu_{ \widehat{N} }|_{U}$ \cite[Proposition 4]{Ba:Cl:Or:19}.

    Solutions of the Beltrami equation for $\mu_{ \widehat{N} }$, even when $\widehat{N}$ is a continuous Riemannian norm field, need not always be $\mathcal{C}^1$-continuous.  In the following, we use complex notation $z = z_1 + iz_2$ to denote the point $(z_{1}, z_{2}) \in \mathbb{R}^2$ and $\overline{z} = z_{1} - iz_{2}$ to denote the complex conjugate of $z$. See Section 2.4 of \cite{Ast:Iwa:Mar:09} for a brief overview of complex notation. The following example is based on Section 15.1 of \cite{Ast:Iwa:Mar:09}. Let
    \[\mu(z) = \frac{ z }{ \overline{z}( 1 + \log \norm{z}^{2}_{2} ) }\]
    and consider the continuous Riemannian norm field $\widehat{N}$ on $\widehat{ \Omega } = B_{ \norm{ \cdot }_{2} }( 0, e^{-1/2} )$ defined by $\widehat{N}_{z}(v) = \norm{ v + \mu(z) \overline{v} }_{2}$. Then $\mu(z) = \mu_{\widehat{N}_z}$, where $\mu_{\widehat{N}_z}$ is the Beltrami coefficient corresponding to the $\widehat{N}$ as described earlier in this remark. Even though $\widehat{N}$ is continuous, every solution for the Beltrami equation for $\mu_{ \widehat{N}_{z} } = \mu(z)$ has a discontinuous derivative at the origin. This is seen by considering the particular solution $g(z) = -z \log \norm{z}_{2}^{2}$ and noticing that the differential $Dg$ is discontinuous at the origin. It is enough to check this property for $g$ since, by the Stoilow factorization theorem \cite[Theorem 5.5.1]{Ast:Iwa:Mar:09}, every other quasiconformal solution is of the form $\Psi \circ g$ for some conformal diffeomorphism $\Psi$.
    
    For more general norm fields, we have the additional complexity that $\mu_{ \widehat{N} }$ can be smooth even though $\widehat{N}$ is not. For example, consider the continuous norm field $\widehat{N}$ defined by $\widehat{N}_{z}(v) = \norm{ e^{ i \norm{ z }_{2} } v }_{\infty}$. Since the supremum norm is not $\mathcal{C}^{1}$-continuous in $\mathbb{R}^{2} \setminus \left\{0\right\}$ we see that $\widehat{N}$ is not $\mathcal{C}^{1}$-continuous, for example by considering the basepoint $z = \pi/4$ and the vector $v = 1$, even though $\mu_{ \widehat{N} } = 0$. The identity $\mu_{ \widehat{N} } = 0$ follows from \Cref{cor:isothermal:supremum}.
    
\subsection{Proof of \Cref{thm:example}}\label{sec:proof:factorization:bad}
    
    In this section, we present the construction used to prove \Cref{thm:example}, namely of a quasiconformal surface whose isothermal parametrization cannot be factored as a bi-Lipschitz mapping postcomposed with a quasiconformal mapping of smaller distortion. We begin by introducing the notation and parameters involved in \Cref{sec:construction_notation}. We develop various properties of this construction in the following subsections, culminating with the proof of \Cref{thm:example} in \Cref{sec:main_theorem_proof}.
    
\subsubsection{Notation} \label{sec:construction_notation}

	Let us introduce the notation used in our construction. Our first task is to construct a sequence of nested Cantor sets, denoted by $K_1, K_2, \ldots$ and satisfying $K_1 \supset K_2 \supset \cdots$. There are two intermediate steps used in our construction of the sets $K_i$. First, we define sets $E_i^j$ for all $i,j \in \mathbb{N}$, $j \geq i$, to serve as base collections of squares from which the Cantor sets are extracted. The sets $E_i^j$ are each the union of a collection of congruent closed squares $Q_i^j(k,l)$ that covers almost all of $[0,1]^2$. The main feature of our construction is that the squares $Q_i^j(k,l)$  have the standard non-rotated alignment for odd values of $i$, while the square $Q_i^j(k,l)$ are aligned diagonally for even values of $i$. 
	
	In the second intermediate step, we define inductively \[F_i^j = E_i^j \cap F_{i-1}^j \cap F_i^{j-1} \cap [0,1]^2\]
	for all $i,j \in \mathbb{N}$, $j \geq i$, with the convention that $F_0^j = F_i^{i-1} = [0,1]^2$ for all $i,j$. By taking $F_i = \bigcap_j F_i^j$, we obtain a collection of nested Cantor sets. However, for our construction to work, we need the further property that the intersection of the Cantor sets is small. For this reason, we later define $K_i$ to be a subset of $F_i$ with the property that $\diam K_i \to 0$ as $i \to \infty$.  
	
	We take a moment to fix some additional notation. In the following, let $I = J = [0,1]$ and let $Q = I \times J = [0,1]^2$. We identify $I$ with the set $[0,1] \times \{0\}$ and $J$ with the set $\{0\} \times [0,1]$. Let $\pi_1$ denote the standard projection map from $\mathbb{R}^2$ onto the first coordinate axis, and let $\pi_2$ denote the standard projection map from $\mathbb{R}^2$ onto the second coordinate axis. 
	
	As mentioned above, the even-numbered Cantor sets are formed from squares that are rotated by $\pi/4$ from the standard alignment. Let $Q^*$ denote the square with vertices $(1/2,-1/2)$, $(3/2,1/2)$, $(1/2,3/2)$, and $(-1/2,1/2)$. Let $I^* = J^* = [0,\sqrt{2}]$. We also identify $I^*$ with the set $[0, \sqrt{2}] \times\{0\}$ and $J^*$ with the set $\{0\} \times [0, \sqrt{2}]$. Let $\varphi\colon \mathbb{R}^2 \to \mathbb{R}^2$ be the orientation-preserving isometry that maps $[0,\sqrt{2}]^2$ onto $Q^*$ and satisfies $\varphi(0,0) = (1/2,-1/2)$. Explicitly,
	\[\varphi(x,y) = (1/2, -1/2) + \frac{1}{\sqrt{2}}(x-y,x+y). \] 
	Thus $\varphi(I^* \times J^*) = Q^*$.  Next, let $\pi_1^*$ denote the projection map from $Q^*$ onto $\varphi(I^*)$, and let $\pi_2^*$ denote the projection map from $Q^*$ onto $\varphi(J^*)$. Explicitly, $\pi_1^*(x,y) = \varphi(\pi_1(\varphi^{-1}(x,y)),0)$ and $\pi_2^*(x,y) = \varphi(0,\pi_2(\varphi^{-1}(x,y)))$.
	
	\begin{figure}[t] 
        \centering
    	\begin{tikzpicture}[>=latex, scale=5]
    	\draw[thick] (0,0) -- (1,0) -- (1,1) -- (0,1) -- (0,0);
    	\draw[thick] (-.167,-.167) -- (1.167,-.167) -- (1.167,1.167) -- (-.167, 1.167) -- (-.167,-.167);
    	\foreach \i in {1,2,3} { 
    	\foreach \j in {1} {
    	\draw[thick,fill=lightgray] (-.167+.333*\i-.14,-.333+.333*\j) -- (-.167+.333*\i,-.333+.333*\j+.14) -- (-.167+.333*\i+.14,-.333+.333*\j);
    	} }
    	\foreach \i in {1,2,3} { 
    	\foreach \j in {2,3} {
    	\draw[thick,fill=lightgray] (-.167+.333*\i-.14,-.333+.333*\j) -- (-.167+.333*\i,-.333+.333*\j+.14) -- (-.167+.333*\i+.14,-.333+.333*\j) -- 
    	(-.167+.333*\i,-.333+.333*\j-.14) --
    	(-.167+.333*\i-.14,-.333+.333*\j);
    	} }
    	\foreach \i in {1,2,3} { 
    	\foreach \j in {1,2,3} {
    	\draw[thick,fill=lightgray] (-.167+.333*\i-.14,.333*\j) -- (-.167+.333*\i,.333*\j-.14) -- (-.167+.333*\i+.14,.333*\j);
    	} }
    	\foreach \i in {3} { 
    	\foreach \j in {1,2,3} {
    	\draw[thick,fill=lightgray] (.333*\i,-.167+.333*\j+.14) -- (.333*\i-.14,-.167+.333*\j) -- (.333*\i,-.167+.333*\j-.14);
    	} }
    	\foreach \i in {1,2} { 
    	\foreach \j in {1,2,3} {
    	\draw[thick,fill=lightgray] (.333*\i,-.167+.333*\j+.14) -- (.333*\i-.14,-.167+.333*\j) -- (.333*\i,-.167+.333*\j-.14) --
    	(.333*\i+.14,-.167+.333*\j) --
    	(.333*\i,-.167+.333*\j+.14);
    	} }
    	\foreach \i in {0,1,2} { 
    	\foreach \j in {1,2,3} {
    	\draw[thick,fill=lightgray] (.333*\i,-.167+.333*\j+.14) -- (.333*\i+.14,-.167+.333*\j) -- (.333*\i,-.167+.333*\j-.14);
    	} }
    	\draw [decorate,decoration={brace,amplitude=10pt},xshift=-1pt,yshift=0pt]
(-.167,-.167) -- (-.167,1.167) node [black,midway,xshift=-18pt]
{\large $\frac{1}{N_i^j}$};
\draw [decorate,decoration={brace,amplitude=10pt},xshift=1pt,yshift=0pt]
(1.167,1.167) -- (1.167,1.003) node [black,midway,xshift=22pt]
{\large $\frac{1}{a_i^jN_i^j}$};
\draw [decorate,decoration={brace,amplitude=10pt},xshift=1pt,yshift=0pt]
(1.167,.996) -- (1.167,0) node [black,midway,xshift=44pt]
{\large $\left(1-\frac{2}{a_i^j} \right)\frac{1}{N_i^j}$};
    	\end{tikzpicture}
    	\caption{A square $Q_i^j(k,l) \subset E_i^j$ for $i$ odd and the intersection $Q_i^j(k,l) \cap E_{i+1}^j$, shaded gray. The large outer square is $I_i^j(k) \times J_i^j(l)$.}
    	\label{fig:shaded_area_outer}
	\end{figure}
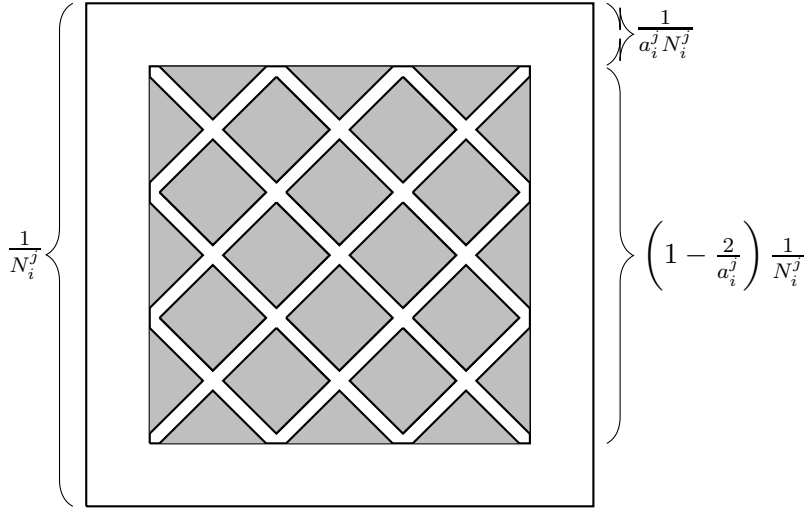

	

    The definition of the sets $E_i^j$ involves three sets of parameters: $\varepsilon_i^j >0$, $N_i^j \in \mathbb{N}$, and $a_i^j \in \mathbb{N}$. A short explanation of these parameters is the following. The first parameter $\varepsilon_i^j$ gives an upper bound on the proportion of area lost when passing from one step of the construction to the next. The second parameter $N_i^j$ gives the number of subdivisions of the initial interval $I$ or $I^*$ that are made when forming the squares that comprise $E_i^j$. The final parameter $a_i^j$ corresponds to the side length of these squares. The precise relation is that the side length of a square in $E_i^j$ is $(1-2(a_i^j)^{-1})/N_i^j$ for $i$ odd and $\sqrt{2}(1-2(a_i^j)^{-1})/N_i^j$ for $i$ even.

    \subsubsection{Constructing the sets $E_i^j$} \label{sec:constructing_E}
    
    For two pairs of indices $(i,j)$ and $(i',j')$, we say that $(i,j) \preceq (i',j')$ if $j<j'$ or if $j=j'$ and $i \leq i'$. The relation $\preceq$ gives an ordering on the set of indices $(i,j)$. We consider the sets $E_i^j$ as being traversed in this order. We also write $(i, j) \prec (i', j')$ if $j < j'$ or if $j = j'$ and $i < i'$. Recall that here and throughout this proof we consider only those indices $i,j \in \mathbb{N}$ for which $j \geq i$. This ordering is illustrated in \Cref{fig:ordering}.
    
    We first choose the parameters $\varepsilon_i^j>0$ so that they satisfy $\prod_{i,j} (1-\varepsilon_i^j) \geq 1/2$. The factors in the product are traversed according to the ordering on $\{(i,j)\}$ just defined.
    
    The sets $E_i^j$ are defined for all $i,j \in \mathbb{N}$ satisfying $j \geq i$ in the following way. Assume for the moment that we have made suitable choices of $N_i^j, a_i^j \in \mathbb{N}$. In the case that $i$ is odd, we divide $I$ into $N_i^j$ equal subintervals $I_i^j(k) = [(k-1)/N_i^j,k/N_i^j]$ and $J$ into $N_i^j$ equal subintervals $J_i^j(l) = [(l-1)/N_i^j,l/N_i^j]$. If $i$ is even, we divide $I^*$ into $N_i^j$ equal subintervals $I_i^j(k) = [\sqrt{2}(k-1)/N_i^j,\sqrt{2}k/N_i^j]$ and $J^*$ into $N_i^j$ equal subintervals $J_i^j(l) = [\sqrt{2}(l-1)/N_i^j,\sqrt{2}l/N_i^j]$. This yields a collection of squares $I_i^j(k) \times J_i^j(l)$, where $k,l \in \{1,\ldots, N_i^j\}$. If $i$ is odd, let $Q_i^j(k,l)$ be the square of side length $(1-2(a_i^j)^{-1})/N_i^j$ with the same center and alignment as $I_i^j(k) \times J_i^j(l)$. If $i$ is even, define $Q_i^j(k,l)$ to be the square of side length $\sqrt{2}(1-2(a_i^j)^{-1})/N_i^j$ with the same center and alignment as $\varphi(I_i^j(k) \times J_i^j(l))$. In this case, the square $Q_i^j(k,l)$ is contained in $Q^*$ and is aligned diagonally. Let 
    \[E_i^j = \bigcup_{k,l} Q_i^j(k,l).\]
    
    The square $Q_i^j(k,l)$, for $i$ odd, is given explicitly by
    \begin{equation*}
        \left[\left(k-1+ \frac{1}{a_i^j}\right)\frac{1}{N_i^j},\left(k- \frac{1}{a_i^j}\right)\frac{1}{N_i^j}  \right] \times \left[\left(l- 1+\frac{1}{a_i^j}\right)\frac{1}{N_i^j},\left(l - \frac{1}{a_i^j}\right)\frac{1}{N_i^j}  \right].
    \end{equation*}
    Let $v_i^j(k,l,1), v_i^j(k,l,2), v_i^j(k,l,3), v_i^j(k,l,4)$ denote the four vertices of $Q_i^j(k,l)$, traversed counterclockwise from the bottom left. Let $w_i^j(k,l)$ denote the center point of $Q_i^j(k,l)$.
    
    \begin{figure}[t]
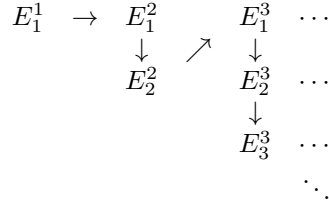
 
	  \begin{tabular}{ccccccc}
	       $E_1^1$ & $\rightarrow$ & $E_1^2$ & & $E_1^3$ &  $\cdots$ &\\
	        & & $\downarrow$ & $\nearrow$ & $\downarrow$ & & \\
	        & & $E_2^2$ & & $E_2^3$ & $\cdots$ &\\
	       & & & & $\downarrow$ \\
	        &  & & & $E_3^3$ & $\cdots$ & \\
	        & & & & & $\ddots$ \\  
	  \end{tabular}
	  \caption{The sets $E_i^j$ as ordered by $\preceq$.} \label{fig:ordering}
	\end{figure}

    Similarly, the square $Q_i^j(k,l)$, for $i$ even, is the image under $\varphi$ of the square
    \begin{equation} \label{equ:even_square}
            \left[
                \left(k-1+ \frac{1}{a_i^j}\right)
                \frac{\sqrt{2}}{N_i^j},
                \left(k- \frac{1}{a_i^j}\right)
                \frac{\sqrt{2}}{N_i^j}
            \right]
            \times
            \left[
                \left(l-1+ \frac{1}{a_i^j}\right)
                \frac{\sqrt{2}}{N_i^j},
                \left(l - \frac{1}{a_i^j}\right)
                \frac{\sqrt{2}}{N_i^j}
            \right].
    \end{equation}
    Let $v_{i}^{j}(k, l, 1), v_{i}^{j}(k, l, 2), v_{i}^{j}(k, l, 3), v_{i}^{j}(k, l, 4)$ denote the four vertices of $Q_i^j(k,l)$, where $v_{i}^{j}(k, l, 1)$ is the image under $\varphi$ of the bottom left vertex of the square in \eqref{equ:even_square} and the rest are labelled in counterclockwise order. Let $w_i^j(k,l)$ denote the center point of $Q_i^j(k,l)$.

    The values of $N_i^j$ and $a_i^j$ are chosen inductively using the ordering $\preceq$. Let $N_1^1 = 2$ and choose $a_1^1 \in \mathbb{N}$ so that $\mathcal{L}^2(E_1^1) \geq 1- \varepsilon_1^1$. For the inductive step, assume that we have chosen $N_{i'}^{j'}$ and $a_{i'}^{j'}$ for some pair $(i',j')$, and that 
    \[ \mathcal{L}^2 \left(\bigcap_{(i'',j'') \preceq (i',j')} E_{i''}^{j''}\right) \geq \prod_{(i'',j'') \preceq (i',j')} (1-\varepsilon_{i''}^{j''}). \]  
    Let $(i,j)$ denote the pair immediately succeeding $(i',j')$. 
    
    Define now $N_i^j = 2a_{i'}^{j'} N_{i'}^{j'}$. We then choose $a_i^j$ so that
    \[
        \mathcal{L}^2\left(\bigcap_{(i'',j'') \preceq (i,j)} E_{i''}^{j''}\right)
        \geq
        \prod_{(i'',j'') \preceq (i,j)} (1-\varepsilon_{i''}^{j''}) .
    \] 
    This can be done because $E_i^j$ can be made to have arbitrarily large area in, respectively, $Q$ or $Q^*$ by making $a_i^j$ sufficiently large.  
    
    We make the following observation. Fix $(i,j)$ and consider a square $Q_i^j(k,l)$. For all $(i',j')$ such that $(i,j) \prec (i',j')$ and $m,n \in \{1, \ldots, N_{i'}^{j'}\}$, the square $I_{i'}^{j'}(m) \times J_{i'}^{j'}(n)$, if $i'$ is odd, or $\varphi(I_{i'}^{j'}(m) \times J_{i'}^{j'}(n))$, if $i'$ is even, is either entirely contained in $Q_i^j(k,l)$, has interior disjoint from $Q_i^j(k,l)$, or intersects $Q_i^j(k,l)$ in a triangle whose vertices are three of the vertices of $I_{i'}^{j'}(m) \times J_{i'}^{j'}(n)$. 
    
    We also observe a uniformity to how the squares are distributed. For each $i,j,k,l$, we divide the square $Q_i^j(k,l)$ into four triangles whose vertices are two adjacent vertices of $Q_i^j(k,l)$ and the midpoint of $Q_i^j(k,l)$. Denote these by $T_i^j(k,l,1)$, $T_i^j(k,l,2)$, $T_i^j(k,l,3)$, $T_i^j(k,l,4)$, where $T_i^j(k,l,m)$ contains the edge 
    $[v_i^j(k,l,m),v_i^j(k,l,m+1)]$, taking $v_i^j(k,l,5) = v_i^j(k,l,1)$.  
    
    \begin{lemm} \label{lemm:congruent_intersection}
    Let $i,j,i',j' \in \mathbb{N}$, where $(i,j) \prec (i',j')$. For all $k,l \in \{1, \ldots, N_i^j\}$ and $m \in \{1,\ldots, 4\}$ satisfying $T_i^j(k,l,m) \subset Q$, the sets $T_i^j(k,l,m) \cap E_{i'}^{j'}$ are all congruent.  
    \end{lemm}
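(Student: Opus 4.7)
The plan is to exhibit, for any two triples $(k,l,m)$ and $(k',l',m')$ yielding triangles contained in $Q$, an orientation-preserving isometry $\psi$ of $\mathbb{R}^{2}$ that sends $T_i^j(k,l,m)$ onto $T_i^j(k',l',m')$ and maps $E_{i'}^{j'}$ onto itself on a neighborhood of each triangle. Since this gives $\psi(T_i^j(k,l,m) \cap E_{i'}^{j'}) = T_i^j(k',l',m') \cap E_{i'}^{j'}$, congruence of the two intersections will follow. The natural candidate is $\psi = \tau \circ R$, where $R$ is the rotation by $(m'-m)\pi/2$ about the center $w_i^j(k,l)$ of $Q_i^j(k,l)$ and $\tau$ is the translation by $w_i^j(k',l')-w_i^j(k,l)$. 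Because the four vertices of each $Q_i^j(k,l)$ are labelled counterclockwise, $R$ sends $T_i^j(k,l,m)$ to $T_i^j(k,l,m')$, and $\tau$ then sends this to $T_i^j(k',l',m')$.

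To verify that $\psi$ preserves $E_{i'}^{j'}$ on the relevant region, I will first extend $E_{i'}^{j'}$ periodically to a set $\widehat{E}_{i'}^{j'} \subset \mathbb{R}^{2}$, by placing a congruent copy of a square of $E_{i'}^{j'}$ at every point of the appropriate lattice (axis-aligned for $i'$ odd, diagonal for $i'$ even). The inclusion $Q \subset Q^{*}$, together with the fact that the only squares of $\widehat{E}_{i'}^{j'}$ intersecting $Q$ are those already present in $E_{i'}^{j'}$, yields $\widehat{E}_{i'}^{j'} \cap Q = E_{i'}^{j'} \cap Q$. Since both triangles lie in $Q$, it suffices to show that $\psi$ preserves $\widehat{E}_{i'}^{j'}$ as a subset of $\mathbb{R}^{2}$.

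I will then verify that $\tau$ and $R$ each preserve $\widehat{E}_{i'}^{j'}$. For $\tau$, the translation vector $w_i^j(k',l')-w_i^j(k,l)$ lies in the center lattice of $E_i^j$. The recursive rule $N_i^j = 2 a_{i_0}^{j_0} N_{i_0}^{j_0}$ applied along the chain $(i,j) \prec \cdots \prec (i',j')$ forces $N_{i'}^{j'}/N_i^j$ to be even, and this divisibility is exactly what is needed to express such a translation vector as an integer combination of the translation symmetries of $\widehat{E}_{i'}^{j'}$ in each of the four combinations of parities of $i$ and $i'$. For $R$, I will check case-by-case that $w_i^j(k,l)$ is a fourfold rotational symmetry point of $\widehat{E}_{i'}^{j'}$: in each case, it coincides with either the center of a square of $\widehat{E}_{i'}^{j'}$ or the center of a face of its center lattice, both of which are $\pi/2$-rotational symmetry points of a square-decorated square lattice.

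The main obstacle is this case analysis, particularly the two mixed-parity cases, where the lattice generated by the points $w_i^j(k,l)$ is rotated by $\pi/4$ relative to the lattice generated by the centers of $\widehat{E}_{i'}^{j'}$. What makes all four cases work uniformly is the divisibility $2 N_i^j \mid N_{i'}^{j'}$ guaranteed by the inductive choice of parameters, which allows the half-integer multiples of $1/N_i^j$ entering through the $-1/2$ in $w_i^j(k,l)$ to land on the appropriate symmetry locus of $\widehat{E}_{i'}^{j'}$.
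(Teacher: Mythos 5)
Your argument is correct, and it takes a cleaner route than the paper's. The paper proceeds by explicit coordinate bookkeeping: it computes the intersection $Q_i^j(k,l) \cap E_{i'}^{j'}$ as a union of specific squares $Q_{i'}^{j'}(k',l')$ indexed by ranges that depend on $(k,l)$ only through a translation, concludes congruence of these box intersections, and then passes to the triangles via the observation that each box intersection is invariant under the quarter-turn about $w_i^j(k,l)$ (the paper writes $\pi/4$ but evidently means $\pi/2$); in the case $i'$ of opposite parity it detours through the diagonal ``rectangle'' $R_i^j(k,l,m)$ and a reflection symmetry. Your approach sidesteps all of that by working directly with the explicit rigid motion $\psi = \tau \circ R$ and the doubly periodic extension $\widehat{E}_{i'}^{j'}$: once one knows $\psi(\widehat{E}_{i'}^{j'}) = \widehat{E}_{i'}^{j'}$ and $\widehat{E}_{i'}^{j'} \cap Q = E_{i'}^{j'} \cap Q$, congruence of the triangle intersections is immediate, and both parities of $i$ are handled by the same argument. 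The verification reduces to two arithmetic facts about lattices, both of which follow from $2 N_i^j \mid N_{i'}^{j'}$ (a weaker divisibility than the $2 a_i^j N_i^j \mid N_{i'}^{j'}$ invoked in the paper): the difference lattice of the centers $w_i^j(k,l)$ is contained in the translation lattice of $\widehat{E}_{i'}^{j'}$, and each center $w_i^j(k,l)$ lies on the set of fourfold rotation points of $\widehat{E}_{i'}^{j'}$. I checked all four parity combinations and the arithmetic works out exactly as you describe; in the axis-aligned case $w_i^j(k,l)$ lands on a vertex of the fundamental cell lattice of $\widehat{E}_{i'}^{j'}$, and in the mixed case the offset by $(1/2,-1/2)$ is absorbed precisely because $2N_i^j \mid N_{i'}^{j'}$. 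The one small caveat is that a complete write-up would still need to spell out the four-case arithmetic you sketch in the last paragraph, so the proof is not strictly shorter than the paper's, but it is more conceptual and makes the role of the divisibility hypothesis transparent.
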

    \begin{proof}
    This proof depends on the property that $2a_i^jN_i^j$ divides $N_{i'}^{j'}$. As a result, squares at different levels of the construction intersect nicely. We consider the case when $i$ is odd. 
    
    First, suppose that $i'$ is also odd. For each $m \in \{1,\ldots ,4\}$, consider the edge $e_i^j(k,l,m)$ as defined above. We have $\pi_1(v_i^j(k,l,1)) = \pi_1(v_i^j(k,l,4)) = k_1(k,l)/N_{i'}^{j'}$ and $\pi_1(v_i^j(k,l,3)) = \pi_1(v_i^j(k,l,2)) = k_3(k,l)/N_{i'}^{j'}$, where 
    \[
        k_1(k,l)
        =
        \frac{
            (a_i^jk-a_i^j+1)N_{i'}^{j'}
        }{
            a_i^jN_i^j
        }
        \, \, \, \text{ and } \, \, \,
        k_3(k,l)
        =
       \frac{
            (a_i^jk-1)N_{i'}^{j'}
        }{
            a_i^jN_i^j
        }.
    \]
    Similarly, we have $\pi_2(v_i^j(k,l,2)) = \pi_2(v_i^j(k,l,1)) = k_2(k,l)/N_{i'}^{j'}$ and $\pi_4(v_i^j(k,l,3)) = \pi_4(v_i^j(k,l,4)) = k_4(k,l)/N_{i'}^{j'}$, where
    \[
        k_2(k,l)
        =
        \frac{
            (a_i^jl-a_i^j + 1)N_{i'}^{j'}
        }{
            a_i^jN_i^j
        }
        \, \, \, \text{ and } \, \, \,
        k_4(k,l)
        = 
        \frac{
            (a_i^jl-1)N_{i'}^{j'}
        }{
            a_i^jN_i^j
        }.
    \]
    Observe that $k_i(k,l) \in \mathbb{N}$ for all $i \in \{1, \ldots, 4\}$. We have then
    \[
        Q_i^j(k,l)
        =
        [k_1(k,l)/N_{i'}^{j'}, k_3(k,l)/N_{i'}^{j'}]
        \times
        [k_2(k,l)/N_{i'}^{j'}, k_4(k,l)/N_{i'}^{j'}].
    \]  
    We conclude from this that the intersection $Q_i^j(k,l) \cap E_{i'}^{j'}$ is precisely the union of the squares
    \[
        \{
            Q_{i'}^{j'}(k',l'):
            k_2(k,l)+1 \leq k' \leq k_4(k,l),
            k_1(k,l)+1 \leq l' \leq k_3(k,l)
        \}.
    \]
    We also observe that
    \[|k_3(k,l) - k_1(k,l)| = |k_4(k,l) - k_2(k,l)| = \frac{(a_i^j-2) N_{i'}^{j'}}{a_i^jN_i^j}.\]
    Thus the sets $Q_i^j(k,l) \cap E_{i'}^{j'}$ are congruent for all $k,l \in \{1, \ldots, N_i^j\}$.
    Moreover, notice that each set $Q_i^j(k,l) \cap E_{i'}^{j'}$ is invariant under rotations by $\pi/4$ about the center point $w_i^j(k,l)$. We conclude from this that the sets $T_i^j(k,l,m) \cap E_{i'}^{j'}$ are all congruent. 
    
    Next, suppose that $i'$ is even. Consider now a triangle $T_i^j(k,l,m)$. 
    The two shorter edges of $T_i^j(k,l,m)$ are the edges of a rectangle $R_i^j(k,l,m)$ of side length $\sqrt{2}(a_i^j-2)/(a_i^jN_i^j)$. To keep the exposition more manageable, we write out the argument only for $T_i^j(k,l,1)$. We compute
    \begin{align*}
        \varphi^{-1}(v_i^j(k,l,1)) & = \left( \frac{(a_i^jk+a_i^jl-2a_i^j+2)\sqrt{2}}{2a_i^jN_i^j}, -\frac{(-a_i^jk+a_i^jl+a_i^jN_i^j)\sqrt{2}}{2a_i^jN_i^j}\right) \\
        \varphi^{-1}(v_i^j(k,l,2)) & = \left( \frac{(a_i^jk+a_i^jl-a_i^j)\sqrt{2}}{2a_i^jN_i^j}, \frac{(-a_i^jk+ a_i^jl -a_i^j+2+a_i^jN_i^j)\sqrt{2}}{2a_i^jN_i^j} \right) .
    \end{align*}
    Comparing this with \eqref{equ:even_square} and using the property that $2a_i^jN_i^j$ divides $N_{i'}^{j'}$, we have
    \[\varphi^{-1}(R_i^j(k,l,m)) = \left[\frac{k_1(k,l)\sqrt{2}}{N_{i'}^{j'}},\frac{k_3(k,l)\sqrt{2}}{N_{i'}^{j'}}\right] \times \left[\frac{k_2(k,l)\sqrt{2}}{N_{i'}^{j'}},\frac{k_4(k,l)\sqrt{2}}{N_{i'}^{j'}}\right] \]
    for some $k_1(k,l), \ldots, k_4(k,l) \in \mathbb{N}$ satisfying
    \[|k_3(k,l) - k_1(k,l)| = |k_4(k,l) - k_2(k,l)| = \frac{(a_i^j-2)N_{i'}^{j'}}{2a_i^jN_i^j}.\]
    The intersection $R_i^j(k,l,1) \cap E_{i'}^{j'}$ is the union of the squares
    \[  \{  Q_{i'}^{j'}(k',l'):
            k_2(k,l)+1 \leq k' \leq k_4(k,l),
            k_1(k,l)+1 \leq l' \leq k_3(k,l)\}.\]
    Thus the sets $R_i^j(k,l,1) \cap E_{i'}^{j'}$ are all congruent, and by symmetry it follows that the sets $T_i^j(k,l,1) \cap E_{i'}^{j'}$ are all congruent as well.
    

    The case when $i$ is even is similar and its proof is omitted.
    \end{proof} 
    

	\subsubsection{Constructing the Cantor sets} \label{sec:Cantor_sets}
	
	For all $i,j$, let $F_0^j = Q$ and $F_i^{i-1} = Q$. Define now \[F_i^j = E_i^j \cap F_{i-1}^j \cap F_i^{j-1}\] 
	for all $j \geq i$. Observe that $\bigcap_{(i'',j'') \preceq (i,j)} E_{i''}^{j''} \subset F_i^j$, so we have $\mathcal{L}^2(F_i^j) \geq 1/2$ for all $i,j$. Next, let $F_i = \bigcap_{j \geq i} F_i^j$. 
	
	Let $K_0 = \mathbb{R}^2$. For each $i \geq 1$, pick inductively a square $Q_i = Q_i^i(j_i,k_i)$ with the property that $Q_i \subset Q_{i-1}$. Let \[K_i = F_i \cap Q_i.\] 
	From Lemma \ref{lemm:congruent_intersection}, it follows that $\mathcal{L}^2(K_i) = \mathcal{L}^2(F_i)/(N_i^i)^2$. Moreover, we have that $\diam K_i \to 0$ as $i \to 0$, and in particular that $\bigcap_i F_i$ is a single point set.

	\subsubsection{Dense networks of paths} \label{sec:network_paths}
	  	
	The following portion of the argument relates to having a ``dense network of paths'' at every stage.
	  
	
	
	
	We define the following subset of $K_i$. If $i$ is even, let 
	\[H_i = K_i \cap \pi_2^{-1}(I \setminus \pi_2(K_{i+1})).\] 
	If $i$ is odd, let 
	\[H_i = K_i \cap (\pi_2^*)^{-1}(I^* \setminus \pi_2^*(K_{i+1})).\] 
	For example, in \Cref{fig:shaded_area_outer} representing the case where $i$ is odd, a point $x \in K_i \cap Q_i^j(k,l)$ belongs to $H_i$ if the line $t \mapsto x + (t,t)$ does not intersect any of the gray boxes.
	
	\begin{lemm} \label{lemm:previous_level_density}
	For every point $x \in K_{i+1}$ and $r>0$, the set $H_i \cap B_{\norm{\cdot}_2}(x,r)$ has positive $\mathcal{L}^2$-measure.
	\end{lemm}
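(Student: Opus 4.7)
The plan is to locate, inside $B_{\|\cdot\|_2}(x,r)$, an open axis-aligned rectangle on which $K_i$ has positive planar measure and which is entirely disjoint from $\pi_2^{-1}(\pi_2(K_{i+1}))$. I write out the case when $i$ is even; the case $i$ odd is identical after conjugating everything by the isometry $\varphi$ and replacing $\pi_2$ by $\pi_2^*$.

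Since $K_{i+1} \subset F_{i+1}^{j} \subset E_{i+1}^{j}$ for every $j \geq i+1$, one has $\pi_2(K_{i+1}) \subset \pi_2(E_{i+1}^{j})$. The latter is the disjoint union of $N_{i+1}^{j}$ closed intervals of length $(1-2/a_{i+1}^{j})/N_{i+1}^{j}$, separated by open gap intervals of length $2/(a_{i+1}^{j}N_{i+1}^{j})$ each. Because $N_{i+1}^{j}\to\infty$ as $j\to\infty$, for the given $r>0$ I can choose $j$ so large that some such gap interval $G$ is contained in $(\pi_2(x)-r/2,\pi_2(x)+r/2)$. Then $G\cap\pi_2(K_{i+1})=\emptyset$, so $K_i\cap\pi_2^{-1}(G)\subset H_i$. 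Pick an open axis-aligned rectangle $R=(a_1,a_2)\times G\subset B_{\|\cdot\|_2}(x,r)$, which exists since $G$ lies in the open segment $\pi_2(B_{\|\cdot\|_2}(x,r))$ and can be taken close enough to $\pi_2(x)$ that a nontrivial horizontal slice remains inside the ball. The lemma reduces to showing $\mathcal{L}^2(K_i\cap R)>0$.

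The positivity of $\mathcal{L}^2(K_i\cap R)$ is established via the uniform self-similar distribution of $K_i$ provided by \Cref{lemm:congruent_intersection}. By induction on the ordering $\preceq$ in the construction, the measure $\mathcal{L}^2(F_{i''}^{j''}\cap T_i^{j'}(k,l,m))$ is independent of the choice of interior triangle $T_i^{j'}(k,l,m)\subset Q_i$: at each inductive step, the divisibility relation $N_i^{j}\mid N_{i'}^{j'}$ for $(i,j)\preceq(i',j')$ built into the parameter selection ensures that the refining sets $E_{i''}^{j''}$, $F_{i''-1}^{j''}$, and $F_{i''}^{j''-1}$ cut each such triangle in a pattern congruent to the pattern in every other such triangle, so that \Cref{lemm:congruent_intersection} applied at the relevant stage propagates the invariance. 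Passing to the monotone limit $F_i=\bigcap_{j''\geq i} F_i^{j''}$ preserves this invariance, so $\mathcal{L}^2(F_i\cap T_i^{j'}(k,l,m))$ equals a constant depending only on $j'$, and this constant is positive because $\mathcal{L}^2(F_i)\geq 1/2$ is distributed over only finitely many such triangles.

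To finish, choose $j'$ so large that every triangle $T_i^{j'}(k,l,m)$ has diameter less than $\min\{(a_2-a_1)/2,|G|/2\}$; then some interior triangle $T_i^{j'}(k,l,m)\subset Q_i$ lies inside $R$, and
\[
\mathcal{L}^2(K_i\cap R)\geq \mathcal{L}^2(F_i\cap T_i^{j'}(k,l,m))>0,
\]
completing the argument. The main obstacle is the inductive claim that $\mathcal{L}^2(F_{i''}^{j''}\cap T_i^{j'}(k,l,m))$ is invariant across interior triangles: \Cref{lemm:congruent_intersection} gives the one-step statement for $E_{i'}^{j'}$, but one must carefully track how the recursive factors $F_{i''-1}^{j''}$ and $F_{i''}^{j''-1}$ in the definition $F_{i''}^{j''}=E_{i''}^{j''}\cap F_{i''-1}^{j''}\cap F_{i''}^{j''-1}$ interact with the triangles, which is where the divisibility structure of the $N_i^{j}$ is essential.
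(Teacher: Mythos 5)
Your approach is in the same spirit as the paper's (locate a triangle $T_i^{j'}(k,l,m)$ at a fine scale near $x$ sitting in the $\pi_2$-gap of $K_{i+1}$), but there is a genuine gap in the justification that $\mathcal{L}^2(K_i\cap T_i^{j'}(k,l,m))>0$. You assert that $\mathcal{L}^2(F_{i''}^{j''}\cap T_i^{j'}(k,l,m))$ is the same for every interior triangle $T_i^{j'}(k,l,m)\subset Q_i$, and cite \Cref{lemm:congruent_intersection} as the engine for this invariance. That lemma only gives congruence of $T_i^{j'}(k,l,m)\cap E_{i'}^{j'}$ when $(i,j')\prec(i',j')$, i.e., when $E_{i'}^{j'}$ lives at a strictly \emph{finer} stage than the triangle. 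But $F_i$ also involves the coarser sets $E_{i''}^{j''}$ with $(i'',j'')\prec(i,j')$ and $i''\leq i$ (for instance $E_i^{j''}$ with $j''<j'$). A triangle at level $(i,j')$ has diameter on the order of $1/N_i^{j'}$, while a gap of such a coarser $E_{i''}^{j''}$ has width on the order of $1/(a_{i''}^{j''}N_{i''}^{j''})$; since $N_i^{j'}$ is a multiple of $2a_{i''}^{j''}N_{i''}^{j''}$, some interior triangles lie entirely inside a gap of a coarser $E_{i''}^{j''}$ and therefore satisfy $K_i\cap T_i^{j'}(k,l,m)=\emptyset$. The invariance you are invoking is false in the form stated, so ``some interior triangle inside $R$'' gives no guarantee of positive $K_i$-measure --- indeed, nothing in your argument rules out $R\cap K_i=\emptyset$.

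The paper avoids precisely this by a positional, not generic, choice of triangle: its $T_i^{j+1}(k',l',m')$ has its hypotenuse on a boundary edge $S$ of the square $Q_{i+1}^j(k,l)$ that actually contains $x\in K_{i+1}$. Because $x$ belongs to every $E_{i''}^{j''}$ entering $F_i$, the divisibility built into the $N_i^j$ places that square, the edge $S$, and the triangles hanging off $S$, inside the good part of all the coarser stages; the only sets left to intersect with $T$ are the finer ones, which \Cref{lemm:congruent_intersection} controls. Your rectangle $R=(a_1,a_2)\times G$ captures the $\pi_2$-disjointness from $K_{i+1}$ but throws away this positional anchor at $x$. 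A repair would be to take $(a_1,a_2)$ to be a short interval around $\pi_1(x)$ and $G$ to be the gap abutting $\pi_2\bigl(Q_{i+1}^j(k,l)\bigr)$, and then to argue positivity for the specific triangle pressed against the edge of $Q_{i+1}^j(k,l)$ --- which is essentially the paper's argument.
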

	\begin{proof}
	In the first case, we assume that $i$ is even, and hence that $i+1$ is odd. Let $x \in K_{i+1}$ and $r>0$. Consider a square $Q_{i+1}^j(k,l)$ containing $x$ for some $j$ sufficiently large so that $Q_{i+1}^j(k,l) \subset B(x,r/3)$ and such that $Q_{i+1}^j(k,l) \subset Q_{i+1}$. Pick a horizontal edge $S$ of $Q_{i+1}^j(k,l)$ whose interior is contained in $\interior(Q_{i+1})$.
	
	Consider now the set $E_{i}^{j+1}$. Subdivide $S$ into $(a_{i+1}^j-2)N_{i}^{j+1}/(a_{i+1}^jN_{i+1}^j)$ congruent subintervals. Each subinterval is the diagonal of a square $\varphi(I_i^{j+1}(k')\times J_i^{j+1}(l'))$, with corresponding square $Q_{i}^{j+1}(k',l') \subset E_{i}^{j+1}$. From such a square $Q_{i}^{j+1}(k',l')$, we may extract a triangle $T_{i}^{j+1}(k',l',m')$, as defined prior to the statement of \Cref{lemm:congruent_intersection}, whose interior does not intersect $E_{i+1}^j$. Observe further that $T_{i}^{j+1}(k',l',m') \subset Q_{i}$, so that 
	\[T_{i}^{j+1}(k',l',m') \cap F_{i} = T_{i}^{j+1}(k',l',m') \cap K_{i}.\] 
	As a consequence of \Cref{lemm:congruent_intersection}, we have that
	\[
	    \mathcal{L}^2(T_{i}^{j+1}(k',l',m') \cap F_{i})
	    =
	    \frac{\mathcal{L}^2(F_{i})}{4(N_{i}^{j+1})^2}.
	\] 
	Moreover, $T_{i}^{j+1}(k',l',m')$ lies in the neighborhood of $Q_{i+1}^j(k,l)$ of radius $$1/N_{i}^{j+1} \leq \diam Q_{i+1}^j(k,l) < 2r/3,$$ so $T_{i}^{j+1}(k',l',m') \subset B_{\norm{\cdot}_2}(x,r)$. Also, we have that \[\interior(T_{i}^{j+1}(k',l',m')) \cap F_{i} \subset H_i.\] 
	This verifies the claim.
	
	The case that $i$ is even and $i+1$ is odd is similar, and we omit the details.
	\end{proof} 
	
	\begin{lemm} \label{lemm:quantitative_density}
	Let $x \in \pi_1(K_i)$ be a Lebesgue density point of $\pi_1(K_i)$, where $i \in \mathbb{N}$ is odd. Let $\delta>0$, and let $t_0>0$ be such that 
	\[\frac{\mathcal{L}^1(\pi_1(K_i) \cap (x,x+t))}{t} \geq 1-\delta \]
	for all $t \in (0,t_0)$. Then for all $y \in \pi_1(K_i)$ satisfying $|y-x| < 2\delta t$,
	\[\frac{\mathcal{L}^1(\pi_1(K_i) \cap (y,y+t))}{t} \geq 1-2\delta. \]
	The same result holds with $\pi_2$ instead of $\pi_1$. If $i$ is even, the corresponding result holds for $\pi_1^*$ and $\pi_2^*$, identifying $I^*$ and $J^*$ with the interval $[0,\sqrt{2}]$.
	\end{lemm}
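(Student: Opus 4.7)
The proof will be an elementary density propagation, so the plan is simply to compare the measure of $\pi_1(K_i)$ in $(y,y+t)$ with the known measure in intervals anchored at $x$, losing at most a factor of order $\delta$.

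First I would observe that since $x$ is a Lebesgue density point, after possibly shrinking $t_0$ one also has the left-sided bound
\[
\frac{\mathcal{L}^1\bigl(\pi_1(K_i)\cap(x-s,x)\bigr)}{s}\ge 1-\delta\quad\text{for every }s\in(0,t_0).
\]
(We may assume $\delta\leq 1/2$, else the conclusion is trivial, and we may restrict attention to $t\in(0,t_0/(1+2\delta))$ so that every interval we integrate over lies in $(x-t_0,x+t_0)$.) Fix such $t$ and $y\in\pi_1(K_i)$ with $|y-x|<2\delta t$.

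The first case is $y\ge x$. Here $(y,y+t)\subset(x,y+t)$ and the gap $(x,y)$ has length less than $2\delta t$, so the right-sided density estimate at $x$ applied to $(x,y+t)$ of length $t+(y-x)$ yields
\[
\mathcal{L}^1\bigl(\pi_1(K_i)\cap(y,y+t)\bigr)
\ge(1-\delta)\bigl(t+(y-x)\bigr)-(y-x)
=(1-\delta)t-\delta(y-x)
\ge(1-\delta-2\delta^{2})t,
\]
which is at least $(1-2\delta)t$ since $\delta\le 1/2$.

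The second case is $y<x$. Split $(y,y+t)=(y,x)\cup(x,y+t)$. The right piece is handled as above, giving
\[
\mathcal{L}^1\bigl(\pi_1(K_i)\cap(x,y+t)\bigr)\ge(1-\delta)\bigl(t-(x-y)\bigr).
\]
For the left piece, the left-sided density estimate on $(x-2\delta t,x)$ controls a superset of $(y,x)$, and since $(x-2\delta t,x)\setminus(y,x)$ has length $2\delta t-(x-y)$,
\[
\mathcal{L}^1\bigl(\pi_1(K_i)\cap(y,x)\bigr)
\ge(1-\delta)(2\delta t)-\bigl(2\delta t-(x-y)\bigr)
=(x-y)-2\delta^{2}t.
\]
Summing,
\[
\mathcal{L}^1\bigl(\pi_1(K_i)\cap(y,y+t)\bigr)
\ge(1-\delta)t+\delta(x-y)-2\delta^{2}t
\ge(1-2\delta)t.
\]

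The main (minor) obstacle is the left-sided case, since the hypothesis explicitly records only the right-sided density bound; the Lebesgue density point assumption is what makes the symmetric strengthening free. The statements for $\pi_2$, and for $\pi_1^{*},\pi_2^{*}$ in the even case, follow by exactly the same argument in the appropriate coordinate, since the orthogonal projections in each case play symmetric roles and the rescaling by $\sqrt{2}$ does not affect the density computation.
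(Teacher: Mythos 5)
Your proof is correct but takes a more elaborate route than the paper's. The paper dispatches the lemma with the single inclusion inequality
\[
\mathcal{L}^1\bigl(\pi_1(K_i)\cap(y,y+t)\bigr)
\geq
\mathcal{L}^1\bigl(\pi_1(K_i)\cap(x,x+t)\bigr)-|x-y|,
\]
which holds because $(y,y+t)$ contains all of $(x,x+t)$ except for a sub-interval of length $|x-y|$; plugging in the hypotheses gives $(1-3\delta)t$. That is not quite the $1-2\delta$ claimed in the statement, but the downstream use in \Cref{sec:main_theorem_proof} only needs a bound of the form $1-O(\delta)$, so the discrepancy is immaterial. You instead invoke both one-sided density estimates at $x$ — the left-sided one being legitimate from the Lebesgue-density hypothesis, as you note — and split into the cases $y\ge x$ and $y<x$, which lands exactly on the stated constant $1-2\delta$. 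The price is the extra bookkeeping and the shrinking of the admissible range to $t<t_0/(1+2\delta)$ so that the longer comparison interval $(x,y+t)$ still falls under the hypothesis; this is a harmless replacement of $t_0$. In short, both arguments serve the purpose: the paper's is a one-line estimate that actually yields the slightly weaker $1-3\delta$, while yours recovers the constant in the statement at the cost of a case analysis.
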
 
	\begin{proof}
	The first claim follows from the relationship
	\[ \mathcal{L}^1(\pi_1(K_i) \cap (y,y+t)) \geq \mathcal{L}^1(\pi_1(K_i) \cap (x,x+t)) - |x-y|.\]
	The other claims follow from a similar inequality.
	\end{proof}
	  
	\subsubsection{Defining the metric on $\mathbb{R}^2$} \label{sec:define_metric}
	
%

    Define a norm field $N$ on $\mathbb{R}^2$ by the formula 
	\[
        N_x
        =
        \left\{
        \begin{array}{ll}
            2^{-i/2} \|\cdot\|_1 & \text{ if } x \in K_{i} \setminus K_{i+1}\text{, $i$ even,}
        \\
            2^{-(i-1)/2}\|\cdot\|_\infty & \text{ if } x \in K_{i} \setminus K_{i+1}\text{, $i$ odd.} 
        \end{array}
        \right.
	\]
	The norm field $N$ is admissible in the sense of \Cref{defi:seminorm_admissible}, in particular being lower semicontinuous, and induces a metric $d$ on $\mathbb{R}^2$ as described in \Cref{sec:distances}. Observe that $N$ vanishes at a single point. \Cref{thm:removable_implies_reciprocal} and \Cref{cor:isothermal:supremum} imply the following.
	\begin{prop}\label{prop:qc_equivalence}
	The identity map $\iota \colon ( \mathbb{R}^{2}, \norm{ \cdot }_{2} ) \rightarrow ( \mathbb{R}^{2}, d )$ is an isothermal quasiconformal mapping.
	\end{prop}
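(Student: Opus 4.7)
The proposition is a direct application of earlier results. The key input is verifying that $N$ is an admissible norm field in the sense of \Cref{defi:seminorm_admissible}, from which the main theorems take over.

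First, I check admissibility. Local boundedness is immediate from the uniform bound $L(N_x) \leq \sqrt{2}$, and the distortion of $c\|\cdot\|_1$ and $c\|\cdot\|_\infty$ equals $\sqrt{2}$ regardless of $c>0$, so $N$ has uniformly bounded distortion. The vanishing set $E = \bigcap_i K_i$ is a single point by the construction in \Cref{sec:Cantor_sets}, hence compact and non-separating. For lower semicontinuity at $x \in K_i \setminus K_{i+1}$, the closedness of $K_{i+1}$ gives a neighborhood $U$ of $x$ with $U \cap K_{i+1} = \emptyset$, so every $y \in U$ lies in $K_j \setminus K_{j+1}$ for some $0 \leq j \leq i$. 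Combining the elementary inequalities $\|\cdot\|_\infty \leq \|\cdot\|_1 \leq 2\|\cdot\|_\infty$ with the doubling of the coefficient $2^{-i/2}$ when passing to a lower level, a short case analysis on the parities of $i$ and $j$ gives $N_y \geq N_x$ on $U$. At the unique vanishing point, lower semicontinuity is trivial since the target is zero.

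Since a single point has $\mathcal{H}^1$-measure zero, it is removable for conformal mappings, as noted in \Cref{sec:main_results}. By \Cref{thm:removable_implies_reciprocal}, $N$ is reciprocal, and by \Cref{lemm:injectivity_obstruction} the quotient map $\pi_N$ is a homeomorphism that is locally quasiconformal. As $E$ is a singleton, $\pi_N$ may be identified with the identity map $\iota$, and the pseudometric $d_N$ is genuinely a metric. Combining local quasiconformality with the uniform distortion bound via \Cref{prop:modulus_distortion} and \Cref{lemm:metric_derivative_minimal_upper_gradient} upgrades this to global quasiconformality of $\iota$. For isothermality, observe that at every $x \in \mathbb{R}^2$ the unit ball of $N_x$ is a square, being a rescaled copy of either $\{\|\cdot\|_\infty \leq 1\}$ or $\{\|\cdot\|_1 \leq 1\}$; hence \Cref{cor:isothermal:supremum} yields that $\iota$ is isothermal. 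The proposition presents no substantial obstacle; its role is simply to register that our explicit construction slots directly into the framework developed in the earlier sections, with the only nontrivial check being lower semicontinuity at the interfaces between consecutive Cantor levels.
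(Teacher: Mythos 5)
Your proof is correct and follows the same route as the paper: verify admissibility of $N$ (with the lower semicontinuity check being the only nontrivial part), note that a single point is removable so \Cref{thm:removable_implies_reciprocal} gives reciprocality, and invoke \Cref{cor:isothermal:supremum} using that every $N_x$ has square unit ball. The paper's own proof is a two-sentence remark that simply cites these same ingredients, so your writeup is essentially an expansion of it; the extra detour through \Cref{prop:modulus_distortion} and \Cref{lemm:metric_derivative_minimal_upper_gradient} to ``upgrade local to global quasiconformality'' is harmless but unnecessary, since isothermality from \Cref{cor:isothermal:supremum} already includes quasiconformality, and reciprocality of $(\mathbb{R}^2,d_N)$ together with \Cref{lemm:injectivity_obstruction} already hands you that $\pi_N = \iota$ is a (globally) quasiconformal homeomorphism.
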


	\subsubsection{Proof of \Cref{thm:example}} \label{sec:main_theorem_proof}
	
	We suppose to the contrary that there is a metric space $(\widehat{X},\widehat{d})$ such that a factorization $\iota = \widehat{\iota} \circ P$ as in the statement of \Cref{thm:example} exists, that is,  that $P$ is bi-Lipschitz and that $\widehat{\iota}$ has distortion $H( \widehat{\iota} ) < \sqrt{ 2 }$. Since $\iota$ is Lipschitz, it follows that $\widehat{\iota}$ is also Lipschitz.
	
	By considering the metric $\widehat{d}( P(x), P(y) )$ on $\mathbb{R}^{2}$, we assume without loss of generality that $\widehat{X} = \mathbb{R}^2$ and that $\widehat{\iota}$ and $P$ are each the identity map on $\mathbb{R}^2$. Let $\widehat{N}$ denote the metric derivative of the map $P\colon (\mathbb{R}^2,\norm{\cdot}_2) \to (\mathbb{R}^2, \widehat{d})$ as defined in \Cref{defi:metric_derivative_Ivanov}.

	By assumption, the identity map $\widehat{\iota} \colon (\mathbb{R}^2, \widehat{d}) \to (\mathbb{R}^2, d)$ is quasiconformal with $H(\widehat{\iota}) < \sqrt{2}$. Moreover, since $P$ is Lipschitz, there exists $C>0$ such that $\widehat{N}_x \leq C\norm{\cdot}_1$ for every $x \in \mathbb{R}^2$. Let $v = (1,0)$ and $w = (1/\sqrt{2}, 1/\sqrt{2})$, and let $a = H(\widehat{\iota})/\sqrt{2} < 1$. It suffices to show that, for all $i \geq 0$ and almost every $x \in K_i \setminus K_{i+1}$,
	\begin{equation}\label{equ:inductive_claim}
        \begin{array}{ll}
            \widehat{N}_x(w) \leq Ca^{i} &\text{ if $i$ is even,} 
            \\
            \widehat{N}_x(v) \leq Ca^{i} & \text{ if $i$ is odd.}
        \end{array}  
	\end{equation}
	This provides a contradiction. Indeed, given that $P$ is bi-Lipschitz, $\widehat{N}_{x}(w)$ and $\widehat{N}_{x}(v)$ are bounded from below for all $x \in \mathbb{R}^2$ by some constant $C' > 0$.

    Observe that, when $i$ is even, $N_{x}(w) = \sqrt{2} N_{x}(v)$ for all $x \in K_{i} \setminus K_{i+1}$.  Similarly, when $i$ is odd, $N_{x}(v) = \sqrt{2}N_{x}(w)$. It follows from Proposition 5.12 of \cite{Iko:19} that the pointwise distortion of $\widehat{ \iota }$ coincides with the distortion of the identity map from $( \mathbb{R}^{2}, \widehat{N}_{x} )$ to $( \mathbb{R}^{2}, N_{x} )$ for almost every $x \in \mathbb{R}^2$. As a consequence, for almost every $x \in K_{i} \setminus K_{i+1}$,
	\begin{equation}\label{equ:inductive_claim:key}
        \begin{array}{ll}
            \widehat{N}_x(v) \leq a \widehat{N}_{x}(w) &\text{ if $i$ is even,} 
            \\
            \widehat{N}_{x}(w) \leq a \widehat{N}_x(v) & \text{ if $i$ is odd}.
        \end{array}  
	\end{equation}
	
    We verify \eqref{equ:inductive_claim} by induction on $i$. The claim is immediate for $i=0$, recalling that $K_{0} = \mathbb{R}^{2}$. For the inductive step, fix $i \geq 1$ and assume that \eqref{equ:inductive_claim} holds for almost every $x \in \mathbb{R}^2 \setminus K_{i}$. We show that \eqref{equ:inductive_claim} holds for almost every $x \in K_i \setminus K_{i+1}$. Let $N_{i-1}$ denote the set of points in $K_{i-1} \setminus K_{i}$ for which \eqref{equ:inductive_claim} or \eqref{equ:inductive_claim:key} fails. We split into two cases based upon on whether $i$ is odd or even. The idea is the same in each, but the bookkeeping requires separate statements.
	
	\subsubsection*{Case 1} Assume that $i$ is odd. By the inductive hypothesis, we have $\widehat{N}_x(v) \leq a \widehat{N}_x(w) \leq Ca^{i}$ for every $x \in (K_{i-1} \setminus K_i)\setminus N_{i-1}$, where $N_{i-1}$ has $\mathcal{L}^2$-measure zero. We claim that $\widehat{N}_x(v) \leq Ca^{i}$ for almost every $x \in K_i \setminus K_{i+1}$. Assume to the contrary that there exists a set $G \subset K_i \setminus K_{i+1}$ of positive measure and a constant $b>0$ such that $\widehat{N}_x(v) \geq (C+b)a^{i}$ for all $x \in G$.
	
	For all $t \in [0,1]$, let $\gamma_{t}\colon I \to \mathbb{R}^2$ be the path defined by $\gamma_{t}(s) = (s,t)$. 
	According to \Cref{lemm:metric_speed_Lips}, for every path $\gamma_{t}$ and every subinterval $I' \subset I$,
	\[
	    \ell_{\widehat{d}}(\gamma_{t}|_{I'})
	    =
	    \int_{I'} \widehat{N}_{\gamma_t(s)}(v)\, d\mathcal{L}^{1}.
	\]
	Consider now the interval $[s_0,s_0+h]$ for some $s_0 \in (0,1)$ and $h \in (0, 1-s_0)$. Differentiating, we have for $\mathcal{L}^{2}$-almost every $(s_0,t) \in G$ that
	\[
	    \lim_{h \to 0}
	        \frac{
	            \widehat{d}( \gamma_{t}( s_{0} ), \gamma_{t}( s_{0} + h ) )
	        }{h}
	    =
	    \lim_{h \to 0} \frac{\ell_{\widehat{d}}( \gamma_t|_{ [s_0,s_0+h] } ) }{h}
	    \geq
	    (C+b)a^{i}.
	\]  
	In particular, for almost every $x \in G$, there exists $r_0 >0$ such that
	\begin{equation}\label{equ:arguebycontradiction}
	    \widehat{d}(x,x+ rv ) \geq (C+ b/2)a^{i}r
	\end{equation}
	for all $r \in (0,r_0)$.

    On the other hand, consider now a point $x \in G$ such that $\pi_1^*(x)$ is a Lebesgue density point of $\pi_1^*(K_{i-1})$ and $\pi_2^*(x)$ is a Lebesgue density point of $\pi_2^*(K_{i-1})$. Note that by Fubini's theorem, $\mathcal{L}^{2}$-almost every point in $G$ has this property. Let $\delta>0$ and let $t_0 = t_0(\delta)$ be such that the hypothesis in \Cref{lemm:quantitative_density} is satisfied for both the point $\pi_1^*(x)$ and the point $\pi_2^*(x)$.
    
    For all $\varepsilon > 0$, let $H(x, \varepsilon )$ be the set comprising those points $y \in H_{i-1} \cap \overline{B}_{ \norm{ \cdot }_{2} }( x, \varepsilon )$ for which
    \begin{equation*}
        \mathcal{H}^{1}_{ \norm{ \cdot }_{2} }(
            N_{i-1}
            \cap
             \pi_2^{-1}(\pi_{2}(y))
        )
        =
        0.
    \end{equation*}
    Recall that the set $H_{i-1}$ is defined in \Cref{sec:network_paths}. By \Cref{lemm:previous_level_density}, the set $H_{i-1} \cap \overline{B}_{ \norm{ \cdot }_{2} }( x, \varepsilon )$ has positive $\mathcal{L}^2$-measure. Since $N_{i-1}$ has $\mathcal{L}^2$-measure zero, an application of Fubini's theorem shows that $H(x,\varepsilon)$ is a full measure subset of $H_{i-1} \cap \overline{B}_{ \norm{ \cdot }_{2} }( x, \varepsilon )$. Let $r \in (0, t_0)$ and $\varepsilon \in (0,2\delta r)$. 

    Consider a point $y \in H(x,\varepsilon)$. Let $\gamma_y\colon [0,r] \to \mathbb{R}^2$ be the path defined by $\gamma_{y}(s) = y + sv$. \Cref{lemm:metric_speed_Lips} implies that 
    \begin{equation*}
        \ell_{ \widehat{d} }( \gamma_{y} )
        =
        \int_{ [0,r] }
            \widehat{N}_{ \gamma_{y}(s) }( v )
        \,d\mathcal{L}^{1}( s ),
    \end{equation*}
    and the definition of $H(x, \varepsilon )$ implies that
    \begin{equation}
        \label{equ:key:step:induction}
        \widehat{N}_{ z }( v )
        \leq
        C a^{i}
    \end{equation}
    for $\mathcal{H}^{1}_{ \norm{ \cdot }_{2} }$-almost every $z \in K_{i-1} \cap \abs{ \gamma_{y} }$.
%
%
%
    
    Next, we estimate the $\mathcal{H}^{1}_{ \norm{ \cdot }_{2} }$-measure of $K_{i-1} \cap \abs{ \gamma_{y} }$. To this end, observe that the path $\gamma_y^1\colon [0,r] \to \mathbb{R}$, $\gamma_y^1(s) = y + sw/\sqrt{2}$, intersects $K_{i-1}$ in a set congruent to $\pi_1^*(K_{i-1}) \cap \pi_1^*(|\gamma_y^1|)$. Similarly, the path $\gamma_y^2\colon [0,r] \to \mathbb{R}$, $\gamma_y^2(s) = y + s\overline{w}/\sqrt{2}$, where $\overline{w} = (1/\sqrt{2},-1/\sqrt{2})$, intersects $K_{i-1}$ in a set congruent to $\pi_2^*(K_{i-1}) \cap \pi_2^*(|\gamma_y^2|)$. Since $|\pi_m^*(y) - \pi_m^*(x)| <2\delta r$, \Cref{lemm:quantitative_density} gives, for $m \in \{1,2\}$,
    \begin{equation}\label{equ:temporary:ref}
        \frac{
            \mathcal{H}_{\norm{\cdot}_2}^1(
                K_{i-1} \cap |\gamma_y^m|
            )
        }{ r/\sqrt{2} }
        \geq 
        1-2\delta.
    \end{equation}
    We combine this with the following observation: for any measurable sets $E_1, E_2 \subset [0,r]$ satisfying $|E_j| \geq (1-\varepsilon_j)r$ for some $\varepsilon_j \in (0,1)$, $j \in \{1,2\}$, the diagonal path $\gamma\colon [0,r] \to [0,r]^2$ defined by $\gamma(s) = (s,s)$ intersects $E_1 \times E_2$ in a set of length at least $\sqrt{2}(1-\varepsilon_1 - \varepsilon_2)r$. Since $K_{i-1}$ is constructed as a product set relative to which $\gamma_y$ is a diagonal path, we conclude from \eqref{equ:temporary:ref} that
    \begin{equation} \label{equ:K_i_intersection}
        \frac{\mathcal{H}_{\norm{\cdot}_2}^1(K_{i-1} \cap |\gamma_y|)}{r}
        \geq
        1-4\delta. 
    \end{equation}
    Using \eqref{equ:key:step:induction} and the fact that $\widehat{N}_z(v) \leq C$ for all $z \in \mathbb{R}^2$, the inequality \eqref{equ:K_i_intersection} gives
    \[
        \widehat{d}(y,y+rv) \leq (1-4\delta)Ca^{i}r + 4\delta C r.
    \]  
    Next, by making the initial choice of $\delta$ sufficiently small, we have
    \[
        \widehat{d}(y,y+rv) \leq (1+\delta)Ca^{i}r.
    \]
    From this and the relationship $\widehat{d} \leq Cd_{\norm{ \cdot }_{1}} \leq \sqrt{2}Cd_{\norm{ \cdot }_{2}}$, it follows that 
    \[\widehat{d}(x,x+rv) \leq 2\sqrt{2} C\varepsilon + (1+\delta)Ca^{i}r.\]
    Since $\varepsilon \in (0,2\delta r)$ is arbitrary, we obtain 
    \[
        \widehat{d}(x,x+rv) \leq (1+\delta)Ca^{i}r.
    \] 
    Since this estimate holds for $\mathcal{L}^2$-almost every $x \in G$, this contradicts our earlier statement \eqref{equ:arguebycontradiction} when $\delta$ is sufficiently small. We conclude that $\widehat{N}_x(v) \leq Ca^{i}$ for almost every $x \in K_i\setminus K_{i+1}$.
    
    \subsubsection*{Case 2} We now consider the case that $i$ is even. The idea is the same as in the first case, but now everything is rotated by $\pi/4$. By the inductive hypothesis, we have that $\widehat{N}_x(w) \leq a\widehat{N}_x(v) \leq Ca^{i}$ for every $x \in (K_{i-1}\setminus K_i)\setminus N_{i-1}$. We claim that $\widehat{N}_x(w) \leq Ca^{i}$ for almost every $x \in K_i \setminus K_{i+1}$. Assume to the contrary that there exists a set $G \subset K_i \setminus K_{i+1}$ of positive measure and a constant $b>0$ such that $\widehat{N}_x(w) \geq (C+b)a^i$ for all $x \in G$.
    
    For all $t \in J^*$, let $\gamma_t\colon I^* \to \mathbb{R}^2$ be the path defined by $\gamma_t(s) = \varphi(s,t)$. Consider as before the interval $[s_0,s_0+h]$ for some $s_0 \in (0,\sqrt{2})$ and $h \in (0, \sqrt{2}-s_0)$. Differentiating, we have that \[\lim_{h \to 0} \frac{\ell_{\widehat{d}}(\gamma_t|_{ [s_0,s_0+h] } )}{h} \geq (C+b)a^i.\]
    In particular, for $\mathcal{L}^{2}$-almost every $x \in G$, there exists $r_{0} > 0$ such that
    \begin{equation}\label{equ:arguebycontradiction:2}
        \widehat{d}(x,x+rw) \geq (C+ b/2)a^{i}r
    \end{equation}
    for all $r \in ( 0, r_{0} )$.

    On the other hand, consider a point $x \in G$ such that $\pi_1(x)$ is a Lebesgue density point of $\pi_1(K_{i-1})$ and $\pi_2(x)$ is a Lebesgue density point of $\pi_2(K_{i-1})$. Let $\delta > 0$ and let $t_0 = t_0(\delta)$ be the corresponding value in \Cref{lemm:quantitative_density}. For all $\varepsilon >0$, define the set $H(x,\varepsilon)$ as the set of points $y \in H_{i-1} \cap \overline{B}_{\norm{\cdot}_2}(x,\varepsilon)$ for which 
    \[\mathcal{H}^{1}_{ \norm{ \cdot }_{2} }(
            N_{i-1}
            \cap
             (\pi_2^*)^{-1}(\pi_{2}^*(y))
        )
        =
        0.\]
    As before, $H(x, \varepsilon)$ is a full measure subset of $H_{i-1} \cap \overline{B}_{\norm{\cdot}_2}(x,\varepsilon)$. Let $r \in (0,t_0)$ and $\varepsilon \in (0,2\delta r)$.
    
    For all $y \in H(x,\varepsilon)$, define the path $\gamma_y\colon [0,r] \to \mathbb{R}^2$ by $\gamma_{y}(s) = y + sw$. Recall from \Cref{lemm:metric_speed_Lips} that
    \[\ell_{\widehat{d}}(\gamma_{y}) = \int_{[0,r]}  \widehat{N}_{\gamma_y(s)}(w)\, d\mathcal{L}^{1}.\] 
    Moreover, $\widehat{N}_{z}( w ) \leq C a^{i}$ for $\mathcal{H}^{1}_{ \norm{ \cdot }_{2} }$-almost every $z \in K_{i-1} \cap \abs{ \gamma_{y} }$. Arguing as in the first case, we obtain the inequality
    \[
        \widehat{d}(y,y+rw) \leq (1-4\delta)Ca^{i}r + 2\delta C r. 
    \]
    Next, by taking $\delta$ sufficiently small, we then have $\widehat{d}(y,y+rw) \leq (1+\delta)Ca^ir$.
     As before, since $\varepsilon \in (0,2\delta r)$ is arbitrary,
    \[
        \widehat{d}(x,x+rw)
        \leq
        (1+\delta)Ca^{i}r,
    \]
    which contradicts \eqref{equ:arguebycontradiction:2} for sufficiently small $\delta > 0$. We conclude that $\widehat{N}_x(w) \leq Ca^{i}$ for almost every $x \in K_i \setminus K_{i+1}$.

\bibliographystyle{alpha}
\bibliography{bibliography}

\end{document}